\newtheorem{theorem}{Theorem}[chapter]
\newtheorem{proposition}[theorem]{Proposition}
\newtheorem{corollary}[theorem]{Corollary}
\newtheorem{lemma}[theorem]{Lemma}
\theoremstyle{definition}
\newtheorem{definition}[theorem]{Definition}
\newtheorem{remark}[theorem]{Remark}
\def\ls{\lesssim}
\def\gs{\gtrsim}
\def\fz{\infty}
\renewcommand{\r}{\right}
\def\supp{{\mathop\mathrm{\,supp\,}}}
\def\rr{{\mathbb R}}
\def\rn{{{\rr}^n}}
\def\zz{{\mathbb Z}}
\def\nn{{\mathbb N}}
\def\cc{{\mathbb C}}
\newcommand{\wz}{\widetilde}
\newcommand{\oz}{\overline}
\newcommand{\cb}{{\mathcal B}}
\newcommand{\cd}{{\mathcal D}}
\newcommand{\cg}{{\mathcal G}}
\newcommand{\cp}{{\mathcal P}}
\newcommand{\cs}{{\mathcal S}}
\newcommand{\cy}{{\mathcal Y}}
\def\az{\alpha}
\def\lz{\lambda}
\def\blz{\Lambda}
\def\bfai{\Phi}
\def\dz {\delta}
\def\ez {\eta}
\def\epz{\epsilon}
\def\bz{\beta}
\def\rz{\rho}
\def\pz{\psi}
\def\xz{\xi}
\def\gz{{\gamma}}
\def\tz{\theta}
\def\sz{\sigma}
\def\zez{\zeta}
\def\wz{\widetilde}
\def\ls{\lesssim}
\def\gs{\gtrsim}
\def\boz{\Omega}
\def\oz{\omega}
\def\gfz{\genfrac{}{}{0pt}{}}
\def\pat{\partial}
\def\fin{{\mathop\mathrm{fin}}}
\def\esup{\mathop\mathrm{\,esssup\,}}
\def\einf{\mathop\mathrm{\,essinf\,}}
\def\bmo{{{\mathop\mathrm {bmo}}}}
\def\bbmo{{{\mathop\mathrm {BMO}}}}
\def\hs{\hspace{0.3cm}}
\def\gfz{\genfrac{}{}{0pt}{}}
\def\supp{{\mathop\mathrm{\,supp\,}}}
\def\diam{{\mathop\mathrm{\,diam\,}}}
\def\dist{{\mathop\mathrm{\,dist\,}}}
\def\loc{{\mathop\mathrm{loc\,}}}
\def\lfz{\lfloor}
\def\rf{\rfloor}
\numberwithin{equation}{chapter}
\begin{document}

\keywords{local weight, local Orlicz-Hardy space, atom, local grand
maximal function, quasi-Banach space, $\mathop\mathrm{BMO}$-type
space, duality, local Riesz transform, local fractional integral,
pseudo-differential operator.} \mathclass{Primary 46E30; Secondary
42B35, 42B30, 42B25, 42B20, 35S05, 47G30, 47B06.}
\thanks{The first author is supported by the National
Natural Science Foundation (Grant No. 10871025) of China
and Program for Changjiang Scholars and Innovative
Research Team in University of China.

\medskip

Both authors would like to thank Professor Lin Tang
for some helpful discussions on the subject of this paper and
they would also like to thank the referee and the copy editor
for their valuable remarks which made this article
more readable.}

\abbrevauthors{D. Yang and S. Yang}
\abbrevtitle{Weighted local Orlicz-Hardy spaces}

\title{{\vspace{-5cm}\bf Dissertationes Math. 
(Rozprawy Mat.) (to appear)}\\
\vspace{4.5cm} Weighted local Orlicz-Hardy spaces
with applications to pseudo-differential operators}

\author{Dachun Yang}
\address{School of Mathematical Sciences\\ Beijing Normal
University\\ Laboratory of Mathematics and Complex Systems\\Ministry
of Education\\ Beijing 100875\\People's Republic of China\\
E-mail: dcyang@bnu.edu.cn}

\author{Sibei Yang}
\address{School of Mathematical Sciences\\ Beijing Normal
University\\ Laboratory of Mathematics and Complex Systems\\Ministry
of Education\\ Beijing 100875\\People's Republic of China\\
E-mail: yangsibei@mail.bnu.edu.cn}

\maketitledis

\tableofcontents
\begin{abstract}
Let $\Phi$ be a concave function on $(0,\infty)$ of strictly lower
type $p_{\Phi}\in(0,1]$ and $\omega\in
A^{\mathop\mathrm{loc}}_{\infty}(\mathbb{R}^n)$ (the class of local
weights introduced by V. S. Rychkov). We introduce the weighted
local Orlicz-Hardy space $h^{\Phi}_{\omega}(\mathbb{R}^n)$ via the
local grand maximal function. Let $\rho(t)\equiv
t^{-1}/\Phi^{-1}(t^{-1})$ for all $t\in(0,\infty)$. We also
introduce the $\mathop\mathrm{BMO}$-type space
$\mathop\mathrm{bmo}_{\rho,\,\omega}(\mathbb{R}^n)$ and establish
the duality between $h^{\Phi}_{\omega}(\mathbb{R}^n)$ and
$\mathop\mathrm{bmo}_{\rho,\,\omega}(\mathbb{R}^n)$.
Characterizations of $h^{\Phi}_{\omega}(\mathbb{R}^n)$, including
the atomic characterization, the local vertical and the local
nontangential maximal function characterizations, are presented.
Using the atomic characterization, we prove the existence of finite
atomic decompositions achieving the norm in some dense subspaces of
$h^{\Phi}_{\omega}(\mathbb{R}^n)$, from which, we further deduce
that for a given admissible triplet $(\rho,\,q,\,s)_{\omega}$ and a
$\beta$-quasi-Banach space $\mathcal{B}_{\beta}$ with
$\beta\in(0,1]$, if $T$ is a $\mathcal{B}_{\beta}$-sublinear
operator, and maps all $(\rho,\,q,\,s)_{\omega}$-atoms and
$(\rho,\,q)_{\omega}$-single-atoms with $q<\infty$ (or all
continuous $(\rho,\,q,\,s)_{\omega}$-atoms with $q=\infty$) into
uniformly bounded elements of $\mathcal{B}_{\beta}$, then $T$
uniquely extends to a bounded $\mathcal{B}_{\beta}$-sublinear
operator from $h^{\Phi}_{\omega}(\mathbb{R}^n)$ to
$\mathcal{B}_{\beta}$. As applications, we show that the local Riesz
transforms are bounded on $h^{\Phi}_{\omega}(\mathbb{R}^n)$, the
local fractional integrals are bounded from
{\normalsize$h^p_{\omega^p}(\mathbb{R}^n)$} to
{\normalsize$L^q_{\omega^q}(\mathbb{R}^n)$} when $q>1$ and from
{\normalsize$h^p_{\omega^p}(\mathbb{R}^n)$} to
{\normalsize$h^q_{\omega^q}(\mathbb{R}^n)$} when $q\le 1$, and some
pseudo-differential operators are also bounded on both
$h^{\Phi}_{\omega}(\mathbb{R}^n)$. All results for any general
$\Phi$ even when $\omega\equiv 1$ are new.
\end{abstract}
\makeabstract

\chapter{Introduction}\label{s1}

It is well known that the theory of the classical local Hardy
spaces, originally introduced by Goldberg \cite{Go79}, plays an
important role in partial differential equations and harmonic
analysis; see, for example, \cite{Go79,B81,R01,Tay91,Tr83,Tr92} and
their references. In particular, pseudo-differential operators are
bounded on local Hardy spaces $h^p (\rn)$ with $p\in(0,1]$, but they
are not bounded on Hardy spaces $H^p (\rn)$ with $p\in(0,1];$ see
\cite{Go79} (also \cite{Tr83, Tr92}). In \cite{B81}, Bui studied the
weighted version $h^p_{\oz}(\rn)$ of the local Hardy space $h^p
(\rn)$  with $\oz\in A_{\fz}(\rn)$, where and in what follows,
$A_q(\rn)$ for $q\in[1,\fz]$ denotes the \emph{class of Muckenhoup's
weights}; see, for example, \cite{gr85} for their definitions and
properties.

Rychkov \cite{R01} introduced and studied a class of local weights,
denoted by $A^{\loc}_{\fz}(\rn)$ (see also Definition \ref{d2.1}
below), and the weighted Besov-Lipschitz spaces and Triebel-Lizorkin
spaces with weights belonging to $A^{\loc}_{\fz}(\rn)$, which
contains $A_{\fz}(\rn)$ weights and exponential weights introduced
by Schott \cite{S98} as special cases. In particular, Rychkov
\cite{R01} generalized some of the theory of weighted local Hardy
spaces developed by Bui \cite{B81} to $A^{\loc}_{\fz}(\rn)$ weights.
In fact, Rychkov established  the local vertical and the local
nontangential maximal function characterizations of weighted local
Hardy spaces with $A^{\loc}_{\fz}(\rn)$ weights. Very recently, Tang
\cite{Ta1} established the weighted atomic decomposition
characterization of the weighted local Hardy space $h^p_{\oz}(\rn)$
with $\oz\in A^{\loc}_{\fz}(\rn)$ via the local grand maximal
function. Motivated by \cite{blyz08}, Tang also established some
criterions for boundedness of $\cb_{\bz}$-sublinear operators on
$h^p_{\oz}(\rn)$ (see Section \ref{s6} for the notion of
$\cb_{\bz}$-sublinear operators, which was first introduced in
\cite{yz08}). As applications, Tang \cite{Ta1, Ta2} proved that some
strongly singular integrals, pseudo-differential operators and their
commutators are bounded on $h^p_{\oz}(\rn)$. It is worth pointing
out that in recent years, many papers are focused on criterions for
boundedness of (sub)linear operators on various Hardy spaces with
different underlying spaces (see, for example,
\cite{B05,msv08,yz09,blyz08, gly08,yz08,rv,Ta1}), and on entropy and
approximation numbers of embeddings of function spaces with
Muckenhoupt weight (see, for example, \cite{hs08,hs1,hs2, hs3}).

It is also well known that the classical $\bbmo$ space (the {\it
space of functions with bounded mean oscillation}) originally
introduced by John and Nirenberg \cite{jn61} and the classical
Morrey space originally introduced by Morrey \cite{M67} play an
important role in the study of partial differential equations and
harmonic analysis; see, for example, \cite{fs72,ddy05,dxy07,N06}. In
particular, Fefferman and Stein \cite{fs72} proved that $\bbmo(\rn)$
is the dual space of the Hardy space $H^1 (\rn)$. Moveover, Goldberg
\cite{Go79} introduced the space $\bmo(\rn)$ and proved that
$\bmo(\rn)$ is the dual space of the local Hardy space $h^1 (\rn)$.

On the other hand,  as the generalization of $L^p (\rn)$, the Orlicz
space was introduced by Birnbaum-Orlicz in \cite{bo31} and Orlicz in
\cite{O32}, since then, the theory of the Orlicz spaces themselves
has been well developed and these spaces have been widely used in
probability, statistics, potential theory, partial differential
equations, as well as harmonic analysis and some other fields of
analysis; see, for example, \cite{rr91,rr00,byz08,mw08,io02}.
Moreover, Orlicz-Hardy spaces are also suitable substitutes of the
Orlicz spaces in dealing with many problems of analysis; see, for
example, \cite{J80,Vi87,Str79,jy10}. Recall that Orlicz-Hardy spaces
and their dual spaces were studied by Janson \cite{J80} on $\rn$ and
Viviani \cite{Vi87} on spaces of homogeneous type in the sense of
Coifman and Weiss \cite{cw71}. Recently, Orlicz-Hardy spaces
associated with some differential operators and their dual spaces
were introduced and studied in \cite{jyz09,jy10}.

Let $\oz\in A^{\loc}_{\fz}(\rn)$, $\bfai$ be a concave function on
$(0,\fz)$ of strictly lower type $p_{\bfai}\in(0,1]$ (see
\eqref{2.6} below for the definition) and
$$\rz(t)\equiv t^{-1}/\bfai^{-1}(t^{-1})$$
for all $t\in(0,\fz)$, where $\bfai^{-1}$ is the inverse function of
$\bfai$. A typical example of such Orlicz functions is
$\bfai(t)\equiv t^p$ for all $t\in(0,\fz)$ and $p\in(0,1]$.
Motivated by \cite{R01,Ta1,Go79,jyz09,jy10,blyz08}, in this paper,
we introduce the weighted local Orlicz-Hardy space
$h^{\Phi}_{\omega}(\mathbb{R}^n)$ via the local grand maximal
function. We also introduce the $\mathop\mathrm{BMO}$-type space
$\mathop\mathrm{bmo}_{\rho,\,\omega}(\mathbb{R}^n)$ and establish
the duality between $h^{\Phi}_{\omega}(\mathbb{R}^n)$ and
$\mathop\mathrm{bmo}_{\rho,\,\omega}(\mathbb{R}^n)$.
Characterizations of $h^{\Phi}_{\omega}(\mathbb{R}^n)$, including
the atomic characterization, the local vertical and the local
nontangential maximal function characterizations, are presented.
Using the atomic characterization, we prove the existence of finite
atomic decompositions achieving the norm in some dense subspaces of
$h^{\Phi}_{\omega}(\mathbb{R}^n)$, from which, we further deduce
that for a given admissible triplet $(\rho,\,q,\,s)_{\omega}$ and a
$\beta$-quasi-Banach space $\mathcal{B}_{\beta}$ with
$\beta\in(0,1]$, if $T$ is a $\mathcal{B}_{\beta}$-sublinear
operator, and maps all $(\rho,\,q,\,s)_{\omega}$-atoms and
$(\rho,\,q)_{\omega}$-single-atoms with $q<\infty$ (or all
continuous $(\rho,\,q,\,s)_{\omega}$-atoms with $q=\infty$) into
uniformly bounded elements of $\mathcal{B}_{\beta}$, then $T$
uniquely extends to a bounded $\mathcal{B}_{\beta}$-sublinear
operator from $h^{\Phi}_{\omega}(\mathbb{R}^n)$ to
$\mathcal{B}_{\beta}$. As applications, we show that the local Riesz
transforms are bounded on $h^{\Phi}_{\omega}(\mathbb{R}^n)$, the
local fractional integrals are bounded from $h^p_{\oz^p}(\rn)$ to
$L^q_{\oz^q}(\rn)$ when $q>1$ and from $h^p_{\oz^p}(\rn)$ to
$h^q_{\oz^q}(\rn)$ when $q\le 1$, and some pseudo-differential
operators are also bounded on both $h^{\Phi}_{\omega}(\mathbb{R}^n)$
and $\mathop\mathrm{bmo}_{\rho,\,\omega}(\mathbb{R}^n)$. We point
out that the Orlicz-Hardy spaces $h^{\bfai}_\oz(\rn)$ include the
classical local Hardy spaces of Goldberg \cite{Go79}, the weighted
local Hardy spaces of Bui \cite{B81} and the weighted local Hardy
spaces of Tang \cite{Ta1} as special cases. Moreover, the method of
obtaining atomic decompositions used in this paper (see the proof of
Theorem \ref{t5.6} below) is different from the classical methods in
\cite{Go79,B81}. Indeed, following Bownik \cite{B03} (see also
\cite{blyz08,Ta1}), we give a direct proof for the weighted atomic
characterization of $h^{\bfai}_\oz(\rn)$, without invoking the
atomic characterization of $H^\Phi_\oz(\rn)$. All results of this
paper for any general $\Phi$ even when $\oz\equiv 1$ are new.

Precisely, this paper is organized as follows. In Section \ref{s2},
we first recall some definitions and notation concerning local
weights introduced in \cite{R01,Ta1}, then describe some basic
assumptions and present some properties of Orlicz functions
considered in this paper.

In Section \ref{s3}, we first introduce the weighted local
Orlicz-Hardy space $h^{\bfai}_{\oz,\,N}(\rn)$ via the local grand
maximal function, and then the weighted atomic local Orlicz-Hardy
space $h^{\rz,\,q,\,s}_{\oz}(\rn)$ for any admissible triplet
$(\rz,\,q,\,s)_{\oz}$ (see Definition \ref{d3.4} below). We point
out that when $\bfai(t)\equiv t^p$ for all $t\in(0,\fz)$ and
$p\in(0,1]$, the weighted local Orlicz-Hardy space
$h^{\bfai}_{\oz,\,N}(\rn)$ is just the weighted local Hardy space
$h^p_{\oz,\,N}(\rn)$ introduced by Tang in \cite{Ta1}. Next, we
establish the local vertical and the local nontangential maximal
function characterizations of $h^{\bfai}_{\oz,\,N}(\rn)$ via a local
Calder\'on reproducing formula and some useful estimates established
by Rychkov \cite{R01}, which generalizes \cite[Theorem 2.24]{R01} by
taking $\bfai(t)\equiv t^p$ for all $t\in(0,\fz)$ and $p\in(0,1]$;
see Theorems \ref{t3.12} and \ref{t3.14} and Remark \ref{r3.13}
below. Finally, we present some properties of these weighted local
Orlicz-Hardy spaces $h^{\bfai}_{\oz,\,N}(\rn)$ and weighted atomic
local Orlicz-Hardy spaces $h^{\rz,\,q,\,s}_{\oz}(\rn)$.

Throughout the whole paper, as usual, $\cd(\rn)$ denotes the {\it
set of all $C^\fz(\rn)$ functions on $\rn$ with compact support},
endowed with the inductive limit topology, and $\cd'(\rn)$ its {\it
topological dual space}, endowed with the weak $\ast$-topology. Let
$\lfz r\rf$ for any $r\in\rr$ denote the \emph{maximal integer not more
than $r$}. In Section \ref{s4}, for any given $f\in\cd'(\rn)$, integer
$$s\ge\lfz n(q_{\oz}/p_{\bfai}-1)\rf$$
and $\lz>\inf_{x\in\rn}\cg_{N,\,\wz{R}} (f)(x)$, where $q_{\oz}$,
$p_{\bfai}$ and $\cg_{N,\,\wz{R}}(f)$ are respectively as in
\eqref{2.4}, \eqref{2.6} and \eqref{3.2} below, and
$\wz{R}=2^{3(10+n)}$, following \cite{St93,B03,blyz08,Ta1}, via a
Whitney decomposition of $\boz_{\lz}$ in \eqref{4.1}, we obtain the
Calder\'on-Zygmund decomposition $f\equiv g+\sum_i b_i$ in
$\cd'(\rn)$ of degree $s$ and height $\lz$ associated with the local
grand maximal function $\cg_{N,\,\wz{R}}(f)$. The main task of
Section \ref{s4} is to establish some subtle estimates for $g$ and
$\{b_i\}_i$. Precisely, Lemmas \ref{l4.2} through \ref{l4.5} are
estimates on $\{b_i\}_i$, the bad part of $f$, while Lemmas
\ref{l4.6} and \ref{l4.7} on $g$, the good part of $f$. As an
application of these estimates, we obtain the density of
$L^q_{\oz}(\rn)\cap h^{\bfai}_{\oz,\,N}(\rn)$ in
$h^{\bfai}_{\oz,\,N}(\rn)$, where $q\in(q_{\oz},\fz)$ (see Corollary
\ref{c4.8} below). Differently from the proof of
\cite[Lemma\,4.9]{Ta1}, via an application of the boundedness of the
local vector-valued Hardy-Littlewood maximal operator obtained by
Rychkov \cite{R01} (see also Lemma \ref{l3.10} below), our Lemma
\ref{l4.7} below improves \cite[Lemma\,4.9]{Ta1} even when
$\bfai(t)\equiv t^p$ for all $t\in(0,\fz)$ and $p\in(0,1]$, which is
necessary for Corollary \ref{c4.8}.

In Section \ref{s5}, we prove that for any given admissible triplet
$(\rz,\,q,\,s)_{\oz}$,
$$h^{\rz,\,q,\,s}_{\oz}(\rn)=h^{\bfai}_{\oz,\,N}(\rn)$$
with equivalent norms when positive integer $N\ge N_{\bfai,\,\oz}$ (see
\eqref{3.25} below for the definition of $N_{\bfai,\,\oz}$), by
using the Calder\'on-Zygmund decomposition and some subtle estimates
obtained in Section \ref{s4}, which completely covers
\cite[Theorem\,5.1]{Ta1} by taking $\bfai(t)\equiv t^p$ for all
$p\in(0,1]$ and $t\in(0,\fz)$; see Theorem \ref{t5.6} and Remark
\ref{r5.7} below. It is worth pointing out that we show Theorem
\ref{t5.6} by a way different from the methods in \cite{Go79,B81},
but close to those in \cite{B03,blyz08,Ta1}. For simplicity, in the
rest of this introduction, we denote by $h^{\bfai}_{\oz}(\rn)$ the
{\it weighted local Orlicz-Hardy space $h^{\bfai}_{\oz,\,N}(\rn)$
with} $N\ge N_{\bfai,\,\oz}$.

Assume that $(\rz,\,q,\,s)_{\oz}$ is an admissible triplet. Let
$h^{\rz,\,q,\,s}_{\oz,\,\fin}(\rn)$ be the {\it space of all finite
linear combinations of $(\rz,\,q,\,s)_{\oz}$-atoms and
$(\rz,\,q)_{\oz}$-single-atoms} (see Definition \ref{d6.1} below),
and $h^{\rz,\,\fz,\,s}_{\oz,\,\fin,\,c}(\rn)$ the {\it space of all
$f\in h^{\rz,\,\fz,\,s}_{\oz,\,\fin}(\rn)$ with compact support}. In
Section \ref{s6}, we prove that
$\|\cdot\|_{h^{\rz,\,q,\,s}_{\oz,\,\fin}(\rn)}$ and
$\|\cdot\|_{h^{\bfai}_{\oz}(\rn)}$ are equivalent quasi-norms on
$h^{\rz,\,q,\,s}_{\oz,\,\fin}(\rn)$ when $q <\fz$, and
$\|\cdot\|_{h^{\rz,\,\fz,\,s}_{\oz,\,\fin}(\rn)}$ and
$\|\cdot\|_{h^{\bfai}_{\oz}(\rn)}$ are equivalent quasi-norms on
$h^{\rz,\,\fz,\,s}_{\oz,\,\fin,\,c}(\rn)\cap C(\rn)$ when $q=\fz$
(see Theorem \ref{t6.2} below). As an application, we prove that for
a given admissible triplet $(\rho,\,q,\,s)_{\omega}$ and a
$\beta$-quasi-Banach space $\mathcal{B}_{\beta}$ with
$\beta\in(0,1]$, if $\bfai$ has a upper type $\wz{p}$ satisfying
$0<\wz{p}\le\beta$, and $T$ is a $\cb_{\bz}$-sublinear operator
mapping all $(\rz,\,q,\,s)_{\oz}$-atoms and
$(\rz,\,q)_{\oz}$-single-atoms with $q\in(q_{\oz},\fz)$ (or all
continuous $(\rho,\,q,\,s)_{\omega}$-atoms with $q=\infty$) into
uniformly bounded elements of $\cb_{\bz}$, then $T$ uniquely extends
to a bounded $\cb_{\bz}$-sublinear operator from
$h^{\bfai}_{\oz}(\rn)$ to $\cb_{\bz}$ which coincides with $T$ on
these $(\rz,\,q,\,s)_{\oz}$-atoms and
$(\rz,\,q)_{\oz}$-single-atoms; see Theorem \ref{t6.4} below. We
remark that this extends both the results of
Meda-Sj\"ogren-Vallarino \cite{msv08} and Yang-Zhou \cite{yz09} to
the setting of weighted local Orlicz-Hardy spaces. We also point out
that Theorem \ref{t6.2}(i) and Theorem \ref{t6.4}(i) below
completely cover \cite[Theorem\,6.1]{Ta1} and \cite[Theorem
\,6.2]{Ta1}, respectively, by taking $\bfai(t)\equiv t^p$ for all
$t\in(0,\fz)$ and $p\in(0,1]$; and Theorem \ref{t6.2}(ii) and
Theorem \ref{t6.4}(ii) below are new even for $\bfai(t)\equiv t^p$
with $t\in(0,\fz)$ and $p\in(0,1];$ see Remark \ref{r6.5} below.

Let $(\rz,\,q,\,s)_{\oz}$ be an admissible triplet, $q'$ the dual
exponent of $q$ and  $q_{\oz}$ the critical index of $\oz$. In
Section \ref{s7}, we introduce the $\bbmo$-type space
$\bmo_{\rz,\,\oz}^{q'}(\rn)$ and prove that
$$\left[h^{\rz,\,q,\,s}_{\oz} (\rn)\r]^{\ast}=\bmo_{\rz,\,\oz}^{q'}
(\rn),$$ where $\left[h^{\rz,\,q,\,s}_{\oz} (\rn)\r]^{\ast}$ denotes
the dual space of $h^{\rz,\,q,\,s}_{\oz} (\rn)$; see Theorem
\ref{t7.5} below. Denote $\bmo^{1}_{\rz,\,\oz}(\rn)$ simply by
$\bmo_{\rz,\,\oz}(\rn)$. As applications of Theorems \ref{t5.6} and
\ref{t7.5}, we see that for $q\in[1,\frac{q_{\oz}}{q_{\oz}-1})$,
$\bmo^{q}_{\rz,\,\oz}(\rn)=\bmo_{\rz,\,\oz}(\rn)$ with equivalent
norms and
$$\left[h^{\bfai}_{\oz}(\rn)\r]^{\ast}=\bmo_{\rz,\,\oz}(\rn);$$
see Corollaries \ref{c7.6} and \ref{c7.7} below.

In Section \ref{s8}, as applications of Theorem \ref{6.2}, we obtain
the boundedness of some operators from $h^{\bfai}_{\oz}(\rn)$ to
some $\beta$-quasi-Banach space $\mathcal{B}_{\beta}$ with
$\bz\in(0,1]$. First, we prove that the local Riesz transforms are
bounded on $h^{\bfai}_{\oz}(\rn)$ if $p_{\bfai}=p_{\bfai}^+$ and
\eqref{2.5} holds for $p_{\bfai}^+$ with $t\in[1,\fz)$ (see Section
\ref{s2} below for the definitions of $p_{\bfai}^+$), which
completely covers \cite[Lemma\,8.3]{Ta1} by taking $\bfai(t)\equiv
t$ for all $t\in(0,\fz)$; see Theorem \ref{t8.2} and Remark
\ref{r8.4} below. Then we introduce the local fractional integral
operator $I^{\loc}_{\az}$ and show that $I^{\loc}_{\az}$ is bounded
from $h^p_{\oz^p}(\rn)$ to $L^q_{\oz^q}(\rn)$ when $\az\in(0,n)$,
$p\in[\frac{n}{n+\az},1]$, $\frac{1}{q}=\frac{1}{p}-\frac{\az}{n}$,
and $\oz^{\frac{nr}{nr-n-r\az}}\in A^{\loc}_1 (\rn)$ for some
$r\in(\frac{n}{n-\az},\,\fz)$ and $\int_{\rn}[\oz(x)]^p\,dx=\fz$
(see Theorem \ref{t8.10} below); furthermore, when $\az\in(0,1)$,
$p\in(0,\,\frac{n}{n+\az}]$,
$\frac{1}{q}=\frac{1}{p}-\frac{\az}{n}$, and $\oz$ satisfies the
same conditions, we prove that $I^{\loc}_{\az}$ is bounded from
$h^p_{\oz^p}(\rn)$ to $h^q_{\oz^q}(\rn)$ (see Theorem \ref{t8.11}
below). To the best of our knowledge, Theorems \ref{t8.10} and
\ref{t8.11} are new even when $\oz\equiv1$. Finally, let $\oz\in
A_{\fz}(\phi)$ which was introduced by Tang \cite{Ta2} (see also
Definition \ref{d8.13} below) and $T$ be an $S^0_{1,\,0}(\rn)$
pseudo-differential operator. We prove that $T$ is bounded on
$h^{\bfai}_{\oz}(\rn)$ if $p_{\bfai}=p_{\bfai}^+$ and \eqref{2.5}
holds for $p_{\bfai}^+$ with $t\in[1,\fz)$; see Theorem \ref{t8.18}
below. We point out that $A_{\fz}(\phi)\subset A^{\loc}_{\fz}(\rn)$
but $A_{\fz}(\phi)\supset A_{\fz}(\rn)$. We also remark that Theorem
\ref{t8.18} below extends \cite[Theorem\,4]{Go79} to the setting of
weighted local Orlicz-Hardy spaces, and completely covers
\cite[Theorem\,7.3]{Ta1} by taking $\bfai(t)\equiv t^p$ for all
$t\in(0,\fz)$ and $p\in(0,1]$ and also \cite[Theorem\,2]{lll10} by
taking $\bfai(t)\equiv t$ for all $t\in(0,\fz)$ and $\oz\in
A_1(\rn)$; see Remark \ref{r8.19} below. As an application of
Theorems \ref{t7.5} and \ref{t8.18}, we also obtain that $T$ is
bounded on $\bmo_{\rz,\,\oz}(\rn)$; see Corollary \ref{c8.20} below.

A main motivation of this paper is to pave the way for the study of
weighted Orlicz-Hardy spaces associated with divergence operators on
strongly Lipschitz domains of $\rn$. The corresponding Hardy spaces
associated with divergence operators on strongly Lipschitz domains
of $\rn$ were originally studied by Auscher and Russ \cite{ar03},
where the atomic characterization of the classical Hardy spaces
plays a key role. Earlier works on Hardy spaces on domains can be
found, for example, in \cite{jsw84,M90,cks93, cds99,tw96}. It was
shown in these papers that the theory of Hardy spaces on domains
plays an important role in partial differential equations and
harmonic analysis.

Finally we make some conventions on notation. Throughout the whole
paper, we denote by $C$ {\it a positive constant} which is
independent of the main parameters, but it may vary from line to
line. We also use $C(\gz,\bz,\cdots)$ to denote {\it a positive
constant depending on the indicated parameters $\gz$, $\bz$,
$\cdots$}. The {\it symbol} $A\ls B$ means that $A\le CB$. If $A\ls
B$ and $B\ls A$, then we write $A\sim B$. The {\it symbol} $\lfz
s\rf$ for $s\in\rr$ denotes the maximal integer not more than $s$.
For any given normed spaces $\mathcal A$ and $\mathcal B$ with the
corresponding norms $\|\cdot\|_{\mathcal A}$ and
$\|\cdot\|_{\mathcal B}$, the {\it symbol} ${\mathcal
A}\subset{\mathcal B}$ means that for all $f\in \mathcal A$, then
$f\in\mathcal B$ and $\|f\|_{\mathcal B}\ls \|f\|_{\mathcal A}$. For
any subset $G$ of $\rn$, we denote by $G^\complement$ the {\it set}
$\rn\setminus G$; for a measurable set $E$, denote by $\chi_{E}$ the
{\it characteristic function of} $E$. We also set $\nn\equiv\{1,\,
2,\, \cdots\}$ and $\zz_+\equiv\nn\cup\{0\}$. For any
$\tz\equiv(\tz_{1},\ldots,\tz_{n})\in\zz_{+}^{n}$, let
$|\tz|\equiv\tz_{1}+\cdots+\tz_{n}$ and
$\partial^{\tz}_x\equiv\frac{\partial^{|\tz|}}{\partial
{x_{1}^{\tz_{1}}}\cdots\partial {x_{n}^{\tz_{n}}}}$. Given a
function $g$ on $\rn$, if $\int_\rn g(x)\,dx\neq 0$, let $L_g\equiv
-1$; otherwise, let $L_g\in\zz_+$ be the {\it maximal integer} such
that $g$ has vanishing moments up to order $L_g$, namely,
$\int_{\rn}g(x)x^{\az}\,dx=0$ for all multi-indices $\az$ with
$|\az|\le L_g$.

\chapter{Preliminaries\label{s2}}

In this section, we first recall some notions and notation
concerning local weights introduced in \cite{R01,Ta1}, then describe
some basic assumptions and present some properties of Orlicz
functions considered in this paper.

\section{$A^{\loc}_p (\rn)$ weights\label{s2.1}}

In this subsection, we recall some notions and properties of the
local weights. Let $Q$ be a cube in $\rn$ and we denote its Lebesgue
measure by $|Q|$. Throughout the whole paper, {\it all cubes are
assumed to be closed and their sides parallel to the coordinate
axes}.
\begin{definition}\label{d2.1}
Let $p\in(1,\fz)$. The {\it weight class $A^{\loc}_p (\rn)$} is
defined to be the set of all nonnegative locally integrable
functions $\oz$ on $\rn$ such that
\begin{equation}\label{2.1}
A^{\loc}_p (\oz)\equiv\sup_{|Q|\le1}\frac{1}{|Q|^p}\int_Q \oz(x)\,dx
\left(\int_Q [\oz(y)]^{-p'/p}\,dy\r)^{p/p'}<\fz,
\end{equation}
where the supremum is taken over all cubes $Q\subset\rn$ with
$|Q|\le1$ and $\frac{1}{p}+\frac{1}{p'}=1$.

When $p=1$, the {\it weight class $A^{\loc}_1 (\rn)$} is defined to
be the set of all nonnegative locally integrable functions $\oz$ on
$\rn$ such that
\begin{equation}\label{2.2}
A^{\loc}_1 (\oz)\equiv\sup_{|Q|\le1}\frac{1}{|Q|}\int_Q \oz(x)\,dx
\left(\esup_{y\in Q}[\oz(y)]^{-1}\r)<\fz,
\end{equation}
where the supremum is taken over all cubes $Q\subset\rn$ with
$|Q|\le1$.

When $p=\fz$, the {\it weight class $A^{\loc}_{\fz} (\rn)$} is
defined to be the set of all nonnegative locally integrable
functions $\oz$ on $\rn$ such that for any $\az\in(0,1)$,
\begin{equation}\label{2.3}
A^{\loc}_{\fz} (\oz;\,\az)\equiv\sup_{|Q|\le1} \left[\sup_{F\subset
Q,|F|\ge\az|Q|}\frac{\oz(Q)}{\oz(F)}\r]<\fz,
\end{equation}
where $F$ runs through all measurable sets in $\rn$ with the
indicated properties, the supremum is taken over all cubes
$Q\subset\rn$ with $|Q|\le1$ and $\oz(Q)\equiv\int_Q \oz(x)\,dx$.
\end{definition}

\begin{remark}\label{r2.2}
(i) We point out that the weight class $A^{\loc}_p (\rn)$ for
$p\in(1,\fz]$ was introduced by Rychkov \cite{R01} and $A^{\loc}_1
(\rn)$ by Tang \cite{Ta1}. By H\"older's inequality, we see that
$A^{\loc}_{p_1} (\rn)\subset A^{\loc}_{p_2} (\rn)\subset
A^{\loc}_{\fz} (\rn)$, if $1\le p_1<p_2<\fz$. Conversely, it was
proved in \cite[Lemma 1.3]{R01} that if $\oz\in A^{\loc}_{\fz}
(\rn)$, then $\oz\in A^{\loc}_{p} (\rn)$ for some $p\in(1,\fz)$.
Thus, we have $A^{\loc}_{\fz} (\rn)=\cup_{1\le p<\fz}A^{\loc}_p
(\rn)$.

(ii) For any constant $\wz{C}\in(0,\fz)$, the condition $|Q|\le1$
can be replaced by $|Q|\le\wz{C}$ in \eqref{2.1}, \eqref{2.2} and
\eqref{2.3}; see \cite[Remark\,1.5]{R01}. In this case, $A^{\loc}_p
(\oz)$ with $p\in[1,\fz)$ and $A^{\loc}_{\fz}(\oz,\,\az)$ depend on
$\wz{C}$.
\end{remark}

In what follows, $Q(x,t)$ denotes the \emph{closed cube centered at
$x$ and of the sidelength $t$}. Similarly, given $Q=Q(x,t)$ and
$\lz\in(0,\fz)$, we write $\lz Q$ for the {\it$\lz$-dilated cube},
which is the cube with the same center $x$ and with sidelength $\lz
t$. Given a Lebesgue measurable set $E$ and a weight $\oz\in
A^{\loc}_{\fz} (\rn)$, let $\oz(E)\equiv\int_{E}\oz(x)\,dx$. For any
$\oz\in A^{\loc}_{\fz} (\rn)$, the \emph{space} $L^p_{\oz}(\rn)$
with $p\in(0,\fz)$ denotes the set of all measurable functions $f$
such that
$$\|f\|_{L^p_{\oz}(\rn)}\equiv\left\{\int_{\rn}|f(x)|^p
\oz(x)\,dx\r\}^{1/p}<\fz,$$ and $L^{\fz}_{\oz} (\rn)\equiv
L^{\fz}(\rn)$. The {\it space $L^{1,\,\fz}_{\oz}(\rn)$} denotes the
set of all measurable functions $f$ such that
$$\|f\|_{L^{1,\,\fz}_{\oz}(\rn)}\equiv\sup_{\lz>0}\left\{\lz
\oz(\{x\in\rn:\,|f(x)|>\lz\})\r\}<\fz.$$ For a positive constant
$\wz{C}$, any locally integrable function $f$ and $x\in\rn$, the
{\it local Hardy-Littlewood maximal function $M^{\loc}_{\wz{C}}(f)$}
is defined by
$$M^{\loc}_{\wz{C}}(f)(x)\equiv\sup_{Q\ni x,\,|Q|\le \wz{C}}
\frac{1}{|Q|}\int_{Q}|f(y)|\,dy,$$ where the supremum is taken over
all cubes $Q\subset\rn$ such that $Q\ni x$ and $|Q|\le\wz{C}$. If
$\wz{C}=1$, we denote $M^{\loc}_{\wz{C}}(f)$ simply by
$M^{\loc}(f)$.

Next, we recall some properties for weights in $A^{\loc}_{\fz}(\rn)$
and $A_p (\rn)$, where and in what follows, $A_p (\rn)$ for
$p\in[1,\fz)$ denotes the classical {\it Muckenhoupt weights}; see
\cite{gr85,St93} for their definitions.

\begin{lemma}\label{l2.3}
$\mathrm{(i)}$ Let $p\in[1,\fz),\,\oz\in A^{\loc}_p(\rn)$, and $Q$
be a unit cube, namely, $l(Q)=1$. Then there exist an
$\overline{\oz}\in A_p (\rn)$ such that $\overline{\oz}=\oz$ on $Q$,
and a positive constant $C$ independent of $Q$ such that $A_p
(\overline{\oz})\le CA^{\loc}_p(\oz)$, where $A_p (\overline{\oz})$
denotes the weight constant of $\overline{\oz}$, which is as in
\eqref{2.1} and \eqref{2.2} by removing the restriction $l(Q)\le1$.

$\mathrm{(ii)}$ If $\oz\in A^{\loc}_p(\rn)$ with $p\in(1,\fz)$, then
there exist $\ez_1,\,\ez_2\in(0,\fz)$ such that $\oz\in
A^{\loc}_{p-\ez_1}(\rn)$ with $p-\ez_1\in(1,\fz)$, and
$\oz^{1+\ez_2}\in A^{\loc}_p(\rn)$.

$\mathrm{(iii)}$ If $1\le p_1<p_2<\fz$, then
$A^{\loc}_{p_1}(\rn)\subset A^{\loc}_{p_2}(\rn)$.

$\mathrm{(iv)}$ For $p\in(1,\fz)$, $\oz\in A^{\loc}_{p}(\rn)$ if and
only if $\oz^{-1/(p-1)}\in A^{\loc}_{p'}(\rn)$, where
$$1/p+1/p'=1.$$

$\mathrm{(v)}$ For $\oz\in A^{\loc}_{\fz}(\rn)$ and $Q=Q(x_0,l(Q))$,
there exists a positive constant $C$ such that $\oz(2Q)\le C\oz(Q)$
when $l(Q)<1$, and $\oz(Q(x_0,l(Q)+1))\le C\oz(Q)$ when $l(Q)\ge1$.

$\mathrm{(vi)}$ If $p\in(1,\fz)$ and $\oz\in A^{\loc}_{p}(\rn)$,
then the local Hardy-Littlewood maximal operator $M^{\loc}$ is
bounded on $L^p_{\oz}(\rn)$.

$\mathrm{(vii)}$ If $\oz\in A^{\loc}_{1}(\rn)$, then $M^{\loc}$ is
bounded from $L^1_{\oz}(\rn)$ to $L^{1,\,\fz}_{\oz}(\rn)$.

$\mathrm{(viii)}$ If $\oz\in A_{p}(\rn)$ with $p\in[1,\fz)$, then
there exists a positive constant $C$ such that for all cubes
$Q_1,\,Q_2\subset\rn$ with $Q_1\subset Q_2$,
$$\frac{\oz(Q_2)}{\oz(Q_1)}\le C\left(\frac{|Q_2|}{|Q_1|}\r)^p.$$
\end{lemma}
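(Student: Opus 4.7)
Most parts of this lemma reduce either to classical Muckenhoupt theory on $\rn$ or to a direct unwinding of the definitions. The structural key is part (i): it promotes any $\oz\in A^{\loc}_p(\rn)$, restricted to a unit cube $Q$, to a genuine $A_p(\rn)$ weight with uniformly controlled constant, and thus transports any classical $A_p$ statement on $Q$ into an $A^{\loc}_p$ statement. The plan is to establish (i) first, derive (ii), (vi), (vii) from it plus classical weight theory, and handle (iii), (iv), (v), (viii) by direct computation.

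\smallskip

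For (i), I plan to construct $\overline\oz$ by a folding/reflection extension across the faces of $Q=Q(x_0,1)$: reflect $\oz|_Q$ evenly across each face of $Q$ to extend it to $Q(x_0,3)$, and iterate (equivalently, pull $\oz|_Q$ back by a $2\zz^n$-periodic folding map $\pi\colon\rn\to Q$) to obtain a $2\zz^n$-periodic $\overline\oz$ that coincides with $\oz$ on $Q$. The main work is then verifying $A_p(\overline\oz)\ls A^{\loc}_p(\oz)$ on every cube $R\subset\rn$. When $|R|\le 1$, $R$ meets only a bounded number of reflected copies of $Q$, so unfolding rewrites the Muckenhoupt product on $R$ as the corresponding product on a subcube of $Q$ (with a bounded multiplicative loss), which is controlled by $A^{\loc}_p(\oz)$. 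When $|R|$ is large, $R$ contains many full periods so the averages of $\overline\oz$ and $\overline\oz^{-p'/p}$ stabilize to their averages over $Q$, and the ratio is again bounded by $A^{\loc}_p(\oz)$. This two-scale verification is the main obstacle.

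\smallskip

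Given (i), part (ii) follows by applying to $\overline\oz$ the classical openness-in-$p$ and reverse H\"older self-improvement properties of $A_p(\rn)$ weights, and then restricting back to $Q$; since $Q$ is arbitrary and the extension constants in (i) are uniform, the resulting $A^{\loc}$ statements are global. For (vi) and (vii), observe that $M^{\loc}f(x)$ depends only on the values of $f$ on a cube of sidelength at most one containing $x$; tile $\rn$ by a bounded-overlap family of unit cubes $\{Q_k\}_k$, extend $\oz|_{3Q_k}$ through (the Remark \ref{r2.2}(ii) version of) (i) to some $\overline{\oz_k}\in A_p(\rn)$, and apply on each $Q_k$ the classical weighted $L^p$ (respectively, weak-type $(1,1)$) bound for the Hardy--Littlewood maximal operator with weight $\overline{\oz_k}$; summing over $k$ yields the desired global estimate.

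\smallskip

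The remaining items have short direct proofs. Part (iii) is H\"older's inequality inside the $A^{\loc}_{p_1}$ Muckenhoupt product, exactly as in the classical case. Part (iv) is the algebraic identity one obtains by raising $\oz^{-1/(p-1)}$ to the correct powers in \eqref{2.1}. For (v), the case $l(Q)<1$ comes from \eqref{2.3} applied with $F=Q\subset 2Q$ and $|F|=2^{-n}|2Q|$, using Remark \ref{r2.2}(ii) to allow cubes of sidelength up to $2$; the case $l(Q)\ge 1$ follows similarly, either by using Remark \ref{r2.2}(ii) with an enlarged constant $\wz C=(l(Q)+1)^n$, or by covering the annular region $Q(x_0,l(Q)+1)\setminus Q$ by $O(l(Q)^{n-1})$ unit cubes and pairing each with a nearby unit subcube of $Q$ via the $A^{\loc}_\fz$ doubling on unit scales. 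Finally, (viii) is the standard consequence of the $A_p$ condition: testing H\"older's inequality on $\chi_{Q_1}$ gives $|Q_1|^p\le\oz(Q_1)\,(\int_{Q_2}\oz^{-p'/p})^{p/p'}$, and combining with the $A_p$ bound on $Q_2$ yields $\oz(Q_2)/\oz(Q_1)\le A_p(\oz)(|Q_2|/|Q_1|)^p$, as required.
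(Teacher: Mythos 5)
The paper itself does not prove Lemma~\ref{l2.3}; it cites Rychkov for (i), Tang's Lemma~2.1 and Corollary~2.1 for (ii)--(vii), and standard references for (viii). Your proposal is therefore a standalone reconstruction, and it substantively matches what those references actually do: the periodized reflection extension in (i) is Rychkov's construction in \cite[Lemma~1.1]{R01}, and the strategy of transporting classical $A_p$ facts through (i) back to $A^{\loc}_p$ is exactly Tang's method for (ii), (vi), and (vii). Parts (iii), (iv), and (viii) are the expected direct computations.

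There is one genuine error in the proposal. In (v), the first alternative you offer for the case $l(Q)\ge 1$ --- applying Remark~\ref{r2.2}(ii) with $\wz C=(l(Q)+1)^n$ --- does not work, because as the remark explicitly states, the constant $A^{\loc}_{\fz}(\oz;\alpha)$ produced after replacing $|Q|\le 1$ by $|Q|\le\wz C$ \emph{depends on} $\wz C$. Taking $\wz C$ to grow with $l(Q)$ would therefore yield a bound whose constant blows up as $l(Q)\to\fz$, which contradicts the claim of part (v) that $C$ is uniform in $Q$. Your second alternative (covering $Q(x_0,l(Q)+1)\setminus Q$ by boundedly overlapping cubes of unit scale and pairing each with a nearby unit subcube of $Q$ using the $A^{\loc}_{\fz}$ condition at unit scales) is the correct argument and should stand alone; you should also note explicitly that the paired interior subcubes can be chosen with bounded overlap so that $\sum_k\oz(R'_k)\ls\oz(Q)$, which is the step that makes the constant uniform. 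With that correction, the proposal is sound.
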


Lemma \ref{l2.3}(i) is just \cite[Lemma\,1.1]{R01}. The statements
(ii) through (vii) of Lemma \ref{l2.3} are just Lemma 2.1 and
Corollary 2.1 in \cite{Ta1}, which are deduced from Lemma
\ref{l2.3}(i) and the properties of $A_p (\rn)$; see the proofs of
\cite[Lemma\,2.1,\,Corollary\,2.1]{Ta1}. Lemma \ref{l2.3}(viii) is
included, for example, in \cite{ga79,gr85,St93}.

\begin{remark}\label{r2.4}
Let $\wz{C}$ be a positive constant. It was pointed out in
\cite[Remark\,1.5]{R01} and \cite{Ta1} that (i) through (vii) of
Lemma \ref{l2.3} are also true, if $l(Q)=1$, $l(Q)\ge1,$ $l(Q)<1,$
$Q(x_0,\,l(Q)+1)$ and $M^{\loc}$ are respectively replaced  by
$l(Q)=\wz{C}$, $l(Q)\ge\wz{C},$ $l(Q)<\wz{C},$ $Q(x_0,\,l(Q)+\wz{C})$
and $M^{\loc}_{\wz{C}}$. In this case, the constants appearing in
(i), (vi) and (vii) of Lemma \ref{l2.3} depend on $\wz{C}$.
\end{remark}

For any given $\oz\in A^{\loc}_{\fz}(\rn)$, define the {\it critical
index of $\oz$} by
\begin{equation}\label{2.4}
q_{\oz}\equiv\inf\left\{p\in[1,\fz):\,\oz\in A^{\loc}_p(\rn)\r\}.
\end{equation}
\begin{remark}\label{r2.5}
Obviously, $q_{\oz}\in[1,\fz)$. If $q_{\oz}\in(1,\fz)$, by Lemma
\ref{r2.2}(ii), it is easy to know that $\oz\not\in
A^{\loc}_{q_{\oz}}(\rn)$. Moreover, there exists an $\oz\not\in
A^{\loc}_1 (\rn)$ such that $q_{\oz}=1$. Indeed, for
$t\in\rr\setminus\{0\}$, let
$\oz(t)\equiv[\ln(\frac{1}{|t|})]^{-1}$. Johnson and Neugebauer
\cite[p.\,254,\,Remark]{jn87} showed that $\oz\in(\cap_{p>1}A_p
(\rn))\setminus A_1 (\rn)$. By the fact that $A_p (\rn)\subset
A^{\loc}_p (\rn)$ for all $p\in[1,\fz)$, which is obvious by their
definitions, we see that $\oz\in\cap_{p>1}A^{\loc}_p (\rn)$. We
claim that $\oz\not\in A^{\loc}_1 (\rn)$. In fact, taking
$x\in(0,1/2)$, then we have
\begin{eqnarray*}
M^{\loc}(\oz)(x)\ge\frac{1}{2}\int^{x+1}_{x-1}\oz(t)\,dt\ge
\int^{1/2}_{0}\left[\ln\left(\frac{1}{t}\r)\r]^{-1}\,dt\equiv\fz.
\end{eqnarray*}
Moreover, it is easy to see that $\oz(x)\to0$ as $x\to0$. Thus, by
\eqref{2.2}, we know that $\oz\not\in A^{\loc}_1 (\rn)$.
\end{remark}

For $\cd(\rn),\,\cd'(\rn)$ and $L^q_{\oz}(\rn)$, we have the
following conclusions.
\begin{lemma}\label{l2.6}
Let $\oz\in A^{\loc}_{\fz}(\rn)$, $q_{\oz}$ be as in \eqref{2.4} and
$p\in(q_{\oz},\fz]$.

$\mathrm{(i)}$ If $\frac{1}{p}+\frac{1}{p'}=1$, then
$\cd(\rn)\subset L^{p'}_{\oz^{-1/(p-1)}}(\rn)$.

$\mathrm{(ii)}$ $L^{p}_{\oz}(\rn)\subset\cd' (\rn)$ and the
inclusion is continuous.

$\mathrm{(iii)}$ Let $\phi\in\cd(\rn)$ and
$\int_{\rn}\phi(x)\,dx=1$. If $q\in(q_{\oz},\fz)$, then for any
$f\in L^q_{\oz}(\rn)$, $f\ast\phi_t\to f$ in $L^q_{\oz}(\rn)$ as
$t\to0$, where and in what follows, $\phi_t (x)\equiv\frac{1}{t^n}
\phi(\frac{x}{t})$ for all $t\in(0,\fz)$ and $x\in\rn$.
\end{lemma}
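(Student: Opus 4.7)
\medskip
\noindent\textbf{Proof proposal.} My plan is to treat the three parts in turn, each following from a short Hölder/approximation argument combined with the structural properties of $A^{\loc}_p(\rn)$ already assembled in Lemmas \ref{l2.3}.

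For (i), the point is local integrability of the dual weight. Since $p>q_{\oz}$, Lemma \ref{l2.3}(ii)(iii) yields $\oz\in A^{\loc}_p(\rn)$, whence by Lemma \ref{l2.3}(iv) we have $\oz^{-1/(p-1)}\in A^{\loc}_{p'}(\rn)$, so $\oz^{-1/(p-1)}$ is locally integrable. For any $\varphi\in\cd(\rn)$ with $\supp\varphi\subset Q$ for some cube $Q$,
$$\|\varphi\|_{L^{p'}_{\oz^{-1/(p-1)}}(\rn)}\le\|\varphi\|_{L^\fz(\rn)}\left(\int_Q\oz(x)^{-1/(p-1)}\,dx\r)^{1/p'}<\fz,$$
which proves the claim for $p\in(q_{\oz},\fz)$; the case $p=\fz$ reduces to $\cd(\rn)\subset L^1(\rn)$, which is immediate.

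For (ii), I would deduce both the inclusion and its continuity from Hölder's inequality and (i). Given $f\in L^p_{\oz}(\rn)$ and $\varphi\in\cd(\rn)$ with $\supp\varphi\subset K$,
$$\left|\int_{\rn}f(x)\varphi(x)\,dx\r|\le\|f\|_{L^p_{\oz}(\rn)}\|\varphi\|_{L^{p'}_{\oz^{-1/(p-1)}}(\rn)}\ls\|f\|_{L^p_{\oz}(\rn)}\|\varphi\|_{L^\fz(\rn)},$$
with the implicit constant depending only on $K$ and $\oz$. This defines $f$ as an element of $\cd'(\rn)$, and the displayed estimate applied to a convergent sequence $f_n\to f$ in $L^p_{\oz}(\rn)$ shows $\langle f_n,\varphi\raz\to\langle f,\varphi\raz$ for every $\varphi\in\cd(\rn)$, i.e.\ continuity into the weak$\ast$-topology of $\cd'(\rn)$. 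When $p=\fz$, one uses $\|\varphi\|_{L^1(\rn)}$ in place of the weighted dual norm.

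Part (iii) is the substantive one and I would handle it by the classical three-step density argument. First I would establish a uniform bound: fixing $R$ with $\supp\phi\subset Q(0,R)$, for any $t\in(0,1]$ the support of $\phi_t$ lies in $Q(0,tR)$ and
$$|(f\ast\phi_t)(x)|\le\|\phi\|_{L^\fz(\rn)}t^{-n}\int_{Q(x,tR)}|f(y)|\,dy\ls M^{\loc}_{\wz{C}}(f)(x)$$
for a suitable $\wz{C}=\wz{C}(R)$, uniformly in $t\in(0,1]$. Since $q>q_{\oz}$ gives $\oz\in A^{\loc}_q(\rn)$ by Lemma \ref{l2.3}(iii), Remark \ref{r2.4} together with Lemma \ref{l2.3}(vi) yields $\|f\ast\phi_t\|_{L^q_{\oz}(\rn)}\ls\|f\|_{L^q_{\oz}(\rn)}$ uniformly in $t\in(0,1]$. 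Second, for $g$ continuous with compact support, $\phi_t\ast g\to g$ uniformly on $\rn$ as $t\to 0$ (uniform continuity of $g$ and $\int\phi=1$) and the supports of $\phi_t\ast g$ lie in a common compact set; local integrability of $\oz$ and the dominated convergence theorem then give $\phi_t\ast g\to g$ in $L^q_{\oz}(\rn)$. Third, since the Borel measure $\oz(x)\,dx$ is finite on compact sets, the space of continuous compactly supported functions is dense in $L^q_{\oz}(\rn)$; a standard three-$\varepsilon$ argument combining this density with the uniform bound and the convergence on continuous compactly supported functions concludes the proof.

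The main obstacle will be the uniform boundedness step: one must pass from the pointwise domination by the local Hardy--Littlewood maximal function to an $L^q_{\oz}(\rn)$ bound, and this is precisely where the hypothesis $q>q_{\oz}$ combined with Rychkov's local weight theory (Lemma \ref{l2.3}(vi) and Remark \ref{r2.4}) becomes indispensable; the rest of (iii) is a routine approximation scheme.
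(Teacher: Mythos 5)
Your proof is correct. The paper itself does not prove Lemma~\ref{l2.6}: it simply records that (i)–(ii) are Lemma~2.2 of \cite{Ta1} and (iii) is Proposition~2.1 of \cite{Ta1}, so there is no written argument here to compare against. Your reconstruction supplies the natural details: in (i) you correctly reduce to local integrability of the dual weight via Lemma~\ref{l2.3}(iii)–(iv), handling $p=\fz$ separately; in (ii) the Hölder estimate against (i) gives both the inclusion and continuity into the weak-$\ast$ topology; and in (iii) the pointwise domination $|f\ast\phi_t|\ls M^{\loc}_{\wz C}(f)$ combined with Remark~\ref{r2.4} and Lemma~\ref{l2.3}(vi) gives the uniform $L^q_{\oz}$ bound, after which the density of $C_c(\rn)$ in $L^q(\oz\,dx)$ (a Radon measure) and the $\varepsilon/3$ scheme close the argument. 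You also correctly identify the uniform-boundedness step and the hypothesis $q>q_{\oz}$ as the substantive point; one could note that in (i) the appeal to Lemma~\ref{l2.3}(ii) is not needed — membership $\oz\in A^{\loc}_p(\rn)$ for $p>q_{\oz}$ follows already from the definition of the critical index together with the nesting Lemma~\ref{l2.3}(iii) — but this is cosmetic and does not affect correctness.
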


We remark that (i) and (ii) of Lemma \ref{l2.6}, and Lemma
\ref{l2.6}(iii) are, respectively, Lemma 2.2 and Proposition 2.1 in
\cite{Ta1}.

\section{Orlicz functions\label{s2.2}}

Let $\bfai$ be a positive function on $\rr_{+}\equiv(0,\fz)$. The
function $\bfai$ is said to be of {\it upper type $p$ (resp. lower
type $p$)} for some $p\in[0,\fz)$, if there exists a positive
constant $C$ such that for all $t\in[1,\fz)$ (resp. $t\in(0,1]$) and
$s\in(0,\fz)$,
\begin{equation}\label{2.5}
\bfai(st)\le Ct^p \bfai(s).
\end{equation}
Obviously, if $\bfai$ is of lower type $p$ for some $p\in(0,\fz)$,
then $\lim_{t\to0_{+}}\bfai(t)=0$. So for the sake of convenience,
if it is necessary, we may assume that $\bfai(0)=0$. If $\bfai$ is
of both upper type $p_1$ and lower type $p_0$, then $\bfai$ is said
to be of \emph{type $(p_0,\,p_1)$}. Let
\begin{eqnarray*}
&&p_{\bfai}^{+}\equiv\inf\{p\in(0,\fz):\,\text{there exists}\,
\,C\in(0,\fz)\,\,\\
&&\hspace{4 em}\text{such that}\,\eqref{2.5} \,\, \text{holds for
all} \,\,t\in[1,\fz)\,\, \text{and}\,\,s\in(0,\fz)\},
\end{eqnarray*}
and
\begin{eqnarray*}
&&p_{\bfai}^{-}\equiv\sup\{p\in(0,\fz):\,\text{there exists}\,
\,C\in(0,\fz)\,\,\\
&&\hspace{4 em}\,\text{such that}\,\eqref{2.5} \,\, \text{holds for
all} \,\,t\in(0,1]\,\, \text{and}\,\,s\in(0,\fz)\}.
\end{eqnarray*}
The function $\bfai$ is said to be of {\it strictly lower type $p$}
if for all $t\in(0,1)$ and $s\in(0,\fz)$, $\bfai(st)\le
t^p\bfai(s)$, and we define
\begin{equation}\label{2.6}
p_{\bfai}\equiv\sup\{p\in(0,\fz):\,\bfai(st)\le t^p\bfai(s)
\,\text{holds for all}\ t\in(0,1)\,\text{and}\ s\in(0,\fz)\}.
\end{equation}
It is easy to see that $p_{\bfai}\le p_{\bfai}^{-}\le p_{\bfai}^{+}$
for all $\bfai$. In what follows, $p_{\bfai},\,p_{\bfai}^{-}$ and
$p_{\bfai}^{+}$ are respectively called the {\it strictly critical
lower type index}, the  {\it critical lower type index} and the {\it
critical upper type index} of $\bfai$. We point out that if
$p_{\bfai}$ is defined as in \eqref{2.6}, then $\bfai$ is also of
strictly critical lower type $p_{\bfai}$; see \cite{jy10} for the
proof.

Throughout the whole paper, we always assume that $\bfai$ satisfies
the following assumption.

\medskip

\noindent{\sc Assumption (A).} Let $\bfai$ be a positive function
defined on $\rr_{+}$, which is of strictly lower type and its
strictly critical lower type index $p_{\bfai}\in(0,1]$. Also assume
that $\bfai$ is continuous, strictly increasing, subadditive and
concave.

\medskip

Notice that if $\bfai$ satisfies Assumption (A), then $\bfai(0)=0$
and $\bfai$ is obviously of upper type 1. For any concave and
positive function $\wz{\bfai}$ of strictly lower type $p$, if we set
$\bfai(t)\equiv\int_0^t \frac{\bfai(s)}{s}\,ds$ for $t\in[0,\fz)$,
then by \cite[Proposition 3.1]{Vi87}, $\bfai$ is equivalent to
$\wz{\bfai}$, namely, there exists a positive constant $C$ such that
$C^{-1}\wz{\bfai}(t)\le\bfai(t)\le C\wz{\bfai}(t)$ for all
$t\in[0,\fz)$; moreover, $\bfai$ is strictly increasing, concave,
subadditive and continuous function of strictly lower type $p$.
Notice that all our results are invariant on equivalent functions
satisfying Assumption (A). From this, we deduce that all results in
this paper with $\bfai$ as in Assumption (A) also holds for all
concave and positive functions $\wz{\bfai}$ of the same strictly
critical lower type $p_{\bfai}$ as $\bfai$.

Let $\bfai$ satisfy Assumption (A) and $\oz\in A_{\fz}^{\loc}(\rn)$.
A measurable function $f$ on $\rn$ is called to belong to the \emph{space}
$L^{\bfai}_{\oz}(\rn)$ if $\int_{\rn}\bfai(|f(x)|)\oz(x)\,dx<\fz$.
Moreover, for any $f\in L^{\bfai}_{\oz}(\rn)$, define
$$\|f\|_{L^{\bfai}_{\oz}(\rn)}\equiv\inf\left\{\lz\in(0,\fz):
\,\int_{\rn}\bfai\left(\frac{|f(x)|}{\lz}\r)\oz(x)\,dx\le1\right\}.$$
Since $\bfai$ is strictly increasing, we define the \emph{function
$\rz$} on $\rr_{+}$ by setting, for all $t\in(0,\fz)$,
\begin{equation}\label{2.7}
\rz(t)\equiv\frac{t^{-1}}{\bfai^{-1}(t^{-1})},
\end{equation}
 where $\bfai^{-1}$ is the inverse function of
$\bfai$. Then the types of $\bfai$  and $\rz$ have the following
relation. Let $0<p_0\le p_1\le1$ and $\bfai$ be an increasing
function, then $\bfai$ is of type $(p_0,p_1)$ if and only if $\rz$
is of type $(p_1^{-1}-1,p_0^{-1}-1);$ see \cite{Vi87} for its proof.
Moreover, it is easy to see that for all $t\in(0,\fz)$,
\begin{equation}\label{2.8}
t\Phi\left(\frac 1{t\rho(t)}\r)=1,
\end{equation}
which is used in what follows.

\chapter{Weighted local Orlicz-Hardy spaces and their maximal
function characterizations\label{s3}}

In this section, we introduce the weighted local Orlicz-Hardy space
$h^{\bfai}_{\oz,\,N}(\rn)$ via the local grand maximal function and
establish its local vertical and nontangential maximal function
characterizations via a local Calder\'on reproducing formula and
some useful estimates obtained by Rychkov \cite{R01}. We also
introduce the weighted atomic local Orlicz-Hardy space
$h^{\rz,\,q,\,s}_{\oz}(\rn)$ and give some basic properties of these
spaces.

First, we introduce some local maximal functions. For $N\in\zz_+$
and $R\in(0,\fz)$, let
\begin{eqnarray*}
\cd_{N,\,R}(\rn)\equiv&&\Bigg\{\pz\in\cd(\rn):\,\supp(\pz)
\subset B(0,R),\\
&&\hspace{0.5cm}\left.\|\pz\|_{\cd_{N}(\rn)}\equiv\sup_{x\in\rn}
\sup_{{\az\in\zz^n_+},\,{|\az|\le
N}}|\partial^{\az}\pz(x)|\le1\r\}.
\end{eqnarray*}

\begin{definition}\label{d3.1}
Let $N\in\zz_+$ and $R\in(0,\fz)$. For any $f\in\cd'(\rn)$, the {\it
local nontangential grand maximal function} $\wz{\cg}_{N,\,R} (f)$
of $f$ is defined by setting, for all $x\in\rn$,
\begin{eqnarray}\label{3.1}
\wz{\cg}_{N,\,R} (f)(x)\equiv\sup\left\{|\pz_t\ast
f(z)|:\,|x-z|<t<1,\,\pz\in\cd_{N,\,R}(\rn)\r\},
\end{eqnarray}
and the {\it local vertical grand maximal function} $\cg_{N,\,R}
(f)$ of $f$ is defined by setting, for all $x\in\rn$,
\begin{eqnarray}\label{3.2}
\cg_{N,\,R} (f)(x)\equiv\sup\left\{|\pz_t\ast
f(x)|:\,t\in(0,1),\,\pz\in\cd_{N,\,R}(\rn)\r\}.
\end{eqnarray}
\end{definition}

For convenience's sake, when $R=1$, we denote $\cd_{N,\,R}(\rn)$,
$\wz{\cg}_{N,\,R} (f)$ and $\cg_{N,\,R}(f)$ simply by
$\cd^0_{N}(\rn)$, $\wz{\cg}^0_{N}(f)$ and $\cg^0_{N}(f)$,
respectively; when $R=2^{3(10+n)}$, we denote $\cd_{N,\,R}(\rn)$,
$\wz{\cg}_{N,\,R} (f)$ and $\cg_{N,\,R}(f)$ simply by
$\cd_{N}(\rn)$, $\wz{\cg}_{N}(f)$ and $\cg_{N}(f)$, respectively.
For any $N\in\zz_+$ and $x\in\rn$, obviously,
$$\cg^{0}_N(f)(x)\le\cg_N (f)(x)\le\wz{\cg}_N (f)(x).$$

For the local grand maximal function $\cg^0_N (f)$, we have the
following Proposition \ref{p3.2}, which is just \cite[Proposition
2.2]{Ta1}.

\begin{proposition}\label{p3.2}
Let $N\ge2$.

$\mathrm{(i)}$ Then there exists a positive constant $C$ such that
for all $f\in(L^1_{\loc}(\rn)\cap\cd'(\rn))$ and almost every
$x\in\rn$,
$$|f(x)|\le \cg^0_N (f)(x)\le M^{\loc}(f)(x).$$

$\mathrm{(ii)}$ If $\oz\in A_p^{\loc}(\rn)$ with $p\in(1,\fz)$, then
$f\in L^p_{\oz}(\rn)$ if and only if $f\in \cd'(\rn)$ and $\cg^0_N
(f)\in L^p_{\oz}(\rn)$; moreover,
$$\|f\|_{L^p_{\oz}(\rn)}\sim\|\cg^0_N (f)\|_{L^p_{\oz}(\rn)}.$$

$\mathrm{(iii)}$ If $\oz\in A_1^{\loc}(\rn)$, then $\cg^0_N$ is
bounded from $L^1_{\oz}(\rn)$ to $L^{1,\fz}_{\oz}(\rn)$.
\end{proposition}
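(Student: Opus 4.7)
The plan is to prove the three parts in the order given, using the pointwise comparison $|f|\ls \cg^0_N(f)\ls M^{\loc}(f)$ together with a standard approximation-to-identity argument and the $L^p_\oz$-bounds for $M^{\loc}$ already catalogued in Lemma \ref{l2.3}.

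For part (i), the upper bound $\cg^0_N(f)(x)\ls M^{\loc}(f)(x)$ is a one-line estimate: for any $\pz\in\cd^0_N(\rn)$ and $t\in(0,1)$,
\[
|\pz_t\ast f(x)| \le \|\pz_t\|_{L^{\fz}(\rn)}\int_{B(x,t)}|f(y)|\,dy \le t^{-n}\int_{B(x,t)}|f(y)|\,dy,
\]
and the right-hand side is controlled by $M^{\loc}(f)(x)$ after inscribing $B(x,t)$ in a cube of sidelength comparable to $t<1$ (absorbing constants via Remark \ref{r2.4}). For the lower bound $|f(x)|\ls \cg^0_N(f)(x)$, I would fix once and for all a single nonnegative $\pz_0\in\cd^0_N(\rn)$ with $c_0\equiv\int_{\rn}\pz_0(x)\,dx>0$, for instance a rescaled standard mollifier; the Lebesgue differentiation theorem then gives $(\pz_0)_t\ast f(x)\to c_0 f(x)$ as $t\to0^{+}$ at every Lebesgue point of $f$, which yields $c_0|f(x)|\le\cg^0_N(f)(x)$ almost everywhere and hence the desired inequality with $C=1/c_0$.

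For part (ii), the direction $\|\cg^0_N(f)\|_{L^p_\oz(\rn)}\ls\|f\|_{L^p_\oz(\rn)}$ is immediate from (i) combined with Lemma \ref{l2.3}(vi). For the reverse direction, given $f\in\cd'(\rn)$ with $\cg^0_N(f)\in L^p_\oz(\rn)$, the family $\{(\pz_0)_t\ast f\}_{0<t<1}$ consists of smooth functions that are pointwise dominated by $\cg^0_N(f)$, hence uniformly bounded in $L^p_\oz(\rn)$; since $L^p_\oz(\rn)$ is reflexive (via Lemma \ref{l2.3}(iv), which identifies its dual as $L^{p'}_{\oz^{-1/(p-1)}}(\rn)$), one can extract a weakly convergent subsequence, and since the same subsequence converges to $c_0 f$ in $\cd'(\rn)$, the weak limit must agree with $c_0 f$, giving $f\in L^p_\oz(\rn)$ with the desired norm bound. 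Part (iii) is obtained by the same pointwise comparison, invoking the weak-type bound in Lemma \ref{l2.3}(vii) in place of the strong-type bound in Lemma \ref{l2.3}(vi). The main obstacle will be the reverse direction of (ii): upgrading the purely distributional hypothesis $f\in\cd'(\rn)$ to the pointwise statement $f\in L^p_\oz(\rn)$ requires the weak-compactness argument above together with the compatibility between distributional and weak convergence in $L^p_\oz(\rn)$; everything else is routine bookkeeping once the pointwise comparison in (i) is in hand.
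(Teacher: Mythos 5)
Your argument is correct and is the classical proof, which the paper itself does not supply: Proposition \ref{p3.2} is quoted verbatim from \cite[Proposition 2.2]{Ta1}, so there is no in-text argument against which to compare. The chain you describe---the kernel estimate $|\pz_t\ast f(x)|\le t^{-n}\int_{B(x,t)}|f(y)|\,dy$ controlled by a dilated local maximal function $M^{\loc}_{\wz C}$ (with $\wz{C}$ depending only on $n$, absorbed via Remark \ref{r2.4}), mollification together with the Lebesgue differentiation theorem for the lower bound $c_0|f|\le\cg^0_N(f)$ a.e., and uniform boundedness plus weak compactness in the reflexive space $L^p_{\oz}(\rn)$ to identify the distributional limit $c_0 f$ of the mollifications with a genuine $L^p_{\oz}(\rn)$-function in part (ii)---is precisely the standard route going back to Goldberg, Bui and Stein that underlies Tang's cited result. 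One small miscitation: the embedding of $\cd(\rn)$ into the dual $[L^p_{\oz}(\rn)]^{\ast}=L^{p'}_{\oz^{-1/(p-1)}}(\rn)$, which you need in order to reconcile the weak limit with the distributional limit, is Lemma \ref{l2.6}(i) (with $p>q_{\oz}$, guaranteed here by Remark \ref{r2.5}), not Lemma \ref{l2.3}(iv), the latter being a statement about $A^{\loc}_p(\rn)$ weights unrelated to the duality of weighted $L^p$ spaces.
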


Now we introduce the weighted local Orlicz-Hardy space via the local
grand maximal function as follows.

\begin{definition}\label{d3.3}
Let $\bfai$ satisfy Assumption (A), $\oz\in A^{\loc}_{\fz}(\rn)$,
$q_{\oz}$ and $p_{\bfai}$ be respectively as in \eqref{2.4} and
\eqref{2.6}, and $\wz{N}_{\bfai,\,\oz}\equiv\lfz
n(\frac{q_\oz}{p_\bfai}-1)\rf+2$. For each $N\in\nn$ with $N\ge
\wz{N}_{\bfai,\,\oz}$, the {\it weighted local Orlicz-Hardy space}
is defined by
$$h^{\bfai}_{\oz,\,N}(\rn)\equiv\left\{f\in\cd'(\rn):\ \cg_N (f)\in
L^{\bfai}_{\oz}(\rn)\r\}.$$ Moreover, let
$\|f\|_{h^{\bfai}_{\oz,\,N}(\rn)}\equiv\|\cg_N
(f)\|_{L^{\bfai}_{\oz}(\rn)}$.
\end{definition}

We remark that when $\bfai(t)\equiv t^p$ for all $t\in(0,\fz)$ and
$p\in(0,1]$, $h^{\bfai}_{\oz,\,N}(\rn)$ above is the weighted local
Hardy space $h^p_{\oz,\,N}(\rn)$ introduced
 by Tang \cite{Ta1}. Obviously, for any integers $N_1$ and $N_2$
with $N_1\ge N_2\ge \wz{N}_{\bfai,\,\oz}$,
$$h^{\bfai}_{\oz,\,\wz{N}_{\bfai,\,\oz}}(\rn)\subset
h^{\bfai}_{\oz,\,N_2}(\rn) \subset h^{\bfai}_{\oz,\,N_1}(\rn),$$
and the inclusions are continuous. We also point out that  Theorem
\ref{t3.14} below further implies that
$$\|\cg^0_N
(f)\|_{L^{\bfai}_{\oz}(\rn)}\sim\|\wz{\cg}^0_N
(f)\|_{L^{\bfai}_{\oz}(\rn)}\sim\|\cg_N
(f)\|_{L^{\bfai}_{\oz}(\rn)}\sim\|\wz{\cg}_N
(f)\|_{L^{\bfai}_{\oz}(\rn)}$$
for all $N\in\nn$ with $N\ge
N_{\bfai,\,\oz}$ (see \eqref{3.25} for the definition of
$N_{\bfai,\,\oz}$).

Next, we introduce the weighted local atoms, via which, we introduce
the weighted atomic local Orlicz-Hardy space.
\begin{definition}\label{d3.4}
Let $\bfai$ satisfy Assumption $\mathrm{(A)}$, $\oz\in
A^{\loc}_{\fz}(\rn)$ and $q_{\oz}$, $\rz$ be respectively as in
\eqref{2.4} and \eqref{2.7}. A triplet $(\rz,\,q,\,s)_{\oz}$ is
called {\it admissible} if $q\in(q_{\oz},\fz]$, $s\in\zz_+$ and
$s\ge\lfz n(\frac{q_{\oz}}{p_{\bfai}}-1)\rf$. A function $a$ on $\rn$
is called a {\it $(\rz,\,q,\,s)_{\oz}$-atom} if there exists a cube
$Q\subset\rn$ such that

$\mathrm{(i)}$ $\supp(a)\subset Q$;

$\mathrm{(ii)}$ $\|a\|_{L^q_{\oz}(\rn)}\le[\oz(Q)]^{\frac{1}{q}
-1}[\rz(\oz(Q))]^{-1}$;

$\mathrm{(iii)}$ $\int_{\rn}a(x)x^{\az}\,dx=0$ for all
$\az\in\zz_+^n$ with $|\az|\le s$, when $l(Q)<1$.\\ Moreover, a
function $a$ on $\rn$ is called  a {\it
$(\rz,\,q)_{\oz}$-single-atom} with $q\in(q_{\oz},\fz]$, if
$$\|a\|_{L^q_{\oz}(\rn)}\le[\oz(\rn)]^{\frac{1}{q}-1}
[\rz(\oz(\rn))]^{-1}.$$
\end{definition}

We point out that when $\bfai(t)\equiv t^p$ for $t\in(0,\fz)$ and
$p\in(0,1]$, $(\rz,\,q,\,s)_{\oz}$-atoms and
$(\rz,\,q)_{\oz}$-single-atoms are respectively
$(p,\,q,\,s)_{\oz}$-atoms and $(p,\,q)_{\oz}$-single-atoms,
introduced by Tang \cite{Ta1}.

\begin{definition}\label{d3.5}
Let $\bfai$ satisfy Assumption (A), $\oz\in A^{\loc}_{\fz}(\rn)$,
$q_{\oz}$ and $\rz$ be respectively as in \eqref{2.4} and
\eqref{2.7}, and $(\rz,\,q,\,s)_{\oz}$ be admissible. The {\it
weighted atomic local Orlicz-Hardy space}
$h^{\rz,\,q,\,s}_{\oz}(\rn)$ is defined be the set of all
$f\in\cd'(\rn)$ satisfying that
$$f=\sum_{i=0}^{\fz}\lz_i a_i$$
in $\cd'(\rn)$, where $\{a_i\}_{i\in\nn}$ are
$(\rz,\,q,\,s)_{\oz}$-atoms with $\supp(a_i)\subset Q_i$, $a_0$ is a
$(\rz,\,q)_{\oz}$-single-atom, $\{\lz_i\}_{i\in\zz_+}\subset\cc$,
and
$$\sum_{i=1}^{\fz}\oz(Q_i)\bfai\left(\frac{|\lz_i|}{\oz(Q_i)\rz(\oz(Q_i))}\r)
+\oz(\rn)\bfai\left(\frac{|\lz_0|}{\oz(\rn)\rz(\oz(\rn))}\r)<\fz.$$
Moreover, letting
\begin{eqnarray*}
&&\blz(\{\lz_i a_i\}_i)\equiv \inf\left\{\lz>0:\
\sum_{i=1}^{\fz}\oz(Q_i)\bfai\left(\frac{|\lz_i|}
{\lz\oz(Q_i)\rz(\oz(Q_i))}\r)\r.\\
&&\hspace{8
em}+\oz(\rn)\bfai\left(\frac{|\lz_0|}{\lz\oz(\rn)\rz(\oz(\rn))}\r)\le1\Bigg\},
\end{eqnarray*}
the \emph{quasi-norm} of $f\in h^{\rz,\,q,\,s}_{\oz}(\rn)$ is defined by
$$\|f\|_{h^{\rz,\,q,\,s}_{\oz}(\rn)}\equiv\inf\left\{
\blz(\{\lz_i a_i\}_{i\in\zz_+})\r\},$$ where the infimum is taken
over all the decompositions of $f$ as above.
\end{definition}

\begin{remark}\label{r3.6}
(i) Notice that the definition of $\blz(\{\lz_i a_i\}_{i\in\zz_+})$
above is different from \cite{Vi87}. In fact, if $p\in(0,1]$ and
$\bfai(t)\equiv t^p$ for all $t\in(0,\fz)$, then $\blz(\{\lz_i
a_i\}_{i\in\zz_+})$ coincides with $(\sum_{i\in\zz_+}
|\lz_i|^p)^{1/p}$.

(ii) Let $\{\lz_i^k\}_{i,\,k}$ and $\{a_i^k\}_{i,\,k}$ satisfy
$\blz(\{\lz^k_i a^k_i\}_{i\in\zz_+})<\fz$, where $k=1,\,2$. By the
fact that $\bfai$ is subadditive and of strictly lower type
$p_{\bfai}$, we have
$$\left[\blz(\{\lz^1_i a^1_i,\ \lz^2_i a^2_i\}_{i\in\zz_+}
)\r]^{p_{\bfai}}\le\sum_{k=1}^2 \left[\blz(\{\lz^k_i
a^k_i\}_{i\in\zz_+})\r]^{p_{\bfai}}.$$

(iii) Since $\bfai$ is concave, it is of upper type 1. Thus, for any
$f\in h^{\rz,\,q,\,s}_{\oz}(\rn)$, there exist $\{a_i\}_{i\in\zz_+}$
and $\{\lz_i\}_{i\in\zz_+}$ as in Definition \ref{d3.5} such that
$$\sum_{i\in\zz_+} |\lz_i|\ls \blz(\{\lz_i a_i\}_{i\in\zz_+})\ls
\|f\|_{h^{\rz,\,q,\,s}_{\oz}(\rn)}.$$
\end{remark}

Next, we introduce some local vertical, tangential and nontangential
maximal functions, and then establish the characterizations of the
weighted local Orlicz-Hardy space $h^{\bfai}_{\oz,\,N}(\rn)$ on
these local maximal functions.
\begin{definition}\label{d3.7}
Let
\begin{eqnarray}\label{3.3}
\pz_0\in\cd(\rn)\,\, \text{with}\,\,\int_{\rn}\pz_0 (x)\,dx\neq0.
\end{eqnarray}
For $j\in\zz_+$, $A,\,B\in[0,\fz)$ and $y\in\rn$, let
$m_{j,\,A,\,B}(y)\equiv(1+2^j |y|)^A 2^{B|y|}$. The {\it local
vertical maximal function} $\pz_0^{+}(f)$ of $f$ associated to
$\pz_0$ is defined by setting, for all $x\in\rn$,
\begin{eqnarray}\label{3.4}
\pz_0^{+}(f)(x)\equiv \sup_{j\in\zz_+}|(\pz_0)_j \ast f(x)|,
\end{eqnarray}
the {\it local tangential Peetre-type  maximal function
$\pz^{\ast\ast}_{0,\,A,\,B}(f)$} of $f$ associated to $\pz_0$ is
defined by setting, for all $x\in\rn$,
\begin{eqnarray}\label{3.5}
\pz^{\ast\ast}_{0,\,A,\,B}(f)(x)\equiv\sup_{j\in\zz_+,\,y\in\rn}
\frac{|(\pz_0)_j \ast f(x-y)|}{m_{j,\,A,\,B}(y)}
\end{eqnarray}
and the {\it local nontangential maximal function
$(\pz_0)^{\ast}_{\triangledown}(f)$} of $f$ associated to $\pz_0$ is
defined by setting, for all $x\in\rn$,
\begin{eqnarray}\label{3.6}
(\pz_0)^{\ast}_{\triangledown}(f)(x)\equiv
\sup_{|x-y|<t<1}|(\pz_0)_t \ast f(y)|,
\end{eqnarray}
where and in what follows, for all $x\in\rn$,
$(\pz_0)_j(x)\equiv2^{jn}\pz_0 (2^j x)$ for all $j\in\zz_+$ and
$(\pz_0)_t (x)\equiv\frac{1}{t^n}\pz_0 (\frac{x}{t})$ for all
$t\in(0,\fz)$.
\end{definition}

Obviously, for any $x\in\rn$, we have
$$\pz_0^{+}(f)(x)\le(\pz_0)^{\ast}_{\triangledown}(f)(x)
\ls\pz^{\ast\ast}_{0,\,A,\,B}(f)(x).$$
We remark that the local
tangential Peetre-type maximal function
$\pz^{\ast\ast}_{0,\,A,\,B}(f)$ was introduced by Rychkov
\cite{R01}.

In order to establish the local vertical and the local nontangential
maximal function characterizations of $h^{\bfai}_{\oz,\,N}(\rn)$, we
first establish some relations in the norm of $L^{\bfai}_{\oz}(\rn)$
of the local maximal functions
$\pz^{\ast\ast}_{0,\,A,\,B}(f),\,\pz_0^{+}(f)$ and
$\wz{\cg}_{N,\,R}(f)$, which further imply the desired
characterizations. We begin with some technical lemmas.

\begin{lemma}\label{l3.8}
Let $\pz_0$ be as in \eqref{3.3} and $\pz(x)\equiv\pz_0
(x)-\frac{1}{2^n}\pz_0 (\frac{x}{2})$ for all $x\in\rn$. Then for
any given integer $L\in\zz_+$, there exist $\ez_0,\,\ez\in\cd(\rn)$
such that $L_{\ez}\ge L$ and
$$f=\ez_0 \ast\pz_0 \ast
f+\sum_{j=1}^{\fz}\ez_j\ast\pz_j\ast f$$
in $\cd'(\rn)$ for all $f\in\cd'(\rn)$.
\end{lemma}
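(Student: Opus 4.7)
The plan is to build a local Calder\'on reproducing formula by first establishing a trivial (singular) version of the identity via telescoping, and then promoting the Dirac-type kernels to smooth, compactly supported ones with the required number of vanishing moments via a Neumann-type construction.

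After rescaling so that $\int_{\rn}\pz_0(x)\,dx=1$, the relation $\pz(x)=\pz_0(x)-2^{-n}\pz_0(x/2)$ yields $\pz_j=(\pz_0)_j-(\pz_0)_{j-1}$ for every $j\ge1$, and the partial sums telescope to $\pz_0+\sum_{j=1}^{N}\pz_j=(\pz_0)_N$. Since $(\pz_0)_N\ast f\to f$ in $\cd'(\rn)$ for every $f\in\cd'(\rn)$ by a standard approximate-identity argument, this produces the baseline identity $f=\pz_0\ast f+\sum_{j=1}^\fz\pz_j\ast f$ in $\cd'(\rn)$, but only for the illegal choice $\ez_0=\ez=\dz$.

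To upgrade the kernels, I would fix an auxiliary $\theta\in\cd(\rn)$ with small support, $\int\theta=1$, and $\int\theta(x)x^{\az}\,dx=0$ for every $1\le|\az|\le L$ (such $\theta$ is obtained from any bump by subtracting polynomial corrections). Setting $\Theta(x):=\theta(x)-2^{-n}\theta(x/2)$, a direct moment calculation gives $\int\Theta(x)x^{\az}\,dx=(1-2^{|\az|})\int\theta(x)x^{\az}\,dx=0$ for every $|\az|\le L$, so $L_\Theta\ge L$, and the same telescoping supplies $\theta\ast f+\sum_{j=1}^\fz\Theta_j\ast f=f$ in $\cd'(\rn)$. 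The task then reduces to finding $\ez_0,\ez\in\cd(\rn)$ such that $\ez_0\ast\pz_0=\theta$ and $\ez\ast\pz=\Theta$, for then by dyadic scaling $\ez_j\ast\pz_j=\Theta_j$ for every $j\ge1$, and substituting into the $\theta$-identity yields the claim. Since $\widehat{\pz_0}(0)=1\neq 0$, convolution with $\pz_0$ can be inverted at low frequencies by a Neumann series: picking a suitably localized $\theta_0\in\cd$ and iterating the identity $\dz=\theta_0\ast\pz_0+(\dz-\theta_0\ast\pz_0)$, one produces $\ez_0$, and analogously $\ez$ with $L_\ez\ge L$, as compactly supported $C^\fz$ functions.

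The main obstacle is the simultaneous control of three competing requirements on the iterated kernels: compact support, the prescribed number $L+1$ of vanishing moments for $\ez$, and convergence of $\sum_{j=1}^\fz\ez_j\ast\pz_j\ast f$ in $\cd'(\rn)$ for arbitrary $f\in\cd'(\rn)$. Compact support is not preserved by Neumann inversion in general and must be enforced by choosing all auxiliary bumps supported in sufficiently small balls, so that the supports of the iterated factors still lie in a fixed bounded set. The $\cd'$-convergence is then obtained by pairing against a test function $\vz\in\cd(\rn)$ and exploiting the $L+1$ vanishing moments of $\ez$ to estimate $|\ez_j\ast\vz(x)|$ by a negative power of $2^j$, which matches the polynomial-growth bound on $\pz_j\ast f$ coming from the distributional order of $f$ and ensures absolute summability in the duality pairing.
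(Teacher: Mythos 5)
Your proposal does not, as written, close the gap. The paper's own ``proof'' of Lemma \ref{l3.8} is a citation: it simply records that the statement is \cite[Theorem\,1.6]{R01}. Your sketch instead tries to derive the identity from scratch, but the central reduction you propose is not achievable.

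You reduce the problem to solving the two exact convolution equations $\ez_0\ast\pz_0=\theta$ and $\ez\ast\pz=\Theta$ with $\ez_0,\ez\in\cd(\rn)$. On the Fourier side this would force $\widehat{\ez_0}=\widehat{\theta}/\widehat{\pz_0}$ and $\widehat{\ez}=\widehat{\Theta}/\widehat{\pz}$. Since $\pz_0\in\cd(\rn)$, $\widehat{\pz_0}$ is entire of exponential type and (generically) has zeros in $\cc^n$, so $\widehat{\theta}/\widehat{\pz_0}$ has poles unless $\widehat\theta$ vanishes at exactly those zeros, which one cannot impose while also prescribing the moments of $\theta$ and keeping it in $\cd(\rn)$. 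Worse, $\widehat{\pz}(\xi)=\widehat{\pz_0}(\xi)-\widehat{\pz_0}(2\xi)$ vanishes not just at the origin but on the entire (codimension-one) set $\{\widehat{\pz_0}(\xi)=\widehat{\pz_0}(2\xi)\}$, so $\widehat{\Theta}/\widehat{\pz}$ cannot be entire in general, and $\ez\in\cd(\rn)$ with $\ez\ast\pz=\Theta$ simply does not exist. The extra vanishing moments of $\Theta$ only control the order of the zero at $\xi=0$; they do nothing about zeros elsewhere. So the reduction step is asking for more than the lemma actually needs, and more than is possible.

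The Neumann-series patch does not repair this. Writing $\ez_0=\theta_0\ast\sum_{k\ge0}(\dz-\theta_0\ast\pz_0)^{\ast k}$ produces a $k$-fold convolution in the $k$-th term; if $\theta_0$ and $\pz_0$ are supported in $B(0,r)$, the $k$-th term is supported in $B(0,(2k+1)r)$. Shrinking $r$ shrinks each individual factor but cannot keep the \emph{union} over all $k\in\nn$ bounded, because the number of factors is unbounded. The phrase ``so that the supports of the iterated factors still lie in a fixed bounded set'' is precisely the false step. There is also a convergence issue on the Fourier side: $|1-\widehat{\theta_0}(\xi)\widehat{\pz_0}(\xi)|\to1$ as $|\xi|\to\fz$, so the geometric series $\sum_k(1-\widehat{\theta_0}\widehat{\pz_0})^k$ does not converge uniformly, reflecting the fact that $1/\widehat{\pz_0}$ cannot be the Fourier transform of anything in $\cd(\rn)$ (it fails to decay). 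Rychkov's actual construction avoids these obstacles entirely: he never attempts an exact deconvolution, but instead builds $\ez_0$ and $\ez$ directly so that the full reproducing identity $\widehat{\ez_0}\,\widehat{\pz_0}+\sum_{j\ge1}\widehat{\ez}(2^{-j}\cdot)\widehat{\pz}(2^{-j}\cdot)=1$ holds, exploiting that finite linear combinations of the iterated convolutions $\pz_0^{\ast k}$ remain in $\cd(\rn)$ and that the low-frequency error can be pushed to arbitrarily high order. You should invoke \cite[Theorem\,1.6]{R01} (as the paper does) or reproduce that construction; the factorization-plus-Neumann route cannot be made to work.
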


Lemma \ref{l3.8} is just \cite[Theorem\,1.6]{R01}.

\begin{remark}\label{r3.9}
Let $\pz_0,\,\pz,\,\ez_0$ and $\ez$ be as in Lemma \ref{l3.8}. From
the proof of \cite[Theorem 1.6]{R01}, it is easy to deduce that for
any $j\in\zz_+$ and $f\in\cd'(\rn)$,
$$f=(\ez_0)_j\ast(\pz_0)_j \ast f+\sum_{k=j+1}^{\fz}
\ez_k\ast\pz_k \ast f$$ in $\cd'(\rn)$ (see also
\cite[(2.11)]{R01}). We omit the details.
\end{remark}

For $f\in L^1_{\loc}(\rn)$, $B\in[0,\fz)$ and $x\in\rn$, let
\begin{eqnarray}\label{3.7}
K_B f(x)\equiv\int_{\rn}|f(y)|2^{-B|x-y|}\,dy,
\end{eqnarray}
 where and in what follows,
the \emph{space} $L^1_{\loc}(\rn)$ denotes the set of all locally
integrable functions on $\rn$.
\begin{lemma}\label{l3.10}
Let $p\in(1,\fz)$, $q\in(1,\fz]$, and $\oz\in A^{\loc}_p (\rn)$.
Then there exists a positive constant $C$ such that for any sequence
$\{f^j\}_j$ of measurable functions,
\begin{eqnarray}\label{3.8}
\left\|\{M^{\loc}(f^j)\}_j\r\|_{L^p_{\oz}(l_q)}\le C
\left\|\{f^j\}_j\r\|_{L^p_{\oz}(l_q)},
\end{eqnarray}
where and in what follows,
$$\left\|\{f^j\}_j\r\|_{L^p_{\oz}(l_q)}\equiv\left\|\left\{\sum_{j}|f^j|^q\r\}^{1/q}
\r\|_{L^p_{\oz}(\rn)}.$$
Also, there exists positive constants $C$ and
$B_0\equiv B_0 (\oz,n)$ such that for all $B\ge B_0/p$,
\begin{eqnarray}\label{3.9}
\left\|\{K_B (f^j)\}_j\r\|_{L^p_{\oz}(l_q)}\le C
\left\|\{f^j\}_j\r\|_{L^p_{\oz}(l_q)}.
\end{eqnarray}
\end{lemma}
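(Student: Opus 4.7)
The plan is to first establish the scalar versions of \eqref{3.8} and \eqref{3.9} and then upgrade to the vector-valued form. For \eqref{3.9} the upgrade is essentially free: since $K_B$ is a positive linear operator, Minkowski's integral inequality in $\ell^q$ for $q\in(1,\fz)$ (the cases $q=1$ and $q=\fz$ being trivial) yields the pointwise bound
$$\Bigl(\sum_j|K_B f^j(x)|^q\Bigr)^{1/q}\le K_B\Bigl(\bigl(\textstyle\sum_j|f^j|^q\bigr)^{1/q}\Bigr)(x),$$
so \eqref{3.9} reduces to its scalar case. This pointwise domination fails for the sublinear operator $M^{\loc}$, hence \eqref{3.8} requires a genuine argument.

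For \eqref{3.8}, I would reduce to the classical Fefferman--Stein vector-valued inequality via the localization Lemma \ref{l2.3}(i). Tile $\rn$ by unit cubes $Q_\mu\equiv\mu+[0,1]^n$, $\mu\in\zz^n$, and observe that any cube $Q\ni x$ with $|Q|\le1$ is contained in the $3^n$-neighborhood $3Q_\mu$ of $Q_\mu$ once $x\in Q_\mu$, giving $M^{\loc}(f)(x)\le M(f\chi_{3Q_\mu})(x)$ on $Q_\mu$, where $M$ is the classical Hardy--Littlewood maximal operator. Using Lemma \ref{l2.3}(i) together with Remark \ref{r2.4} (to accommodate sidelength $3$), extend $\oz|_{3Q_\mu}$ to $\overline{\oz}_\mu\in A_p(\rn)$ with $A_p(\overline{\oz}_\mu)\le CA^{\loc}_p(\oz)$ uniformly in $\mu$, and then apply the classical vector-valued Fefferman--Stein inequality with respect to the genuine Muckenhoupt weight $\overline{\oz}_\mu$ on $Q_\mu$. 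Summing over $\mu\in\zz^n$ and exploiting the bounded overlap of $\{3Q_\mu\}$ yields \eqref{3.8}.

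For the scalar form of \eqref{3.9}, split at $|x-y|=1$ and decompose the far part into unit-width annuli to obtain
$$K_B f(x)\ls M^{\loc}(f)(x)+\sum_{k=1}^{\fz}2^{-Bk}\int_{k\le|x-y|<k+1}|f(y)|\,dy.$$
For $x\in Q_\mu$ the annulus is covered by $O(k^{n-1})$ unit cubes $Q_\nu$ with $|\mu-\nu|\sim k$, and on each such cube the local H\"older inequality extracted from $A_p^{\loc}(\oz)$ gives $\int_{Q_\nu}|f|\ls\oz(Q_\nu)^{-1/p}\bigl(\int_{Q_\nu}|f|^p\oz\bigr)^{1/p}$. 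Iterating the unit-scale doubling (Lemma \ref{l2.3}(v)) across adjacent unit cubes supplies the controlled growth $\oz(Q_\mu)\le C_0^{|\mu-\nu|}\oz(Q_\nu)$ for a constant $C_0\equiv C_0(\oz,n)$. Taking $L^p_\oz$-norms via Minkowski in $k$ and in $\nu$, the analysis produces a geometric series in $k$ weighted by $(2^{-Bp}C_0)^k$ and a polynomial factor $k^{(n-1)p}$; choosing $B_0\equiv \log_2 C_0(\oz,n)$, the series converges whenever $Bp\ge B_0$, and combining with Lemma \ref{l2.3}(vi) yields the scalar case of \eqref{3.9}.

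The main obstacle is this second step: one must upgrade the unit-scale doubling of $A_p^{\loc}$ weights to the quantitative exponential bound $\oz(Q_\mu)\le C_0^{|\mu-\nu|}\oz(Q_\nu)$, and then calibrate $B_0$ so that the kernel decay $2^{-Bp|x-y|}$ beats both this (possibly genuinely exponential) weight growth and the polynomial factors coming from counting the unit cubes inside each annulus. Once the bookkeeping is done, the quantity $B_0/p$ in the threshold appears naturally from the condition $2^{-Bp}C_0<1$.
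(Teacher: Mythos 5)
The paper offers no proof here: Lemma~\ref{l3.10} is invoked verbatim as Rychkov's \cite[Lemma~2.11]{R01}, the only in-text commentary being that Rychkov's argument also yields the $M^{\loc}_{\wz{C}}$ variant for any positive $\wz{C}$. There is therefore no internal proof to compare against, but your reconstruction is correct and uses exactly the tools this framework makes available. For \eqref{3.8}, the tiling by unit cubes $Q_\mu$, the pointwise bound $M^{\loc}(f)\le M(f\chi_{3Q_\mu})$ on $Q_\mu$, the uniform $A_p$-extension $\overline{\oz}_\mu$ from Lemma~\ref{l2.3}(i) and Remark~\ref{r2.4}, the classical weighted Fefferman--Stein inequality applied with $\overline{\oz}_\mu$, and the bounded overlap of $\{3Q_\mu\}_\mu$ fit together as you describe. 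For \eqref{3.9}, the reduction to the scalar case via positivity of $K_B$ and Minkowski's integral inequality in $\ell^q$ is immediate, and your annulus decomposition together with the chained doubling from Lemma~\ref{l2.3}(v) does give $\oz(Q_\mu)\le C_0^{|\mu-\nu|}\oz(Q_\nu)$ with $C_0=C_0(\oz,n)$; the local $A_p$ H\"older estimate then sets up the competition between $2^{-B|m|}$ and $C_0^{|m|/p}$ (equivalently $2^{-Bp}$ against $C_0$ at the $L^p$ level) that pins down the threshold. The one small bookkeeping point is that the resulting geometric series needs the strict inequality $2^{-Bp}C_0<1$, so you should take $B_0$ slightly larger than $\log_2 C_0(\oz,n)$ in order to honour the non-strict hypothesis $B\ge B_0/p$ in the statement.
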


Lemma \ref{l3.10} is just \cite[Lemma\,2.11]{R01}. Moreover, from
the proof of \cite[Lemma\,2.11]{R01}, it is easy to deduce that
\eqref{3.8} also holds for $M^{\loc}_{\wz{C}}$ with any given
positive constant $\wz{C}$. In this case, the positive constant $C$
in Lemma \ref{l3.10} depends on $\wz{C}$.

\begin{lemma}\label{l3.11}
Let $\pz_0$ be as in \eqref{3.3} and $r\in(0,\fz)$. Then there
exists a positive constant $A_0$ depending only on the support of
$\pz_0$ such that for any $A\in(\max\{A_0,\frac{n}{r}\},\fz)$ and
$B\in[0,\fz)$, there exists a positive constant $C$, depending only
on $n,\,r,\,\pz_0,\,A$ and $B$, satisfying that for all
$f\in\cd'(\rn)$, $x\in\rn$ and $j\in\zz_+$,
\begin{eqnarray*}
\left[(\pz_0 )^{\ast}_{j,\,A,\,B}(f)(x)\r]^r&\le& C\sum_{k=j}^{\fz}
2^{(j-k)(Ar-n)}\left\{M^{\loc}(|(\pz_0 )_k \ast
f|^r)(x)\r.\\
&&+K_{Br}(|(\pz_0 )_k \ast f|^r)(x)\big\},
\end{eqnarray*}
where
$$(\pz_0)^{\ast}_{j,\,A,\,B}(f)(x)
\equiv\sup_{y\in\rn}\frac{|(\pz_0)_j \ast f(x-y)|}{m_{j,\,A,\,B}(y)}$$
for all $x\in\rn$.
\end{lemma}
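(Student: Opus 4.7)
The plan is to follow a Peetre-type maximal function argument, using the local Calder\'on reproducing formula of Remark~\ref{r3.9} in place of its global counterpart. I would first apply Remark~\ref{r3.9} with the vanishing-moment order $L_\ez$ of $\ez$ chosen large (say $L_\ez + 1 > \max\{A, n/r\}$), convolve the resulting decomposition of $f$ with $(\pz_0)_j$, and use the identity $\pz_k = (\pz_0)_k - (\pz_0)_{k-1}$ to rewrite the expansion purely in terms of $(\pz_0)_k \ast f$ for $k \ge j$:
\begin{equation*}
(\pz_0)_j \ast f = \sum_{k \ge j} K_{j,k} \ast \bigl((\pz_0)_k \ast f\bigr).
\end{equation*}
Because $\pz_0, \ez_0, \ez \in \cd(\rn)$ are compactly supported, each $K_{j,k}$ is supported in a ball $B(0, R_\ast 2^{-j})$ with $R_\ast > 0$ depending only on $\supp \pz_0$ and $\supp \ez$; this $R_\ast$ determines the constant $A_0$ in the statement.

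Pairing the $L_\ez$ vanishing moments of $\ez$ against a Taylor expansion of $(\pz_0)_j$ then yields the pointwise kernel bound
\begin{equation*}
|K_{j,k}(z)| \le C\, 2^{jn}\, 2^{(j-k)(L_\ez + 1)}\, \chi_{B(0, R_\ast 2^{-j})}(z),
\end{equation*}
with the decay factor $2^{(j-k)(L_\ez + 1)}$ replaced by $1$ at $k = j$. Substituting produces the pointwise estimate
\begin{equation*}
|(\pz_0)_j \ast f(x-y)| \le C \sum_{k \ge j} 2^{(j-k)(L_\ez + 1)}\, 2^{jn} \int_{|z| \le R_\ast 2^{-j}} |(\pz_0)_k \ast f(x-y-z)|\, dz.
\end{equation*}
Raising to the $r$-th power (via Jensen if $r \ge 1$, via subadditivity $(\sum a_k)^r \le \sum a_k^r$ if $r < 1$) and dividing by $m_{j,A,B}(y)^r$, I split the weight as $m_{j,A,B}(y)^{-r} = (1+2^j|y|)^{-Ar}\, 2^{-Br|y|}$ and perform the substitution $u = y + z$. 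Since $m_{j,A,B}(y) \sim m_{j,A,B}(y+z)$ uniformly for $|z| \le R_\ast 2^{-j}$ (with comparability constants depending only on $R_\ast$ and $B$), the polynomial factor paired with $2^{jn} \chi_{B(0, R_\ast 2^{-j})}$ becomes an average of $|(\pz_0)_k \ast f|^r$ over a ball of radius at most $R_\ast$, which is dominated by $M^{\loc}(|(\pz_0)_k \ast f|^r)(x)$; the exponential factor $2^{-Br|u|}$ integrated against $|(\pz_0)_k \ast f(x-u)|^r$ yields $K_{Br}(|(\pz_0)_k \ast f|^r)(x)$. The hypothesis $A > n/r$ makes the net exponent $(Ar - n) > 0$ so that the resulting geometric series in $k$ converges.

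The main obstacle is the case $r < 1$, where Jensen's inequality for the concave function $t \mapsto t^r$ goes the wrong way (one has $[\text{avg}_B g]^r \ge \text{avg}_B g^r$), so the $r$-th power cannot be pulled through the local average for free. The standard remedy, which I would adopt, is to perform the power manipulation together with the polynomial decay: the surplus $(1 + 2^j|y|)^{-Ar}$ available under $A > n/r$ absorbs the loss $2^{jn(1-r)}$ that emerges when one exchanges a bound on $[2^{jn}\int_{|z| \le R_\ast 2^{-j}}|(\pz_0)_k \ast f|\, dz]^r$ for a bound on $2^{jn}\int_{|z| \le R_\ast 2^{-j}}|(\pz_0)_k \ast f|^r\, dz$, producing the sharp exponent $(Ar - n)$ in the final estimate.
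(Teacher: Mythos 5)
Your proposal takes a genuinely different route from the paper. The paper's proof is essentially a citation: it observes that Rychkov proves the estimate for $f\in\cs'_e$, a class for which the auxiliary Peetre quantity
\[
M_{A,B}(x,j)\equiv\sup_{k\ge j,\,y\in\rn}2^{(j-k)A}\frac{|\pz_k\ast f(x-y)|}{m_{j,A,B}(y)}
\]
is automatically finite, and then invokes Grafakos' Proposition~2.3.4(a) to obtain that finiteness for every $f\in\cd'(\rn)$ once $A>A_0$; this is precisely where the constant $A_0$ (depending on $\supp\pz_0$) originates. You instead attempt a self-contained derivation. Your preliminary steps are fine: the expansion via Remark~\ref{r3.9}, the rewriting $\pz_k=(\pz_0)_k-(\pz_0)_{k-1}$ to produce kernels $K_{j,k}$ with $\supp K_{j,k}\subset B(0,R_\ast 2^{-j})$, and the moment--Taylor estimate $|K_{j,k}|\ls 2^{jn}2^{(j-k)(L_\ez+1)}$ are all correct, and the $r\ge1$ branch can be completed along the lines you sketch.

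The gap is in the branch $r<1$, which is the one actually used later (one takes $r_0<p_\bfai/q_\oz\le1$ in the proof of Theorem~\ref{t3.12}). The ``exchange'' you propose,
\[
\left[2^{jn}\int_{|z|\le R_\ast 2^{-j}}|(\pz_0)_k\ast f|\,dz\right]^{r}\ls 2^{jn(1-r)}\cdot 2^{jn}\int_{|z|\le R_\ast 2^{-j}}|(\pz_0)_k\ast f|^{r}\,dz,
\]
is false. Up to the fixed constant $|B(0,R_\ast)|$, the measure $2^{jn}\chi_{\{|z|\le R_\ast 2^{-j}\}}\,dz$ is a probability measure, so Jensen's inequality for the concave map $t\mapsto t^r$ gives the reverse bound $[\mathrm{avg}\,|g|]^r\ge\mathrm{avg}\,|g|^r$, and no $j$-independent constant can fix this: taking $g=\chi_{B_\varepsilon}$ and letting $\varepsilon\to0$, the ratio of the left-hand side to the right-hand side scales like $|B_\varepsilon|^{r-1}\to\fz$. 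The correct remedy is not to push the $r$-th power through the integral but the Peetre bootstrap, as in Rychkov: split $|(\pz_0)_k\ast f|=|(\pz_0)_k\ast f|^r|(\pz_0)_k\ast f|^{1-r}$, bound the second factor by $[M_{A,B}(x,j)]^{1-r}m_{j,A,B}(\cdot)^{1-r}$ (with the appropriate $2^{(j-k)A}$ weight), and then absorb the resulting $[M_{A,B}(x,j)]^{1-r}$ back to the left. That absorption step requires the a priori finiteness $M_{A,B}(x,j)<\fz$ for all $f\in\cd'(\rn)$, which your proposal never addresses and which is the entire content of the paper's proof. As written, your plan breaks down at precisely the point the paper is careful about.
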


\begin{proof}
Lemma \ref{l3.11} is a modified version of \cite[Lemma\,2.10]{R01},
and was essentially obtained by Rychkov in the proof of
\cite[Theorem\,2.24]{R01}. Let $\pz$ be as in Lemma \ref{l3.8}.
Indeed, Rychkov \cite{R01} showed Lemma \ref{l3.11} under the
assumption that $f\in\cs'_e$, namely, there exist positive constant
$A_f$ and nonnegative integer $N_f$ such that for all
$\gz\in\cd(\rn)$,
$$|\langle f,\gz\rangle|\le A_f \sup\left\{|\partial^{\az}\gz(x)|e^{N_f |x|}:
\ x\in\rn,\ \az\in\zz^n_+\ \text{and}\ |\az|\le N_f\r\},$$
which guarantees that for all $x\in\rn$ and
$j\in\zz_+$,
$$M_{A,\,B}(x,\,j)\equiv\sup_{k\ge j,\,y\in\rn}2^{(j-k)A}\frac{|\pz_k
\ast f(x-y)|}{m_{j,\,A,\,B}(y)}<\fz.$$
By \cite[Proposition\,2.3.4(a)]{Gr08}, we have that for
any $f\in\cd'(\rn)$, $M_{A,\,B}(x,\,j)<\fz$ for all $x\in\rn$ and
$j\in\zz_+$, provided $A>A_0$, where $A_0$ is a positive constant
depending only on the support of $\pz_0$. Thus, Lemma \ref{l3.11} is
true for all $f\in\cd'(\rn)$. This finishes the proof of Lemma
\ref{l3.11}.
\end{proof}

\begin{theorem}\label{t3.12}
Let $\bfai$ satisfy Assumption $\mathrm{(A)}$, $\oz\in
A^{\loc}_{\fz}(\rn)$, $R\in(0,\fz)$, $\pz_0,\,q_{\oz}$ and
$p_{\bfai}$ be respectively as in \eqref{3.3} , \eqref{2.4} and
$\eqref{2.6}$, $\pz_0^{+}(f),\, \pz^{\ast\ast}_{0,\,A,\,B}(f)$, and
$\wz{\cg}_{N,\,R} (f)$ be respectively as in \eqref{3.4},
\eqref{3.5} and \eqref{3.1}. Let
$$A_1\equiv\max\{A_0,\,nq_{\oz}/p_{\bfai}\},$$
$B_1\equiv B_0/p_{\bfai}$
and integer $N_0\equiv\lfz2A_1\rf+1$, where $A_0$ and $B_0$ are
respectively as in Lemmas \ref{3.3} and \ref{3.2}. Then for any
$A\in(A_1,\fz)$, $B\in(B_1,\fz)$ and integer $N\ge N_0$, there
exists a positive constant $C$, depending only on
$A,\,B,\,N,\,R,\,\pz_0,\,\bfai,\,\oz$ and $n$, such that for all
$f\in\cd'(\rn)$,
 \begin{eqnarray}\label{3.10}
 \left\|\pz^{\ast\ast}_{0,\,A,\,B}(f)\r\|_{L^{\bfai}_{\oz}(\rn)}\le C
 \left\|\pz_0^{+}(f)\r\|_{L^{\bfai}_{\oz}(\rn)},
 \end{eqnarray}
 and
\begin{eqnarray}\label{3.11}
\left\|\wz{\cg}_{N,\,R} (f)\r\|_{L^{\bfai}_{\oz}(\rn)}\le C
\left\|\pz_0^{+}(f)\r\|_{L^{\bfai}_{\oz}(\rn)}.
\end{eqnarray}
\end{theorem}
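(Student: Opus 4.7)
The plan is to mirror Rychkov's strategy from \cite[Theorem~2.24]{R01}, but carry it through at the Orlicz level instead of at the $L^p$ level. For \eqref{3.10}, the key ingredient is Lemma~\ref{l3.11}. Since $A>A_1\geq nq_\oz/p_\bfai$, the interval $(n/A,p_\bfai/q_\oz)$ is non-empty, and I would fix $r$ in it; then $Ar>n$, $r<p_\bfai\leq 1$, and $rq_\oz<p_\bfai$. Applying Lemma~\ref{l3.11} and taking $\sup_{j\in\zz_+}$, a convolution-in-$k$ argument exploiting the convergence of $\sum_{m\geq 0}2^{-m(Ar-n)}$ delivers the pointwise bound
$$
[\pz^{\ast\ast}_{0,\,A,\,B}(f)(x)]^r\leq C\sup_{k\in\zz_+}\left\{M^{\loc}(|(\pz_0)_k\ast f|^r)(x)+K_{Br}(|(\pz_0)_k\ast f|^r)(x)\r\}.
$$
Extracting the $r$-th root (via a quasi-triangle inequality, since $1/r\geq 1$) reduces the task to estimating, in the $L^\bfai_\oz(\rn)$ quasi-norm, the two quantities $[\sup_k M^{\loc}(|(\pz_0)_k\ast f|^r)]^{1/r}$ and $[\sup_k K_{Br}(|(\pz_0)_k\ast f|^r)]^{1/r}$ in terms of $\|\pz_0^+(f)\|_{L^\bfai_\oz(\rn)}$.

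To effect this, I would introduce the auxiliary function $\bfai_r(t)\equiv\bfai(t^{1/r})$, for which $\|F\|_{L^\bfai_\oz(\rn)}^r=\|F^r\|_{L^{\bfai_r}_\oz(\rn)}$; since $r<p_\bfai$, $\bfai_r$ has strictly lower type $p_\bfai/r>1$, placing us in a classical convex-Orlicz regime. The task becomes to establish the vector-valued inequality
$$
\|\{M^{\loc}(h_k)\}_k\|_{L^{\bfai_r}_\oz(l^\fz)}+\|\{K_{Br}(h_k)\}_k\|_{L^{\bfai_r}_\oz(l^\fz)}\ls\|\{h_k\}_k\|_{L^{\bfai_r}_\oz(l^\fz)}
$$
with $h_k\equiv|(\pz_0)_k\ast f|^r$. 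I pick an auxiliary scalar exponent $p\in(\max\{rq_\oz,\,B_0/B\},\,p_\bfai)$, which is non-empty because $rq_\oz<p_\bfai$ and $B_0/B<p_\bfai$ (the latter since $B>B_1=B_0/p_\bfai$). By Lemma~\ref{l2.3}(iii) one has $\oz\in A^{\loc}_{p/r}(\rn)$, and $Br\geq B_0/(p/r)$ holds by construction, so Lemma~\ref{l3.10} applied with scalar exponent $p/r>q_\oz$ and vector exponent $l^\fz$ yields the scalar inequality; I would promote this scalar bound to $L^{\bfai_r}_\oz(\rn)$ by a layer-cake/Chebyshev argument that exploits the strict lower type $p_\bfai/r>1$ of $\bfai_r$. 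Undoing the change of variable yields \eqref{3.10}.

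For \eqref{3.11}, I would invoke Remark~\ref{r3.9}: for any $\pz\in\cd_{N,\,R}(\rn)$, any $t\in(0,1)$, and any $z$ with $|x-z|<t$, I expand
$$
\pz_t\ast f(z)=(\pz_t\ast\ez_0)\ast(\pz_0)\ast f(z)+\sum_{k\geq 1}(\pz_t\ast\ez_k)\ast(\pz_0)_k\ast f(z)
$$
using kernels $\ez_0,\ez\in\cd(\rn)$ from Lemma~\ref{l3.8} with $L_\ez$ chosen large. The regularity $N\geq N_0=\lfz 2A_1\rf+1$ of $\pz$ yields rapid-decay kernel estimates on $\pz_t\ast\ez_k$; after absorbing the shift $y=z-x$ into the weight $m_{k,\,A,\,B}(y)$ and summing in $k$, this produces the pointwise domination $\wz{\cg}_{N,\,R}(f)(x)\leq C\,\pz^{\ast\ast}_{0,\,A,\,B}(f)(x)$, whereupon \eqref{3.10} delivers \eqref{3.11}.

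The main obstacle is the scalar-to-Orlicz upgrade in the second paragraph: since $\bfai$ is only concave of lower type $p_\bfai\leq 1$ and $\oz$ is only a local Muckenhoupt weight, passing from the $L^{p/r}_\oz(l^\fz)$ vector-valued maximal inequality to its $L^{\bfai_r}_\oz(l^\fz)$ counterpart requires the careful choice $r\in(n/A,p_\bfai/q_\oz)$, so that the parameters align simultaneously with the weight class $A^{\loc}_{p/r}(\rn)$, the decay threshold $B_0/p$ in Lemma~\ref{l3.10}, and the convexification $\bfai_r$.
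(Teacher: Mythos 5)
Your proposal is correct and follows essentially the same route as the paper: Lemma~\ref{l3.11} is fed into a restricted weak-type/layer-cake bound for $M^{\loc}$ and $K_{Br}$ against the lower type $p_{\bfai}$ of $\bfai$ to obtain \eqref{3.10}, and the local reproducing formula (Lemma~\ref{l3.8}, Remark~\ref{r3.9}) together with the kernel decay estimates gives the pointwise domination $\wz{\cg}_{N,\,R}(f)\ls\pz^{\ast\ast}_{0,\,A,\,B}(f)$, whence \eqref{3.11}. The vector-valued $l^{\fz}$ framing and the convexification $\bfai(\cdot^{1/r})$ you introduce are cosmetic: since $|(\pz_0)_k\ast f|\le\pz_0^{+}(f)$ pointwise, the $\sup_k$ collapses onto the single scalar $[\pz_0^{+}(f)]^r$, and your $L^{\bfai_r}_{\oz}(l^{\fz})$ claim reduces to exactly the scalar Orlicz estimate of $M^{\loc}$ and $K_{Br}$ that the paper carries out directly with $\bfai$ (estimates $\mathrm{I}_1$ and $\mathrm{I}_2$ in \eqref{3.14}--\eqref{3.18}).
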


\begin{proof}
Let $f\in\cd'(\rn)$. First, we prove \eqref{3.10}. Let
$A\in(A_1,\fz)$ and $B\in(B_1,\fz)$. By $A_1
\equiv\max\{A_0,\,nq_{\oz}/p_{\bfai}\}$ and $B_1\equiv B_0
/p_{\bfai}$, we know that there exists
$r_0\in(0,\frac{p_{\bfai}}{q_{\oz}})$ such that $A>\frac{n}{r_0}$
and $Br_0>\frac{B_0}{q_{\oz}}$,  where $A_0$ and $B_0$ are
respectively as in Lemmas \ref{3.3} and \ref{l3.10}. Thus, by Lemma
\ref{l3.11}, for all $x\in\rn$, we have
\begin{eqnarray}\label{3.12}
\left[(\pz_0)^{\ast}_{j,\,A,\,B}(f)(x)\r]^{r_0}&\ls&\sum_{k=j}^{\fz}
2^{(j-k)(Ar_0 -n)}\Big\{M^{ \loc}\left(|(\pz_0)_k\ast f|^{r_0}\r)(x)
\nonumber\\
 &&+K_{Br_0}\left(|(\pz_0)_k\ast f|^{r_0}\r)(x)\Big\}.
\end{eqnarray}
 Let $\pz^{+}_0 (f)$ and $\pz^{\ast\ast}_{0,\,A,\,B}(f)$ be
respectively as in \eqref{3.4} and \eqref{3.5}. We notice that for
any $x\in\rn$ and $k\in\zz_+$,
$$|(\pz_0)_k \ast f (x)|\le\pz^{+}_0(f)(x),$$
which together with \eqref{3.12} implies that for all
$x\in\rn$,
\begin{eqnarray}\label{3.13}
\left[\pz^{\ast\ast}_{0,\,A,\,B}(f)(x)\r]^{r_0}\ls
M^{\loc}\left([\pz^{+}_0 (f)]^{r_0})(x)+K_{Br_0}([\pz^{+}_0
(f)]^{r_0}\r)(x).
\end{eqnarray}
By \eqref{3.12} and the subadditivity of $\bfai$, we have
\begin{eqnarray}\label{3.14}
&&\int_{\rn}\bfai\left(\pz^{\ast\ast}_{0,\,A,\,B}(f)(x)\r)\oz(x)\,dx\nonumber\\
&&\hs\ls\int_{\rn}\bfai\left(\left\{M^{\loc}\left([\pz^{+}_0 (f)
]^{r_0}\r)(x)\r\}^{1/{r_0}}\r)\oz(x)\,dx\nonumber\\
&&\hs\hs+\int_{\rn}\bfai\left(\left\{K_{Br_0}\left([\pz^{+}_0
(f)]^{r_0}\r)(x)\r\}^{1/{r_0}}\r)\oz(x)\,dx
\equiv\mathrm{I_1}+\mathrm{I_2}.
\end{eqnarray}

First, we estimate $\mathrm{I_1}$.  By $r_0
<\frac{p_{\bfai}}{q_{\oz}}$, we know that there exists
$q\in(q_{\oz},\fz)$ such that $r_0 q<p_{\bfai}$ and $\oz\in
A^{\loc}_q (\rn)$. For any $\az\in(0,\fz)$ and $g\in L^1_{\loc}
(\rn)$, let
$$g=g \chi_{\{x\in\rn:\,\,|g(x)|\le\az\}}+g
\chi_{\{x\in\rn:\,\,|g(x)|>\az\}}\equiv g_1 +g_2.$$
It is easy to see that
$$\left\{x\in\rn:\,M^{\loc}(g)(x)>2\az\r\}\subset
\left\{x\in\rn:\,M^{\loc}(g_2)(x)>\az\r\},$$
which together with Lemma \ref{l2.3}(vi) implies that
\begin{eqnarray}\label{3.15}
&&\oz\left(\left\{x\in\rn:\,M^{\loc}(g)(x)>2\az\r\}\r)\nonumber\\   &&\hs
\le\oz\left(\left\{x\in\rn:\,M^{\loc}(g_2)(x)>\az\r\}\r)
\le\frac{1}{\az^q}\int_{\rn} \left[M^{\loc}(g_2)(x)\r]^q\oz(x)\,dx\nonumber\\
 &&\hs\ls\frac{1}{\az^q}\int_{\rn}|g_2 (x)|^q\oz(x)\,dx
\sim\frac{1}{\az^q}\int_{\{x\in\rn:\,|g(x)|>\az\}}|g(x)|^q\oz(x)\,dx.
\end{eqnarray}
Thus, for any $\az\in(0,\fz)$, by \eqref{3.15}, we have
\begin{eqnarray}\label{3.16}
&&\oz\left(\left\{x\in\rn:\,\left[M^{\loc}\left([\pz^+_0
(f)]^{r_0}\r)(x)\r]^{1/{r_0}}>\az\r\}\r)\nonumber\\  &&
\hs\ls\frac{1}{\az^{r_0 q}}\int_{\{x\in\rn:\,[\pz^{+}_0
(f)(x)]^{r_0}>\frac{\az^{r_0}}{2}\}}[\pz^{+}_0 (f)(x)]^{r_0
q}\oz(x)\,dx\nonumber\\  &&\hs\sim\sz_{\pz^+_0
(f)}\left(\frac{\az}{2^{1/{r_0}}}\r)+\frac{1}{\az^{r_0
q}}\int^{\fz}_{\frac{\az}{2^{1/{r_0}}}}r_0 qs^{r_0 q-1}\sz_{\pz^+_0
(f)}(s)\,ds,
\end{eqnarray}
where and in what follows,
$$\sz_{\pz^+_0(f)}(s)\equiv\oz(\{x\in\rn:\,\pz^+_0 (f)(x)>s\}).$$
From the fact that $\bfai$ is concave and of lower type
$p_{\bfai}$, we infer that
$\bfai(t)\sim\int_0^t \frac{\bfai(s)}{s}\,ds$ for all $t\in(0,\fz)$.
By this, \eqref{3.16} and the lower type $p_{\bfai}$ property of
$\bfai$, the fact $r_0 q<p_{\bfai}$ and Fubini's theorem, we have
\begin{eqnarray}\label{3.17}
\hs\mathrm{I_1}&\sim&\int_{\rn}\left\{\int^{\left\{M^{\loc}([\pz^{+}_0
(f)]^{r_0})(x)\r\}^{1/{r_0}}}_0\frac{\bfai(t)}{t}\,dt\r\}\oz(x)\,dx
\nonumber\\
&\sim&\int^{\fz}_0\frac{\bfai(t)}{t}\sz_{\left\{M^{\loc}([\pz^{+}_0
(f)]^{r_0})\r\}^{1/{r_0}}}(t)\,dt\nonumber\\
&\ls&\int^{\fz}_0\frac{\bfai(t)}{t}\left\{\sz_{\pz^+_0
(f)}\left(\frac{t}{2^{1/{r_0}}}\r)+\frac{1}{t^{r_0 q}}
\int^{\fz}_{\frac{t}{2^{1/{r_0}}}}r_0 qs^{r_0 q-1}\sz_{\pz^{+}_0
(f)}(s)\,ds\r\}\,dt\nonumber\\  &\ls& \mathrm{J}_f+\int^{\fz}_0 r_0
q s^{r_0 q-1}\sz_{\pz^{+}_0 (f)}(s) \left\{\int_0^{2^{\frac{1}{r_0}}
s}\frac{\bfai(t)}{t}\frac{1}{t^{r_0 q}}\,dt\r\}\,ds\nonumber\\
&\sim&\mathrm{J}_f +\int^{\fz}_0 r_0 qs^{r_0 q-1}\sz_{\pz^{+}_0
(f)}(s)\bfai(2^{\frac{1}{r_0}} s)\left\{\int_0^{2^{\frac{1}{r_0}}
s}\left(\frac{t}{2^{\frac{1}{r_0}} s}\r) ^{p_{\bfai}}\frac{1}{t^{r_0
q+1}}\,dt\r\}\,ds\nonumber\\
&\sim&\mathrm{J}_f\sim\int_{\rn}\bfai\left(\pz^+_0 (f)(x)\r)\oz(x)\,dx,
\end{eqnarray}
where $\mathrm{J}_f\equiv\int^{\fz}_0
\frac{\bfai(t)}{t}\sz_{\pz^{+}_0 (f)}(t)\,dt$.

Next, we estimate $\mathrm{I_2}$. For any $\az\in(0,\fz)$ and $g\in
L^1_{\loc} (\rn)$, let $g_1$ and $g_2$ be as above. For
$H\in[\frac{B_0}{q},\fz)$, let $\int_{\rn}2^{-H|x-y|}\,dy\equiv
c_H$.  It is easy to see that for all $x\in\rn$, $K_H (g_1) (x)\le
c_H \az$, which implies that
$$\left\{x\in\rn:\,K_H (g)(x)>(c_H +1)\az\r\}\subset\left\{x\in\rn:\,K_H
(g_2) (x)>\az\r\},$$ where $K_H$ is as in \eqref{3.7}. Thus, by
Lemma \ref{l3.10}, we have
\begin{eqnarray*}
\oz\left(\{x\in\rn:\,K_H g
(x)>(c_H +1)\az\}\r)&\le&\oz\left(\{x\in\rn:\,K_H g_2 (x)>\az\}\r)\\
&\ls&\frac{1}{\az^q}\int_{\{x\in\rn:\,|g(x)|>\az\}}|g(x)|^q\oz(x)\,dx.
\end{eqnarray*}
Similarly to \eqref{3.16}, from the above estimate,
$Br_0>\frac{B_0}{q}$ and Lemma \ref{3.2}, we also deduce that
\begin{eqnarray*}
&&\oz\left(\left\{x\in\rn:\,\left[K_{Br_0}([\pz^+_0
(f)]^{r_0})(x)\r]^{1/{r_0}}>\az\r\}\r)\\
&&\hs\ls\sz_{\pz^+_0
(f)}\left(\frac{\az}{(c_{Br_0}+1)^{1/r_0}}\r)+\frac{1}{\az^{r_0
q}}\int^{\fz}_{\frac{\az}{(c_{Br_0}+1)^{1/r_0}}}r_0 qs^{r_0
q-1}\sz_{\pz^+_0 (f)}(s)\,ds.
\end{eqnarray*}
By this, similarly to the estimate of $\mathrm{I_1}$, we also have
\begin{eqnarray}\label{3.18}
\mathrm{I_2}\ls\int_{\rn}\bfai\left(\pz^+_0 (f)(x)\r)\oz(x)\,dx.
\end{eqnarray}

Thus, we deduce from \eqref{3.14}, \eqref{3.17} and \eqref{3.18} that
$$\int_{\rn}\bfai\left(\pz^{\ast\ast}_{0,\,A,\,B}(f)(x)\r)\oz(x)\,dx\ls
\int_{\rn}\bfai\left(\pz^+_0 (f)(x)\r)\oz(x)\,dx.$$ Replacing $f$ by
$f/\lz$ with $\lz\in(0,\fz)$ in the above inequality, and
noticing that
$$\bfai(\pz^{\ast\ast}_{0,\,A,\,B}(f/\lz))
=\bfai(\pz^{\ast\ast}_{0,\,A,\,B}(f)/\lz)$$ and $\bfai(\pz^+_0
(f/\lz))=\bfai(\pz^+_0 (f)/\lz)$, we have
\begin{eqnarray}\label{3.19}
\int_{\rn}\bfai\left(\pz^{\ast\ast}_{0,\,A,\,B}(f)(x)/\lz\r)\oz(x)\,dx\ls
\int_{\rn}\bfai\left(\pz^+_0 (f)(x)/\lz\r)\oz(x)\,dx,
\end{eqnarray}
which together with the arbitrariness of $\lz\in(0,\fz)$ implies
\eqref{3.10}.

Now, we prove \eqref{3.11}. By $N_0 \equiv\lfz2A_1\rf+1$, we know
that there exists $A\in(A_1,\fz)$ such that $2A<N_0$. In the rest of
this proof, we fix $A\in(A_1,\fz)$ satisfying $2A<N_0$ and
$B\in(B_1,\fz)$. Let integer $N\ge N_0$ and $R\in(0,\fz)$. For any
$\gz\in\cd_{N,\,R}(\rn)$, $t\in(0,1)$ and $j\in\zz_+$, from Lemma
\ref{l3.8} and Remark \ref{r3.9}, it follows that
\begin{eqnarray}\label{3.20}
\gz_t \ast f=\gz_t \ast(\ez_0)_j\ast(\pz_0)_j\ast
f+\sum_{k=j+1}^{\fz}\gz_t \ast\ez_k\ast\pz_k \ast f,
\end{eqnarray}
where $\ez_0,\,\ez\in\cd(\rn)$ with $L_{\ez}\ge N$ and $\pz$ is as
in Lemma \ref{l3.8}.

For any given $t\in(0,1)$ and $x\in\rn$, let $2^{-j_0-1}\le
t<2^{-j_0}$ for some $j_0\in\zz_+$, and $z\in\rn$ satisfy $|z-x|<t$.
Then, by \eqref{3.20}, we have
\begin{eqnarray}\label{3.21}
|\gz_t \ast f(z)|&\le&\left|\gz_t \ast(\ez_0)_{j_0}\ast(\pz_0)_{j_0}
\ast f(z)\r|+\sum^{\fz}_{k=j_0+1}\left|\gz_t \ast\ez_k\ast\pz_k \ast f(z)\r|
\nonumber\\
 &\le&\int_{\rn}\left|\gz_t \ast(\ez_0)_{j_0}
(y)\r|\left|(\pz_0)_{j_0}\ast f(z-y)\r|\,dy\nonumber\\
&&+\sum_{k=j_0+1}^{\fz}\int_{\rn}\left|\gz_t \ast\ez_k
(y)\r|\left|\pz_k\ast f(z-y)\r|\,dy \equiv\mathrm{I_3}+\mathrm{I_4}.
\end{eqnarray}

To estimate $\mathrm{I_3}$, from
\begin{eqnarray*}
\pz^{\ast\ast}_{0,\,A,\,B}(f)(x)&=&\sup_{j\in\zz_+,\,y\in\rn}
\frac{|(\pz_0)_j\ast
f(x-y)|}{m_{j,\,A,\,B}(y)}\\
&=&\sup_{j\in\zz_+,\,y\in\rn}\frac{|(\pz_0)_j\ast
f(x-(y+x-z))|}{m_{j,\,A,\,B}(y+x-z)}\\
&=& \sup_{j\in\zz_+,\,y\in\rn}\frac{|(\pz_0)_j\ast
f(z-y)|}{m_{j,\,A,\,B}(y+x-z)},
\end{eqnarray*}
we infer that
$$\left|(\pz_0)_{j_0}\ast f(z-y)\r|\le\pz^{\ast\ast}_{0,\,A,\,B}(f)(x)
m_{j_0,\,A,\,B}(y+x-z),$$
which, together with the facts that
$$m_{j_0,\,A,\,B}(y+x-z)\le m_{j_0,\,A,\,B}(x-z)m_{j_0,\,A,\,B}(y)$$
and $m_{j_0,\,A,\,B}(x-z)\ls2^{A}$, implies that
$$|(\pz_0)_{j_0}\ast f(z-y)|\ls2^{A}\pz^{\ast\ast}_{0,\,A,\,B}(f)(x)
m_{j_0,\,A,\,B}(y).$$
Thus, we have
\begin{eqnarray*}
\mathrm{I_3}\ls2^{A}\left\{\int_{\rn}|\gz_t \ast(\ez_0)_{j_0}
(y)|m_{j_0,\,A,\,B}(y)\,dy\r\}\pz^{\ast\ast}_{0,\,A,\,B}(f)(x).
\end{eqnarray*}

To estimate $\mathrm{I_4}$, by the definition of $\pz$, it is easy
to know that for any $k\in\nn$,
$$\left|\pz_k\ast
f(z-y)\r|\le\left|(\pz_0)_k\ast f(z-y)\r|+\left|(\pz_0)_{k-1}\ast
f(z-y)\r|.$$
By the definition of $\pz^{\ast\ast}_{0,\,A,\,B}(f)$
and the facts that
$$m_{k,\,A,\,B}(y+x-z)\le
m_{k,\,A,\,B}(x-z)m_{k,\,A,\,B}(y)$$
for any $k\in\nn$ and
$m_{k,\,A,\,B}(x-z)\ls2^{(k-j_0)A}$, we conclude that
\begin{eqnarray*}
|(\pz_0)_k\ast f(z-y)|&\le&
\pz^{\ast\ast}_{0,\,A,\,B}(f)(x)m_{k,\,A,\,B}(y+x-z)\\
&\le&\pz^{\ast\ast}_{0,\,A,\,B}(f)(x)
m_{k,\,A,\,B}(x-z)m_{k,\,A,\,B}(y)\\
&\ls&2^{(k-j_0)A} m_{k,\,A,\,B}(y)\pz^{\ast\ast}_{0,\,A,\,B}(f)(x).
\end{eqnarray*}
Similarly, we also have
$$|(\pz_0)_{k-1}\ast f(z-y)|\ls2^{(k-j_0)A}
m_{k,\,A,\,B}(y)\pz^{\ast\ast}_{0,\,A,\,B}(f)(x).$$
Thus,
\begin{eqnarray*}
\mathrm{I_4}\ls\sum_{k=j_0+1}^{\fz}2^{(k-j_0)A}\left\{\int_{\rn}|\gz_t
\ast\ez_k
(y)|m_{k,\,A,\,B}(y)\,dy\r\}\,\pz^{\ast\ast}_{0,\,A,\,B}(f)(x).
\end{eqnarray*}

From \eqref{3.21} and the above estimates of $\mathrm{I_3}$ and
$\mathrm{I_4}$, it follows that
\begin{eqnarray}\label{3.22}
|\gz_t \ast f(z)|&\ls&\left\{\int_{\rn}|\gz_t \ast(\ez_0)_{j_0}
(y)|m_{j_0,\,A,\,B}(y)\,dy\r.\nonumber\\
&&+\left.\sum_{k=j_0+1}^{\fz}2^{(k-j_0)A}\int_{\rn}|\gz_t \ast\ez_k
(y)|m_{k,\,A,\,B}(y)\,dy\r\}\pz^{\ast\ast}_{0,\,A,\,B}(f)(x).
\end{eqnarray}
Assume that $\supp(\ez_0)\subset B(0,R_0)$. Then
$\supp((\ez_0)_j)\subset B(0,2^{-j}R_0)$ for all $j\in\zz_+$.
Moreover, by $\supp(\gz)\subset B(0,R)$ and $2^{-j_0-1}\le
t<2^{-j_0}$, we see that
$$\supp(\gz_t)\subset B(0,2^{-j_0}R).$$
From this, we further deduce that
$\supp(\gz_t\ast(\ez_0)_{j_0})\subset B(0,2^{-j_0}(R_0
+R))$ and
\begin{eqnarray*}
|\gz_t\ast(\ez_0)_{j_0} (y)|\ls\int_{\rn}|\gz_t (s)||(\pz_0)_{j_0}
(y-s)|\,ds\ls2^{j_0 n}\int_{\rn}|\gz_t (s)|\,ds\sim2^{j_0 n},
\end{eqnarray*}
which implies that
\begin{eqnarray}\label{3.23}
&&\int_{\rn}|\gz_t\ast(\ez_0)_{j_0} (y)|m_{j_0,\,A,\,B}(y)\,dy\nonumber\\
 &&\hs\ls2^{j_0 n}\int_{B(0,2^{-j_0}(R_0 +R))}(1+2^{j_0}
|y|)^A 2^{B|y|}\,dy\ls1.
\end{eqnarray}
Moreover, since $\ez$ has vanishing moments up to order $N$, it was
proved in \cite[(2.13)]{R01} that
$$\|\gz_t\ast\ez_k\|_{L^{\fz}(\rn)}\ls2^{(j_0-k)N}2^{j_0 n}$$
for all $k\in\nn$ with $k\ge j_0 +1$, which, together with the facts that
$N>2A$ and
$$\supp(\gz_t\ast\ez_k) \subset B(0,2^{-j_0}R_0
+2^{-k}R),$$
implies that
\begin{eqnarray}\label{3.24}
&&\sum_{k=j_0+1}^{\fz}2^{(k-j_0)A}\int_{\rn}|\gz_t \ast\ez_k
(y)|m_{k,\,A,\,B}(y)\,dy\nonumber\\
 &&\hs\ls\sum_{k=j_0+1}^{\fz}2^{(k-j_0)A}2^{(j_0-k)N}2^{j_0
n} (2^{-j_0}R_0 +2^{-k}R)^n\nonumber\\  &&\hs\hspace{1
em}\times\left[1+2^k
(2^{-j_0}R_0+2^{-k}R)\r]^A 2^{(2^{-j_0}R_0+2^{-k}R)B}\nonumber\\
 &&\hs\ls\sum_{k=j_0 +1}^{\fz}2^{(j_0-k)(N-2A)}\ls1.
\end{eqnarray}
Thus, from \eqref{3.22}, \eqref{3.23} and \eqref{3.24}, we deduce
that $|\gz_t \ast f(z)|\ls\pz^{\ast\ast}_{0,\,A,\,B}(f)(x)$. Then,
by the arbitrariness of $t\in(0,1)$ and $z\in B(x,t)$, we know that
$$\wz{\cg}_{N,R}(f)(x)\ls\pz^{\ast\ast}_{0,\,A,\,B}(f)(x),$$
which together with \eqref{3.19} implies that for any $\lz\in(0,\fz)$,
$$\int_{\rn}\bfai\left(\wz{\cg}_{N,\,R}(f)(x)/\lz\r)\oz(x)\,dx\ls
\int_{\rn}\bfai\left(\pz^{+}_0 (f)(x)/\lz\r)\oz(x)\,dx.$$ From this, we
infer that \eqref{3.11} holds, which completes the proof of Theorem
\ref{t3.12}.
\end{proof}

\begin{remark}\label{r3.13}
Let $p\in (0,1]$. We point out that Theorem \ref{t3.12} when
$R\equiv1$ and $\bfai(t)\equiv t^p$ for all $t\in(0,\fz)$ was
obtained by Rychkov \cite[Theorem\,2.24]{R01}.
\end{remark}

As a corollary of Theorem \ref{3.1}, we immediately obtain that the
local vertical and the local nontangential maximal function
characterizations of $h^{\bfai}_{\oz,\,N}(\rn)$ with $N\ge
N_{\bfai,\,\oz}$ as follows. Here and in what follows,
\begin{eqnarray}\label{3.25}
N_{\bfai,\,\oz}\equiv\max\left\{\wz{N}_{\bfai,\,\oz},\,N_0\r\},
\end{eqnarray}
where $\wz{N}_{\bfai,\,\oz}$ and $N_0$ are  respectively as in
Definition \ref{d3.3} and Theorem \ref{t3.12}.

\begin{theorem}\label{t3.14}
Let $\bfai$ satisfy Assumption $\mathrm{(A)}$, $\oz\in
A^{\loc}_{\fz}(\rn)$, $\pz_0$ and $N_{\bfai,\,\oz}$ be respectively
as in \eqref{3.3} and \eqref{3.25}. Then for any integer $N\ge
N_{\bfai,\,\oz}$, the following are equivalent:

\begin{enumerate}

\item[\rm(i)] $f\in h^{\bfai}_{\oz,\,N}(\rn);$

\item[\rm(ii)] $f\in\cd'(\rn)$ and $\pz^{+}_0 (f)\in L^{\bfai}_{\oz}(\rn);$

\item[\rm(iii)] $f\in\cd'(\rn)$ and $(\pz_0)^{\ast}_{\triangledown}
(f)\in L^{\bfai}_{\oz}(\rn);$

\item[\rm(iv)] $f\in\cd'(\rn)$ and $\wz{\cg}_N (f)\in L^{\bfai}_{\oz}(\rn);$

\item[\rm(v)] $f\in\cd'(\rn)$ and $\wz{\cg}^0_N (f)\in L^{\bfai}_{\oz}(\rn);$

\item[\rm(vi)] $f\in\cd'(\rn)$ and $\cg^0_N (f)\in L^{\bfai}_{\oz}(\rn)$.

\end{enumerate}

Moreover,
\begin{eqnarray}\label{3.26}
\|f\|_{h^{\bfai}_{\oz,\,N}(\rn)}&\sim&\left\|\pz^{+}_0
(f)\r\|_{L^{\bfai}_{\oz}(\rn)}\sim
\left\|(\pz_0)^{\ast}_{\triangledown}(f)\r\|_{L^{\bfai}_{\oz}(\rn)}
\nonumber\\&\sim&\left\|\wz{\cg}_N
(f)\r\|_{L^{\bfai}_{\oz}(\rn)}\sim\left\|\wz{\cg}^0_N
(f)\r\|_{L^{\bfai}_{\oz}(\rn)}\sim\left\|\cg^0_N
(f)\r\|_{L^{\bfai}_{\oz}(\rn)},
\end{eqnarray}
where the implicit constants are independent of $f$.
\end{theorem}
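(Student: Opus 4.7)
The plan is to prove the six characterizations equivalent by a circular chain of norm inequalities, most of which follow either from trivial pointwise comparisons or from a direct application of Theorem \ref{t3.12}. Only one step genuinely requires a dilation trick.

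First, I would collect the obvious pointwise inequalities. By the very definitions,
\begin{eqnarray*}
\pz_0^{+}(f)(x)\le(\pz_0)^{\ast}_{\triangledown}(f)(x)\ls\pz^{\ast\ast}_{0,\,A,\,B}(f)(x),
\end{eqnarray*}
together with $\cg_N^{0}(f)\le\cg_N(f)\le\wz{\cg}_N(f)$ and $\cg_N^0(f)\le\wz{\cg}_N^{0}(f)\le\wz{\cg}_N(f)$. These provide the easy directions $\|\pz_0^{+}(f)\|_{L^{\bfai}_{\oz}(\rn)}\le\|(\pz_0)^{\ast}_{\triangledown}(f)\|_{L^{\bfai}_{\oz}(\rn)}\le\|\pz^{\ast\ast}_{0,\,A,\,B}(f)\|_{L^{\bfai}_{\oz}(\rn)}$ and the analogous monotonicities among the four grand maximal functions.

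Second, I would invoke Theorem \ref{t3.12}. Since $N\ge N_{\bfai,\,\oz}\ge N_0$, estimate \eqref{3.10} gives $\|\pz^{\ast\ast}_{0,\,A,\,B}(f)\|_{L^{\bfai}_{\oz}(\rn)}\ls\|\pz_0^{+}(f)\|_{L^{\bfai}_{\oz}(\rn)}$, and hence the three Peetre-type quantities are comparable in norm, settling the equivalence of (ii) and (iii). Applying \eqref{3.11} with $R=2^{3(10+n)}$ yields $\|\wz{\cg}_N(f)\|_{L^{\bfai}_{\oz}(\rn)}\ls\|\pz_0^{+}(f)\|_{L^{\bfai}_{\oz}(\rn)}$, which combined with the pointwise chain $\cg_N^0\le\wz{\cg}_N^0\le\wz{\cg}_N$ and $\cg_N^0\le\cg_N\le\wz{\cg}_N$ shows that all four grand maximal function norms are dominated by $\|\pz_0^{+}(f)\|_{L^{\bfai}_{\oz}(\rn)}$. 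In particular, $\|f\|_{h^{\bfai}_{\oz,\,N}(\rn)}=\|\cg_N(f)\|_{L^{\bfai}_{\oz}(\rn)}\ls\|\pz_0^{+}(f)\|_{L^{\bfai}_{\oz}(\rn)}$, which takes care of (ii) $\Rightarrow$ (i), (iv), (v), (vi).

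Third, to close the loop, I would establish $\pz_0^{+}(f)(x)\ls\cg_N^0(f)(x)$ pointwise by a dilation argument. Choose $R_0\in(0,1)$ so that after rescaling $\pz_0$ (replacing it by $\pz_0(\lz\,\cdot)$ for large $\lz$ if necessary), one has $\supp(\pz_0)\subset B(0,R_0)$. Setting $\wz{\pz}(z)\equiv R_0^n\pz_0(R_0 z)$ gives $\supp(\wz{\pz})\subset B(0,1)$, and $\wz{\pz}/c\in\cd_N^{0}(\rn)$ for an explicit constant $c$ depending on $R_0$, $N$ and the $\cd_N$-norm of $\pz_0$. A direct computation shows $(\pz_0)_j(x)=\wz{\pz}_{R_0\cdot 2^{-j}}(x)$ for every $j\in\zz_+$, and since $R_0\cdot 2^{-j}\in(0,1)$, we obtain $|(\pz_0)_j\ast f(x)|\le c\,\cg_N^{0}(f)(x)$, whence $\pz_0^{+}(f)(x)\le c\,\cg_N^{0}(f)(x)$ on $\rn$. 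Taking $L^{\bfai}_{\oz}$-norms closes the circle of equivalences in \eqref{3.26}.

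The main obstacle is precisely this last scaling argument: the definition of $\cg_N^0$ restricts both to $t\in(0,1)$ and to $\pz\in\cd_N^{0}(\rn)$ (support in $B(0,1)$), so a generic $\pz_0$ cannot be written as $\wz{\pz}_t$ for any admissible pair $(\wz{\pz},t)$. The choice $t=R_0$ with $R_0<1$ works because it simultaneously makes $t$ lie in $(0,1)$ and keeps $\supp(\wz{\pz})\subset B(0,1)$; this is why the preliminary rescaling of $\pz_0$ is essential. Everything else reduces to applying Theorem \ref{t3.12} and bookkeeping the trivial pointwise comparisons.
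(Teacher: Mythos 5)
Your Parts 1 and 2 are correct and coincide with the paper's proof: the trivial pointwise inequalities plus \eqref{3.10} and \eqref{3.11} of Theorem \ref{t3.12} give that $\|\pz_0^+(f)\|_{L^\bfai_\oz(\rn)}$ dominates every other norm in \eqref{3.26}, in particular (ii)$\Rightarrow$(i),(iii),(iv),(v),(vi). The gap is in Part 3, the closing step. The pointwise inequality $\pz_0^+(f)(x)\ls\cg_N^0(f)(x)$ that you claim is \emph{false} for a generic $\pz_0$ from \eqref{3.3}: if $\supp(\pz_0)\subset B(0,R)$ with $R>1$, then for $j=0$ the convolution $(\pz_0)_0\ast f(x)=\pz_0\ast f(x)$ picks up information about $f$ at distance up to $R$ from $x$, whereas every function $\pz_t$ with $\pz\in\cd_N^0(\rn)$ and $t\in(0,1)$ is supported in $B(0,1)$. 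Taking $f$ a point mass at $x_0$ with $1<|x-x_0|<R$ gives $\pz_0\ast f(x)\neq 0$ while $\cg_N^0(f)(x)=0$. Your proposed remedy --- ``replace $\pz_0$ by $\pz_0(\lz\cdot)$ for large $\lz$'' --- changes the function $\pz_0$ and therefore the quantities $\pz_0^+(f)$ and $(\pz_0)^*_{\triangledown}(f)$ appearing in the theorem; the bound you obtain afterwards concerns the rescaled function, not the original one, and you never transfer it back. As written, the circle is not closed.

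The fix is exactly what your dilation calculation is pointing at, but one more step is needed. Your rescaling produces an auxiliary function $\pz_1\equiv c^{-1}\wz{\pz}$ (in the notation of your Part 3) satisfying \eqref{3.3} with $\pz_1\in\cd_N^0(\rn)$, for which the pointwise bound $\pz_1^+(f)\le\cg_N^0(f)$ is legitimate. Now apply Theorem \ref{t3.12} with this $\pz_1$ in place of $\pz_0$ and with $R$ taken to be the support radius of the \emph{original} $\pz_0$: estimate \eqref{3.11} gives $\|\wz{\cg}_{N,R}(f)\|_{L^\bfai_\oz(\rn)}\ls\|\pz_1^+(f)\|_{L^\bfai_\oz(\rn)}\ls\|\cg_N^0(f)\|_{L^\bfai_\oz(\rn)}$, and since $\pz_0^+(f)\ls\wz{\cg}_{N,R}(f)$ for this $R$, one concludes $\|\pz_0^+(f)\|_{L^\bfai_\oz(\rn)}\ls\|\cg_N^0(f)\|_{L^\bfai_\oz(\rn)}$, closing the loop. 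This is precisely how the paper argues: it introduces an auxiliary $\pz_1\in\cd_N^0(\rn)$ for (vi)$\Rightarrow$(i) and a second auxiliary $\wz{\pz}_0\in\cd_N(\rn)$ together with $\wz{\cg}_{N,R}$ for (i)$\Rightarrow$(ii), and never claims a pointwise domination of $\pz_0^+$ by a grand maximal function with smaller support. Your dilation trick is a clean way of \emph{constructing} the auxiliary $\pz_1$, but it cannot replace the norm-level transfer via Theorem \ref{t3.12}.
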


\begin{proof}
$\mathrm{(i)\Rightarrow(ii)}$. Let integer $N\ge N_{\bfai,\,\oz}$
and $f\in h^{\bfai}_{\oz,\,N}(\rn)$. Let $\wz{\pz}_0$ satisfy
\eqref{3.3} and $\wz{\pz}_0\in\cd_N (\rn)$. Then from the definition
of $\cg_N (f)$, we infer that $\wz{\pz}^{+}_0 (f)\le\cg_N (f)$ and
hence $\wz{\pz}^{+}_0 (f)\in L^{\bfai}_{\oz}(\rn)$. For any $\pz_0$
satisfying \eqref{3.3}, assume that $\supp(\pz_0)\subset B(0,R)$.
Then, by \eqref{3.11} and the above argument, we have
$$\left\|\wz{\cg}_{N,\,R} (f)\r\|_{L^{\bfai}_{\oz}(\rn)}\ls
\left\|\wz{\pz}_0^{+}(f)\r\|_{L^{\bfai}_{\oz}(\rn)}\ls
\|f\|_{h^{\bfai}_{\oz,\,N}(\rn)},$$
which together with $\pz^{+}_0
(f)\ls\wz{\cg}_{N,\,R} (f)$ implies that $\pz^{+}_0 (f)\in
L^{\bfai}_{\oz}(\rn)$ and
$$\left\|\pz^{+}_0
(f)\r\|_{L^{\bfai}_{\oz}(\rn)}\ls\left\|f\r\|_{h^{\bfai}_{\oz,\,N}(\rn)}.$$

$\mathrm{(ii)\Rightarrow(iii)}$. Let $f\in\cd'(\rn)$ satisfy
$\pz_0^{+}(f) \in L^{\bfai}_{\oz}(\rn)$, where $\pz_0$ is as in
\eqref{3.3}. Then from the fact that
$$\pz_0^{+}(f)
\le(\pz_0)^{\ast}_{\triangledown}(f)\ls\pz^{\ast\ast}_
{0,\,A,\,B}(f)$$
and \eqref{3.10}, we deduce that $(\pz_0)^{\ast}_
{\triangledown}(f)\in L^{\bfai}_{\oz}(\rn)$ and
$$\|(\pz_0)^{\ast}_{\triangledown}(f)\|_{L^{\bfai}_{\oz}(\rn)}\ls\|\pz^{+}_0
(f)\|_{L^{\bfai}_{\oz}(\rn)}.$$

$\mathrm{(iii)\Rightarrow(iv)}$. Let $f\in\cd'(\rn)$ satisfy
$(\pz_0)^{\ast}_ {\triangledown}(f)\in L^{\bfai}_{\oz}(\rn)$, where
$\pz_0$ is as in \eqref{3.3}. By \eqref{3.11},
$$\|\wz{\cg}_{N}
(f)\|_ {L^{\bfai}_{\oz}(\rn)}\ls
\|\pz_0^{+}(f)\|_{L^{\bfai}_{\oz}(\rn)},$$
which together with the fact that
$$\pz_0^{+}(f) \le(\pz_0)^{\ast}_{\triangledown}(f)$$
and the assumption that $(\pz_0)^{\ast}_ {\triangledown}(f)\in
L^{\bfai}_{\oz}(\rn)$ implies $\wz{\cg}_{N} (f) \in
L^{\bfai}_{\oz}(\rn)$ and
$$\left\|\wz{\cg}_N
(f)\r\|_{L^{\bfai}_{\oz}(\rn)}\ls\left\|(\pz_0)^{\ast}_{\triangledown}(f)
\r\|_{L^{\bfai}_{\oz}(\rn)}.$$

$\mathrm{(iv)\Rightarrow(v)\Rightarrow(vi)}$. By the facts that
$\cg^0_N (f)\le\wz{\cg}^0_N (f)\le\wz{\cg}_N (f)$ for any
$f\in\cd'(\rn)$ and $\bfai$ is increasing, we see that all the
conclusions hold. Moreover, it is obvious that
$$\left\|\cg^0_N
(f)\r\|_{L^{\bfai}_{\oz}(\rn)}\le\left\|\wz{\cg}^0_N
(f)\r\|_{L^{\bfai}_{\oz}(\rn)}\le\left\|\wz{\cg}_N
(f)\r\|_{L^{\bfai}_{\oz}(\rn)}.$$

$\mathrm{(vi)\Rightarrow(i)}$. Let $f\in\cd'(\rn)$ satisfy
$$\cg^0_N(f)\in L^{\bfai}_{\oz}(\rn).$$
Let $\pz_1$ satisfy \eqref{3.3} and
$\pz_1\in\cd^0_N (\rn)$. Then by \eqref{3.10}, we have that
$$\left\|\wz{\cg}_{N} (f)\r\|_ {L^{\bfai}_{\oz}(\rn)}\ls
\|\pz_1^{+}(f)\|_{L^{\bfai}_{\oz}(\rn)},$$
which together with the
facts that $\pz_1^{+}(f)\le\cg^0_{N} (f)$
and $\cg_{N} (f)\le\wz{\cg}_{N} (f)$ implies that
$$\left\|\cg_{N} (f)\r\|_
{L^{\bfai}_{\oz}(\rn)}\ls\left\|\cg^0_{N} (f)\r\|_
{L^{\bfai}_{\oz}(\rn)}.$$ Thus, by the definition of
$h^{\bfai}_{\oz,\,N}(\rn)$, we know that $f\in h^{\bfai}_
{\oz,\,N}(\rn)$ and
$$\|f\|_ {h^{\bfai}_{\oz,\,N}(\rn)}\ls\|\cg^0_{N}
(f)\|_ {L^{\bfai}_{\oz}(\rn)},$$
which completes the proof of Theorem \ref{t3.14}.
\end{proof}

As a corollary of Theorems \ref{t3.12} and \ref{t3.14}, we have the
following local tangential maximal function characterization of
$h^{\bfai}_{\oz,\,N}(\rn)$. We omit the details.
\begin{corollary}\label{c3.15}
Let $\bfai$ satisfy Assumption $\mathrm{(A)}$, $\pz_0$ be as in
\eqref{3.3}, $\oz\in A^{\loc}_{\fz}(\rn)$, $N_{\bfai,\,\oz}$ be as
in \eqref{3.25}, $A$ and $B$ be as in Theorem \ref{t3.12}. Then for
integer $N\ge N_{\bfai,\,\oz}$,
$$f\in h^{\bfai}_{\oz,\,N}(\rn)$$
if and only if $f\in\cd'(\rn)$ and
$\pz^{\ast\ast}_{0,\,A,\,B}(f)\in{L^{\bfai}_{\oz}(\rn)}$; moreover,
$$\|f\|_{h^{\bfai}_{\oz,\,N}(\rn)}\sim
\|\pz^{\ast\ast}_{0,\,A,\,B}(f)\|_{L^{\bfai}_{\oz}(\rn)}.$$
\end{corollary}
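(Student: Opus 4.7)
The plan is to deduce Corollary \ref{c3.15} directly by chaining the norm inequalities \eqref{3.10} and \eqref{3.11} from Theorem \ref{t3.12} with the equivalences \eqref{3.26} established in Theorem \ref{t3.14}. No new estimates are required; the result is a packaging of what has already been proved.

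For the forward direction, suppose $f\in h^{\bfai}_{\oz,\,N}(\rn)$ with $N\ge N_{\bfai,\,\oz}$. By Theorem \ref{t3.14}, $f\in\cd'(\rn)$ and $\pz_0^{+}(f)\in L^{\bfai}_{\oz}(\rn)$ with $\|\pz_0^{+}(f)\|_{L^{\bfai}_{\oz}(\rn)}\sim\|f\|_{h^{\bfai}_{\oz,\,N}(\rn)}$. Since $A>A_1$ and $B>B_1$ by assumption, \eqref{3.10} of Theorem \ref{t3.12} applies and gives
\[
\left\|\pz^{\ast\ast}_{0,\,A,\,B}(f)\r\|_{L^{\bfai}_{\oz}(\rn)}\ls\left\|\pz_0^{+}(f)\r\|_{L^{\bfai}_{\oz}(\rn)}\ls\|f\|_{h^{\bfai}_{\oz,\,N}(\rn)},
\]
so $\pz^{\ast\ast}_{0,\,A,\,B}(f)\in L^{\bfai}_{\oz}(\rn)$ with the desired control.

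For the converse, suppose $f\in\cd'(\rn)$ and $\pz^{\ast\ast}_{0,\,A,\,B}(f)\in L^{\bfai}_{\oz}(\rn)$. Taking $y=0$ and $j\in\zz_+$ arbitrary in the definition \eqref{3.5}, we obtain the trivial pointwise bound $\pz_0^{+}(f)(x)\le\pz^{\ast\ast}_{0,\,A,\,B}(f)(x)$ for every $x\in\rn$, since $m_{j,\,A,\,B}(0)=1$. Because $\bfai$ is increasing, this gives $\pz_0^{+}(f)\in L^{\bfai}_{\oz}(\rn)$ with $\|\pz_0^{+}(f)\|_{L^{\bfai}_{\oz}(\rn)}\le\|\pz^{\ast\ast}_{0,\,A,\,B}(f)\|_{L^{\bfai}_{\oz}(\rn)}$. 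By the equivalence (ii)$\Leftrightarrow$(i) in Theorem \ref{t3.14}, we conclude $f\in h^{\bfai}_{\oz,\,N}(\rn)$ with $\|f\|_{h^{\bfai}_{\oz,\,N}(\rn)}\ls\|\pz^{\ast\ast}_{0,\,A,\,B}(f)\|_{L^{\bfai}_{\oz}(\rn)}$.

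There is essentially no obstacle here: everything reduces to a one-line monotonicity argument in the $(\Leftarrow)$ direction and a single invocation of \eqref{3.10} in the $(\Rightarrow)$ direction. The only point to verify carefully is that the parameter ranges $A\in(A_1,\fz)$ and $B\in(B_1,\fz)$ match the hypothesis of Theorem \ref{t3.12} and that $\pz_0\in\cd_{N,\,R}(\rn)$ for some $R$, which is automatic from $\pz_0\in\cd(\rn)$ together with $N\ge N_{\bfai,\,\oz}\ge N_0$. Combining the two bounds yields the claimed equivalence $\|f\|_{h^{\bfai}_{\oz,\,N}(\rn)}\sim\|\pz^{\ast\ast}_{0,\,A,\,B}(f)\|_{L^{\bfai}_{\oz}(\rn)}$, completing the proof.
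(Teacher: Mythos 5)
Your proof is correct and is exactly the assembly of Theorems \ref{t3.12} and \ref{t3.14} that the paper intends when it states the corollary and writes ``We omit the details.'' The forward direction via \eqref{3.10} combined with $\|\pz_0^+(f)\|_{L^{\bfai}_{\oz}(\rn)}\sim\|f\|_{h^{\bfai}_{\oz,\,N}(\rn)}$ from \eqref{3.26}, and the converse via the pointwise bound $\pz_0^+(f)\le\pz^{\ast\ast}_{0,\,A,\,B}(f)$ (which the paper itself records immediately after Definition \ref{d3.7}) followed by the equivalence (ii)$\Leftrightarrow$(i) of Theorem \ref{t3.14}, is precisely the intended argument.
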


Next, we give some basic properties concerning
$h^{\bfai}_{\oz,\,N}(\rn)$ and $h^{\rz,\,q,\,s}_{\oz}(\rn)$.

\begin{proposition}\label{p3.16}
Let $\bfai$ satisfy Assumption $\mathrm{(A)}$, $\oz\in
A^{\loc}_{\fz}(\rn)$ and $N_{\bfai,\,\oz}$ be as in \eqref{3.25}. If
integer $N\ge N_{\bfai,\,\oz}$, then the inclusion
$h^{\bfai}_{\oz,\,N}(\rn)\hookrightarrow\cd'(\rn)$ is continuous.
\end{proposition}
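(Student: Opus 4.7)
The plan is to prove that for every test function $\vz\in\cd(\rn)$ there exists a constant $C(\vz)$ such that $|\langle f,\vz\rangle|\le C(\vz)\,\|f\|_{h^{\bfai}_{\oz,\,N}(\rn)}$ for all $f\in h^{\bfai}_{\oz,\,N}(\rn)$; this is exactly continuity of the inclusion into $\cd'(\rn)$ with its weak-$\ast$ topology. The strategy is to realise a translated, dilated, and renormalised copy of $\vz$ as an element of $\cd_N(\rn)$ rescaled by some admissible $t\in(0,1)$; the definition of $\cg_N(f)$ then converts $\langle f,\vz\rangle$ into a value of $(\pz^{y})_t\ast f(y)$ that is pointwise controlled by $\cg_N(f)(y)$.

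First I would reduce to the case in which $\supp\vz$ is contained in a ball of radius strictly less than $2^{3(10+n)}$. If this is not already so, cover $\supp\vz$ by finitely many open balls $\{B(z_i,r)\}_{i=1}^{M}$ with $r<2^{3(10+n)}$, pick a smooth partition of unity $\{\ez_i\}_{i=1}^{M}$ subordinate to this cover, and write $\vz=\sum_{i=1}^{M}\vz\ez_i$. Each summand has small support, so by the triangle inequality it suffices to treat that case.

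Now assume $\supp\vz\subset B(x_0,R)$ with $R<2^{3(10+n)}$. Choose $r\in(0,\,2^{3(10+n)}-R)$ and set $t\equiv (R+r)/2^{3(10+n)}\in(0,1)$. Let $C_\vz\equiv\max_{|\az|\le N}\|\pat^{\az}_x \vz\|_{L^{\fz}(\rn)}$ and, for each $y\in B(x_0,r)$, define
$$\pz^{y}(z)\equiv C_\vz^{-1}\,t^{n}\,\vz(y-tz).$$
A direct computation gives $\supp\pz^{y}\subset B(0,2^{3(10+n)})$ and $|\pat^{\az}_z\pz^{y}(z)|\le C_\vz^{-1}t^{n+|\az|}\|\pat^{\az}_x\vz\|_{L^{\fz}(\rn)}\le 1$ for all $|\az|\le N$, so $\pz^{y}\in\cd_N(\rn)$. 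Moreover $(\pz^{y})_t(z)=t^{-n}\pz^{y}(z/t)=C_\vz^{-1}\vz(y-z)$, which yields $((\pz^{y})_t\ast f)(y)=C_\vz^{-1}\langle f,\vz\rangle$. Invoking the definition of $\cg_N(f)$ in \eqref{3.2} gives the pointwise estimate
$$|\langle f,\vz\rangle|\le C_\vz\,\cg_N(f)(y)\qquad\text{for every}\ y\in B(x_0,r).$$

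Setting $E\equiv B(x_0,r)$, we have $\oz(E)\in(0,\fz)$ by local integrability of $\oz$. For any $\lz>\|f\|_{h^{\bfai}_{\oz,\,N}(\rn)}=\|\cg_N(f)\|_{L^{\bfai}_{\oz}(\rn)}$, the above pointwise bound together with monotonicity of $\bfai$ gives
$$\bfai\!\left(\frac{|\langle f,\vz\rangle|}{C_\vz\lz}\r)\oz(E)\le\int_{E}\bfai\!\left(\frac{\cg_N(f)(y)}{\lz}\r)\oz(y)\,dy\le 1,$$
so that $|\langle f,\vz\rangle|\le C_\vz\,\bfai^{-1}(1/\oz(E))\,\lz$; letting $\lz\to\|f\|_{h^{\bfai}_{\oz,\,N}(\rn)}$ finishes the small-support case. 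The main obstacle in the whole argument is really the bookkeeping that produces $\pz^{y}\in\cd_N(\rn)$ with the scaling parameter $t$ staying strictly inside $(0,1)$; the partition-of-unity reduction is what allows this to be done for test functions of arbitrary support, and once the normalisation is in place the Luxemburg-norm argument is routine.
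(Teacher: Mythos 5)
Your proof is correct and takes a genuinely different, more self-contained route than the paper's. The paper bounds $|\langle f,\phi\rangle|$ by $\inf_{x\in B(0,1)}\wz{\cg}_{N,R}(f)(x)$, where $\wz{\cg}_{N,R}$ is the \emph{nontangential} grand maximal function with a radius $R$ that grows with $\supp\phi$; it then integrates $\bfai(\wz{\cg}_{N,R}(f))$ over $B(0,1)$, splits into the cases $\|f\|_{h^{\bfai}_{\oz,N}(\rn)}\ge 1$ and $<1$ (using the upper/lower type of $\bfai$, and producing different powers of the norm), and must invoke Theorems \ref{t3.12} and \ref{t3.14} to know that $\|\wz{\cg}_{N,R}(f)\|_{L^{\bfai}_{\oz}(\rn)}\ls\|f\|_{h^{\bfai}_{\oz,N}(\rn)}$, since $\wz{\cg}_{N,R}$ is a priori a larger maximal function than $\cg_N$. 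You avoid both of these ingredients: the partition-of-unity reduction keeps each piece of $\vz$ small enough that, after translating and dilating by $t\in(0,1)$, it lands inside the \emph{fixed} class $\cd_N(\rn)$, so the pointwise control is by $\cg_N(f)$ itself with no appeal to the nontangential version or to the maximal-function equivalences; and instead of a two-case analysis you read off the estimate directly from the Luxemburg norm, since the pointwise bound on a set $E$ of positive $\oz$-measure together with $\int_{\rn}\bfai(\cg_N(f)/\lz)\,\oz\le 1$ for $\lz>\|f\|$ gives the single linear bound $|\langle f,\vz\rangle|\le C_\vz\,\bfai^{-1}(1/\oz(E))\,\|f\|_{h^{\bfai}_{\oz,N}(\rn)}$, using only that $\bfai$ is continuous, strictly increasing, with $\bfai(0)=0$ and $\bfai(t)\to\fz$ so that $\bfai^{-1}$ is defined on $(0,\fz)$. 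One tiny slip: with $t=(R+r)/2^{3(10+n)}$ you only get $\supp\pz^{y}\subset\overline{B(0,2^{3(10+n)})}$ rather than the open ball; since $\supp\vz$ is a compact subset of the open ball $B(x_0,R)$, first shrink $R$ slightly to $R'<R$ with $\supp\vz\subset\overline{B(x_0,R')}$ and then run the argument with $R'$, which gives $(r+R')/t<2^{3(10+n)}$ strictly.
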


\begin{proof}
Let $f\in h^{\bfai}_{\oz,\,N}(\rn)$. For any given
$\phi\in\cd(\rn)$, assume that $\supp(\phi)\subset B(0,R)$ with
$R\in(0,\fz)$. Then we have
\begin{eqnarray}\label{3.27}
|\langle f,\phi\rangle|=\left|f\ast\wz{\phi}(0)\r|\le\left\|\wz{\phi}\r\|_{\cd_{N,\,R}
(\rn)}\inf_{x\in B(0,1)}\wz{\cg}_{N,\,R} (f)(x),
\end{eqnarray}
where $\wz{\cg}_{N,\,R} (f)$ is as in \eqref{3.2} and
$\wz{\phi}(x)\equiv\phi(-x)$ for all $x\in\rn$. Now, to prove
Proposition \ref{p3.16}, we consider the following two cases for
$\|f\|_{h^{\bfai}_{\oz,\,N}(\rn)}$.

{\it Case (i)} $\|f\|_{h^{\bfai}_{\oz,\,N}(\rn)}\ge1$. In this case,
by the upper type 1 property of $\bfai$ and Theorems \ref{t3.12} and
\ref{t3.14}, we obtain
\begin{eqnarray}\label{3.28}
&&\int_{\rn}\bfai\left(\wz{\cg}_{N,\,R} (f)(x)\r)\oz(x)\,dx\nonumber \\
&&\hs\ls \|f\|_{h^{\bfai}_{\oz,\,N}(\rn)}
\int_{\rn}\bfai\left(\frac{\wz{\cg}_{N,\,R} (f)(x)}
{\|f\|_{h^{\bfai}_{\oz,\,N}(\rn)}}\r)\oz(x)\,dx
\ls\|f\|_{h^{\bfai}_{\oz,\,N}(\rn)}.
\end{eqnarray}
Notice that the upper type 1 property of $\bfai$ implies that for
$t\in(0,1]$,
$$\bfai(1)=\bfai\left(t\frac{1}{t}\r)\ls\frac{1}{t}\bfai(t)$$
and hence $\bfai(t)\gs t$. Thus, when $\inf_{x\in
B(0,1)}\wz{\cg}_{N,\,R} (f)(x)\le1$, from \eqref{3.27} and
\eqref{3.28}, we deduce that
 \begin{eqnarray}\label{3.29}
|\langle f,\phi\rangle|&\ls&\|\phi\|_{\cd_{N,\,R} (\rn)}
\bfai\left(\inf_{x\in B(0,1)}\wz{\cg}_{N,\,R} (f)(x)\r)\nonumber\\
&\ls&\|\phi\|_{\cd_{N,\,R} (\rn)}\frac{1}{\oz(B(0,1))}
\int_{B(0,1)}\bfai\left(\wz{\cg}_{N,\,R} (f)(y)\r)\oz(y)\,dy\nonumber\\
 &\ls&\|\phi\|_{\cd_{N,\,R} (\rn)}\frac{1}{\oz(B(0,1))}
\|f\|_{h^{\bfai}_{\oz,\,N}(\rn)}.
\end{eqnarray}
Let $p_{\bfai}$ be as in \eqref{2.6}. Since $\bfai$ is lower type
$p_{\bfai}$, then for $t\in(1,\fz)$,
$$\bfai(1)=\bfai\left(t\frac{1}{t}\r)\ls\frac{1}{t^{p_{\bfai}}}\bfai(t)$$
and hence $t\ls[\bfai(t)]^{1/p_{\bfai}}$. Thus, when $\inf_{x\in
B(0,1)}\wz{\cg}_{N,\,R} (f)(x)>1$, by \eqref{3.26} and \eqref{3.27},
we conclude that
\begin{eqnarray}\label{3.30}
|\langle f,\phi\rangle|&\ls&\|\phi\|_{\cd_{N,\,R} (\rn)}
\left\{\bfai\left(\inf_{x\in B(0,1)}\wz{\cg}_{N,\,R} (f)(x)\r)\r\}
^{1/p_{\bfai}}\nonumber\\  &\ls&\|\phi\|_{\cd_{N,\,R}
(\rn)}[\oz(B(0,1))]^{-1/p_{\bfai}}\nonumber\\
&&\times\left\{\int_{B(0,1)}\bfai\left(\wz{\cg}_{N,\,R}
(f)(y)\r)\oz(y)\,dy\r\}^{1/p_{\bfai}}\nonumber\\
&\ls&\|\phi\|_{\cd_{N,\,R} (\rn)}[\oz(B(0,1))]^{-1/p_{\bfai}}
\|f\|_{h^{\bfai}_{\oz,\,N}(\rn)}^{1/p_{\bfai}}.
\end{eqnarray}

{\it Case (ii)} $\|f\|_{h^{\bfai}_{\oz,\,N}(\rn)}<1$. In this case,
by the lower type $p_{\bfai}$ property of $\bfai$ and Theorems
\ref{t3.12} and \ref{t3.14}, we see that
\begin{eqnarray*}
&&\int_{\rn}\bfai\left(\wz{\cg}_{N,\,R} (f)(x)\r)\oz(x)\,dx\\
&&\hs\ls \|f\|^{p_{\bfai}}_{h^{\bfai}_{\oz,\,N}(\rn)}
\int_{\rn}\bfai\left(\frac{\wz{\cg}_{N,\,R} (f)(x)}
{\|f\|_{h^{\bfai}_{\oz,\,N}(\rn)}}\r)\oz(x)\,dx
\ls\|f\|^{p_{\bfai}}_{h^{\bfai}_{\oz,\,N}(\rn)}.
\end{eqnarray*}
Thus, from  this fact and \eqref{3.27}, similarly to the proof of
\eqref{3.29} and \eqref{3.30}, we infer that if $\inf_{x\in
B(0,1)}\wz{\cg}_{N,\,R} (f)(x)\le1$, then
$$|\langle f,\phi\rangle|\ls\|\phi\|_{\cd_{N,\,R}
(\rn)}\frac{1}{\oz(B(0,1))} \|f\|^{p_{\bfai}}
_{h^{\bfai}_{\oz,\,N}(\rn)},$$ and if $\inf_{x\in
B(0,1)}\wz{\cg}_{N,\,R} (f)(x)>1$, then
$$|\langle f,\phi\rangle|\ls\|\phi\|_{\cd_{N,\,R}
(\rn)}[\oz(B(0,1))]^{-1/p_{\bfai}}
\|f\|_{h^{\bfai}_{\oz,\,N}(\rn)}.$$ Thus, $f\in\cd'(\rn)$ and the
inclusion is continuous, which completes the proof of Proposition
\ref{p3.16}.
\end{proof}

\begin{proposition}\label{p3.17}
Let $\bfai$ satisfy Assumption $\mathrm{(A)}$, $\oz\in
A^{\loc}_{\fz}(\rn)$ and $N_{\bfai,\,\oz}$ be as in \eqref{3.25}. If
integer $N\ge N_{\bfai,\,\oz}$, then the space
$h^{\bfai}_{\oz,\,N}(\rn)$ is complete.
\end{proposition}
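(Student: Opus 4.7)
The plan is to run the standard completeness argument for quasi-Banach spaces, reducing everything to (i) a quasi-triangle inequality with exponent $p_{\bfai}$ for the Luxemburg functional, and (ii) the continuity of the inclusion $h^{\bfai}_{\oz,\,N}(\rn)\hookrightarrow\cd'(\rn)$ established in Proposition~\ref{p3.16}.

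First I would record the following $p_{\bfai}$-subadditivity of $\|\cdot\|_{L^{\bfai}_{\oz}(\rn)}$: for $f,g\in L^{\bfai}_{\oz}(\rn)$,
$$\|f+g\|_{L^{\bfai}_{\oz}(\rn)}^{p_{\bfai}}\le\|f\|_{L^{\bfai}_{\oz}(\rn)}^{p_{\bfai}}+\|g\|_{L^{\bfai}_{\oz}(\rn)}^{p_{\bfai}}.$$
To verify this, set $c\equiv(\|f\|_{L^{\bfai}_{\oz}(\rn)}^{p_{\bfai}}+\|g\|_{L^{\bfai}_{\oz}(\rn)}^{p_{\bfai}})^{1/p_{\bfai}}$ and combine the subadditivity of $\bfai$ (from Assumption (A)) with its strictly lower type $p_{\bfai}$ applied at $\|f\|_{L^{\bfai}_{\oz}(\rn)}/c,\,\|g\|_{L^{\bfai}_{\oz}(\rn)}/c\in(0,1]$ to obtain $\int_{\rn}\bfai(|f(x)+g(x)|/c)\oz(x)\,dx\le1$, and hence $\|f+g\|_{L^{\bfai}_{\oz}(\rn)}\le c$. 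Together with the sublinearity $\cg_N(f+g)\le \cg_N(f)+\cg_N(g)$ this transfers to $\|\cdot\|_{h^{\bfai}_{\oz,\,N}(\rn)}$ and reduces completeness to showing that every $p_{\bfai}$-absolutely convergent series converges.

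Next, I would take $\{f_k\}_{k\in\nn}\subset h^{\bfai}_{\oz,\,N}(\rn)$ with $\sum_{k=1}^{\fz}\|f_k\|_{h^{\bfai}_{\oz,\,N}(\rn)}^{p_{\bfai}}<\fz$ and set $S_m\equiv\sum_{k=1}^{m}f_k$. By Proposition~\ref{p3.16}, $\{S_m\}_m$ is Cauchy in $\cd'(\rn)$ and hence converges to some $f\in\cd'(\rn)$. For every $\pz\in\cd_N(\rn)$ and $t\in(0,1)$, distributional convergence yields $\pz_t\ast f(x)=\sum_{k=1}^{\fz}\pz_t\ast f_k(x)$ pointwise in $x$, whence
$$\cg_N(f)(x)\le\sum_{k=1}^{\fz}\cg_N(f_k)(x),\qquad x\in\rn.$$
Applying the $p_{\bfai}$-subadditivity above to the finite partial sums on the right and passing to the limit via Fatou's lemma then gives $\|f\|_{h^{\bfai}_{\oz,\,N}(\rn)}^{p_{\bfai}}\le\sum_{k=1}^{\fz}\|f_k\|_{h^{\bfai}_{\oz,\,N}(\rn)}^{p_{\bfai}}<\fz$, so $f\in h^{\bfai}_{\oz,\,N}(\rn)$. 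The very same pointwise domination applied to the tail $f-S_m=\sum_{k=m+1}^{\fz}f_k$ in $\cd'(\rn)$ yields $\|f-S_m\|_{h^{\bfai}_{\oz,\,N}(\rn)}^{p_{\bfai}}\le\sum_{k=m+1}^{\fz}\|f_k\|_{h^{\bfai}_{\oz,\,N}(\rn)}^{p_{\bfai}}\to0$ as $m\to\fz$, finishing the proof.

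The hard part, if any, will be the correct identification of the quasi-triangle structure: one has to observe that concavity of $\bfai$ (rather than convexity, as in the classical Orlicz setting) combined with strict lower type $p_{\bfai}$ still produces a usable $p_{\bfai}$-subadditivity of the Luxemburg functional, which is what turns $h^{\bfai}_{\oz,\,N}(\rn)$ into a genuine $p_{\bfai}$-normed space. Once this ingredient is in place, the remainder is a routine pointwise domination of $\cg_N$ applied to a series by the series of $\cg_N$'s, combined with Fatou's lemma and Proposition~\ref{p3.16}.
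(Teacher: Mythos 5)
Your proposal is correct and, up to organization, follows the same path as the paper's proof: both rely on (a) deriving a $p_{\bfai}$-power inequality for the Luxemburg functional from the subadditivity and strictly lower type $p_{\bfai}$ of $\bfai$, (b) Proposition~\ref{p3.16} to pass from Cauchy in $h^{\bfai}_{\oz,\,N}(\rn)$ to convergence of partial sums in $\cd'(\rn)$, (c) the pointwise domination $\cg_N(f)\le\sum_k\cg_N(f_k)$ extracted from distributional convergence, and (d) the standard reduction of completeness to convergence of $p_{\bfai}$-absolutely summable series. The only cosmetic differences: the paper proves the series inequality $\|f\|^{p_{\bfai}}\le\sum_i\|f_i\|^{p_{\bfai}}$ directly with weights $\lz_i=\|f_i\|^{p_{\bfai}}$ and the Levi (monotone convergence) lemma, whereas you first establish the pairwise $p_{\bfai}$-quasi-triangle inequality and then iterate and pass to the limit; and your invocation of ``Fatou'' is better phrased as monotone convergence, which is what actually justifies exchanging the integral with the increasing limit of partial sums.
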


\begin{proof}
For any $\pz\in\cd_N (\rn)$ and $\{f_i\}_{i\in\nn}\subset\cd'(\rn)$
such that $\{\sum_{i=1}^j f_i\}_{j\in\nn}$ converges in $\cd'(\rn)$
to a distribution $f$ as $j\to\fz$, the series $\{\sum_{i=1}^j
f_i\ast\pz\}_{j\in\nn}$ converges to $f\ast\pz$ also pointwise as
$j\to\fz$. By Assumption $\mathrm{(A)}$, we know that $\bfai$ is
strictly increasing and subadditive, which together with the
continuity of $\bfai$ implies that  for all $x\in\rn$,
$$\bfai\left(\cg_N (f)(x)\r)\le\bfai\left(\sum_{i=1}^{\fz}\cg_N
(f_i)(x)\r)\le\sum_{i=1}^{\fz}\bfai\left(\cg_N (f_i)(x)\r).$$ If
$\sum_{i=1}^{\fz}\|f_i\|_{h^{\bfai}_{\oz,\,N}(\rn)}^{p_{\bfai}}<\fz$,
letting $\lz_i =\|f_i\|_{h^{\bfai}_{\oz,\,N}(\rn)}^{p_{\bfai}}$,
then by the strictly lower type $p_{\bfai}$ property of $\bfai$ and
the levi lemma, we know that
\begin{eqnarray*}
&&\int_{\rn}\bfai\left(\frac{\cg_N
(f)(x)}{(\sum_{j=1}^{\fz}\lz_j)^{1/p_{\bfai}}}\r)\oz(x)\,dx\\
&&\hs\le \sum_{i=1}^{\fz}\int_{\rn}\bfai\left(\frac{\cg_N
(f_i)(x)}{(\sum_{j=1}^{\fz}\lz_j)^{1/p_{\bfai}}}\r)\oz(x)\,dx\\
&&\hs\le\sum_{i=1}^{\fz}\frac{\lz_i}{\sum_{j=1}^{\fz}\lz_j}
\int_{\rn}\bfai\left(\frac{\cg_N
(f_i)(x)}{\lz_i^{1/p_{\bfai}}}\r)\oz(x)\,dx\le
\sum_{i=1}^{\fz}\frac{\lz_i}{\sum_{j=1}^{\fz}\lz_j}=1,
\end{eqnarray*}
 which further implies that
\begin{eqnarray}\label{3.31}
\|f\|^{p_{\bfai}}_{h^{\bfai}_{\oz,\,N}(\rn)}\le\sum_{i=1}^{\fz}
\|f_i\|^{p_{\bfai}}_{h^{\bfai}_{\oz,\,N}(\rn)}.
\end{eqnarray}

To prove that $h^{\bfai}_{\oz,\,N}(\rn)$ is complete, it suffices to
show that for every sequence $\{f_j\}_{j\in\nn}$ with $\|f_j\|
_{h^{\bfai}_{\oz,\,N}(\rn)}<2^{-j}$ for any $j\in\nn$, the series
$\{f_j\}_{j\in\nn}$ converges in $h^{\bfai}_{\oz,\,N}(\rn)$. Since
$\{\sum_{i=1}^j f_i\}_{j\in\nn}$ is a Cauchy sequence in
$h^{\bfai}_{\oz,\,N}(\rn)$, by Proposition \ref{p3.16} and the
completeness of $\cd'(\rn)$, $\{\sum_{i=1}^j f_i\}_{j\in\nn}$ is
also a Cauchy sequence in $\cd'(\rn)$ and thus converges to some
$f\in\cd'(\rn)$. Therefore, by \eqref{3.31},
$$\left\|f-\sum_{i=1}^j
f_i\r\|^{p_{\bfai}}_{h^{\bfai}_{\oz,\,N}(\rn)}=\left\|\sum_{i=j+1}^{\fz}
f_i\r\|^{p_{\bfai}}_{h^{\bfai}_{\oz,\,N}(\rn)}\le
\sum_{i=j+1}^{\fz}2^{-ip_{\bfai}}\to0$$ as $j\to\fz$, which
completes the proof of Proposition \ref{p3.17}.
\end{proof}

\begin{theorem}\label{t3.18}
Let $\bfai$ satisfy Assumption $\mathrm{(A)}$, $\oz\in
A^{\loc}_{\fz}(\rn)$ and $N_{\bfai,\,\oz}$ be as in \eqref{3.25}. If
$(\rz,\,q,\,s)_{\oz}$ is admissible (see Definition \ref{d3.4}) and
integer $N\ge N_{\bfai,\,\oz}$, then
$$h^{\rz,\,q,\,s}_{\oz}(\rn)\subset
h^{\bfai}_{\oz,\,N_{\bfai,\,\oz}}(\rn)\subset
h^{\bfai}_{\oz,\,N}(\rn)$$
and, moreover, there exists a positive
constant $C$ such that for all $f\in h^{\rz,\,q,\,s}_{\oz}(\rn)$,
$$\|f\|_{h^{\bfai}_{\oz,\,N}(\rn)}\le\|f\|_{h^{\bfai}_{\oz,\,
N_{\bfai,\,\oz}}(\rn)} \le C\|f\|_{h^{\rz,\,q,\,s}_{\oz}(\rn)}.$$
\end{theorem}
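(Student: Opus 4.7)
The second inclusion $h^{\bfai}_{\oz,N_{\bfai,\oz}}(\rn)\subset h^{\bfai}_{\oz,N}(\rn)$ together with the corresponding norm comparison is essentially automatic. Since $N\ge N_{\bfai,\oz}$, the constraint $\|\pz\|_{\cd_N(\rn)}\le 1$ is strictly more restrictive than $\|\pz\|_{\cd_{N_{\bfai,\oz}}(\rn)}\le 1$, so $\cd_N(\rn)\subset\cd_{N_{\bfai,\oz}}(\rn)$ and therefore $\cg_N(f)(x)\le\cg_{N_{\bfai,\oz}}(f)(x)$ pointwise; the monotonicity of $\bfai$ transfers this to $L^{\bfai}_\oz$-quasi-norms.

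For the nontrivial inclusion $h^{\rz,q,s}_\oz(\rn)\subset h^{\bfai}_{\oz,N_{\bfai,\oz}}(\rn)$, given $f=\sum_{i\ge 0}\lz_i a_i$ in $\cd'(\rn)$ with $a_0$ a $(\rz,q)_\oz$-single-atom and $a_i$ ($i\ge 1$) a $(\rz,q,s)_\oz$-atom supported in a cube $Q_i$, the plan is to exploit the sublinearity of $\cg_N$ (which follows from $\pz_t\ast f(x)=\sum_i\lz_i\pz_t\ast a_i(x)$, a consequence of distributional convergence applied to the test function $\pz_t(x-\cdot)$) together with the subadditivity of $\bfai$ to reduce to the per-atom estimate
\[
\int_{\rn}\bfai\left(\frac{|\lz|\,\cg_N(a)(x)}{\mu}\right)\oz(x)\,dx\le C\,\oz(Q)\,\bfai\left(\frac{|\lz|}{\mu\,\oz(Q)\,\rz(\oz(Q))}\right),
\]
uniform in $\lz\in\cc$, $\mu\in(0,\fz)$, and across all atoms $a$ supported in $Q$ (with $\rn$ in place of $Q$ for the single-atom case). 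Summing in $i$, taking $\mu=\blz(\{\lz_i a_i\}_{i\in\zz_+})$, and applying Theorem \ref{t3.14} will then produce $\|f\|_{h^{\bfai}_{\oz,N_{\bfai,\oz}}(\rn)}\le C\|f\|_{h^{\rz,q,s}_\oz(\rn)}$.

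To prove the per-atom estimate I would split $\rn$ according to distance from $Q$. First, the localisation built into $\cg_N$ (namely $t\in(0,1)$ in the supremum and $\supp\pz\subset B(0,2^{3(10+n)})$) immediately forces $\cg_N(a)(x)=0$ whenever $\dist(x,Q)\ge 2^{3(10+n)}$; call the remaining effective support $Q^\ast$. On $2Q\cap Q^\ast$, I would invoke the pointwise domination $\cg_N(a)\ls M^{\loc}_{\wz C}(a)$ in the spirit of Proposition \ref{p3.2}(i), the $L^q_\oz$-boundedness of $M^{\loc}_{\wz C}$ from Lemma \ref{l2.3}(vi) and Remark \ref{r2.4}, H\"older's inequality, the size bound in Definition \ref{d3.4}(ii), the upper type $1$ of $\bfai$, and the identity \eqref{2.8} $t\bfai(1/(t\rz(t)))=1$, to obtain the right-hand shape. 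On $Q^\ast\setminus 2Q$, which is nonempty only when $l(Q)<1$, I would Taylor-expand $\pz_t(x-\cdot)$ to order $s$ around the center $x_Q$ of $Q$ and use the vanishing moment condition of Definition \ref{d3.4}(iii) to obtain the decay
\[
\cg_N(a)(x)\ls \|a\|_{L^1(\rn)}\,\frac{l(Q)^{s+1}}{|x-x_Q|^{n+s+1}},
\]
then integrate over dyadic annular shells about $Q$ using Lemma \ref{l2.3}(v) and (viii). The single-atom contribution is handled by the $L^q_\oz$ argument alone, since its size bound already contains $[\rz(\oz(\rn))]^{-1}$.

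The main obstacle is the annular integration on $Q^\ast\setminus 2Q$: one must verify that the vanishing-moment order $s\ge\lfz n(q_\oz/p_\bfai-1)\rf$ forced by admissibility precisely matches the weighted volume growth of dyadic shells around $Q$, controlled via Lemma \ref{l2.3}(viii) after passing from $\oz\in A^{\loc}_{q_\oz+\varepsilon}(\rn)$ to a global $A_{q_\oz+\varepsilon}$-envelope via Lemma \ref{l2.3}(i), and that the resulting polynomial factors are absorbed by the lower type $p_\bfai$ of $\bfai$ together with the $\rz$--$\bfai$ relation \eqref{2.8}. A secondary bookkeeping issue is the uniform treatment of the two regimes $l(Q)<1$ and $l(Q)\ge 1$: in the latter there is no off-$2Q$ piece beyond $Q^\ast$, and one needs Lemma \ref{l2.3}(v) to absorb $\oz(Q^\ast)$ into $\oz(Q)$.
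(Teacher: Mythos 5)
Your proposal is correct and follows essentially the same route as the paper: reduce via subadditivity of $\bfai$ and sublinearity of the grand maximal function to a per-atom estimate, then handle the local part of the maximal function by Jensen, H\"older and the $L^q_\oz$-boundedness of the maximal operator (Proposition~\ref{p3.2}(ii), which is the same content as the pointwise bound plus Lemma~\ref{l2.3}(vi) that you invoke), and the far-field part (when $l(Q)<1$) by a degree-$s$ Taylor expansion of the test function, the vanishing moments, the resulting decay $\ls |Q|^{(s+n+1)/n}[\oz(Q)\rz(\oz(Q))]^{-1}|x-x_Q|^{-(n+s+1)}$, and a dyadic-shell integration controlled by Lemma~\ref{l2.3}(i) (to pass to a global $A_{q_0}$ weight) and Lemma~\ref{l2.3}(viii). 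One small imprecision: for $l(Q)\ge 1$ it is not that $Q^\ast\setminus 2Q$ is empty, but rather that no moment/decay argument is needed there, because the Jensen--H\"older step can be run directly on the whole effective support $Q^\ast$ once Lemma~\ref{l2.3}(v) is used to compare $\oz(Q^\ast)$ with $\oz(Q)$; the paper organizes this as a separate Case $|Q|\ge 1$. You should also note explicitly (as the paper implicitly does) that the multiplicative constant produced by the per-atom estimate is absorbed by the lower type $p_\bfai$ of $\bfai$ when converting the bound $\int\bfai(\cg_N(f)/\blz)\,\oz\ls 1$ into the quasi-norm inequality $\|f\|_{h^{\bfai}_{\oz,N}}\ls\blz(\{\lz_ia_i\}_i)$, but this is a routine step.
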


\begin{proof}
Obviously, by Definition \ref{d3.3}, we only need prove that
$h^{\rz,\,q,\,s}_{\oz}(\rn)\subset
h^{\bfai}_{\oz,\,N_{\bfai,\,\oz}}(\rn)$, and for all $f\in
h^{\rz,\,q,\,s}_{\oz}(\rn)$,
$$\|f\|_{h^{\bfai}_{\oz,\,N_{\bfai,\,\oz}}(\rn)} \ls
\|f\|_{h^{\rz,\,q,\,s}_{\oz}(\rn)}.$$
To this end, by Theorem
\ref{t3.14} and Definition \ref{d3.5}, it suffices to prove that for
any $(\rz,\,q)_{\oz}$-single-atom $a$ and $\lz\in\cc$,
\begin{eqnarray}\label{3.32}
\int_{\rn}\bfai\left(\cg^0_{N_{\bfai,\,\oz}}(\lz a)(x)\r)\oz(x)\,dx\ls
\oz(\rn)\bfai\left(\frac{|\lz|}{\oz(\rn)\rz(\oz(\rn))}\r),
\end{eqnarray}
and for any $(\rz,\,q,\,s)_{\oz}$-atom $a$ supported in the cube $Q$
and $\lz\in\cc$,
\begin{eqnarray}\label{3.33}
&&\int_{\rn}\bfai\left(\cg^0_{N_{\bfai,\,\oz}}(\lz
a)(x)\r)\oz(x)\,dx\ls\oz(Q)\bfai\left(\frac{|\lz|}{\oz(Q)\rz(\oz(Q))}\r).
\end{eqnarray}
Indeed, for any $f\in h^{\rz,\,q,\,s}_{\oz}(\rn)$,
$$f=\sum_{i=0}^{\fz}\lz_i a_i$$
in $\cd'(\rn)$, where
$\{\lz_i\}_{i=0}^{\fz}\subset\cc$, $a_0$ is a
$(\rz,\,q)_{\oz}$-single-atom and for any $i\in\nn$, $a_i$ is a
$(\rz,\,q,\,s)_{\oz}$-atom supported in the cube $Q_i$. Then, for
any $\lz\in(0,\fz)$, by the facts that
$\cg^0_{N_{\bfai,\,\oz}}(f/\lz)=\cg^0_{N_{\bfai,\,\oz}}(f)/\lz$ and
$\bfai$ is strictly increasing, subadditive and continuous,
\eqref{3.32} and \eqref{3.33}, we have
\begin{eqnarray*}
&&\int_{\rn}\bfai\left(\frac{\cg^0_{N_{\bfai,\,\oz}}
(f)(x)}{\lz}\r)\oz(x)\,dx\\
&&\hs=\int_{\rn}\bfai\left(\cg^0_{N_{\bfai,\,\oz}}
\left(\frac{f}{\lz}\r)(x)\r)\oz(x)\,dx\le\sum_{i=0}^{\fz}\int_{\rn}\bfai
\left(\cg^0_{N_{\bfai,\,\oz}}
\left(\frac{\lz_i a_i}{\lz}\r)(x)\r)\oz(x)\,dx\\
&&\hs\ls\oz(\rn)\bfai
\left(\frac{|\lz_0|}{\lz\oz(\rn)\rz(\oz(\rn))}\r)+
\sum_{i=1}^{\fz}\oz(Q_i)\bfai\left(\frac{|\lz_i|}{\lz\oz(Q_i)\rz(\oz(Q_i))}\r),
\end{eqnarray*}
which together with Theorem \ref{t3.14} implies that
$\|f\|_{h^{\bfai}_{\oz,\,N_{\bfai,\,\oz}}(\rn)} \ls
\|f\|_{h^{\rz,\,q,\,s}_{\oz}(\rn)}$.

We now prove \eqref{3.32}. Since $q\in(q_{\oz},\fz]$, by the
definition of $q_{\oz}$, we have $\oz\in A^{\loc}_q (\rn)$. Let $a$
be a $(\rz,\,q)_{\oz}$-single-atom and $\lz\in\cc$. When
$\oz(\rn)=\fz$, by the definition of the single atom, we know that
$a=0$ for almost every $x\in\rn$. In this case, it is easy to see
that \eqref{3.32} holds. When $\oz(\rn)<\fz$, since $\bfai$ is
concave, from Jensen's inequality, H\"older's inequality and
Proposition \ref{p3.2}(ii), we deduce that
\begin{eqnarray*}
&&\int_{\rn}\bfai\left(\cg^0_{N_{\bfai,\,\oz}}(\lz
a)(x)\r)\oz(x)\,dx\\
&&\hs\le\oz(\rn)\bfai\left(\frac{1}{\oz(\rn)}
\int_{\rn}\cg^0_{N_{\bfai,\,\oz}}(\lz
a)(x)\oz(x)\,dx\r)\\
&&\hs\le\oz(\rn)\bfai\left(\frac{1}{[\oz(\rn)]^{1/q}}
\left\{\int_{\rn}\left[\cg^0_{N_{\bfai,\,\oz}}(\lz
a)(x)\r]^{q}\oz(x)\,dx\r\}^{1/q}\r)\\
&&\hs\ls\oz(\rn)\bfai\left(\frac{1}{[\oz(\rn)]^{1/q}}
|\lz|\|a\|_{L^q_{\oz}(\rn)}\r)\ls\oz(\rn)
\bfai\left(\frac{|\lz|}{\oz(\rn)\rz(\oz(\rn))}\r).
\end{eqnarray*}
That is, \eqref{3.32} holds.

 Next, we prove \eqref{3.33}. Let $a$ be a
$(\rz,\,q,\,s)_{\oz}$-atom supported in the  cube $Q\equiv
Q(x_0,r)$, and $\lz\in\cc$. We consider the following two cases for
$Q$.

{\it Case 1)} $|Q|<1$. In this case, letting
$\wz{Q}\equiv2\sqrt{n}Q$, then we have
\begin{eqnarray}\label{3.34}
&&\int_{\rn}\bfai\left(\cg^0_{N_{\bfai,\,\oz}}(\lz a)(x)\r)\oz(x)\,dx\nonumber\\
 &&\hs=\int_{\wz{Q}}\bfai\left(\cg^0_{N_{\bfai,\,\oz}}(\lz
a)(x)\r)\oz(x)\,dx+\int_{\wz{Q}^{\complement}}
\cdots\equiv\mathrm{I_1}+\mathrm{I_2}.
\end{eqnarray}

For $\mathrm{I_1}$, by Jensen's inequality, H\"older's inequality,
Lemma \ref{l2.3}(v) and Proposition \ref{p3.2}(ii), we have
\begin{eqnarray}\label{3.35}
\mathrm{I_1}&\le&\oz(\wz{Q})\bfai\left(\frac{1}{\oz(\wz{Q})}
\int_{\wz{Q}}\cg^0_{N_{\bfai,\,\oz}}(\lz a)(x)\oz(x)\,dx\r)\nonumber\\
 &\le&\oz(\wz{Q})\bfai\left(\frac{1}{[\oz(\wz{Q})]^{1/q}}
\left\{\int_{\wz{Q}}\left[\cg^0_{N_{\bfai,\,\oz}}(\lz
a)(x)\r]^{q}\oz(x)\,dx\r\}^{1/q}\r)\nonumber\\
&\ls&\oz(\wz{Q})\bfai\left(\frac{1}{[\oz(\wz{Q})]^{1/q}}
|\lz|\|a\|_{L^q_{\oz}(\rn)}\r)\nonumber\\
&\ls&\oz(\wz{Q})\bfai\left(\frac{|\lz|}{\oz(Q)\rz(\oz(Q))
}\r)\ls\oz(Q)\bfai\left(\frac{|\lz|}{\oz(Q)\rz(\oz(Q))}\r),
\end{eqnarray}
which is the desired estimate for $\mathrm{I_1}$.

To estimate $\mathrm{I_2}$, we claim that for all
$x\in\wz{Q}^{\complement}$,
\begin{eqnarray}\label{3.36}
\cg^0_{N_{\bfai,\,\oz}}(\lz a)(x)&\ls&|\lz||Q|^{\frac{s_0
+n+1}{n}}[\oz(Q)\rz(\oz(Q))]^{-1}\nonumber\\ &&\times|x-x_0|^{-(s_0
+n+1)} \chi_{B(x_0,2\sqrt{n})}(x),
\end{eqnarray}
where $s_0\equiv\lfz n(\frac{q_{\oz}}{p_{\bfai}}-1)\rf$. Indeed, for
any $\pz\in\cd^0_{N}(\rn)$ and $t\in(0,1)$, let $P$ be the Taylor
expansion of $\pz$ about $(x-x_0)/t$ with degree $s_0$. By Taylor's
remainder theorem, for any $y\in\rn$, we have
\begin{eqnarray*}
&&\left|\pz\left(\frac{x-y}{t}\r)-P\left(\frac{x-y}{t}\r)\r|\\
&&\hs\ls\sum_{\gfz{\az\in\zz^n_+}{|\az|=s_0
+1}}\left|\left(\partial^{\az}\pz\r)
\left(\frac{\theta(x-y)+(1-\theta)(x-x_0)}{t}\r)\r|\left|\frac{x_0
-y}{t}\r|^{s_0 +1},
\end{eqnarray*}
where $\theta\in(0,1)$. By $t\in(0,1)$ and
$x\in\wz{Q}^{\complement}$, we see that $\supp (a\ast\pz_t)\subset
B(x_0,2\sqrt{n})$ and that $a\ast\pz_t (x)\neq0$ implies that
$t>\frac{|x-x_0|}{2}$. Thus, from the above facts, Definition
\ref{d3.4} and \eqref{2.1}, it follows that for all
$x\in\wz{Q}^{\complement}$,
\begin{eqnarray*}
|a\ast\pz_t (x)|&\le&\frac{1}{t^n}\left\{\int_{Q}|a(y)|
\left|\pz\left(\frac{x-y}{t}\r)-P\left(\frac{x-y}{t}\r)\r|\,dy\r\}\,
\chi_{B(x_0,2\sqrt{n})}(x)\\
&\ls&|x-x_0|^{-(s_0 +n+1)}\left\{\int_{Q}|a(y)||x_0 -y|^{s_0
+1}\,dy\r\} \chi_{B(x_0,2\sqrt{n})}(x)\\
&\ls&|Q|^{\frac{s_0 +1}{n}}\|a\|_{L^q_{\oz}
(\rn)}\left(\int_{Q}[\oz(y)]^{-q'/q}\,dy\r)^{1/q'}
|x-x_0|^{-(s_0 +n+1)}\chi_{B(x_0,2\sqrt{n})}(x)\\
&\ls&|Q|^{\frac{s_0 +n+1}{n}}[\oz(Q)\rz(\oz(Q))]^{-1}|x-x_0|^{-(s_0
+n+1)}\chi_{B(x_0,2\sqrt{n})}(x),
\end{eqnarray*}
which together with the arbitrariness of $\pz\in\cd^0_{N}(\rn)$
implies \eqref{3.36}. Thus, the claim holds.

Let $Q_k\equiv2^k\sqrt{n}Q$ for all $k\in\nn$ and $k_0 \in\nn$
satisfy $2^{k_0}r\le4<2^{k_0 +1}r$. By
$$s_0=\left\lfz n\left(\frac{q_{\oz}}{p_{\bfai}}-1\r)\r\rf,$$
we know that there exists $q_0\in(q_{\oz},\fz)$ such that
$p_{\bfai}(s_0 +n+1)>nq_0$. From Lemma \ref{l2.3}, it follows that
there exists an $\overline{\oz}\in A_{p_0}(\rn)$ such that
$\oz=\overline{\oz}$ on $Q(x_0,8\sqrt{n})$. By this fact,
\eqref{3.36}, the lower type $p_{\bfai}$ property of $\bfai$ and
Lemma \ref{l2.3}(viii), we conclude that
\begin{eqnarray*}
\mathrm{I_2}&\le&\int_{\sqrt{n}r\le|x-x_0|<2\sqrt{n}}
\bfai\left(\cg^0_{N_{\bfai,\,\oz}}(\lz a)(x)\r)\oz(x)\,dx\\
&\ls&\int_{\sqrt{n}r\le|x-x_0|<2\sqrt{n}}\bfai\left(|\lz||Q|^{\frac{s_0
+n+1}{n}}\left[\oz(Q)\rz(\oz(Q))\r]^{-1}|x-x_0|^{-(s_0 +n+1)}\r)
\overline{\oz}(x)\,dx\\
&\ls&\sum_{k=1}^{k_0}\int_{Q_{k+1}\setminus Q_{k}}\bfai\left(|\lz|2
^{-k(s_0 +n+1)}\left[\oz(Q)\rz(\oz(Q))\r]^{-1}\r)
\overline{\oz}(x)\,dx\\
&\ls&\sum_{k=1}^{k_0} 2^{-k(s_0
+n+1)p_{\bfai}}\overline{\oz}(Q_{k+1})\bfai\left(
\frac{|\lz|}{\oz(Q)\rz(\oz(Q))}\r)\\
&\ls&\sum_{k=1}^{k_0} 2^{-k[(s_0 +n+1)p_{\bfai}-n
q_0]}\oz(Q)\bfai\left(\frac{|\lz|}{\oz(Q)\rz(\oz(Q))}\r)\\
&\ls&\oz(Q)\bfai\left(\frac{|\lz|}{\oz(Q)\rz(\oz(Q))}\r),
\end{eqnarray*}
which together with \eqref{3.34} and \eqref{3.35} implies
\eqref{3.33} in Case 1).

{\it Case 2)} $|Q|\ge1$. In this case, let $Q^{\ast}\equiv Q(x_0,
r+2)$. Thus, from
$$\supp\left(\cg^0_{N_{\bfai,\,\oz}}(\lz a)\r)\subset
Q^{\ast},$$
Jensen's inequality, H\"older's inequality, Lemma
\ref{l2.3}(v), and Proposition \ref{p3.2}(ii), we deduce that
\begin{eqnarray*}
&&\int_{\rn}\bfai\left(\cg^0_{N_{\bfai,\,\oz}}(\lz
a)(x)\r)\oz(x)\,dx\\
&&\hs=\int_{Q^{\ast}}\bfai\left(\cg^0_{N_{\bfai,\,\oz}} (\lz
a)(x)\r)\oz(x)\,dx
\le\oz(Q^{\ast})\bfai\left(\frac{1}{\oz(Q^{\ast})}\int_{Q^{\ast}}
\cg^0_{N_{\bfai,\,\oz}}(\lz
a)(x)\oz(x)\,dx\r)\\
&&\hs\le\oz(Q^{\ast})\bfai\left(\frac{1}
{[\oz(Q^{\ast})]^{1/q}}\left\{\int_{Q^{\ast}}
\left[\cg^0_{N_{\bfai,\,\oz}}(\lz a)(x)\r]^q\oz(x)\,dx\r\}^{1/q}\r)\\
&&\hs\ls\oz(Q^{\ast})\bfai\left(\frac{|\lz|}{[\oz(Q^{\ast})]^{1/q}}
\|a\|_{L^q_{\oz}(\rn)}\r)
\ls\oz(Q^{\ast})\bfai\left(\frac{|\lz|}{\oz(Q)\rz(\oz(Q))}\r)\\
&&\hs\ls\oz(Q)\bfai\left(\frac{|\lz|}{\oz(Q)\rz(\oz(Q))}\r),
\end{eqnarray*}
which proves \eqref{3.33} in Case 2). This finishes the proof of
Theorem \ref{t3.18}.
\end{proof}

\chapter{Calder\'on-Zygmund decompositions\label{s4}}

In this section, we establish some subtle estimates for the
Calder\'on-Zygmund decomposition associated with local grand maximal
functions on the weighted Euclidean space $\rn$ given in \cite{Ta1}.
Notice that the construction of the Calder\'on-Zygmund decomposition
in \cite{Ta1} is similar to those in \cite{St93,B03,blyz08}.

Let $\Phi$ be a positive function on $\rr_+$ satisfying Assumption
$\mathrm{(A)}$, $\oz\in A^{\loc}_{\fz}(\rn)$ and $q_{\oz}$ be as in
\eqref{2.4}. Let integer $N\ge 2$, $\cg_N (f)$ and $\cg_N^0 (f)$ be
as in \eqref{3.2}.

Throughout this section, let $f\in\cd'(\rn)$ satisfy that for all
$\lz\in(0,\fz)$,
$$\oz\left(\{x\in\rn:\,\cg_N
(f)(x)>\lz\}\r)<\fz.$$ For a given $\lz>\inf_{x\in\rn}\cg_{N}
(f)(x)$, we set
\begin{eqnarray}\label{4.1}
\boz_{\lz}\equiv\{x\in\rn:\,\cg_{N} (f)(x)>\lz\}.
\end{eqnarray}
It is obvious that $\boz_{\lz}$ is a proper open subset of $\rn$.
First, we recall the usual Whitney decomposition of $\boz_{\lz}$
given in \cite{Ta1} (see also \cite{St93,B03,blyz08}). We can find
closed cubes $\{Q_i\}_i$ such that
\begin{eqnarray}\label{4.2}
\boz_{\lz}=\bigcup_{i}Q_i,
\end{eqnarray}
their interiors are away from $\boz_{\lz}^\complement$ and
$$\diam(Q_i)\le2^{-(6+n)}\dist(Q_i,\boz_{\lz}^\complement)\le4\diam(Q_i).$$
In what follows, fix $a\equiv 1+2^{-(11+n)}$ and denote $aQ_i$ by
$Q^{\ast}_i$ for all $i$. Then we have $Q_i\subset Q^{\ast}_i$.
Moveover, $\boz_{\lz}=\cup_{i}Q^{\ast}_i$, and $\{Q^{\ast}_i\}_i$
have the bounded interior property, namely, every point in
$\boz_{\lz}$ is contained in at most a fixed number of
$\{Q^{\ast}_i\}_i$.

Now we take a function $\xz\in \cd(\rn)$ such that $0\le\xz\le1$,
$\supp(\xz)\subset aQ(0,1)$ and $\xz\equiv1$ on $Q(0,1)$. For
$x\in\rn$, set $\xz_i (x)\equiv\xz((x-x_k)/l_i)$, where and in what
follows, $x_i$ is the {\it center} of the cube $Q_i$ and $l_i$ its
{\it sidelength}. Obviously, by the construction of
$\{Q_i^{\ast}\}_i$ and $\{\xz_i\}_i$, for any $x\in\rn$, we have
$1\le\sum_{i}\xz_i (x)\le L$, where $L$ is a fixed positive integer
independent of $x$. Let
\begin{eqnarray}\label{4.3}
\zez_i\equiv\frac{\xz_i}{\sum_j\xz_j}.
\end{eqnarray}
 Then $\{\zez_i\}_i$ form a smooth
partition of unity for $\boz_{\lz}$ subordinate to the locally
finite cover $\{Q_i^{\ast}\}_i$ of $\boz_{\lz}$, namely,
$\chi_{\boz_{\lz}}=\sum_k \zez_k$ with each $\zez_i\in \cd(\rn)$
supported in $Q_i^{\ast}$.

Let $s\in\zz_{+}$ be some fixed integer and $\cp_s (\rn)$ denote the
linear space of polynomials in $n$ variables of degrees no more than
$s$. For each $i\in\nn$ and $P\in\cp_s (\rn)$, set
\begin{eqnarray}\label{4.4}
\|P\|_i\equiv\left[\frac{1}{\int_{\rn}\zez_i
(y)\,dy}\int_{\rn}|P(x)|^2\zez_i (x)\,dx\r]^{1/2}.
\end{eqnarray}
Then it is easy to know that $(\cp_s (\rn),\,\|\cdot\|_i)$ is a
finite dimensional Hilbert space. Let $f\in\cd'(\rn)$. Since $f$
induces a linear functional on $\cp_s (\rn)$ via
$$P\mapsto\frac{1}{\int_{\rn}\zez_i (y)\,dy}\langle f,
P\zez_i\rangle,$$
by the Riesz represent theorem, there exists a
unique polynomial
\begin{eqnarray}\label{4.5}
P_i \in\cp_s (\rn)
\end{eqnarray}
 for each $i$ such that for
all $P\in\cp_s (\rn)$, $\langle f, P\zez_i\rangle=\langle P_i,
P\zez_i\rangle$. For each $i$, define the distribution
\begin{eqnarray}\label{4.6}
b_i\equiv(f-P_i)\zez_i\,\, \text{when}\,\,l_i\in(0,1),\ \text{and} \
b_i\equiv f\zez_i \,\,\text{when}\,\,l_i\in[1,\fz).
\end{eqnarray}
We show that for suitable choices of $s$ and $N$, the series $\sum_i
b_i$ converge in $\cd'(\rn)$, and in this case, we let $g\equiv
f-\sum_i b_i$ in $\cd'(\rn)$. We point out that the represent
\begin{eqnarray}\label{4.7}
f=g+\sum_i b_i,
\end{eqnarray}
where $g$ and $b_i$ are as above, is called a {\it
Calder\'on-Zygmund decomposition} of $f$ of degree $s$ and height
$\lz$ associated with $\cg_N (f)$.

The rest of this section consists of a series of lemmas. Lemma
\ref{l4.1} gives a property of the smooth partition of unity
$\{\zez_i\}_i$, Lemmas \ref{l4.2} through \ref{l4.5} are devoted to
some estimates for the bad parts $\{b_i\}_i$, and Lemmas \ref{l4.6}
and \ref{l4.7} give some controls over the good part $g$. Finally,
Corollary \ref{c4.8} shows that the density of $L^q_{\oz}(\rn)\cap
h^{\bfai}_{\oz,\,N}(\rn)$ in $h^{\bfai}_{\oz,\,N}(\rn)$, where
$q\in(q_{\oz},\fz)$. The following Lemmas \ref{l4.1} through
\ref{l4.3}, and Lemmas \ref{l4.5} and \ref{l4.6} are respectively
Lemmas 4.2 through 4.5, and Lemmas 4.7 and 4.8 in \cite{Ta1}.
\begin{lemma}\label{l4.1}
There exists a positive constant $C_1$ such that for all
$f\in\cd'(\rn)$,
$$\lz>\inf_{x\in\rn}\cg_{N} (f)(x)$$
and $l_i\in(0,1)$,
$$\sup_{y\in\rn}|P_i (y)\zez_i (y)|\le C_1\lz.$$
\end{lemma}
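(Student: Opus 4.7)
The plan is to translate the sup bound for $P_i(y)\zez_i(y)$ into a bound on the intrinsic Hilbert norm $\|P_i\|_i$, and then to exploit the defining identity
$$
\langle P_i,P\zez_i\rangle = \langle f,P\zez_i\rangle\qquad\text{for all } P\in \cp_s(\rn),
$$
by specializing to $P=P_i$. The finite dimensionality of $\cp_s(\rn)$, combined with the linear dilation that carries $Q_i^\ast$ onto a cube of unit sidelength, yields the norm equivalence
$$
\sup_{y\in Q_i^\ast}|P(y)|\sim \|P\|_i\sim\max_{|\az|\le s}l_i^{|\az|}\sup_{y\in Q_i^\ast}|\pat^\az P(y)|
$$
on $\cp_s(\rn)$, with constants depending only on $n$ and $s$. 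Since $0\le \zez_i\le1$ is supported in $Q_i^\ast$, this reduces the lemma to establishing $\|P_i\|_i\ls\lz$.

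Next I would recast $\langle f, P_i\zez_i\rangle$ as a convolution so that $\cg_N(f)$ enters. The Whitney geometry following \eqref{4.2} furnishes a point $\hat x\in \boz_\lz^{\complement}$ with $|\hat x-x_i|\ls l_i$, at which $\cg_N(f)(\hat x)\le \lz$. Define
$$
\phi(z)\equiv c_i^{-1}\,l_i^n\,P_i(\hat x-l_iz)\,\zez_i(\hat x-l_iz),
$$
so that, after the change of variables $y=\hat x-l_iz$, one gets $\phi_{l_i}(\hat x-y)=c_i^{-1}P_i(y)\zez_i(y)$. Using Leibniz together with the Markov-type inequality above for $P_i$ on $Q_i^\ast$ and the standard bound $|\pat^\gz\zez_i|\ls l_i^{-|\gz|}$, one obtains
$$
|\pat^\az(P_i\zez_i)(y)|\ls \|P_i\|_i\,l_i^{-|\az|}\qquad\text{for }|\az|\le N,
$$
so the choice $c_i\sim l_i^n\|P_i\|_i$ forces $\|\phi\|_{\cd_N(\rn)}\le 1$; simultaneously the bounds $|\hat x-x_i|\ls l_i$ and $\supp(\zez_i)\subset Q_i^\ast$ confine $\supp(\phi)$ to a ball of some universal radius $R\le 2^{3(10+n)}$.

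Since $l_i\in(0,1)$, $\phi_{l_i}$ is then admissible in the definition of $\cg_N(f)(\hat x)$, so
$$
\left|c_i^{-1}\langle f, P_i\zez_i\rangle\r|=|(\phi_{l_i}\ast f)(\hat x)|\le \cg_N(f)(\hat x)\le \lz.
$$
Combining this with the identity
$$
\langle f,P_i\zez_i\rangle=\langle P_i,P_i\zez_i\rangle=\left(\int_{\rn}\zez_i(y)\,dy\r)\|P_i\|_i^2,
$$
the estimate $\int_{\rn}\zez_i\sim l_i^n$, and the choice $c_i\sim l_i^n\|P_i\|_i$, one gets $\|P_i\|_i^2\ls \|P_i\|_i\cdot\lz$, whence $\|P_i\|_i\ls\lz$; inserting this into the norm equivalence of the first paragraph concludes the proof.

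The main technical obstacle is the bookkeeping behind the construction of $\phi$: one must simultaneously keep $R$ independent of $i$ (so that $\phi\in \cd_{N,R}(\rn)$ with the ambient $R$ of $\cg_N$), control $\|\phi\|_{\cd_N(\rn)}$, and absorb the $l_i$-scalings. All of this rests on the dilation-invariant Markov inequality for $P_i$ on $Q_i^\ast$, on the uniform $l_i$-scaling of derivatives of $\zez_i$, and on the Whitney separation $\dist(Q_i,\boz_\lz^\complement)\ls l_i$, with the resulting constants traceable and independent of $f$, $\lz$, and $i$.
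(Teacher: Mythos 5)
Your proof is correct. For this lemma the paper gives no argument of its own — it refers to \cite[Lemma 4.2]{Ta1}, whose proof follows the classical pattern of Stein and Bownik: fix an orthonormal basis $\{e_1,\ldots,e_m\}$ of $(\cp_s(\rn),\|\cdot\|_i)$, write $P_i=\sum_l\langle f,e_l\zez_i\rangle e_l/\int\zez_i$, rescale each $e_l\zez_i$ into an admissible test function of $\cd_N(\rn)$, and bound each coefficient by $\lz$ at a nearby point $\hat x\in\boz_\lz^\complement$. Your argument is a mild streamlining of that route: rather than expand $P_i$ in a basis, you test the defining relation $\langle f,P\zez_i\rangle=\langle P_i,P\zez_i\rangle$ against $P=P_i$ itself, obtaining the single identity $\langle f,P_i\zez_i\rangle=\|P_i\|_i^2\int\zez_i$, and then rescale only the one function $P_i\zez_i$, concluding from $\|P_i\|_i^2\ls\lz\|P_i\|_i$. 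This sidesteps the choice of basis while relying on the same ingredients — the $\sup$-vs-$\|\cdot\|_i$ equivalence on $\cp_s$ (valid with absolute constants because $0\le\zez_i\le1$, $\supp\zez_i\subset Q_i^\ast$, $\zez_i\ge 1/L$ on $Q_i$, so $\int\zez_i\sim l_i^n$), the Markov inequality, the uniform bound $|\pat^\gz\zez_i|\ls l_i^{-|\gz|}$ (which requires the comparability of sidelengths of overlapping dilated Whitney cubes), and the Whitney estimate $\dist(Q_i,\boz_\lz^\complement)\le 4\diam(Q_i)$ producing $\hat x$ with $|\hat x-x_i|\ls l_i$, hence $\supp(\phi)\subset B(0,C\sqrt n)\subset B(0,2^{3(10+n)})$. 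Your normalization $c_i\sim l_i^n\|P_i\|_i$ is exactly what makes $\|\phi\|_{\cd_N(\rn)}\le1$, and the hypothesis $l_i<1$ enters precisely where it must, to make $\phi_{l_i}$ admissible in the supremum defining $\cg_N(f)(\hat x)$.
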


\begin{lemma}\label{l4.2}
There exists a positive constant $C_2$ such that for all $i\in\nn$
and $x\in Q_i^{\ast}$,
\begin{eqnarray}\label{4.8}
\cg^0_N (b_i)(x)\le C_2\cg_N (f)(x).
\end{eqnarray}
\end{lemma}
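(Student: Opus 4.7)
The plan is to bound $|b_i\ast\varphi_t(x)|$ uniformly in $\varphi\in\cd_N^0(\rn)$ and $t\in(0,1)$, for fixed $x\in Q_i^\ast$, by expressing this convolution in the form $(f\ast\Psi_s)(x)$ for a suitable $\Psi$ belonging to $C\cdot\cd_N(\rn)$ and some $s\in(0,1)$, plus (in the case $l_i<1$) a harmless correction term controlled by $\lambda$, which is itself bounded by $\cg_N(f)(x)$ because the Whitney decomposition ensures $Q_i^\ast\subset\boz_\lambda$.

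In the case $l_i\ge 1$, we have $b_i=f\zeta_i$. Choose $s=t$ and set $\Psi(u)\equiv\zeta_i(x-tu)\varphi(u)$. Then $\supp\Psi\subset B(0,1)$, and since $|\partial^\beta\zeta_i|\ls l_i^{-|\beta|}\le 1$, the product rule immediately gives $\|\Psi\|_{\cd_N^0(\rn)}\le C$ with $C=C(n,N)$. A direct change of variables shows $b_i\ast\varphi_t(x)=(f\ast\Psi_t)(x)$, hence $|b_i\ast\varphi_t(x)|\le C\cg_N(f)(x)$.

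In the case $l_i<1$, split $b_i\ast\varphi_t(x)=(f\zeta_i)\ast\varphi_t(x)-(P_i\zeta_i)\ast\varphi_t(x)$. For the first piece, set $s\equiv\max\{t,l_i\}\in(0,1)$ and $\Psi(u)\equiv s^n\zeta_i(x-su)\varphi_t(su)$. A change of variables identifies $(f\zeta_i)\ast\varphi_t(x)$ with $(f\ast\Psi_s)(x)$. For the support: the factor $\varphi_t(su)$ forces $|u|<t/s\le 1$, while $\zeta_i(x-su)$ forces $x-su\in aQ_i$, which combined with $x\in aQ_i$ gives $|u|_\infty\le al_i/s\le a$; intersecting, $|u|\le a\sqrt{n}\le 2^{3(10+n)}$. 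For the derivative bounds, using $|\partial^\beta\zeta_i|\ls l_i^{-|\beta|}$ and $|\partial^\gamma\varphi_t|\ls t^{-n-|\gamma|}$, the product rule yields a sum of terms each bounded by $(s/t)^n(s/l_i)^{|\beta|}(s/t)^{|\alpha-\beta|}$; in the subcase $s=l_i$ (when $t>l_i$) the ratios $s/l_i=1$ and $s/t=l_i/t<1$, and in the subcase $s=t$ (when $t\le l_i$) the ratios $s/t=1$ and $s/l_i=t/l_i\le 1$, so $\|\Psi\|_{\cd_N}\le C$ in both subcases. This gives $|(f\zeta_i)\ast\varphi_t(x)|\le C\cg_N(f)(x)$. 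For the second piece, Lemma~\ref{l4.1} gives $|P_i(y)\zeta_i(y)|\le C_1\lambda$, while $\|\varphi_t\|_{L^1}=\|\varphi\|_{L^1}\le|B(0,1)|$; therefore $|(P_i\zeta_i)\ast\varphi_t(x)|\le C\lambda$. Since the Whitney decomposition ensures $Q_i^\ast\subset\boz_\lambda$, we have $\lambda<\cg_N(f)(x)$ for $x\in Q_i^\ast$, so this piece is also bounded by $C\cg_N(f)(x)$. Taking the supremum over $\varphi\in\cd_N^0(\rn)$ and $t\in(0,1)$ then yields \eqref{4.8}.

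The main technical obstacle is the derivative estimate for $\Psi$ in the case $l_i<1$: the function $\zeta_i(x-su)\varphi_t(su)$ has two competing scales (namely $l_i$ from $\zeta_i$ and $t$ from $\varphi_t$), and a single dilation $s\in(0,1)$ must be chosen so that the rescaled function lies uniformly in $\cd_N(\rn)$. The choice $s=\max\{t,l_i\}$ achieves this balance, but the verification requires a case split depending on which of $t$ and $l_i$ dominates, exploiting that the "bad" ratio always equals $1$ in its respective subcase. Once this balancing is done, the rest of the proof is a routine combination of change of variables, the pointwise bound from Lemma~\ref{l4.1}, and the Whitney inclusion $Q_i^\ast\subset\boz_\lambda$.
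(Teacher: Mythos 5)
Your overall strategy---rewriting $b_i\ast\varphi_t(x)$ as $f\ast\Psi_s(x)$ for a suitable $\Psi$ lying uniformly in $\cd_N(\rn)$ and some $s\in(0,1)$, so that it is dominated by $\cg_N(f)(x)$---is the right one and is the standard way to prove such local grand-maximal comparisons; the paper itself simply defers to Tang [Ta1] for this lemma. However, your choice $s\equiv\max\{t,l_i\}$ breaks the derivative estimate in the case $l_i<1$. With $s=\max\{t,l_i\}$, the quantity you compute, $(s/t)^n(s/l_i)^{|\beta|}(s/t)^{|\alpha-\beta|}$, is not uniformly bounded: if $s=t\ge l_i$ then $(s/l_i)^{|\beta|}=(t/l_i)^{|\beta|}$ can be arbitrarily large (take $l_i$ small with $t$ of order one), and symmetrically if $s=l_i\ge t$ then $(s/t)^n=(l_i/t)^n$ blows up as $t\to 0$. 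Note that your own subcase labels (``subcase $s=l_i$ when $t>l_i$'' and ``subcase $s=t$ when $t\le l_i$'') are consistent only with $s=\min\{t,l_i\}$; with $\max$ the labels are reversed and the ratios are $\ge 1$, not $\le 1$ as claimed.

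The correct choice is $s=\min\{t,l_i\}$. Then both $s/t\le 1$ and $s/l_i\le 1$, so every term in the product-rule sum is uniformly bounded and $\|\Psi\|_{\cd_N}\le C(n,N)$. But with $\min$ the support argument must change: the bound $|u|<t/s\le 1$ fails in the subcase $s=l_i<t$; instead one uses the $\zeta_i$-constraint $|u|_\infty\le al_i/s=a$ there, yielding $\supp\Psi\subset B(0,\sqrt{n}\,a)$. Since $\sqrt{n}\,a<2\sqrt{n}\ll 2^{3(10+n)}$, the normalized $\Psi$ still lies in $\cd_N(\rn)$---which is precisely why the lemma compares $\cg^0_N(b_i)$ to the larger maximal function $\cg_N(f)$ rather than to $\cg^0_N(f)$. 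With this $\min$/$\max$ correction made consistently, the rest of your argument (the split of $b_i$, the use of Lemma \ref{l4.1} for the $P_i\zeta_i$ piece, the Whitney inclusion $Q_i^\ast\subset\boz_\lambda$ giving $\lambda<\cg_N(f)(x)$, and the $l_i\ge 1$ case) is sound.
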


\begin{lemma}\label{l4.3}
Assume that integers $s$ and $N$ satisfy $0\le s<N$ and $N\ge2$.
Then there exist positive constants $C$, $C_3$ and $C_4$ such that
for all $i\in\nn$ and $x\in(Q_i^{\ast})^\complement$,
\begin{eqnarray}\label{4.9}
\cg^0_N (b_i)(x)\le C\frac{\lz l_i^{n+s+1}}{(l_i
+|x-x_i|)^{n+s+1}}\chi_{B(x_i, C_3)}(x),
\end{eqnarray}
where $x_i$ is the center of the cube $Q_i$. Moreover, if
$x\in(Q_i^{\ast})^\complement$ and $l_i\in[C_4,\fz)$, then $\cg^0_N
(b_i)(x)=0$.
\end{lemma}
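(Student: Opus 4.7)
The strategy is to bound $|\psi_t\ast b_i(x)|$ uniformly for $\psi\in\cd^0_N(\rn)$, $t\in(0,1)$ and $x\in(Q_i^{\ast})^\complement$, and then take the supremum to obtain \eqref{4.9}. Two structural observations organize the argument. First, the function $\zez_i(\cdot)\psi_t(x-\cdot)$ is supported in $Q_i^{\ast}\cap B(x,t)$, so $\psi_t\ast b_i(x)$ is nonzero only when $\dist(x,Q_i^{\ast})<t<1$; in the regime $l_i<1$ this simultaneously forces the indicator factor $\chi_{B(x_i,C_3)}(x)$ with $C_3:=\sqrt{n}\,a/2+1$ and yields the lower bound $t\gs l_i+|x-x_i|$ that drives the spatial decay. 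Second, the Whitney inequality $\dist(Q_i,\boz_{\lz}^\complement)\le 4\diam(Q_i)$ produces a point $z^\ast\in\boz_{\lz}^\complement$ with $|z^\ast-x_i|\ls l_i$, at which $\cg_N(f)(z^\ast)\le\lz$ by the very definition of $\boz_{\lz}$.

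In the principal case $l_i\in(0,1)$, where $b_i=(f-P_i)\zez_i$, the defining identity \eqref{4.5} of $P_i$ gives $\langle f-P_i,\,\zez_i P\rangle=0$ for every $P\in\cp_s(\rn)$. Letting $T_s$ denote the Taylor polynomial of $y\mapsto\psi_t(x-y)$ at $y=x_i$ of degree $s$, this vanishing-moment identity permits subtracting $T_s$ inside the pairing:
\[
\psi_t\ast b_i(x)=\langle f-P_i,\,\zez_i\,[\psi_t(x-\cdot)-T_s]\rangle.
\]
Since $s+1\le N$ and $\|\partial^\az\psi\|_{L^\fz(\rn)}\le 1$ for $|\az|\le N$, Taylor's remainder theorem gives the pointwise estimate $|\psi_t(x-y)-T_s(y)|\ls t^{-n-s-1}|y-x_i|^{s+1}\ls t^{-n-s-1}\,l_i^{s+1}$ on $\supp\zez_i\subset Q_i^{\ast}$.

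A Leibnitz/rescaling computation---using $|\partial^\bz\zez_i|\ls l_i^{-|\bz|}$, the Taylor estimate above when $|\az-\bz|\le s$, and the raw derivative bound $|\partial^{\az-\bz}\psi_t(x-\cdot)|\le t^{-n-|\az-\bz|}$ when $|\az-\bz|>s$---then yields a factorization
\[
\zez_i(y)\bigl[\psi_t(x-y)-T_s(y)\bigr]=M\,\vz_{l_i}(z^\ast-y)
\]
with $\vz\in\cd_N(\rn)$ (supported in a ball of universal radius, well inside $B(0,2^{3(10+n)})$) and $M\ls t^{-n-s-1}\,l_i^{n+s+1}$. Pairing, and invoking $\cg_N(f)(z^\ast)\le\lz$ together with $\|P_i\zez_i\|_{L^\fz(\rn)}\le C_1\lz$ from Lemma~\ref{l4.1}, yields $|\psi_t\ast b_i(x)|\ls M\lz\ls\lz\,t^{-n-s-1}\,l_i^{n+s+1}$. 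Combined with the support-derived bound $t\gs l_i+|x-x_i|$, this gives \eqref{4.9}.

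The case $l_i\in[1,\fz)$, in which $b_i=f\zez_i$ lacks vanishing moments, is handled by an analogous direct rescaling (without Taylor subtraction), with the cruder estimate absorbed after enlarging $C_3$ if necessary. For the final assertion, choose $C_4:=2C_3/a+1$: whenever $l_i\ge C_4$ and $x\in(Q_i^{\ast})^\complement$, one has $|x-x_i|_\fz>al_i/2\ge C_3$, so $\chi_{B(x_i,C_3)}(x)=0$ and \eqref{4.9} forces $\cg^0_N(b_i)(x)=0$. The principal technical obstacle is the rescaling step, requiring the multi-index case-split on $|\az-\bz|$ relative to $s$ and a clean conversion of the $t^{-n-s-1}$ factor into $(l_i+|x-x_i|)^{-n-s-1}$ decay via the support-driven lower bound on $t$.
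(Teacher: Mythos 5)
There is a genuine gap at the step where you assert that $\dist(x,Q_i^{\ast})<t$ together with $x\in(Q_i^{\ast})^\complement$ forces $t\gs l_i+|x-x_i|$. This does not follow: for $x$ just outside a face of $Q_i^{\ast}$ (so $|x-x_i|\approx a\,l_i/2$) one has $\dist(x,Q_i^{\ast})$ arbitrarily small, hence $t$ can be taken much smaller than $l_i$. In precisely that regime your Leibniz computation breaks: when $|\az-\bz|=m>s$ the corresponding term contributes $l_i^{-|\bz|}t^{-n-m}$, so the minimal admissible normalization constant is
\[
M\sim t^{-n-s-1}l_i^{n+s+1}\max\bigl(1,(l_i/t)^{N-s-1}\bigr),
\]
which agrees with your claimed $M\ls t^{-n-s-1}l_i^{n+s+1}$ only when $t\gs l_i$. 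The missing ingredient is a finer property of the Whitney partition of unity: either $\supp\zez_i$ is kept at distance $\gs l_i$ from $(Q_i^{\ast})^\complement$ (the standard choice, taking $\supp\xi$ strictly inside $aQ(0,1)$), which upgrades the support constraint to $\dist(x,\supp\zez_i)<t$ and thus $t\gs l_i$; or, if $\supp\zez_i$ is allowed to reach $\partial Q_i^{\ast}$, one must exploit the infinite-order vanishing $|\partial^\bz\zez_i(y)|\ls l_i^{-|\bz|}\bigl(\dist(y,\partial Q_i^{\ast})/l_i\bigr)^k$ near the boundary to compensate for the $(l_i/t)$-powers. Neither appears in your argument, and without one of them the spatial decay in \eqref{4.9} is not established.

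Two downstream consequences. First, your proof of the final assertion from \eqref{4.9} is circular: to prove \eqref{4.9} with the indicator you already need to know $\cg^0_N(b_i)\equiv0$ on $(Q_i^{\ast})^\complement\setminus B(x_i,C_3)$, and for large $l_i$ this is exactly the second assertion. With the support separation noted above, the second assertion is immediate (one needs $t<1$ and $t\gs l_i$ simultaneously, impossible once $l_i\ge C_4$), and \eqref{4.9} then only needs to be proved in the bounded range $l_i<C_4$, which forces $C_3\sim1+a\sqrt{n}\,C_4/2$ rather than your $1+a\sqrt{n}/2$. Second, the $l_i\ge1$ case cannot be dispatched as ``an analogous direct rescaling'': there are no vanishing moments, the rescaling scale cannot be $l_i\ge1$ (it must lie in $(0,1)$ for the grand maximal function), and one must instead re-center at $z^{\ast}\in\boz_{\lz}^\complement$ within distance $\ls l_i$ of $x$ and rescale at a fixed admissible scale; the crude bound $\cg^0_N(b_i)\ls\lz$ then suffices because the right-hand side of \eqref{4.9} is $\gs\lz$ on $B(x_i,C_3)$ when $l_i\ge1$.
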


\begin{lemma}\label{l4.4}
Let $\bfai$ satisfy Assumption $\mathrm{(A)}$, $\oz\in
A^{\loc}_{\fz}(\rn)$, $q_{\oz}$ and $p_{\bfai}$ be respectively as
in \eqref{2.4} and \eqref{2.6}. If integers $s\ge\lfz
n(q_{\oz}/p_{\bfai}-1)\rf$, $N>s$ and $N\ge N_{\bfai,\,\oz}$, where
$N_{\bfai,\,\oz}$ is as in \eqref{3.25}. Then there exists a
positive constant $C_5$ such that for all $f\in
h^{\bfai}_{\oz,\,N}(\rn)$, $\lz>\inf_{x\in\rn}\cg_{N} (f)(x)$ and
$i\in\nn$,
\begin{eqnarray}\label{4.10}
\int_{\rn}\bfai\left(\cg^0_N (b_i)(x)\r)\oz(x)\,dx\le C_5
\int_{Q_i^{\ast}}\bfai\left(\cg_N (f)(x)\r)\oz(x)\,dx.
\end{eqnarray}
Moreover, the series $\sum_i b_i$ converges in
$h^{\bfai}_{\oz,\,N}(\rn)$ and
\begin{eqnarray}\label{4.11}
\int_{\rn}\bfai\left(\cg^0_N \left(\sum_i b_i\r)(x)\r)\oz(x)\,dx\le C_5
\int_{\boz_{\lz}}\bfai\left(\cg_N (f)(x)\r)\oz(x)\,dx.
\end{eqnarray}
\end{lemma}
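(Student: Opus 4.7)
To prove \eqref{4.10} for a fixed $i$, my plan is to split $\int_{\rn} = \int_{Q_i^{\ast}} + \int_{(Q_i^{\ast})^{\complement}}$ and treat each piece using the two pointwise bounds already at hand. On $Q_i^{\ast}$, Lemma \ref{l4.2} gives $\cg^0_N(b_i)\le C_2\cg_N(f)$, and since $\bfai$ is monotone and of upper type $1$ (which follows from concavity in Assumption (A)), this yields
$$\int_{Q_i^{\ast}}\bfai(\cg^0_N(b_i)(x))\oz(x)\,dx \ls \int_{Q_i^{\ast}}\bfai(\cg_N(f)(x))\oz(x)\,dx.$$
On $(Q_i^{\ast})^{\complement}$, the second assertion of Lemma \ref{l4.3} lets me assume $l_i<C_4$ (otherwise the integrand vanishes), and \eqref{4.9} confines the support to $B(x_i,C_3)$, so I would partition $B(x_i,C_3)\setminus Q_i^{\ast}$ into dyadic annuli $A_k\equiv\{x:2^k l_i\le|x-x_i|<2^{k+1}l_i\}$ and use \eqref{4.9} to get $\cg^0_N(b_i)(x)\ls\lz\,2^{-k(n+s+1)}$ on $A_k$.

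Next I would apply the strictly lower type $p_{\bfai}$ property of $\bfai$ (combined with upper type $1$ to absorb any constant exceeding $1$) to obtain $\bfai(\cg^0_N(b_i)(x))\ls 2^{-k(n+s+1)p_{\bfai}}\bfai(\lz)$ on $A_k$. Because $s\ge\lfz n(q_\oz/p_{\bfai}-1)\rf$, I can fix $q_0\in(q_\oz,\fz)$ with $(n+s+1)p_{\bfai}>nq_0$ and $\oz\in A^{\loc}_{q_0}(\rn)$; Lemma \ref{l2.3}(i) then provides a global $\overline{\oz}\in A_{q_0}(\rn)$ coinciding with $\oz$ on a fixed-size neighborhood of $Q_i$ (permissible since $l_i<C_4$), and Lemma \ref{l2.3}(viii) delivers $\oz(A_k)\ls 2^{knq_0}\oz(Q_i)$. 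Summing the resulting geometric series gives $\int_{(Q_i^{\ast})^{\complement}}\bfai(\cg^0_N(b_i))\oz\,dx\ls\oz(Q_i)\bfai(\lz)$. To close, I would use $Q_i\subset\boz_{\lz}$ and hence $\cg_N(f)>\lz$ on $Q_i$, so $\oz(Q_i)\bfai(\lz)\le\int_{Q_i}\bfai(\cg_N(f))\oz\,dx\le\int_{Q_i^{\ast}}\bfai(\cg_N(f))\oz\,dx$, completing \eqref{4.10}.

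For the convergence of $\sum_i b_i$ and \eqref{4.11}, I would invoke the subadditivity of $\bfai$ together with the bounded-overlap property $\sum_i\chi_{Q_i^{\ast}}\le L\,\chi_{\boz_{\lz}}$: applying these to any finite tail $S_{M,M'}\equiv\sum_{i=M+1}^{M'}b_i$ gives
$$\int_{\rn}\bfai\left(\cg^0_N(S_{M,M'})(x)\r)\oz(x)\,dx\le\sum_{i=M+1}^{M'}\int_{\rn}\bfai\left(\cg^0_N(b_i)(x)\r)\oz(x)\,dx\ls\int_{\cup_{i>M}Q_i^{\ast}}\bfai(\cg_N(f))\oz\,dx,$$
which tends to $0$ as $M\to\fz$ since $\bfai(\cg_N(f))\in L^1_\oz(\rn)$. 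Translating through the equivalence $\|\cdot\|_{h^{\bfai}_{\oz,N}(\rn)}\sim\|\cg^0_N(\cdot)\|_{L^{\bfai}_\oz(\rn)}$ from Theorem \ref{t3.14} shows that $\{S_M\}$ is Cauchy in $h^{\bfai}_{\oz,N}(\rn)$; Proposition \ref{p3.17} then produces a limit, which coincides with $\sum_i b_i$ in $\cd'(\rn)$ thanks to the continuous embedding of Proposition \ref{p3.16}. The main obstacle I anticipate is the annular weight estimate: turning the purely local condition $\oz\in A^{\loc}_\fz(\rn)$ into a usable $A_{q_0}$-doubling bound along the annuli $A_k$ requires that every relevant cube remain inside a fixed bounded region, which is exactly why the cutoff $l_i<C_4$ from Lemma \ref{l4.3} is indispensable and must be tracked carefully throughout the geometric sum.
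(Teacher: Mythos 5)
Your proposal is correct and takes essentially the same route as the paper: split the integral into $Q_i^\ast$ and its complement, control the local piece via Lemma \ref{l4.2} (absorbing the constant $C_2$ by the upper type $1$ of $\bfai$), and control the far piece by dyadic annuli using the pointwise bound \eqref{4.9}, the strictly lower type $p_\bfai$ property, the extension $\overline{\oz}\in A_{q_0}(\rn)$ from Lemma \ref{l2.3}(i) on the fixed ball $B(x_i,C_3)$, and the doubling estimate of Lemma \ref{l2.3}(viii), closing with $\cg_N(f)>\lz$ on $Q_i\subset\boz_\lz$. The convergence argument for \eqref{4.11} --- subadditivity of $\bfai$, bounded overlap of $\{Q_i^\ast\}_i$, dominated convergence for the tail, and completeness (Proposition \ref{p3.17}) plus the continuous embedding (Proposition \ref{p3.16}) --- likewise matches the paper's reasoning, with your explicit tail-Cauchy estimate merely spelling out a step the paper handles more tersely.
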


\begin{proof}
By Lemmas \ref{l4.2} and \ref{l4.3}, we have
\begin{eqnarray}\label{4.12}
\int_{\rn}\bfai\left(\cg^0_N (b_i)(x)\r)\oz(x)\,dx&\ls&
\int_{Q_i^{\ast}}\bfai\left(\cg_N (f)(x)\r)\oz(x)\,dx\nonumber\\
 &&+\int_{(2C_3 Q_i^0)\setminus Q_i^{\ast}}\bfai\left(\cg^0_N
(b_i)(x)\r)\oz(x)\,dx,
\end{eqnarray} where $Q_i^0\equiv Q(x_i,1)$. Notice that
$s\ge\lfz n(q_{\oz}/p_{\bfai}-1)\rf$ implies
$(s+n+1)p_{\bfai}>nq_{\oz}$. Thus, we take $q_0\in(q_{\oz},\fz)$
such that $(s+n+1)p_{\bfai}>nq_0$ and $\oz\in A^{\loc}_{q_0}(\rn)$.
By Lemma \ref{l2.3}(i), we know that there exists an $\wz{\oz}\in
A_{q_0}(\rn)$ such that $\wz{\oz}=\oz$ on $2C_3 Q^0_i$ and $A_{q_0}
(\wz{\oz})\ls A^{\loc}_{q_0}(\oz)$. Using Lemma \ref{4.3}, the lower
$p_{\bfai}$ property of $\bfai$, Lemma \ref{l2.3}(viii) and the fact
that $\cg_N (f)>\lz$ for all $x\in Q^{\ast}_i$, we conclude that
\begin{eqnarray}\label{4.13}
\hs&&\int_{(2C_3 Q_i^0)\setminus Q_i^{\ast}}\bfai\left(\cg^0_N
(b_i)(x)\r)\oz(x)\,dx\nonumber\\  &&\hs\le\sum_{k=1}^{k_0}\int_{2^k
Q_i^{\ast}\setminus 2^{k-1}Q_i^{\ast}}\bfai\left(\cg^0_N
(b_i)(x)\r)\wz{\oz}(x)\,dx
\ls\sum_{k=1}^{k_0}\bfai\left(\frac{\lz}{2^{k(n+s+1)}}\r)\int_{2^k
Q_i^{\ast}}\wz{\oz}(x)\,dx\nonumber\\
&&\hs\ls\sum_{k=1}^{k_0}\bfai(\lz)\frac{1}{2^{k(n+s+1)p_{\bfai}}}\int_{2^k
Q_i^{\ast}}\wz{\oz}(x)\,dx
\ls\sum_{k=1}^{k_0}\bfai(\lz)2^{-k[(n+s+1)p_{\bfai}-nq_0]}
\wz{\oz}(Q_i^{\ast})\nonumber\\
&&\hs\ls\int_{Q_i^{\ast}}\bfai\left(\cg_N
(f)(x)\r)\wz{\oz}(x)\,dx\sim\int_{Q_i^{\ast}}\bfai\left(\cg_N
(f)(x)\r)\oz(x)\,dx,
\end{eqnarray}
where $k_0\in\nn$ satisfies $2^{k_0 -2}\le C_3<2^{k_0-1}$. From
\eqref{4.12} and \eqref{4.13}, we deduce that \eqref{4.10} holds.
Then, by \eqref{4.10}, we see that
\begin{eqnarray*}
\sum_i\int_{\rn}\bfai\left(\cg^0_N
(b_i)(x)\r)\oz(x)\,dx&\ls&\sum_i\int_{Q_i^{\ast}}\bfai\left(\cg_N
(f)(x)\r)\oz(x)\,dx\\
&\ls&\int_{\boz_{\lz}}\bfai\left(\cg_N (f)(x)\r)\oz(x)\,dx.
\end{eqnarray*}
Combining the above inequality with the completeness of
$h^{\bfai}_{\oz,\,N}(\rn)$, we infer that $\sum_i b_i$ converges in
$h^{\bfai}_{\oz,\,N}(\rn)$. So by Proposition \ref{p3.16}, the
series $\sum_i b_i$ converges in $\cd'(\rn)$ and hence
$$\cg^0_N\left(\sum_i b_i\r)(x)\le \sum_i \cg^0_N (b_i)(x)$$
for all $x\in\rn$, which
gives \eqref{4.11}. This finishes the proof of Lemma \ref{4.4}.
\end{proof}

\begin{lemma}\label{l4.5}
Let $\oz\in A^{\loc}_{\fz}(\rn)$ and $q_{\oz}$ be as in \eqref{2.4},
$s\in\zz_{+}$ and integer $N\ge2$. If $q\in(q_{\oz},\fz]$ and $f\in
L^q_{\oz}(\rn)$, then the series $\sum_i b_i$ converges in
$L^q_{\oz}(\rn)$ and there exists a positive constant $C_6$,
independent of $f$, such that
$$\left\|\sum_i |b_i|\r\|_{L^q_{\oz}(\rn)}\le
C_6 \|f\|_{L^q_{\oz}(\rn)}.$$
\end{lemma}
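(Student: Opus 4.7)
The plan is to split $\sum_i|b_i|$ into the terms with $l_i\ge1$ (where $b_i=f\zez_i$) and those with $l_i<1$ (where $b_i=(f-P_i)\zez_i$), and to dominate the whole sum pointwise by $|f|\chi_{\boz_{\lz}}$ plus a local Hardy-Littlewood maximal function of $f$. The $L^q_{\oz}(\rn)$ estimate will then follow from Lemma \ref{l2.3}(vi) together with Remark \ref{r2.4}, since the assumption $q\in(q_{\oz},\fz]$ yields $\oz\in A^{\loc}_q(\rn)$.

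First I would handle the $l_i\ge1$ part trivially. Since $0\le\zez_i\le1$ and $\supp\zez_i\subset Q_i^{\ast}$, we have $|b_i|\le|f|\chi_{Q_i^{\ast}}$, and the bounded-overlap property (with constant $L$) of $\{Q_i^{\ast}\}_i$ then gives $\sum_{l_i\ge1}|b_i|\le L|f|\chi_{\boz_{\lz}}$, whose $L^q_{\oz}(\rn)$-norm is bounded by $L\|f\|_{L^q_{\oz}(\rn)}$.

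The main step is the pointwise polynomial bound
\begin{equation*}
|P_i(x)|\ls\frac{1}{|Q_i^{\ast}|}\int_{Q_i^{\ast}}|f(y)|\,dy,\qquad x\in Q_i^{\ast},
\end{equation*}
valid for all $i$ with $l_i<1$, where the implicit constant is uniform in $i$. I would derive this from the Riesz-representation definition \eqref{4.5} of $P_i$ as follows. For each fixed $x\in Q_i^{\ast}$, the evaluation functional $P\mapsto P(x)$ on the finite-dimensional Hilbert space $(\cp_s(\rn),\|\cdot\|_i)$ is represented by some $\phi_{i,\,x}\in\cp_s(\rn)$ with $P(x)=\int_{\rn}P(y)\phi_{i,\,x}(y)\zez_i(y)\,dy\bigl/\int_{\rn}\zez_i(y)\,dy$ for all $P\in\cp_s(\rn)$; applying this to $P=P_i$ and using \eqref{4.5} converts the identity into $P_i(x)=\int_{\rn}f(y)\phi_{i,\,x}(y)\zez_i(y)\,dy\bigl/\int_{\rn}\zez_i(y)\,dy$. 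After rescaling $Q_i^{\ast}$ to a reference cube via the affine change of variables $y\mapsto x_i+l_i y$, the coercivity of $\langle\cdot,\cdot\rangle_i$ on $\cp_s(\rn)$ (which holds because $\zez_i\ge1/L$ on $Q_i$ and $|Q_i|\sim|Q_i^{\ast}|$, so the inner product dominates a fixed fraction of the $L^2$-norm on the normalized cube), combined with the equivalence of all norms on the finite-dimensional space $\cp_s(\rn)$, yields a uniform $L^{\fz}$-bound on $\phi_{i,\,x}$ independent of $i$ and of $x\in Q_i^{\ast}$. The displayed inequality then follows, and since $l_i<1$ forces $|Q_i^{\ast}|\le\wz{C}$ for some universal constant $\wz{C}$, we obtain $|P_i(x)|\ls M_{\wz{C}}^{\loc}(f)(x)$ on $Q_i^{\ast}$.

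Summing over $i$ with the bounded overlap of $\{Q_i^{\ast}\}_i$ produces
\begin{equation*}
\sum_i|b_i(x)|\ls|f(x)|\chi_{\boz_{\lz}}(x)+M_{\wz{C}}^{\loc}(f)(x)\chi_{\boz_{\lz}}(x).
\end{equation*}
Taking $L^q_{\oz}$-norms and applying Lemma \ref{l2.3}(vi) together with Remark \ref{r2.4} yields the desired inequality with a constant $C_6$ independent of $f$; the case $q=\fz$ is even simpler, since $M^{\loc}(f)\le\|f\|_{L^{\fz}(\rn)}$ pointwise. Convergence of $\sum_i b_i$ in $L^q_{\oz}(\rn)$ then follows from the dominated convergence theorem applied to partial sums (each point belongs to at most $L$ of the $Q_i^{\ast}$, so partial sums converge pointwise, and the dominant just constructed is in $L^q_{\oz}(\rn)$). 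The main obstacle will be executing the scaling argument cleanly so that the constants in the polynomial bound are genuinely uniform in $i$; once this uniformity is established, the remainder reduces to the $L^q_{\oz}$-boundedness of $M_{\wz{C}}^{\loc}$, which is already available.
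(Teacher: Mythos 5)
Your argument is correct and follows the standard route (cf.\ Bownik \cite{B03}, and the proof of the analogous lemma in \cite{Ta1}, to which the paper defers): the crux is the uniform pointwise bound $|P_i(x)|\ls\frac{1}{|Q_i^\ast|}\int_{Q_i^\ast}|f|$ for $x\in Q_i^\ast$ and $l_i<1$, obtained by representing the evaluation functional in the Hilbert space $(\cp_s(\rn),\|\cdot\|_i)$ and rescaling to a reference cube where the weight $\wz\zez_i$ is pinched between $1/L$ on $Q(0,1)$ and $1$ on $Q(0,a)$, so that all the relevant norms on $\cp_s(\rn)$ are uniformly comparable. Once that bound is in hand, the domination $\sum_i|b_i|\ls(|f|+M^{\loc}_{\wz C}(f))\chi_{\boz_\lz}$ and the $L^q_\oz$-boundedness of $M^{\loc}_{\wz C}$ (Lemma \ref{l2.3}(vi) together with Remark \ref{r2.4}) finish the estimate, exactly as you do.

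One small flaw in the final sentence: the dominated-convergence step only gives norm-convergence of the partial sums for $q<\fz$. For $q=\fz$ the pointwise bound $\sum_i|b_i|\ls\|f\|_{L^\fz}$ certainly gives the displayed inequality, but pointwise convergence of the tails $\sum_{i>K}|b_i|\to 0$ together with a bounded dominant does not imply $\|\sum_{i>K}b_i\|_{L^\fz}\to 0$; if $\boz_\lz$ is unbounded the tail is supported on $\cup_{i>K}Q_i^\ast$, which can recede to infinity without its sup going to zero. So your DCT clause should be restricted to $q<\fz$, and the $q=\fz$ convergence claim (as opposed to the norm bound) needs a separate comment or should be read as convergence in $\cd'(\rn)$. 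This is a subtlety present already in the statement as formulated, and the paper only ever applies this lemma with $q\in(q_\oz,\fz)$, so the rest of your argument is unaffected.
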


\begin{lemma}\label{l4.6}
Let integers $s$ and $N$ satisfy $0\le s<N$ and $N\ge2$,
$f\in\cd'(\rn)$ and $\lz>\inf_{x\in\rn}\cg_{N} (f)(x)$. If $\sum_i
b_i$ converges in $\cd'(\rn)$, then there exists a positive constant
$C_7$, independent of $f$ and $\lz$, such that for all $x\in\rn$,
$$\cg^0_N (g)(x)\le\cg^0_N (f)(x)\chi_{\boz_{\lz}^\complement}(x)+C_7\lz\sum_{i}
\frac{l_i^{n+s+1}}{(l_i
+|x-x_i|)^{n+s+1}}\chi_{B(x_i,C_3)}(x),$$where $x_i$ is the center
of $Q_i$ and $C_3$ is as in Lemma \ref{l4.3}.
\end{lemma}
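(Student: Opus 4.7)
The plan is to split the estimate according to whether $x\in\boz_\lz$ or $x\in\boz_\lz^\complement$, and to bound $|\pz_t\ast g(x)|$ for each $\pz\in\cd^0_N(\rn)$ and $t\in(0,1)$, starting from the distributional identity $g=f-\sum_i b_i$ in $\cd'(\rn)$.

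For $x\in\boz_\lz^\complement$: the Whitney decomposition gives $\boz_\lz=\cup_i Q_i^\ast$, so $x\notin Q_i^\ast$ for every $i$, and Lemma~\ref{l4.3} immediately yields
$$\cg^0_N(b_i)(x)\ls \lz\,\frac{l_i^{n+s+1}}{(l_i+|x-x_i|)^{n+s+1}}\chi_{B(x_i,C_3)}(x).$$
The sub-additivity of $\cg^0_N$, together with the $\cd'$-convergence of $\sum_i b_i$, gives $\cg^0_N(g)(x)\le \cg^0_N(f)(x)+\sum_i\cg^0_N(b_i)(x)$; since $\chi_{\boz_\lz^\complement}(x)=1$ on this set, the claim follows.

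For $x\in\boz_\lz$: set $I_x\equiv\{i:\,x\in Q_i^\ast\}$, so $|I_x|\le L$ by bounded overlap. I split
$$\pz_t\ast g(x)=\pz_t\ast\Bigl(f-\sum_{i\in I_x}b_i\Bigr)(x)-\sum_{i\notin I_x}\pz_t\ast b_i(x),$$
and control the far sum via Lemma~\ref{l4.3} exactly as above. For the near part, using $b_i=(f-P_i)\zez_i$ when $l_i<1$ and $b_i=f\zez_i$ when $l_i\ge1$, I rewrite
$$f-\sum_{i\in I_x}b_i=f\eta+\sum_{i\in I_x,\,l_i<1}P_i\zez_i,\qquad \eta\equiv 1-\sum_{i\in I_x}\zez_i.$$
Lemma~\ref{l4.1} bounds the polynomial contribution by $\ls L C_1\lz\|\pz\|_{L^1(\rn)}\ls\lz$. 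For the residual distributional piece $\langle f,\eta\pz_t(x-\cdot)\rangle$, I observe that $\eta$ vanishes on a neighborhood of $x$ of radius comparable to $l_{i_0}\equiv\min_{i\in I_x}l_i$; using the Whitney inequality $\dist(Q_{i_0},\boz_\lz^\complement)\sim l_{i_0}$, I pick $x^{\ast}\in\boz_\lz^\complement$ with $|x-x^{\ast}|\ls l_{i_0}$, and then $\eta\pz_t(x-\cdot)$ can be written (up to a controllable multiplicative factor) as a dilate of a $\cd^0_N(\rn)$ test function centered at $x^{\ast}$, which gives $|\langle f,\eta\pz_t(x-\cdot)\rangle|\ls \cg^0_N(f)(x^{\ast})\le\lz$.

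To absorb the $O(\lz)$ near-bound into the prescribed sum, note that for $i\in I_x$ the sidelength $l_i$ is bounded (the cubes of large sidelength contribute trivially by Lemma~\ref{l4.3}), $|x-x_i|\ls l_i$, and $x\in B(x_i,C_3)$, so $l_i^{n+s+1}(l_i+|x-x_i|)^{-(n+s+1)}\sim1$ and the $\lz$-bound is dominated by $\lz\sum_{i\in I_x}l_i^{n+s+1}(l_i+|x-x_i|)^{-(n+s+1)}\chi_{B(x_i,C_3)}(x)$. The main obstacle I anticipate is the rescaling argument in the near estimate: verifying that the $\cd^0_N$-norm and dilation scale of the modified test function $\eta\pz_t(x-\cdot)$ (after translation to $x^{\ast}$) are bounded uniformly in $\pz$, $t$, and the Whitney data. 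This is handled via the derivative bounds on $\zez_i$ inherited from $\xz$ together with the comparability of $l_{i_0}$, $\dist(Q_{i_0},\boz_\lz^\complement)$, and $|x-x^{\ast}|$ from the Whitney construction.
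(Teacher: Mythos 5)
The first case ($x\in\boz_\lz^\complement$), the far-cube bound via Lemma \ref{l4.3}, the rewriting $f-\sum_{i\in I_x}b_i=f\eta+\sum_{i\in I_x,\,l_i<1}P_i\zez_i$, and the polynomial bound from Lemma \ref{l4.1} are all fine. The gap is in the treatment of $\langle f,\eta\pz_t(x-\cdot)\rangle$, and it is a genuine one: the claim that $\eta\equiv1-\sum_{i\in I_x}\zez_i$ vanishes on a ball $B(x,cl_{i_0})$ is false. On $\boz_\lz$ one has $\eta=\sum_{j\notin I_x}\zez_j$, and for $j\notin I_x$ the condition $x\notin Q_j^{\ast}$ only gives $\dist(x,Q_j^{\ast})>0$ with no lower bound; the closed cube $Q_j^{\ast}$ (for $j\notin I_x$, adjacent to a cube in $I_x$) can come within arbitrarily small distance of $x$, so $\eta$ need not vanish on any ball of radius comparable to $l_{i_0}$. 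Even if one enlarges $I_x$ to use a bigger dilate so that $\eta$ does vanish on $B(x,cl_{i_0})$, the re-centering step still fails: $\nabla^{\bz}\eta$ is of size $l_{i_0}^{-|\bz|}$, so after translating to $x^{\ast}$ and dilating to scale $\tilde t\sim t$, the $\cd_N$-norm of the rescaled test function is of size $(t/l_{i_0})^N$, which is unbounded as $t/l_{i_0}\to\fz$. Thus the "controllable multiplicative factor" is actually of order $(t/l_{i_0})^N\gg1$ when $t\gg l_{i_0}$, and the resulting bound $\ls(t/l_{i_0})^N\lz$ does not give $\ls\lz$. A further slip: the re-centered test function lands in $\cd_{N,R}(\rn)$ for some $R>1$ (not $\cd^0_N(\rn)$), so you must compare against $\cg_{N}(f)(x^{\ast})$ (built with $R=2^{3(10+n)}$, whose $\lz$-level set is $\boz_\lz$), not against $\cg^0_N(f)(x^{\ast})$; this is precisely why $\boz_\lz$ is defined through $\cg_N$ rather than $\cg^0_N$.

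The structural fix, which is what the cited Lemma~4.8 of \cite{Ta1} (following \cite{B03,blyz08}) actually does, is to let the near/far split depend on the dilation parameter $t$ of the test function rather than on the fixed set $I_x$. Letting $x\in Q_k$, one separates $t\ls l_k$ from $t\gs l_k$: when $t\ls l_k$, the ball $B(x,t)$ is contained in $\boz_\lz$, the set $J(x,t)\equiv\{i:\,Q_i^{\ast}\cap B(x,t)\ne\emptyset\}$ is finite with $l_i\sim l_k$ for $i\in J(x,t)$, and then $1-\sum_{i\in J(x,t)}\zez_i$ vanishes identically on $B(x,t)$, killing the $f$-term; when $t\gs l_k$, one re-centers the \emph{unmodified} $\pz_t(x-\cdot)$ (not the $\eta$-cutoff version) at a point $\bar x\in\boz_\lz^\complement$ with $|x-\bar x|\ls l_k\ls t$, for which the rescaled test function does have bounded $\cd_N$-norm and lands in $\cd_{N,R}$ with $R$ comparable to $2^{3(10+n)}$, yielding $|\pz_t\ast f(x)|\ls\cg_N(f)(\bar x)\le\lz$, while the near $b_i$-terms are handled via their moment/structural properties rather than by Lemma~\ref{l4.2}. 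Finally, the remark that for $i\in I_x$ "the cubes of large sidelength contribute trivially by Lemma~\ref{l4.3}" misreads that lemma: it applies only to $x\in(Q_i^{\ast})^\complement$, i.e.\ $i\notin I_x$, so it says nothing about the sidelengths occurring in $I_x$; the absorption of the $O(\lz)$ bound into the right-hand sum needs some $i\in I_x$ with $|x-x_i|\le C_3$, and when $l_k$ is large one instead argues that $\cg^0_N(g)(x)$ is already zero.
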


\begin{lemma}\label{l4.7}
Let $\bfai$ satisfy Assumption $\mathrm{(A)}$, $\oz\in
A^{\loc}_{\fz}(\rn)$, $q_{\oz}$ and $p_{\bfai}$ be respectively as
in \eqref{2.4} and \eqref{2.6}, integer $N\ge N_{\bfai,\,\oz}$,
where $N_{\bfai,\,\oz}$ is as in \eqref{3.25}, and
$q\in(q_{\oz},\fz)$.

$\mathrm{(i)}$ If integers $s$ and $N$ satisfy $N>s\ge\lfz
n(q_{\oz}/p_{\bfai}-1)\rf$ and $f\in h^{\bfai}_{\oz,\,N}(\rn)$, then
$\cg^0_N (g)\in L^q_{\oz}(\rn)$ and there exists a positive constant
$C_8$, independent of $f$ and $\lz$, such that
\begin{eqnarray}\label{4.14}
\int_{\rn}\left[\cg^0_N (g)(x)\r]^q\oz(x)\,dx\le C_8
\begin{cases}\lz^{q-1}\int_{\rn}\bfai
\left(\cg_N (f)(x)\r)\oz(x)\,dx, &\lz\in(0,1),\\
\lz^{q-p_{\bfai}}\int_{\rn}\bfai\left(\cg_N (f)(x)\r)\oz(x)\,dx,
&\lz\in[1,\fz) .
\end{cases}
\end{eqnarray}

$\mathrm{(ii)}$  If $f\in L^q_{\oz}(\rn)$, then $g\in
L^{\fz}_{\oz}(\rn)$ and there exists a positive constant $C_9$,
independent of $f$ and $\lz$, such that
$\|g\|_{L^{\fz}_{\oz}(\rn)}\le C_9 \lz$.
\end{lemma}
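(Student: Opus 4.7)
\emph{Part (i).} Starting from the pointwise bound of Lemma~\ref{l4.6},
$$\cg^0_N(g)(x)\le\cg^0_N(f)(x)\chi_{\boz_{\lz}^\complement}(x)+C_7\lz\sum_i\frac{l_i^{n+s+1}}{(l_i+|x-x_i|)^{n+s+1}}\chi_{B(x_i,C_3)}(x),$$
I would split the $L^q_\oz$-integral into a good part on $\boz_{\lz}^\complement$ and a tail built from the Whitney cubes. On $\boz_{\lz}^\complement$ one has $\cg^0_N(f)\le\cg_N(f)\le\lz$; since $\bfai$ is of upper type $1$, the ratio $\bfai(t)/t$ is nonincreasing, whence $t\le\lz\bfai(t)/\bfai(\lz)$ for $0<t\le\lz$, and thus $t^q\le\lz^q\bfai(t)/\bfai(\lz)$. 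Applying upper type $1$ once more when $\lz\in(0,1)$ and strict lower type $p_\bfai$ when $\lz\in[1,\fz)$ gives
$$\frac{\lz^q}{\bfai(\lz)}\ls\begin{cases}\lz^{q-1},&\lz\in(0,1),\\\lz^{q-p_\bfai},&\lz\in[1,\fz),\end{cases}$$
which handles the good part after the trivial domination $\int_{\boz_{\lz}^\complement}\bfai(\cg_N(f))\oz\,dx\le\int_\rn\bfai(\cg_N(f))\oz\,dx$.

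\emph{Tail term -- the main obstacle.} Setting $\tz\equiv(n+s+1)/n$, the hypothesis $s\ge\lfz n(q_\oz/p_\bfai-1)\rf$ yields $\tz>q_\oz/p_\bfai\ge 1$. A direct comparison of averages over cubes shows that, for a suitable constant $\wz{C}$ depending on $C_3$,
$$M^{\loc}_{\wz{C}}(\chi_{Q_i})(x)\gs\frac{l_i^n}{(l_i+|x-x_i|)^n}\quad\text{for all }x\in B(x_i,C_3),$$
so raising to the $\tz$-th power gives the key pointwise domination
$$\frac{l_i^{n+s+1}}{(l_i+|x-x_i|)^{n+s+1}}\chi_{B(x_i,C_3)}(x)\ls\left[M^{\loc}_{\wz{C}}(\chi_{Q_i})(x)\r]^{\tz}.$$
Since $q>q_\oz$ and $\tz>1$, Lemma~\ref{l2.3}(iii) gives $\oz\in A^{\loc}_{q\tz}(\rn)$, and I then apply the vector-valued local Hardy-Littlewood maximal inequality (Lemma~\ref{l3.10}) with inner exponent $\tz$ and outer exponent $q\tz$ to obtain
$$\int_\rn\left(\sum_i\left[M^{\loc}_{\wz{C}}(\chi_{Q_i})\r]^{\tz}\r)^q\oz(x)\,dx\ls\int_\rn\left(\sum_i\chi_{Q_i}(x)\r)^q\oz(x)\,dx\sim\oz(\boz_\lz),$$
where the last equivalence uses the (near-)disjointness of the Whitney cubes $\{Q_i\}$. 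Chebyshev's inequality then yields $\bfai(\lz)\oz(\boz_\lz)\le\int_\rn\bfai(\cg_N(f))\oz\,dx$, and inserting the estimate for $\lz^q/\bfai(\lz)$ completes \eqref{4.14}. This vector-valued step is the main obstacle and the novelty of the argument: the exponent $\tz$ must be taken strictly above $q_\oz/p_\bfai$ rather than merely above $1$, which takes one past the scalar maximal inequality used in \cite[Lemma~4.9]{Ta1}.

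\emph{Part (ii).} Because $\supp(b_i)\subset Q_i^\ast\subset\boz_\lz$, one has $g=f$ on $\boz_{\lz}^\complement$; there, Proposition~\ref{p3.2}(i) combined with $\cg^0_N(f)\le\cg_N(f)\le\lz$ forces $|f(x)|\le\lz$ a.e. On $\boz_\lz$ we have $\sum_i\zez_i=1$, and the definitions $b_i=(f-P_i)\zez_i$ for $l_i<1$ and $b_i=f\zez_i$ for $l_i\ge 1$ collapse the expression for $g$ to $g(x)=\sum_{i:\,l_i<1}P_i(x)\zez_i(x)$; by Lemma~\ref{l4.1} each summand is bounded in absolute value by $C_1\lz$, and the bounded overlap of $\{Q_i^\ast\}$ ensures that at most $L$ such terms are nonzero at each $x$, so $\|g\|_{L^\fz_\oz(\rn)}=\|g\|_{L^\fz(\rn)}\ls\lz$.
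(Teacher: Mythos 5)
Your proof is correct and follows essentially the same route as the paper's: Lemma~\ref{l4.6} to split into a good part on $\boz_\lz^\complement$ and a Whitney tail, the vector-valued local maximal inequality of Lemma~\ref{l3.10} with inner exponent $(n+s+1)/n$ and outer exponent $q(n+s+1)/n$ to control the tail by $\oz(\boz_\lz)$, Chebyshev for $\oz(\boz_\lz)$, type estimates for $\lz^q/\bfai(\lz)$ in the two regimes, and Lemma~\ref{l4.1} together with Proposition~\ref{p3.2}(i) and bounded overlap for part (ii). Two small imprecisions in the commentary, neither of which creates a gap: (a) the monotone decrease of $\bfai(t)/t$ comes from concavity together with $\bfai(0)=0$ (Assumption (A)), not from upper type $1$ --- upper type $1$ alone gives only a bounded ratio with a multiplicative constant, which would still be enough here; (b) the vector-valued step needs only $\tz=(n+s+1)/n>1$ (to use $\ell^\tz$ in Lemma~\ref{l3.10}), which already follows from $s\ge 0$; the sharper bound $\tz>q_\oz/p_\bfai$ provided by $s\ge\lfz n(q_\oz/p_\bfai-1)\rf$ is what drives the decay estimates for the bad parts in Lemma~\ref{l4.4}, not the maximal-function step you single out.
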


\begin{proof}
We first prove (i). Let $f\in h^{\bfai}_{\oz,\,N}(\rn)$. By Lemma
\ref{l4.4} and Proposition \ref{p3.16}, $\sum_i b_i$ converges in
both $h^{\bfai}_{\oz,\,N}(\rn)$ and $\cd'(\rn)$. By $s\ge\lfz
n(q_{\oz}/p_{\bfai}-1)\rf$, we know that there exists
$q_0\in(q_{\oz},\fz)$ such that $(s+n+1)p_{\bfai}>nq_0$ and $\oz\in
A^{\loc}_{q_0}(\rn)$. Let
$$\mathrm{J}\equiv\int_{\boz_{\lz}^\complement}[\cg_N (f)(x)]^q
\oz(x)\,dx.$$
From Lemmas \ref{l4.6} and \ref{l3.10}, we infer that
\begin{eqnarray*}
\int_{\rn}\left[\cg^0_N
(g)(x)\r]^q\oz(x)\,dx&&\ls\lz^q\int_{\rn}\left[\sum_i
\frac{l_i^{n+s+1}}{(l_i +|x-x_i|)^{n+s+1}}\chi_{B(x_i,C_3)}(x)\r]^q
\oz(x)\,dx
+\mathrm{J}\\
&&\hs\ls\lz^q\int_{\rn}\left(\sum_i
\left[M^{\loc}_{2C_3}(\chi_{Q_i})(x)\r]^{(n+s+1)/n}\r)^q
\oz(x)\,dx+\mathrm{J}\\
&&\hs\ls\lz^q\int_{\rn}\left(\sum_i [\chi_{Q_i}(x)]^{(n+s+1)/n}\r)^q
\oz(x)\,dx+\mathrm{J}\\
&&\hs\ls\lz^q\int_{\boz_{\lz}}\oz(x)\,dx
+\mathrm{J}\sim\lz^q\oz(\boz_{\lz})+\mathrm{J}.
\end{eqnarray*}
Now, we consider the following two cases for $\lz$.

{\it Case 1)} $\lz\ge1$. In this case, since $\bfai$ has lower type
$p_{\bfai}$, we have
\begin{eqnarray*}
\lz^q\oz(\boz_{\lz})&\le&\lz^{q-p_{\bfai}}\oz(\boz_{\lz})
\left[\inf_{x\in\boz_{\lz}}\cg_N
(f)(x)\r]^{p_{\bfai}}\le\lz^{q-p_{\bfai}}\oz(\boz_{\lz})\bfai
\left(\inf_{x\in\boz_{\lz}}\cg_N
(f)(x)\r)\\
&\le&\lz^{q-p_{\bfai}}\int_{\boz_{\lz}}\bfai\left(\cg_N
(f)(x)\r)\oz(x)\,dx.
\end{eqnarray*}
Recall that
$$\boz_1\equiv\{x\in\rn:\,\cg_N (f)(x)>1\}.$$
From the fact that $\bfai$ has lower type $p_{\bfai}$ and upper type 1, it
follows that
\begin{eqnarray*}
\mathrm{J}&=&\int_{\boz_{\lz}^\complement\cap\boz_1}\left[\cg_N
(f)(x)\r]^q
\oz(x)\,dx+\int_{\boz_{\lz}^\complement\cap\boz_1^\complement}\cdots\\
&\ls&\int_{\boz_{\lz}^\complement}[\cg_N
(f)(x)]^{q-p_{\bfai}}\bfai\left(\cg_N (f)(x)\r)\oz(x)\,dx
+\int_{\boz_{\lz}^\complement}[\cg_N (f)(x)]^{q-1}\bfai\left(\cg_N
(f)(x)\r)\oz(x)\,dx\\
&\ls&(\lz^{q-p_{\bfai}}+\lz^{q-1})\int_{\boz_{\lz}^\complement}\bfai\left(\cg_N
(f)(x)\r)\oz(x)\,dx
\ls\lz^{q-p_{\bfai}}\int_{\boz_{\lz}^\complement}\bfai\left(\cg_N
(f)(x)\r)\oz(x)\,dx,
\end{eqnarray*}
which together with the estimate of $\lz^q\oz(\boz_{\lz})$ implies
\eqref{4.10} in Case 1).

{\it Case 2)} $\lz\in(0,1)$. In this case, for any $x\in\boz_{\lz}$,
if $\cg_N (f)(x)\ge1>\lz$, using the fact that $\bfai$ has lower
type $p_{\bfai}$, we conclude that
$$\lz^q\le\lz^{q-p_{\bfai}}[\cg_N (f)(x)]^{p_{\bfai}}
\ls\lz^{{q-p_{\bfai}}} \bfai\left(\cg_N
(f)(x)\r)\ls\lz^{q-1}\bfai\left(\cg_N (f)(x)\r).$$

If $\cg_N (f)(x)<1$ and $\cg_N (f)(x)>\lz$, by the fact that
$\bfai$ has upper type 1, we see that
$$\lz^q\le\lz^{q-1}\cg_N (f)(x)\ls\lz^{q-1}
\bfai\left(\cg_N (f)(x)\r).$$
From these estimates, we deduce that
$$\lz^q\oz(\boz_{\lz})\ls\lz^{q-1}\int_{\boz_{\lz}}\bfai\left(\cg_N
(f)(x)\r)\oz(x)\,dx.$$
For $\mathrm{J}$, by $\lz\in(0,1)$, $\cg_N
(f)(x)\le\lz$ for all $x\in\boz_{\lz}^\complement$ and the fact that
$\bfai$ has upper type $1$, we know that
$$\mathrm{J}\le\lz^{q-1}\int_{\boz_{\lz}^\complement}\cg_N
(f)(x)\oz(x)\,dx\ls\lz^{q-1}\int_{\boz_{\lz}^\complement}\bfai\left(\cg_N
(f)(x)\r)\oz(x)\,dx,$$
which together with the estimate of
$\lz^q\oz(\boz_{\lz})$ implies \eqref{4.14} in Case 2). Thus, (i)
holds.

Now we prove (ii). If $f\in L^q_{\oz}(\rn)$, then $g$ and
$\{b_i\}_i$ are functions. By Lemma \ref{l4.5}, we know that $\sum_i
b_i$ converges in $L^q_{\oz}(\rn)$ and hence in $\cd'(\rn)$ by Lemma
\ref{l2.6}(ii). Write
$$g=f-\sum_i b_i=f\left(1-\sum_i \zez_i\r)+\sum_{i\in F}
P_i\zez_i=f\chi_{\boz_{\lz}^\complement}+\sum_{i\in F} P_i \zez_i,$$
where $F\equiv\{i\in\nn:\,l_i\in(0,1)\}$. By Lemma \ref{l4.1}, we
have that $|g(x)|\ls\lz$ for all $x\in\boz_{\lz}$, which combined
with Proposition \ref{p3.2}(i) yields that
$$|g(x)|=|f(x)|\le\cg_N(f)(x)\le\lz$$
for almost every $x\in\boz_{\lz}^\complement$, Thus,
$\|g\|_{L^{\fz}_{\oz}(\rn)}\ls\lz$. This shows (ii) and hence
finishes the proof of Lemma \ref{l4.7}.
\end{proof}

\begin{corollary}\label{c4.8}
Let $\bfai$ satisfy Assumption $\mathrm{(A)}$, $\oz\in
A^{\loc}_{\fz}(\rn)$, $q_{\oz}$ be as in \eqref{2.4},
$$q\in(q_{\oz},\fz)$$
and integer $N\ge N_{\bfai,\,\oz}$, where
$N_{\bfai,\,\oz}$ is as in \eqref{3.25}. Then
$h^{\bfai}_{\oz,\,N}(\rn)\cap L^q_{\oz} (\rn)$ is dense in
$h^{\bfai}_{\oz,\,N}(\rn)$.
\end{corollary}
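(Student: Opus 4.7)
The plan is to construct, for each $f\in h^{\bfai}_{\oz,\,N}(\rn)$, a family $\{g_\lz\}_{\lz}$ of approximants lying in $h^{\bfai}_{\oz,\,N}(\rn)\cap L^q_\oz(\rn)$ such that $g_\lz\to f$ in $h^{\bfai}_{\oz,\,N}(\rn)$ as $\lz\to\fz$. Fix an integer $s$ with $\lfz n(q_\oz/p_\bfai-1)\rf\le s<N$. For each $\lz>\inf_{x\in\rn}\cg_N(f)(x)$, form the Calder\'on-Zygmund decomposition $f=g_\lz+\sum_i b_i^\lz$ of degree $s$ and height $\lz$ as in \eqref{4.7}. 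Since $f\in h^{\bfai}_{\oz,\,N}(\rn)$, we have $\bfai(\cg_N(f))\in L^1_\oz(\rn)$, so Chebyshev's inequality gives $\bfai(\lz)\oz(\boz_\lz)\le\int_\rn\bfai(\cg_N(f))\oz\,dx<\fz$; this both verifies the standing hypothesis of Section~\ref{s4} and, recalling that $\bfai(\lz)\to\fz$ thanks to the lower type $p_\bfai>0$, yields $\oz(\boz_\lz)\to 0$ as $\lz\to\fz$.

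I next check that $g_\lz\in h^{\bfai}_{\oz,\,N}(\rn)\cap L^q_\oz(\rn)$. By Lemma~\ref{l4.7}(i), $\cg^0_N(g_\lz)\in L^q_\oz(\rn)$; since $q\in(q_\oz,\fz)$ implies $\oz\in A^{\loc}_q(\rn)$, Proposition~\ref{p3.2}(ii) upgrades this to $g_\lz\in L^q_\oz(\rn)$. On the other hand, Lemma~\ref{l4.4} guarantees that $\sum_i b_i^\lz$ converges in $h^{\bfai}_{\oz,\,N}(\rn)$, so $g_\lz=f-\sum_i b_i^\lz\in h^{\bfai}_{\oz,\,N}(\rn)$ as well.

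It remains to show $\|f-g_\lz\|_{h^{\bfai}_{\oz,\,N}(\rn)}\to 0$. The key estimate \eqref{4.11} provides
\begin{equation*}
\int_\rn\bfai\left(\cg^0_N\left(\sum_i b_i^\lz\r)(x)\r)\oz(x)\,dx\le C_5\int_{\boz_\lz}\bfai(\cg_N(f)(x))\oz(x)\,dx,
\end{equation*}
and since $\bfai(\cg_N(f))\in L^1_\oz(\rn)$ together with $\oz(\boz_\lz)\to 0$, the dominated convergence theorem forces the right-hand side to tend to $0$. To convert this modular decay into Luxembourg-norm decay, I will invoke the upper type $1$ and lower type $p_\bfai$ properties of $\bfai$ from Assumption~(A), which jointly yield $\bfai(t/\epz)\ls\max\{\epz^{-1},\epz^{-p_\bfai}\}\bfai(t)$ for all $t,\epz\in(0,\fz)$. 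Hence, for every fixed $\epz>0$, $\int_\rn\bfai(\cg^0_N(\sum_i b_i^\lz)(x)/\epz)\oz(x)\,dx\to 0$ as $\lz\to\fz$, which by the equivalence $\|\cdot\|_{h^{\bfai}_{\oz,\,N}(\rn)}\sim\|\cg^0_N(\cdot)\|_{L^\bfai_\oz(\rn)}$ from Theorem~\ref{t3.14} delivers the required norm convergence. The one genuine subtlety is this passage between modular and Luxembourg-norm convergence in the Orlicz setting, which the type assumptions on $\bfai$ handle cleanly; everything else is an assembly of the Calder\'on-Zygmund machinery of Section~\ref{s4}.
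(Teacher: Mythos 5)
Your proposal is correct and follows essentially the same route as the paper: form the Calder\'on--Zygmund decomposition of degree $s$ with $\lfz n(q_\oz/p_\bfai-1)\rf\le s<N$ and height $\lz$, invoke Lemma~\ref{l4.4} (specifically \eqref{4.11}) to see that the modular of $\cg^0_N(\sum_i b_i^\lz)$ is controlled by $\int_{\boz_\lz}\bfai(\cg_N(f))\oz\,dx$, which vanishes as $\lz\to\fz$, and invoke Lemma~\ref{l4.7}(i) together with Proposition~\ref{p3.2}(ii) to conclude $g^\lz\in L^q_\oz(\rn)$. Your write-up actually supplies two bookkeeping steps the paper elides: the Chebyshev verification that $\oz(\boz_\lz)<\fz$ for all $\lz>0$ (the standing hypothesis of Section~\ref{s4}) and the passage from modular decay to Luxembourg-norm decay via the two-sided type estimate $\bfai(t/\epz)\ls\max\{\epz^{-1},\epz^{-p_\bfai}\}\bfai(t)$; both are correct and worth having explicitly.
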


\begin{proof}
Let $f\in h^{\bfai}_{\oz,\,N}(\rn)$. For any
$\lz>\inf_{x\in\rn}\cg_N (f)(x)$, let
$$f=g^{\lz}+\sum_i b_i^{\lz}$$
be the Calder\'on-Zygmund decomposition of $f$ of degree $s$ with
$\lfz n(q_{\oz}/p_{\bfai}-1)\rf\le s<N$ and height $\lz$ associated
to $\cg_N (f)$. By Lemma \ref{l4.4},
$$\int_{\rn}\bfai\left(\cg^0_N \left(\sum_i b^{\lz}_i\r)(x)\r)\oz(x)\,dx
\ls\int_{\{x\in\rn:\,\cg_N (f)(x)>\lz\}}\bfai\left(\cg_N
(f)(x)\r)\oz(x)\,dx.$$ Therefore, $g^{\lz}\to f$ in
$h^{\bfai}_{\oz,\,N}(\rn)$ as $\lz\to\fz$. Moreover, by Lemma
\ref{l4.7}(i), we have
$$\cg^0_N (g^{\lz})\in L^q_{\oz}(\rn),$$
which together with Proposition \ref{p3.2}(ii) implies $g^{\lz}\in
L^q_{\oz}(\rn)$. This finishes the proof of Corollary \ref{c4.8}.
\end{proof}

\chapter{Weighted atomic decompositions of
$h^{\bfai}_{\oz,\,N}(\rn)$\label{s5}}

In this section, we establish the
equivalence between $h^{\bfai}_{\oz,\,N}(\rn)$ and
$h^{\rz,\,q,\,s}_{\oz}(\rn)$ by using the Calder\'on-Zygmund
decomposition associated to the local grand maximal function stated
in Section \ref{s4}.

Let $\bfai$ satisfy Assumption $\mathrm{(A)}$, $\oz\in
A^{\loc}_{\fz}(\rn)$, $q_{\oz}$, $p_{\bfai}$ and $N_{\bfai,\,\oz}$
be respectively as in \eqref{2.4}, \eqref{2.6} and \eqref{3.25},
integer $N\ge N_{\bfai,\,\oz}$ and $s_0\equiv\lfz
n(q_{\oz}/p_{\bfai}-1)\rf$. \emph{Throughout this section, let}
$$f\in h^{\bfai}_{\oz,\,N}(\rn).$$
We take $k_0\in\zz$ such that $2^{k_0
-1}\le\inf_{x\in\rn}\cg_N (f)(x)<2^{k_0}$ when
$$\inf_{x\in\rn}\cg_N(f)(x)>0,$$
and when $\inf_{x\in\rn}\cg_N (f)(x)=0$, let $k_0
\equiv-\fz$. {\it Throughout the whole section, we always assume
that $k\ge k_0$.} For each integer $k\ge k_0$, consider the
Calder\'on-Zygmund decomposition of $f$ of degree $s$ and height
$\lz=2^k$ associated to $\cg_N (f)$. Namely, for any $k\ge k_0$, by
taking $\lz\equiv2^k$ in \eqref{4.1}, we now write the
Calder\'on-Zygmund decomposition in \eqref{4.7} by
\begin{eqnarray}\label{5.1}
f=g^k +\sum_{i}b_i^k,
\end{eqnarray}
where and in what follows of this section, we write $\{Q_i\}_i$ in
\eqref{4.2}, $\{\zez_i\}_i$ in \eqref{4.3}, $\{P_i\}_i$ in
\eqref{4.5} and $\{b_i\}_i$ in \eqref{4.6}, respectively, as
$\{Q^k_i\}_i$, $\{\zez^k_i\}_i$, $\{P^k_i\}_i$ and $\{b^k_i\}_i$.
Now, the {\it center} and the {\it sidelength} of $Q^k_i$ is
respectively denoted by $x^k_i$ and $l^k_i$. Recall that for all $i$
and $k$,
\begin{eqnarray}\label{5.2}
\sum_i \zez^k_i =\chi_{\boz_{2^k}},\ \supp(b^k_i)\subset\supp
(\zez^k_i)\subset Q^{k\ast}_i,
\end{eqnarray}
$\{Q^{k\ast}_i\}_i$ has the bounded interior property, and for all
$P\in\cp_s (\rn)$,
\begin{eqnarray}\label{5.3}
\langle f, P\zez^k_i\rangle=\langle P^k_i,P\zez^k_i\rangle.
\end{eqnarray}

For each integer $k\ge k_0$ and $i,\,j\in\nn$, let $P^{k+1}_{i,\,j}$
be the orthogonal projection of $(f-P^{k+1}_j)\zez^k_i$ on $\cp_s
(\rn)$ with respect to the norm
$$\|P\|_j^2\equiv\frac{1}{\int_{\rn}\zez^{k+1}_j (y)\,dy}
\int_{\rn}|P(x)|^2 \zez^{k+1}_j (x)\,dx,$$ namely, $P^{k+1}_{i,\,j}$
is the unique polynomial of $\cp_s (\rn)$ such that for any
$P\in\cp_s (\rn)$,
\begin{eqnarray}\label{5.4}
\langle(f-P^{k+1}_j)\zez^k_i,P\zez^{k+1}_j\rangle=
\int_{\rn}P^{k+1}_{i,\,j} (x)P(x)\zez^{k+1}_j (x)\,dx.
\end{eqnarray}
Recall that $a\equiv 1+2^{-(11+n)}$. In what follows, let
$Q_i^{k\ast}\equiv aQ^k_i$,
$$E^k_1\equiv\left\{i\in\nn:\ |Q^k_i|\ge1/(2^4
n)\r\},$$
$$E^k_2\equiv\left\{i\in\nn:\ |Q^k_i|<1/(2^4 n)\r\},$$
$$F^k_1\equiv\left\{i\in\nn:\ |Q^k_i|\ge1\r\}$$
and
$$F^k_2\equiv\left\{i\in\nn:\ |Q^k_i|<1\r\}.$$

Observe that
\begin{eqnarray}\label{5.5}
P^{k+1}_{i,\,j}\neq0 \,\,\text{if and only if}\,\,Q_i^{k\ast}\cap
Q_j^{(k+1)\ast}\neq\emptyset.
\end{eqnarray} Indeed, this follows directly from
the definition of $P^{k+1}_{i,\,j}$. The following Lemmas \ref{l5.1}
through \ref{l5.3} are just Lemmas 5.1 through 5.3 in \cite{Ta1}.

\begin{lemma}\label{l5.1}
Let $\boz_{2^k}$ be as in \eqref{4.1} with $\lz=2^k$, $Q_i^{k\ast}$
and $l^k_i$ be as above.

$\mathrm{(i)}$ If $Q_i^{k\ast}\cap Q_j^{(k+1)\ast}\neq\emptyset$,
then $l^{k+1}_j\le2^4\sqrt{n}l^k_i$ and $ Q_j^{(k+1)\ast}\subset2^6
nQ_i^{k\ast}\subset\boz_{2^k}$.

$\mathrm{(ii)}$ There exists a positive integer $L$ such that for
each $i\in\nn$, the cardinality of $\{j\in\nn:\,Q_i^{k\ast}\cap
Q_j^{(k+1)\ast}\neq\emptyset\}$ is bounded by $L$.
\end{lemma}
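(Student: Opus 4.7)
The plan for (i) is to exploit the two-sided Whitney size--distance relation $2^{6+n}\sqrt{n}\,l(Q)\le\dist(Q,\boz^\complement)\le 2^{8+n}\sqrt{n}\,l(Q)$ on both decompositions, together with the monotonicity $\boz_{2^{k+1}}\subset\boz_{2^k}$, which reverses to $\boz_{2^k}^\complement\subset\boz_{2^{k+1}}^\complement$ and hence forces $\dist(x,\boz_{2^{k+1}}^\complement)\le\dist(x,\boz_{2^k}^\complement)$ at every point $x$. First I would pick a common point $x\in Q_i^{k\ast}\cap Q_j^{(k+1)\ast}$. From $x\in aQ_j^{k+1}$ together with the Whitney lower bound, I can estimate $\dist(x,\boz_{2^{k+1}}^\complement)\ge(2^{6+n}-2^{-(12+n)})\sqrt{n}\,l_j^{k+1}\ge 2^{5+n}\sqrt{n}\,l_j^{k+1}$. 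From $x\in aQ_i^k$, a triangle inequality through the closest point of $Q_i^k$ combined with the Whitney upper bound gives $\dist(x,\boz_{2^k}^\complement)\le(2^{-(12+n)}+1+2^{8+n})\sqrt{n}\,l_i^k\le 2^{9+n}\sqrt{n}\,l_i^k$. Funneling both through the monotonicity yields $l_j^{k+1}\le 2^4 l_i^k\le 2^4\sqrt{n}\,l_i^k$.

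With the size bound in hand, the two inclusions in (i) become $\ell^\infty$ bookkeeping. For $Q_j^{(k+1)\ast}\subset 2^6 n\,Q_i^{k\ast}$, any $y\in Q_j^{(k+1)\ast}$ satisfies
\[
|y-x_i^k|_\infty\le|y-x_j^{k+1}|_\infty+|x_j^{k+1}-x|_\infty+|x-x_i^k|_\infty\le a\,l_j^{k+1}+\tfrac{a}{2}\,l_i^k\le 2^5 n a\,l_i^k,
\]
which is exactly the half-side of $2^6 n\,Q_i^{k\ast}$ (using $l_j^{k+1}\le 2^4 l_i^k$ and $n\ge 1$). For $2^6 n\,Q_i^{k\ast}\subset\boz_{2^k}$, any point in this enlarged cube lies within Euclidean distance $2^5 n\sqrt{n}\,a\,l_i^k$ of $Q_i^k$, while the Whitney lower bound gives $\dist(Q_i^k,\boz_{2^k}^\complement)\ge 2^{6+n}\sqrt{n}\,l_i^k$; the elementary inequality $n a<2^{n+1}$, immediate for all $n\ge 1$, then shows that the enlarged cube stays inside the Whitney neighborhood of $Q_i^k$ that avoids $\boz_{2^k}^\complement$.

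Part (ii) is the main obstacle. The strategy is to combine (i) with the bounded interior property of the Whitney family $\{Q_j^{(k+1)\ast}\}_j$: once we know every relevant $Q_j^{(k+1)\ast}$ lies inside the single enlarged cube $2^6 n\,Q_i^{k\ast}$ and has sidelength at most $2^4\sqrt{n}\,l_i^k$, the essentially disjoint undilated cubes $Q_j^{k+1}$ pack into this bounded region under the control of the universal Whitney overlap constant, yielding a dimensional bound $L=L(n)$. The delicate point — and where the full Whitney structure (not merely the distance estimates of (i)) is used — is in preventing infinitely many very small cubes $Q_j^{k+1}$ from accumulating near $(\boz_{2^k}\setminus\boz_{2^{k+1}})\cap Q_i^{k\ast}$; this counting step is carried out exactly as in the proof of Lemma 5.1 in \cite{Ta1}.
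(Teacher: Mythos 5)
Your treatment of part (i) is correct and follows the standard route. You correctly unpack the paper's normalization $\diam(Q)\le 2^{-(6+n)}\dist(Q,\boz^\complement)\le 4\diam(Q)$ into $2^{6+n}\sqrt{n}\,l(Q)\le\dist(Q,\boz^\complement)\le 2^{8+n}\sqrt{n}\,l(Q)$, use the monotonicity of distances coming from $\boz_{2^{k+1}}\subset\boz_{2^k}$ (hence $\boz_{2^k}^\complement\subset\boz_{2^{k+1}}^\complement$), and carry out the $\ell^\infty$ bookkeeping with the dilation constant $a=1+2^{-(11+n)}$. The arithmetic checks: $(2^{6+n}-2^{-(12+n)})\ge 2^{5+n}$, $(2^{-(12+n)}+1+2^{8+n})\le 2^{9+n}$, $a(2^4+\tfrac12)\le 2^5 n a$, and $na<2^{n+1}$ all hold for $n\ge1$, and you actually get the slightly sharper bound $l_j^{k+1}\le 2^4\,l_i^k$. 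Since the paper supplies no proof of Lemma \ref{l5.1} and merely quotes it from \cite{Ta1}, your argument for (i) is in fact more than the paper provides.

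For (ii), however, you have a genuine gap. You correctly identify the strategy (pack the essentially disjoint $Q_j^{k+1}$ into the single enlarged cube $2^6n\,Q_i^{k\ast}$) and the obstacle (infinitely many very small Whitney cubes of $\boz_{2^{k+1}}$ could accumulate near $\partial\boz_{2^{k+1}}\cap Q_i^{k\ast}$), but you resolve neither: you simply write that ``this counting step is carried out exactly as in the proof of Lemma 5.1 in \cite{Ta1}.'' That is a citation, not an argument, and the citation is in fact to the very lemma being proved. The claim that the cubes ``pack into this bounded region under the control of the universal Whitney overlap constant, yielding a dimensional bound $L(n)$'' is unjustified as stated: the upper bound $l_j^{k+1}\le 2^4\sqrt{n}\,l_i^k$ and the containment $Q_j^{(k+1)\ast}\subset 2^6n\,Q_i^{k\ast}$ from (i) do not by themselves bound the cardinality, because the Whitney cubes of $\boz_{2^{k+1}}$ shrink without bound as one approaches $\partial\boz_{2^{k+1}}$, and the bounded-overlap constant controls only how many dilates $Q_j^{(k+1)\ast}$ contain a fixed point, not how many meet a fixed cube. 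The missing ingredient is a matching lower bound $l_j^{k+1}\gtrsim l_i^k$ (the companion to your upper bound), and you neither state nor attempt it. Your deferral does mirror the paper, which equally declines to prove Lemma \ref{l5.1}, but as a blind reconstruction (ii) remains unproved: you have described the shape of the argument without supplying the step you yourself single out as delicate.
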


\begin{lemma}\label{l5.2}
 There exists a positive constant $C$
 such that for all $i,\,j\in\nn$ and integer $k\ge k_0$ with
$l^{k+1}_j \in(0,1)$,
\begin{eqnarray}\label{5.6}
\sup_{y\in\rn}\left|P^{k+1}_{i,\,j}(y)\zez^{k+1}_j (y)\r|\le C2^{k+1}.
\end{eqnarray}
\end{lemma}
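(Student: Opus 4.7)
The plan is to combine the orthogonality defining $P^{k+1}_{i,j}$ in \eqref{5.4} with two standard ingredients: (a) equivalence of all norms on the finite-dimensional space $\cp_s(\rn)$, which lets me pass from the weighted $L^2$-type norm $\|\cdot\|_j$ appearing in \eqref{5.4} to a pointwise bound; and (b) a rescaling of a single test function so that the pairing against $f$ is realised as $\phi_t\ast f(x)$ for some $\phi\in\cd_{N,R}(\rn)$ evaluated at a Whitney-type point $x\in\boz_{2^{k+1}}^\complement$, so that $|\phi_t\ast f(x)|\le \cg_N(f)(x)\le 2^{k+1}$.

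First I would dispose of the trivial case: by \eqref{5.5}, if $Q_i^{k\ast}\cap Q_j^{(k+1)\ast}=\emptyset$ then $P^{k+1}_{i,j}\equiv 0$ and \eqref{5.6} is immediate, so I may assume the two cubes meet; Lemma \ref{l5.1}(i) then gives $l^{k+1}_j\le 2^4\sqrt{n}\,l^k_i$ and $Q_j^{(k+1)\ast}\subset 2^6 n Q_i^{k\ast}\subset\boz_{2^k}$. Since $\zez^{k+1}_j\ge 1/L$ on $Q^{k+1}_j$ and all norms on $\cp_s(\rn)$ restricted to $Q_j^{(k+1)\ast}$ are mutually equivalent (with constants depending only on $n$ and $s$), one has
$$\sup_{y\in\rn}|P^{k+1}_{i,j}(y)\zez^{k+1}_j(y)|\ls\sup_{y\in Q_j^{(k+1)\ast}}|P^{k+1}_{i,j}(y)|\ls\|P^{k+1}_{i,j}\|_j,$$
so it suffices to prove $\|P^{k+1}_{i,j}\|_j\ls 2^{k+1}$. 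Plugging $P=P^{k+1}_{i,j}$ into \eqref{5.4} then yields
$$\|P^{k+1}_{i,j}\|_j^2\int_{\rn}\zez^{k+1}_j\,dy=\langle f,P^{k+1}_{i,j}\zez^k_i\zez^{k+1}_j\rangle-\int_{\rn}P^{k+1}_j(y)P^{k+1}_{i,j}(y)\zez^k_i(y)\zez^{k+1}_j(y)\,dy,$$
and the second term is easy: Lemma \ref{l4.1} at level $k+1$ (valid because $l^{k+1}_j\in(0,1)$) yields $|P^{k+1}_j\zez^{k+1}_j|\le C_1 2^{k+1}$ pointwise, and Cauchy--Schwarz with respect to $\zez^{k+1}_j\,dy$ bounds this term by a constant times $2^{k+1}\|P^{k+1}_{i,j}\|_j\int\zez^{k+1}_j$.

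The hard part will be the first term, $\langle f,\wz\phi\rangle$ with $\wz\phi\equiv P^{k+1}_{i,j}\zez^k_i\zez^{k+1}_j$. The function $\wz\phi$ is supported in $Q_j^{(k+1)\ast}$ and its derivatives of orders up to $N$ are bounded by a constant multiple of $\|P^{k+1}_{i,j}\|_{L^{\fz}(Q_j^{(k+1)\ast})}\,(l^{k+1}_j)^{-|\az|}$, using $l^{k+1}_j\ls l^k_i$ to absorb derivatives of $\zez^k_i$. The Whitney condition $\dist(Q^{k+1}_j,\boz_{2^{k+1}}^\complement)\le 4\diam Q^{k+1}_j$ produces a point $x\in\boz_{2^{k+1}}^\complement$ with $|x-x^{k+1}_j|\ls l^{k+1}_j$; choosing a scale $t\sim l^{k+1}_j<1$ then lets me write $\wz\phi(y)=\phi_t(x-y)$ with $\phi/[C(l^{k+1}_j)^n\|P^{k+1}_{i,j}\|_{L^{\fz}(Q_j^{(k+1)\ast})}]\in\cd_{N,R}(\rn)$ for an $R$ depending only on $n$. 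Invoking the definition of $\cg_N(f)$ together with $(l^{k+1}_j)^n\sim\int\zez^{k+1}_j$ and the norm equivalence above then yields
$$|\langle f,\wz\phi\rangle|\ls(l^{k+1}_j)^n\|P^{k+1}_{i,j}\|_{L^{\fz}(Q_j^{(k+1)\ast})}\cg_N(f)(x)\ls 2^{k+1}\|P^{k+1}_{i,j}\|_j\int\zez^{k+1}_j,$$
and combining with the previous bound gives $\|P^{k+1}_{i,j}\|_j^2\ls 2^{k+1}\|P^{k+1}_{i,j}\|_j$, whence $\|P^{k+1}_{i,j}\|_j\ls 2^{k+1}$. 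The main obstacle is the simultaneous bookkeeping needed to certify that the rescaled $\phi$ has fixed support and uniformly bounded derivatives up to order $N$, so that it really lies in some $\cd_{N,R}(\rn)$ with $R$ independent of $i,j,k$; this is where the polynomial norm equivalence and the Whitney structure are used in tandem.
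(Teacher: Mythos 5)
Your proof is correct and follows the standard Stein/Bownik-style argument that Lemma 5.2 relies on; the paper itself does not give a proof but simply cites Lemma~5.2 of Tang's \cite{Ta1}, whose argument is this same one. All the key pieces are in order: the reduction via \eqref{5.5}, the norm equivalence on $\cp_s$ over $Q_j^{(k+1)\ast}$ combined with $\zez^{k+1}_j\ge 1/L$ on $Q^{k+1}_j$, plugging $P=P^{k+1}_{i,j}$ into \eqref{5.4}, the pointwise bound $\sup_{Q_j^{(k+1)\ast}}|P_j^{k+1}|\ls 2^{k+1}$ extracted from Lemma \ref{l4.1} via the same norm equivalence (needed for the Cauchy--Schwarz step), and the realization of $\langle f,\wz\phi\rangle$ as $\phi_t\ast f(x)$ at a Whitney point $x\in\boz_{2^{k+1}}^\complement$ with $t=l^{k+1}_j<1$, where Markov's inequality for $\partial^\beta P^{k+1}_{i,j}$ and $l^{k+1}_j\ls l^k_i$ from Lemma~\ref{l5.1}(i) give uniform control of $\phi$ in $\cd_{N,R}(\rn)$ with $R\ls 2^{8+n}\sqrt{n}\le 2^{3(10+n)}$, so that $\mathcal{G}_N(f)(x)\le 2^{k+1}$ applies.
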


\begin{lemma}\label{l5.3}
For any $k\in\zz$ with $k\ge k_0$,
$$\sum_{i\in\nn}\left(\sum_{j\in
F^{k+1}_2}P^{k+1}_{i,\,j}\zez^{k+1}_j\r)=0,$$
where the series converges both in $\cd'(\rn)$ and pointwise.
\end{lemma}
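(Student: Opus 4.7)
The plan is to reduce the claim to showing that $\sum_{i\in\nn}P^{k+1}_{i,j}\equiv 0$ as a polynomial for each fixed $j\in F^{k+1}_2$, and then swap the order of summation, which is legitimate because at each point the double series has only finitely many nonzero terms.

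First I would fix $j\in F^{k+1}_2$ and exploit the defining orthogonality \eqref{5.4}. Summing the relation over $i\in\nn$, for every $P\in\cp_s(\rn)$, gives
\[
\sum_{i}\int_{\rn}P^{k+1}_{i,j}(x)P(x)\zez^{k+1}_j(x)\,dx
=\left\langle f-P^{k+1}_j,\;P\zez^{k+1}_j\sum_{i}\zez^k_i\right\rangle.
\]
The interchange of $\sum_i$ with the distributional pairing is justified because $\supp(\zez^{k+1}_j)\subset Q^{(k+1)\ast}_j$ is compact and, by the bounded interior property of $\{Q^{k\ast}_i\}_i$ together with Lemma \ref{l5.1}(i), only finitely many $\zez^k_i$ can be nonzero on $Q^{(k+1)\ast}_j$. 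Since $\supp(\zez^{k+1}_j)\subset\boz_{2^{k+1}}\subset\boz_{2^k}$ and $\sum_i\zez^k_i=\chi_{\boz_{2^k}}$, we have $\sum_i\zez^k_i\equiv 1$ on $\supp(\zez^{k+1}_j)$, reducing the right-hand side to $\langle f-P^{k+1}_j,\,P\zez^{k+1}_j\rangle$. Because $j\in F^{k+1}_2$ means $l^{k+1}_j<1$, the defining property \eqref{5.3} (applied at level $k+1$) forces $\langle f,P\zez^{k+1}_j\rangle=\langle P^{k+1}_j,P\zez^{k+1}_j\rangle$, so the right-hand side vanishes.

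Thus $\int_{\rn}\bigl(\sum_i P^{k+1}_{i,j}\bigr)(x)P(x)\zez^{k+1}_j(x)\,dx=0$ for every $P\in\cp_s(\rn)$. Choosing $P\equiv\sum_i P^{k+1}_{i,j}$ (which is a polynomial in $\cp_s(\rn)$, being a finite sum by \eqref{5.5} and Lemma \ref{l5.1}) and using that $\zez^{k+1}_j\ge 0$ and $\zez^{k+1}_j\equiv 1$ on $Q^{k+1}_j$ yields $\sum_i P^{k+1}_{i,j}\equiv 0$ on $Q^{k+1}_j$, hence identically as a polynomial. Consequently $\sum_i P^{k+1}_{i,j}\zez^{k+1}_j\equiv 0$ pointwise.

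For pointwise convergence of the full double series at any $x\in\rn$, the bounded interior property of $\{Q^{(k+1)\ast}_j\}$ limits the nonzero $\zez^{k+1}_j(x)$ to finitely many indices $j$, and for each such $j$ the number of $i$ with $P^{k+1}_{i,j}\neq 0$ is finite by \eqref{5.5} and Lemma \ref{l5.1}(i); hence the sums can be freely reordered, and the claim follows from the preceding paragraph. For $\cd'(\rn)$-convergence one pairs against a test function $\phi\in\cd(\rn)$: the compactness of $\supp(\phi)$ reduces the sum to finitely many nonzero terms by the same geometric bound, and the pairing equals zero. The main technical obstacle is the first justification that $\sum_i$ can be moved inside the distributional pairing with $f$; once the local finiteness on $\supp(\zez^{k+1}_j)$ is established via Lemma \ref{l5.1}, the rest is a clean orthogonality argument.
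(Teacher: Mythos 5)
Your proof is correct and uses the standard orthogonality argument; this paper does not supply a proof of Lemma \ref{l5.3} (it defers to Tang \cite{Ta1}), but the argument there is essentially the one you give: sum the defining relation \eqref{5.4} over $i$, use that $\sum_i\zez^k_i\equiv1$ on $\boz_{2^k}\supset\supp\zez^{k+1}_j$, invoke the defining property of $P^{k+1}_j$ to kill the right-hand side, and read off $\sum_iP^{k+1}_{i,\,j}\equiv0$ from the weighted $L^2$-orthogonality. Two small points worth tightening. First, $\zez^{k+1}_j$ is \emph{not} identically $1$ on $Q^{k+1}_j$: by \eqref{4.3} we have $\zez^{k+1}_j=\xz^{k+1}_j/\sum_l\xz^{k+1}_l$, and while $\xz^{k+1}_j\equiv1$ on $Q^{k+1}_j$, the denominator lies between $1$ and $L$, so the correct statement is $\zez^{k+1}_j\ge1/L>0$ there. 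That suffices for your purposes, since $\int\bigl|\sum_iP^{k+1}_{i,\,j}\bigr|^2\zez^{k+1}_j\,dx=0$ together with strict positivity of $\zez^{k+1}_j$ on a cube of positive measure still forces the polynomial to vanish identically. Second, you invoke $j\in F^{k+1}_2$ before applying \eqref{5.3}, but that restriction plays no role in the orthogonality step: the identity $\langle f,P\zez^{k+1}_j\rangle=\langle P^{k+1}_j,P\zez^{k+1}_j\rangle$ is the definition of $P^{k+1}_j$ (via \eqref{4.5}) for \emph{every} $j$, with no size condition on $l^{k+1}_j$; the set $F^{k+1}_2$ enters only because of where the zero series is inserted in \eqref{5.10}.
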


The following lemma gives the weighted atomic decomposition for a
dense subspace of $h^{\bfai}_{\oz,\,N}(\rn)$.

\begin{lemma}\label{l5.4}
Let $\bfai$ satisfy Assumption $\mathrm{(A)}$, $\oz\in
A^{\loc}_{\fz}(\rn)$, $q_{\oz}$, $p_{\bfai}$ and $N_{\bfai,\,\oz}$
be respectively as in \eqref{2.4}, \eqref{2.6} and \eqref{3.25}. If
$q\in(q_{\oz},\fz)$, integers $N\ge N_{\bfai,\,\oz}$, $s\ge\lfz
n(q_{\oz}/p_{\bfai}-1)\rf$ and $N>s$, then for any
$f\in(L^q_{\oz}(\rn)\cap h^{\bfai}_{\oz,\,N}(\rn))$, there exist
$\lz_0\in\cc$, $\{\lz^k_i\}_{k\ge k_0,\,i}\subset\cc$, a
$(\rz,\fz)_{\oz}$-single-atom $a_0$ and $(\rz,\fz,s)_{\oz}$-atoms
$\{a^k_i\}_{k\ge k_0,\,i}$ such that
\begin{eqnarray}\label{5.7}
f=\sum_{k\ge k_0}\sum_{i}\lz^k_i a^k_i+\lz_0 a_0,
\end{eqnarray}
where the series converges both  in $\cd'(\rn)$ and almost
everywhere. Moreover, there exists a positive constant $C$,
independent of $f$, such that
\begin{equation}\label{5.8}
\blz(\{\lz^k_i a^k_i\}_{k\ge k_0,\,i}\cup\{\lz_0 a_0\}) \le
C\|f\|_{h^{\bfai}_{\oz,\,N}(\rn)}.
\end{equation}
\end{lemma}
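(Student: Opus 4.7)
The plan is to telescope the good parts of the Calder\'on-Zygmund decompositions \eqref{5.1} across the dyadic heights $2^k$, following the approach of Bownik \cite{B03} adapted by Tang \cite{Ta1}. First I would verify that $g^k\to f$ both in $h^{\bfai}_{\oz,\,N}(\rn)$ and pointwise almost everywhere as $k\to\fz$. The Orlicz--Hardy convergence comes directly from Lemma \ref{l4.4}:
\begin{equation*}
\int_{\rn}\bfai\Big(\cg^0_N\Big(\textstyle\sum_i b^k_i\Big)(x)/\lz\Big)\oz(x)\,dx\ls\int_{\boz_{2^k}}\bfai\big(\cg_N(f)(x)/\lz\big)\oz(x)\,dx\to 0
\end{equation*}
as $k\to\fz$, since $\oz(\boz_{2^k})\to 0$ by $f\in h^{\bfai}_{\oz,\,N}(\rn)$; the almost-everywhere convergence follows from Lemma \ref{l4.5} which, using $f\in L^q_{\oz}(\rn)$, also yields $L^q_{\oz}$-convergence. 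At the bottom of the range, if $k_0>-\fz$ I would take $\lz_0 a_0 \equiv g^{k_0}$ and verify via Lemma \ref{l4.7}(ii) together with $2^{k_0-1}\le\inf_{x\in\rn}\cg_N(f)(x)$ that a suitable rescaling of $g^{k_0}$ is a $(\rz,\fz)_{\oz}$-single-atom; if $k_0=-\fz$ and $\oz(\rn)<\fz$, the same lemma shows $\lim_{k\to-\fz} g^k$ exists in $L^{\fz}_{\oz}(\rn)$ and plays the same role; and if $k_0=-\fz$ and $\oz(\rn)=\fz$, Lemma \ref{l4.7}(ii) gives $\|g^k\|_{L^{\fz}(\rn)}\ls 2^k\to 0$, so no single-atom term is needed.

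These reductions give the telescoping identity
\begin{equation*}
f = \lz_0 a_0 + \sum_{k\ge k_0}\bigl(g^{k+1}-g^k\bigr)
\end{equation*}
in $\cd'(\rn)$ and almost everywhere. Next I would decompose each difference into pieces localized to the cubes $\{Q_i^{k\ast}\}_i$. Using $\boz_{2^{k+1}}\subset\boz_{2^k}$ and hence $\sum_i\zez^k_i\equiv 1$ on $\supp\zez^{k+1}_j\subset Q_j^{(k+1)\ast}$, together with Lemma \ref{l5.3}, I would write $g^{k+1}-g^k=\sum_i h^k_i$ with
\begin{equation*}
h^k_i\equiv b^k_i - \sum_j b^{k+1}_j\zez^k_i + \sum_{j\in F^{k+1}_2}P^{k+1}_{i,\,j}\zez^{k+1}_j.
\end{equation*}
Setting $\lz^k_i\equiv C\,2^k\oz(Q_i^{k\ast})\rz(\oz(Q_i^{k\ast}))$ and $a^k_i\equiv h^k_i/\lz^k_i$ for a suitable absolute constant $C$, the support condition follows from \eqref{5.2} and \eqref{5.5} (enlarging $Q_i^{k\ast}$ by the factor in Lemma \ref{l5.1}(i) if necessary, using the doubling property in Lemma \ref{l2.3}(v) to preserve the normalization); the pointwise bound $\|a^k_i\|_{L^{\fz}(\rn)}\le[\oz(Q_i^{k\ast})]^{-1}[\rz(\oz(Q_i^{k\ast}))]^{-1}$ comes from Lemma \ref{l4.1}, Lemma \ref{l5.2}, Proposition \ref{p3.2}(i) (bounding $|f|$ by $\cg_N(f)\le 2^k$ pointwise on $Q_i^{k\ast}\subset\boz_{2^k}$ in the large-cube case $i\in F^k_1$), Lemma \ref{l5.1}(ii) for the bounded overlap of $\{Q_j^{(k+1)\ast}\}_j$, and the identity \eqref{2.8}; the vanishing-moment condition when $l^k_i<1$ is built into the definitions of $P^k_i$ via \eqref{5.3} and of $P^{k+1}_{i,\,j}$ via \eqref{5.4} upon taking $P(x)=x^\az$ with $|\az|\le s$.

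Finally I would derive \eqref{5.8}. Using the bounded overlap of $\{Q_i^{k\ast}\}_i$ and $Q_i^{k\ast}\subset\boz_{2^k}$,
\begin{equation*}
\sum_i\oz(Q_i^{k\ast})\bfai\!\left(\frac{|\lz^k_i|}{\lz\,\oz(Q_i^{k\ast})\rz(\oz(Q_i^{k\ast}))}\r)\ls\oz(\boz_{2^k})\bfai(2^k/\lz),
\end{equation*}
and summing over $k\ge k_0$, the layer-cake representation
\begin{equation*}
\sum_{k\ge k_0}\oz(\boz_{2^k})\bfai(2^k/\lz)\sim\int_{\rn}\bfai\big(\cg_N(f)(x)/\lz\big)\oz(x)\,dx
\end{equation*}
(which uses the upper type $1$ and strictly lower type $p_{\bfai}$ of $\bfai$ to pass between the sum and the integral) gives the bound $\ls 1$ upon choosing $\lz\sim\|f\|_{h^{\bfai}_{\oz,\,N}(\rn)}$ and applying Theorem \ref{t3.14}. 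The single-atom contribution is handled in the same spirit but more simply. The main technical obstacle will be the bookkeeping in the atomic construction: verifying that $h^k_i$ is indeed localized in a fixed dilate of $Q_i^{k\ast}$ with the uniform bound $\|h^k_i\|_{L^{\fz}}\ls 2^k$ despite the overlaps between the two-level Whitney families encoded in Lemma \ref{l5.1}, and that the vanishing-moment requirement in Definition \ref{d3.4}(iii) is preserved in the presence of the large-cube contributions from $j\in F^{k+1}_1$ in the sum $\sum_j b^{k+1}_j\zez^k_i$, which is where the distinction between the index sets $E^k_1,\,E^k_2,\,F^k_1,\,F^k_2$ becomes crucial.
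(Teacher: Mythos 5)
Your proposal reproduces the paper's proof of Lemma \ref{l5.4} in its essential structure: telescope the good parts $g^k$ across dyadic heights, decompose $g^{k+1}-g^k=\sum_i h^k_i$ exactly as in \eqref{5.10}, normalize by $|\lz^k_i|\sim 2^k\oz(\wz{Q}^k_i)\rz(\oz(\wz{Q}^k_i))$, close with the layer-cake estimate, and produce the single-atom from $g^{k_0}$ when $k_0>-\fz$. You also correctly pinpoint the two real technicalities --- the enlargement of $Q_i^{k\ast}$ to a cube $\wz{Q}^k_i$ supporting $h^k_i$, and the interplay of the index sets $E^k_1,E^k_2,F^{k+1}_1,F^{k+1}_2$ in securing the vanishing moments --- so the plan is sound and coincides with the paper's argument.
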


\begin{proof}
Let $f\in (L^q_{\oz}(\rn)\cap h^{\bfai}_{\oz,\,N}(\rn))$. We first
consider the case that $k_0 =-\fz$. As above, for each $k\in\zz$,
$f$ has a Calder\'on-Zygmund decomposition of degree $s$ and height
$\lz=2^k$ associated to $\cg_N (f)$ as in \eqref{5.1}, namely,
$$f=g^k +\sum_{i}b_i^k.$$
By Corollary \ref{c4.8} and
Proposition \ref{3.2}, we have that $g^k \to f$ in both
$h^{\bfai}_{\oz,\,N}(\rn)$ and $\cd'(\rn)$ as $k\to\fz$. By
Lemma \ref{l4.7}(i), $\|g^k\|_{L^q_{\oz}(\rn)}\to0$ as
$k\to-\fz$, and furthermore, by Lemma \ref{l2.6}(ii),
$g^k\to0$ in $\cd'(\rn)$ as $k\to-\fz$. Therefore,
\begin{eqnarray}\label{5.9}
f=\sum_{k=-\fz}^{\fz}\left(g^{k+1}-g^k\r)
\end{eqnarray}
in $\cd'(\rn)$. Moreover,
since $\supp(\sum_i b^k_i)\subset\boz_{2^k}$ and $\oz(\boz_{2^k})\to 0$ as
$k\to\fz$, then $g^k\to f$ almost everywhere as
$k\to\fz$. Thus, \eqref{5.9} also holds almost everywhere.
By Lemma \ref{l5.3} and \eqref{5.2} with $\boz_{2^{k+1}}\subset
\boz_{2^k}$,
\begin{eqnarray}\label{5.10}
g^{k+1}-g^k&=&\left(f-\sum_j b^{k+1}_j\r)-\left(f-\sum_i b_i^k\r)\nonumber\\
 &=&\sum_i b_i^k-\sum_j b^{k+1}_j+\sum_i\left(\sum_{j\in
F^{k+1}_2}P^{k+1}_{i,\,j}\zez^{k+1}_j \r)\nonumber\\  &=&\sum_i
\left[b_i^k-\sum_j b^{k+1}_j \zez^k_i +\sum_{j\in
F^{k+1}_2}P^{k+1}_{i,\,j}\zez^{k+1}_j\r]\equiv\sum_i h^k_i,
\end{eqnarray}
where all the series converge in both $\cd'(\rn)$ and almost
everywhere. Furthermore, from the definitions of $b_j^k$ and
$b_j^{k+1}$ as in \eqref{4.3}, we infer that when $l^k_i\in(0,1)$,
\begin{eqnarray}\label{5.11}
h^k_i=f\chi_{\boz_{2^{k+1}}^\complement}\zez^k_i-P^k_i \zez^k_i
+\sum_{j\in F^{k+1}_2} P^{k+1}_j \zez^k_i \zez^{k+1}_j+\sum_{j\in
F^{k+1}_2}P^{k+1}_{i,\,j}\zez^{k+1}_j,
\end{eqnarray}
and when $l^k_i\in[1,\fz)$,
\begin{eqnarray}\label{5.12}
h^k_i=f\chi_{\boz_{2^{k+1}}^\complement}\zez^k_i +\sum_{j\in
F^{k+1}_2} P^{k+1}_j \zez^k_i \zez^{k+1}_j+\sum_{j\in
F^{k+1}_2}P^{k+1}_{i,\,j}\zez^{k+1}_j.
\end{eqnarray}
By Proposition \ref{p3.2}(i), we know that  for almost every
$x\in\boz_{2^{k+1}}^\complement$,
$$|f(x)|\le\cg_N (f)(x)\le2^{k+1},$$
which, together with Lemma \ref{l4.1}, Lemma \ref{l5.1}(ii),
\eqref{5.5}, Lemma \ref{5.2}, \eqref{5.11} and \eqref{5.12}, implies
that there exists a positive constant $C_{10}$ such that for all
$i\in\nn$,
\begin{eqnarray}\label{5.13}
\|h^k_i\|_{L^{\fz}_{\oz}(\rn)}\le C_{10} 2^k.
\end{eqnarray}
Next, we show that for each $i$ and $k$, $h^k_i$ is a multiple of a
$(\rz,\,\fz,\,s)_{\oz}$-atom by considering the following two cases
for $i$.

{\it Case 1)} $i\in E^k_1$.  In this case, from the fact that
$l^{k+1}_j <1$ for $j\in F^{k+1}_2$, we deduce that $Q^{(k+1)\ast}_j
\subset Q(x^k_i,a(l^k_i +2))$ for $j$ satisfying $Q^{k\ast}_i\cap
Q^{(k+1)\ast}_j\neq\emptyset$.  Let $\gz\equiv1+2^{-12-n}$. Thus,
when $l^k_i \ge 2/(\gz-1)$, if letting $\wz{Q}^k_i\equiv
Q(x^k_i,a(l^k_i +2))$, then,
$$\supp(h^k_i)\subset \wz{Q}^k_i\subset\gz Q^{k\ast}_i
\subset\boz_{2^k}.$$ When $l^k_i <2/(\gz-1)$, if letting
$\wz{Q}^k_i\equiv2^6 n Q^{k\ast}_i$, then by Lemma \ref{5.1}(i),
we have
$$\supp (h^k_i)\subset\wz{Q}^k_i\subset\boz_{2^k}.$$
From the definition of $\wz{Q}^k_i$, Lemma \ref{l2.3}(v) and Remark
\ref{r2.4} with $\wz{C}\equiv 2/(\gz-1)$, we infer that there exists
a positive constant $C_{11}$ such that
\begin{eqnarray}\label{5.14}
\oz(\wz{Q}^k_i)\le C_{11}\oz(Q^{k\ast}_i).
\end{eqnarray}
Let $\wz{A}_1\equiv\max\{C_{10},\,C_{11}\}$,
\begin{eqnarray}\label{5.15}
\lz^k_i\equiv \wz{A}_1 2^k \oz(\wz{Q}^k_i)\rz(\oz(\wz{Q}^k_i))
\end{eqnarray}
and $a^k_i\equiv(\lz^k_i)^{-1}h^k_i$. From \eqref{5.13} and $\supp
(h^k_i)\subset \wz{Q}^k_i$ with $l(\wz{Q}^k_i)\ge2a>1$, it follows
that $a^k_i$ is a $(\rz,\,\fz,\,s)_{\oz}$-atom.

{\it Case 2)} $i\in E^k_2$. In this case, if $j\in F^{k+1}_1$, then
$l^k_i<l^{k+1}_j /(2^4 n)$. By Lemma \ref{l5.1}(i), we know that
$Q^{k\ast}_i \cap Q^{(k+1)\ast}_j=\emptyset$ for $j\in F^{k+1}_1$.
From this, \eqref{5.2} and \eqref{5.10}, we conclude that
\begin{eqnarray}\label{5.16}
h^k_i&=&\left(f-P^k_i\r)\zez^k_i-\sum_{j\in F^{k+1}_1}f\zez^{k+1}_j
\zez^k_i -\sum_{j\in F^{k+1}_2}\left(f-P^{k+1}_{j}\r)\zez^{k+1}_j
\zez^k_i\nonumber \\ &&+\sum_{j\in F^{k+1}_2}P^{k+1}_{i,\,j}\zez^{k+1}_j
\nonumber \\
&=&\left(f-P^k_i\r)\zez^k_i -\sum_{j\in
F^{k+1}_2}\left\{\left(f-P^{k+1}_{j}\r)\zez^{k+1}_j \zez^k_i
-P^{k+1}_{i,\,j} \zez^{k+1}_j\r\}.
\end{eqnarray}
Let $\wz{Q}^k_i\equiv2^6 n Q^{k\ast}_i$. Then $\supp
(h^k_i)\subset\wz{Q}^k_i$. By $l^k_i<1/(2^4 n)$,  Lemma \ref{l2.3}(v)
and Remark \ref{r2.4} with $\wz{C}\equiv4a$, we know that there
exists a positive constant $C_{12}$ such that
\begin{eqnarray}\label{5.17}
\oz(\wz{Q}^k_i)\le C_{12}\oz(Q^{k\ast}_i).
\end{eqnarray}
Moveover, $h^k_i$ satisfies the desired moment conditions, which are
deduced from the moment conditions of $(f-P^k_i)\zez^k_i$ (see
\eqref{5.3}) and $(f-P^{k+1}_{j})\zez^{k+1}_j
\zez^k_i-P^{k+1}_{i,\,j}\zez^{k+1}_j$ (see \eqref{5.4}). Let
$\wz{A}_2\equiv\max\{C_{10},\,C_{12}\}$,
\begin{eqnarray}\label{5.18}
\lz^k_i\equiv \wz{A}_2 2^k \oz(\wz{Q}^k_i)\rz(\oz(\wz{Q}^k_i))
\end{eqnarray}
and $a^k_i\equiv(\lz^k_i)^{-1}h^k_i$. By this, \eqref{5.13}, $\supp
(h^k_i)\subset \wz{Q}^k_i$ and the moment conditions of $h^k_i$, we
know that $a^k_i$ is a $(\rz,\,\fz,\,s)_{\oz}$-atom.

Thus, from \eqref{5.9}, \eqref{5.10}, Case 1) and Case 2), we
infer that
$$f=\sum_{k\in\zz}\sum_{i\in\nn}\lz^k_i a^k_i$$
holds in both $\cd'(\rn)$ and almost everywhere, where for every $k$ and $i$,
$\lz^k_i\in\cc$ and $a^k_i$ is a $(\rz,\,\fz,\,s)_{\oz}$-atom, which
shows \eqref{5.7} in the case that $k_0=-\fz$ by letting $\lz_0=0$.
Furthermore, by the fact that $\bfai(t)\sim\int_0^t
\frac{\bfai(s)}{s}\,ds$ for all $t\in(0,\fz)$, \eqref{5.15},
\eqref{5.18}, \eqref{5.14}, \eqref{5.17}, the upper type 1 property
of $\bfai$, Fubini's theorem and the bounded interior property of
$\{Q_i^{k\ast}\}$, we know that for any $\lz\in(0,\fz)$,
\begin{eqnarray*}
&&\sum_{k,\,i}\oz(\wz{Q}^k_i)\bfai\left(\frac{|\lz^k_i|}
{\lz\rz(\oz(\wz{Q}^k_i))\oz(\wz{Q}^k_i)}\r)\\
&&\hs \ls\sum_{k,\,i}\oz(\wz{Q}^k_i)\bfai\left(\frac{2^k}{\lz}\r)
\ls\sum_{k,i}\oz(Q^{k\ast}_i)\bfai\left(\frac{2^k}{\lz}\r)\\
&&\hs\ls\sum_{k}\oz(\boz_{2^k})\bfai\left(\frac{2^k}{\lz}\r)
\sim\sum_{k}\int_{\boz_{2^k}}\bfai\left(\frac{2^k}{\lz}\r)\oz(x)\,dx\\
&&\hs\ls\int_{\rn}\sum_{k<\log[\cg_N
(f)(x)]}\bfai\left(\frac{2^k}{\lz}\r)\oz(x)\,dx
\ls\int_{\rn}\sum_{k<\log[\cg_N
(f)(x)]}\int^{2^{k+1}}_{2^k}\bfai\left(\frac{t}{\lz}\r)\frac{dt}{t}\oz(x)\,dx\\
&&\hs\ls\int_{\rn}\int^{2\cg_N
(f)(x)/\lz}_{0}\bfai(t)\frac{dt}{t}\oz(x)\,dx
\ls\int_{\rn}\bfai\left(\frac{\cg_N (f)(x)}{\lz}\r)\oz(x)\,dx,
\end{eqnarray*}
which implies \eqref{5.8} in the case that $k_0 =-\fz$.

Finally, we consider the case that $k_0>-\fz$. In this case, by
$f\in h^{\bfai}_{\oz,\,N}(\rn)$, we see that $\oz(\rn)<\fz$.
Adapting the previous arguments, we conclude that
\begin{eqnarray}\label{5.19}
f=\sum_{k=k_0}^{\fz}\left(g^{k+1}-g^k\r)+g^{k_0} \equiv\wz{f}+g^{k_0},
\end{eqnarray}
and for the function $\wz{f}$, we have the same
$(\rz,\,\fz,\,s)_{\oz}$-atomic decomposition as above that
\begin{eqnarray}\label{5.20}
\wz{f}= \sum_{k\ge k_0,\,i} \lz^k_i a^k_i
\end{eqnarray} and
\begin{eqnarray}\label{5.21}
 &&\blz\left(\{\lz^k_i
a^k_i\}_{k\ge k_0,\,i}\r)\ls\|f\|_{h^{\bfai}_{\oz,\,N}(\rn)}.
\end{eqnarray}
From Lemma \ref{l4.7}(ii), it follows that
\begin{eqnarray}\label{5.22}
\|g^{k_0}\|_{L^{\fz}_{\oz}(\rn)}\le C_9 2^{k_0}\le2C_9
\inf_{x\in\rn}\cg_N (f)(x),
\end{eqnarray}
where $C_9$ is the same as in Lemma \ref{4.7}(ii). Let $\lz_0 \equiv
2C_92^{k_0}\oz(\rn)\rz(\oz(\rn))$ and
$$a_0\equiv\lz_0^{-1}g^{k_0}.$$
Then we have
$$\|a_0\|_{L^{\fz}_{\oz}(\rn)}\le[\oz(\rn)\rz(\oz(\rn))]^{-1}.$$
Thus, we know that $a_0$ is a $(\rz,\,\fz)_{\oz}$-single-atom and
$g^{k_0}=\lz_0 a_0$, which together with \eqref{5.19} and
\eqref{5.20} implies \eqref{5.7} in the case that $k_0>-\fz$.
Moreover, from \eqref{5.22}, we deduce that for any $\lz\in(0,\fz)$,
\begin{eqnarray*}
\oz(\rn)\bfai\left(\frac{|\lz_0|}{\lz\oz(\rn)\rz(\oz(\rn))}\r)
=\oz(\rn)\bfai\left(\frac{C_9 2^{k_0}}{\lz}\r)\ls
\int_{\rn}\bfai\left(\frac{\cg_N (f)(x)}{\lz}\r)\oz(x)\,dx,
\end{eqnarray*}
which together with \eqref{5.21} implies \eqref{5.8} in the case
that $k_0>-\fz$. This finishes the proof of Lemma \ref{l5.4}.
\end{proof}

\begin{remark}\label{r5.5}
By its proof, all $(\rz,\,\fz,\,s)_{\oz}$-atoms in Lemma \ref{l5.4}
can be taken to have supports $Q$ satisfying $l(Q)\in(0,2]$. Indeed,
for any $(\rz,\,\fz,\,s)_{\oz}$-atom $a$ supported in a cube $Q_0$
with $l(Q_0)>2$, we know that there exist $N_0\in\nn$, depending on
$l(Q_0)$ and $n$, and cubes $\{Q_i\}_{i=1}^{N_0}$ satisfying
$l(Q_i)\in[1,2]$ with $i\in\{1,\,\cdots,\,N_0\}$ such that
$\cup_{i=1}^{N_0}Q_i=Q_0$, for any $x\in Q_0$,
$1\le\sum_{i=1}^{N_0}\chi_{Q_i}(x)\le C(n)$, and
$$a=\frac{1}{\sum_{j=1}^{N_0}\chi_{Q_j}}\sum_{i=1}^{N_0}a\chi_{Q_i},$$
where $C(n)$ is a positive integer, only depending on $n$. For any
given $\lz_0\in\cc$ and $i\in\{1,\,\cdots,\,N_0\}$, let
$$\gz_i\equiv\frac{\lz_0
\oz(Q_i)\rz(\oz(Q_i))}{\oz(Q_0)\rz(\oz(Q_0))}$$
and
$$b_i\equiv\frac{\oz(Q_0)\rz(\oz(Q_0))a\chi_{Q_i}}
{\oz(Q_i)\rz(\oz(Q_i))\sum_{i=1}^{N_0}\chi_{Q_i}}.$$
Then for any $i\in\{1,\,2,\,\cdots,\,N_0\}$, $b_i$ is a
$(\rz,\,\fz,\,s)_{\oz}$-atom supported in the cube $Q_i$ and
\begin{eqnarray}\label{5.23}
\lz_0 a=\sum_{i=1}^{N_0}\gz_i b_i.
\end{eqnarray}
From the definitions of $\gz_i$ and $b_i$,
$\cup_{i=1}^{N_0}Q_i=Q_0$, and for any $x\in Q_0$,
$$1\le\sum_{i=1}^{N_0}\chi_{Q_i}(x)\le C(n),$$
we also conclude that for all $\lz\in(0,\fz)$,
\begin{eqnarray}\label{5.24}
\sum_{i=1}^{N_0}\oz(Q_i)\bfai\left(\frac{|\gz_i|}{\lz
\oz(Q_i)\rz(\oz(Q_i))}\r)\le C(n) \oz(Q_0)\bfai\left(\frac{|\lz_0|}{
\lz\oz(Q_0)\rz(\oz(Q_0))}\r).
\end{eqnarray}
Thus, by the proof of Lemma \ref{l5.4}, \eqref{5.23} and
\eqref{5.24}, we see that the claim holds.
\end{remark}

Now we state the weighted atomic decompositions of
$h^{\bfai}_{\oz,\,N}(\rn)$ as follows.

\begin{theorem}\label{t5.6}
Let $\bfai$ satisfy Assumption $\mathrm{(A)}$, $\oz\in
A^{\loc}_{\fz}(\rn)$, and $q_{\oz}$ and $N_{\bfai,\,\oz}$ be
respectively as in \eqref{2.4} and \eqref{3.25}. If
$q\in(q_{\oz},\fz]$, integers $s$ and $N$ satisfy $N\ge
N_{\bfai,\,\oz}$ and $N>s\ge\lfz n(\frac{q_{\oz}}{p_{\bfai}}-1)\rf$,
then
$$h^{\rz,\,q,\,s}_{\oz}(\rn)=h^{\bfai}_{\oz,\,N}(\rn)=
h^{\bfai}_{\oz,\,N_{\bfai,\,\oz}}(\rn)$$
with equivalent norms.
\end{theorem}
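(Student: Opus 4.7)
My plan is to prove this theorem by establishing a chain of continuous inclusions. The inclusion $h^{\rz,\,q,\,s}_{\oz}(\rn) \hookrightarrow h^{\bfai}_{\oz,\,N_{\bfai,\,\oz}}(\rn)$ with the norm estimate is exactly Theorem \ref{t3.18}. For the identity $h^{\bfai}_{\oz,\,N_{\bfai,\,\oz}}(\rn) = h^{\bfai}_{\oz,\,N}(\rn)$ when $N \ge N_{\bfai,\,\oz}$, the inclusion $\cd_{N}(\rn) \subset \cd_{N_{\bfai,\,\oz}}(\rn)$ yields the pointwise bound $\cg_{N}(f) \le \cg_{N_{\bfai,\,\oz}}(f)$ and hence $\|f\|_{h^{\bfai}_{\oz,\,N}(\rn)} \le \|f\|_{h^{\bfai}_{\oz,\,N_{\bfai,\,\oz}}(\rn)}$; the reverse inequality follows by applying the norm equivalences in Theorem \ref{t3.14} at both indices and combining them with the trivial pointwise bound $\cg^0_{N_{\bfai,\,\oz}}(f) \le \cg^0_{N}(f)$.

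The substantive part is the reverse inclusion $h^{\bfai}_{\oz,\,N}(\rn) \hookrightarrow h^{\rz,\,q,\,s}_{\oz}(\rn)$. My first step is to reduce to $q=\fz$: since a $(\rz,\fz,s)_{\oz}$-atom satisfies the size condition of a $(\rz,q,s)_{\oz}$-atom for every admissible $q$ (by H\"older's inequality on the $L^{\fz}_{\oz}$-bound against the support cube), and similarly for single-atoms, it suffices to produce a $(\rz,\fz,s)_{\oz}$-atomic decomposition. Given $f \in h^{\bfai}_{\oz,\,N}(\rn)$, I fix some $q_0 \in (q_{\oz},\fz)$ and invoke Corollary \ref{c4.8} to obtain $\{g_{\ell}\}_{\ell\ge 1} \subset L^{q_0}_{\oz}(\rn) \cap h^{\bfai}_{\oz,\,N}(\rn)$ with $g_{\ell} \to f$ in $h^{\bfai}_{\oz,\,N}(\rn)$. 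After passing to a subsequence so that $\|g_{\ell+1}-g_{\ell}\|_{h^{\bfai}_{\oz,\,N}(\rn)}^{p_{\bfai}} \le 2^{-\ell}\|f\|_{h^{\bfai}_{\oz,\,N}(\rn)}^{p_{\bfai}}$ (with the convention $g_0 \equiv 0$), I apply Lemma \ref{l5.4} to each difference $g_{\ell+1}-g_{\ell} \in L^{q_0}_{\oz}(\rn) \cap h^{\bfai}_{\oz,\,N}(\rn)$ to obtain a $(\rz,\fz,s)_{\oz}$-atomic decomposition whose $\Lambda$-functional is controlled by $\|g_{\ell+1}-g_{\ell}\|_{h^{\bfai}_{\oz,\,N}(\rn)}$. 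Concatenating these decompositions over $\ell$ yields a candidate atomic representation of $f = \sum_{\ell\ge 0}(g_{\ell+1}-g_{\ell})$, whose $\Lambda$-functional is bounded by a constant times $\|f\|_{h^{\bfai}_{\oz,\,N}(\rn)}$ by the $p_{\bfai}$-subadditivity in Remark \ref{r3.6}(ii), combined with the geometric decay built into the subsequence.

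Two technical points require care. The distributional convergence of the concatenated series to $f$ follows from the continuous embedding $h^{\bfai}_{\oz,\,N}(\rn) \hookrightarrow \cd'(\rn)$ of Proposition \ref{p3.16}, which transports $h^{\bfai}_{\oz,\,N}$-convergence into $\cd'$-convergence. The main obstacle will be handling the $(\rz,\fz)_{\oz}$-single-atom component: Definition \ref{d3.5} allows only one single-atom in the decomposition of $f$, whereas Lemma \ref{l5.4} can produce a nonzero single-atom contribution $\lambda^{\ell}_0 a^{\ell}_0$ at each level $\ell$ when $\oz(\rn) < \fz$. In the case $\oz(\rn) = \fz$ this difficulty evaporates because every $(\rz,\fz)_{\oz}$-single-atom vanishes identically. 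When $\oz(\rn) < \fz$, I plan to aggregate $\sum_{\ell}\lambda^{\ell}_0 a^{\ell}_0$ into a single $(\rz,\fz)_{\oz}$-single-atom multiplied by a controlled scalar, using that the $L^{\fz}_{\oz}$-bound defining a single-atom is preserved under absolutely convergent linear combinations and that the summability of $\{|\lambda^{\ell}_0|\}$ with appropriate control follows from the $\Lambda$-bounds provided by Lemma \ref{l5.4} at each level together with the geometric decay of the chosen subsequence.
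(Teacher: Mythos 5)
Your proposal is correct and takes essentially the same approach as the paper's proof: both prove the easy inclusion $h^{\rz,\,q,\,s}_{\oz}(\rn)\subset h^{\bfai}_{\oz,\,N}(\rn)$ via Theorem~\ref{t3.18}, then handle the substantive reverse inclusion by reducing to $(\rz,\fz,s)_{\oz}$-atoms, using the density from Corollary~\ref{c4.8} to write $f=\sum_m f_m$ with $\|f_m\|_{h^{\bfai}_{\oz,\,N}(\rn)}\lesssim 2^{-m}\|f\|_{h^{\bfai}_{\oz,\,N}(\rn)}$, applying Lemma~\ref{l5.4} to each $f_m$, and aggregating the per-level single-atoms $\sum_m\lz^m_0 a^m_0$ into a single single-atom whose coefficient is controlled through the lower-type $p_{\bfai}$ property of $\bfai$. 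You correctly identify the single-atom aggregation as the delicate step, and your plan for it coincides with the paper's.

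One small correction: for $N\ge N_{\bfai,\,\oz}$ the inclusion $\cd^0_N(\rn)\subset\cd^0_{N_{\bfai,\,\oz}}(\rn)$ is the right-way (more derivatives constrained means a smaller test class), so the pointwise bound goes $\cg^0_N(f)\le\cg^0_{N_{\bfai,\,\oz}}(f)$, the reverse of what you wrote. Thus your stated route to the reverse inequality $\|f\|_{h^{\bfai}_{\oz,\,N_{\bfai,\,\oz}}(\rn)}\lesssim\|f\|_{h^{\bfai}_{\oz,\,N}(\rn)}$ does not chain together. This does not damage the proof: once you have $h^{\rz,\,q,\,s}_{\oz}(\rn)\subset h^{\bfai}_{\oz,\,N_{\bfai,\,\oz}}(\rn)\subset h^{\bfai}_{\oz,\,N}(\rn)$ and $h^{\bfai}_{\oz,\,N}(\rn)\subset h^{\rz,\,\fz,\,s}_{\oz}(\rn)$ with norm estimates, the identity $h^{\bfai}_{\oz,\,N_{\bfai,\,\oz}}(\rn)=h^{\bfai}_{\oz,\,N}(\rn)$ falls out of the inclusion chain for free, which is how the paper handles it; alternatively one can route through the $N$-independent characterization $\|f\|_{h^{\bfai}_{\oz,\,N}(\rn)}\sim\|\pz^+_0(f)\|_{L^{\bfai}_{\oz}(\rn)}$ of Theorem~\ref{t3.14}. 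Either fix is immediate, so the proposal is sound.
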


\begin{proof}
It is easy to see that $$ h^{\rz,\,\fz,\,s_1}_{\oz}(\rn)\subset
h^{\rz,\,q,\,s}_{\oz}(\rn)\subset
h^{\bfai}_{\oz,\,N_{\bfai,\,\oz}}(\rn)\subset
h^{\bfai}_{\oz,\,N}(\rn)\subset h^{\bfai}_{\oz,\,N_1}(\rn),$$ where
the integers $s_1$ and $N_1$ are respectively no less than $s$ and
$N$, and the inclusions are continuous. Thus, to prove Theorem
\ref{t5.6}, it suffices to prove that for any integers $N,\,s$
satisfying $N>s\ge\lfz n(\frac{q_{\oz}}{p_{\bfai}}-1)\rf$,
$h^{\bfai}_{\oz,\,N}(\rn)\subset h^{\rz,\,\fz,\,s}_{\oz}(\rn)$ and
for all $f\in h^{\bfai}_{\oz,\,N}(\rn)$,
$$\|f\|_{h^{\rz,\,\fz,\,s}_{\oz}(\rn)}\ls
\|f\|_{h^{\bfai}_{\oz,\,N}(\rn)}.$$

Let $f\in h^{\bfai}_{\oz,\,N}(\rn)$. By Corollary \ref{c4.8}, there
exists a sequence of functions,
$$\{f_m\}_{m\in\nn}\subset(h^{\bfai}_{\oz,\,N}(\rn)\cap
L^q_{\oz}(\rn)),$$
such that for all $m\in\nn$,
\begin{eqnarray}\label{5.25}
\|f_m\|_{h^{\bfai}_{\oz,\,N}(\rn)}\le2^{-m}
\|f\|_{h^{\bfai}_{\oz,\,N}(\rn)}
\end{eqnarray}
and $f=\sum_{m\in\nn} f_m$ in $h^{\bfai}_{\oz,\,N}(\rn)$. By Lemma
\ref{l5.4}, we know that for each $m\in\nn$, $f_m$ has an atomic
decomposition
$$f=\sum_{i\in\zz_+}\lz^m_i a^m_i$$
in $\cd'(\rn)$ with
$$\blz(\{\lz^m_i a^m_i\}_{i})\ls\|f_m\|_{h^{\bfai}_{\oz,\,N}(\rn)},$$
where $\{\lz^m_i\}_{i\in\zz_+}\subset\cc$, $\{a^m_i\}_{i\in\nn}$ are
$(\rz,\,\fz,\,s)_{\oz}$-atoms and $a^m_0$ is a
$(\rz,\,\fz)_{\oz}$-single-atom.

Let
$$\wz{\lz}_0\equiv\oz(\rn)\rz(\oz(\rn))\sum^{\fz}_{m=1}|\lz^m_0|
\|a^m_0\|_{L^{\fz}_{\oz}(\rn)}$$
and
$$\wz{a}_0\equiv\left(\wz{\lz}_0\r)^{-1}\sum^{\fz}_{m=1}\lz^m_0 a^m_0.$$
Then
$$\wz{\lz}_0\wz{a}_0=\sum^{\fz}_{m=1}\lz^m_0 a^m_0.$$
It is easy to see that
$$\|\wz{a}_0\|_{L^{\fz}_{\oz}(\rn)}\le
[\oz(\rn)\rz(\oz(\rn))]^{-1},$$
which implies that $\wz{a}_0$ is a
$(\rz,\fz)_{\oz}$-single-atom. Since $\bfai$ is increasing, by
\eqref{5.8}, we know that for any $m\in\nn$,
\begin{eqnarray}\label{5.26}
\oz(\rn)\bfai\left(\frac{|\lz^m_0|}{C\|f_m\|_
{h^{\bfai}_{\oz,\,N}(\rn)}\oz(\rn)\rz(\oz(\rn))}\r)\le1,
\end{eqnarray}
where $C$ is as in \eqref{5.8}. Let
$$\wz{\gz}\equiv
C\left(\sum_{i=m}^{\fz}\|f_m\|_{h^{\bfai}_{\oz,\,N}
(\rn)}^{p_{\bfai}}\r)^{1/p_{\bfai}},$$
where $C$ is as in \eqref{5.8}. Then, from the
continuity and subadditivity of $\bfai$, the strictly lower type
$p_{\bfai}$ property of $\bfai$ and \eqref{5.26}, it follows that
\begin{eqnarray*}
&&\oz(\rn)\bfai\left(\frac{|\wz{\lz}_0|}{\wz{\gz}\oz(\rn)\rz(\oz(\rn))}\r)\\
&&\hs=\oz(\rn)\bfai\left(\frac{\sum^{\fz}_{m=1}|\lz^m_0|
\|a^m_0\|_{L^{\fz}_{\oz}(\rn)}}{\wz{\gz}}\r)\\
&&\hs\le\oz(\rn)\sum^{\fz}_{m=1}\frac{\|f_m\|
_{h^{\bfai}_{\oz,\,N}(\rn)}^{p_{\bfai}}}{\wz{\gz}^{p_{\bfai}}}
\bfai\left(\frac{|\lz^m_0|}{C\|f_m\|
_{h^{\bfai}_{\oz,\,N}(\rn)}\oz(\rn)\rz(\oz(\rn))}\r)\le1,
\end{eqnarray*}
which together with \eqref{5.25} implies that
$$\blz(\{\wz{\lz}_0\wz{a}_0\})\le\wz{\gz}\ls\|f\|
_{h^{\bfai}_{\oz,\,N}(\rn)}.$$
Thus, we see that
$$f=\sum_{m\in\nn}\sum_{i\in\nn}\lz^m_i a^m_i+\wz{\lz}_0\wz{a}_0\in
h^{\rz,\,\fz,\,s}_{\oz}(\rn)$$
and
$$\|f\|_{h^{\rz,\,\fz,\,s}_{\oz}(\rn)}\ls
\|f\|_{h^{\bfai}_{\oz,\,N}(\rn)}.$$
This finishes the proof of
Theorem \ref{t5.6}.
\end{proof}

\begin{remark}\label{r5.7}
Let $p\in(0,1]$. Theorem \ref{5.1} when $\bfai(t)\equiv t^p$ for all
$t\in(0,\fz)$  was obtained by Tang \cite[Theorem\,5.1]{Ta1}.
\end{remark}

For simplicity, from now on, we denote by $h^{\bfai}_{\oz}(\rn)$ the
{\it weighted local Orlicz-Hardy space $h^{\bfai}_{\oz,\,N}(\rn)$}
when $N\ge N_{\bfai,\,\oz}$.

\chapter{Finite atomic decompositions\label{s6}}

\hskip\parindent In this section, we prove that for any given finite
linear combination of weighted atoms when $q <\fz$ (or continuous
$(\rz,\,q,\,s)_{\oz}$-atoms when $q=\fz$), its norm in
$h^{\bfai}_{\oz,\,N}(\rn)$ can be achieved via all its finite
weighted atomic decompositions. This extends the main results in
\cite{msv08,yz09} to the setting of weighted local Orlicz-Hardy
spaces. As applications, we see that for a given admissible
triplet $(\rho,\,q,\,s)_{\omega}$ and a $\beta$-quasi-Banach space
$\mathcal{B}_{\beta}$ with $\beta\in(0,1]$, if $T$ is a
$\mathcal{B}_{\beta}$-sublinear operator, and maps all
$(\rho,\,q,\,s)_{\omega}$-atoms and
$(\rho,\,q)_{\omega}$-single-atoms with $q<\infty$ (or all
continuous $(\rho,\,q,\,s)_{\omega}$-atoms with $q=\infty$) into
uniformly bounded elements of $\mathcal{B}_{\beta}$, then $T$
uniquely extends to a bounded $\mathcal{B}_{\beta}$-sublinear
operator from $h^{\Phi}_{\omega}(\mathbb{R}^n)$ to
$\mathcal{B}_{\beta}$.

\begin{definition}\label{d6.1}
Let $\bfai$ satisfy Assumption (A), $\oz\in A^{\loc}_{\fz}(\rn)$ and
$(\rz,\,q,\,s)_{\oz}$ be admissible as in Definition \ref{d3.4}.
The \emph{space} $h^{\rz,\,q,\,s}_{\oz,\,\fin}(\rn)$ is defined
to be the vector space of all finite linear combinations
of $(\rz,\,q,\,s)_{\oz}$-atoms and
a $(\rz,\,q)_{\oz}$-single-atom, and the \emph{norm} of $f$
in $h^{\rz,\,q,\,s}_{\oz,\,\fin}(\rn)$ is defined by
\begin{eqnarray*}
&&\|f\|_{h^{\rz,\,q,\,s}_{\oz,\,\fin}(\rn)}\equiv\inf\left\{\blz(\{\lz_i
a_i\}_i):\,f=\sum_{i=0}^{k}\lz_i a_i,\,k\in\zz_+,\ \{\lz_i\}_{i=0}^k
\subset\cc,\,\ \{a_i\}_{i=1}^{k}\,\,\text{are}\r. \\
&&\hspace{9 em}(\rz,\,q,\,s)_{\oz}\text{-atoms}\ \text{and}\,\,a_0
\,\,\text{is a}\,\, (\rz,\,q)_{\oz}\text{-single-atom}\Bigg\}.
\end{eqnarray*}
\end{definition}

Obviously, for any admissible triplet $(\rz,\,q,\,s)_{\oz}$,
$h^{\rz,\,q,\,s}_{\oz,\,\fin}(\rn)$ is dense in
$h^{\rz,\,q,\,s}_{\oz}(\rn)$ with respect to the quasi-norm
$\|\cdot\|_{h^{\rz,\,q,\,s}_{\oz}(\rn)}$.

\begin{theorem}\label{t6.2}
Let $\bfai$ satisfy Assumption $\mathrm{(A)}$, $\oz\in
A^{\loc}_{\fz}(\rn)$, $q_{\oz}$ be as in \eqref{2.4} and
$(\rz,\,q,\,s)_{\oz}$ be admissible as in Definition \ref{d3.4}.

$\mathrm{(i)}$ If $q\in(q_{\oz},\fz)$, then
$\|\cdot\|_{h^{\rz,\,q,\,s}_{\oz,\,\fin}(\rn)}$ and
$\|\cdot\|_{h^{\bfai}_{\oz}(\rn)}$ are equivalent quasi-norms on
$h^{\rz,\,q,\,s}_{\oz,\,\fin}(\rn)$.

$\mathrm{(ii)}$ Let $h^{\rz,\,\fz,\,s}_{\oz,\,\fin,\,c}(\rn)$ denote
the set of all $f\in h^{\rz,\,\fz,\,s}_{\oz,\,\fin}(\rn)$ with
compact support. Then
$\|\cdot\|_{h^{\rz,\,\fz,\,s}_{\oz,\,\fin}(\rn)}$ and
$\|\cdot\|_{h^{\bfai}_{\oz}(\rn)}$ are equivalent quasi-norms on
$h^{\rz,\,\fz,\,s}_{\oz,\,\fin,\,c}(\rn)\cap C(\rn)$.
\end{theorem}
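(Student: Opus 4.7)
The inequality $\|f\|_{h^{\bfai}_{\oz}(\rn)} \le C\|f\|_{h^{\rz,\,q,\,s}_{\oz,\,\fin}(\rn)}$ is immediate from Theorem \ref{t3.18}, since any finite atomic decomposition of $f$ is a legitimate competitor in the infimum defining $\|f\|_{h^{\rz,\,q,\,s}_{\oz}(\rn)}$, and the latter dominates $\|f\|_{h^{\bfai}_{\oz}(\rn)}$. The substantive content is the reverse inequality, for which I would adapt the approach of Meda--Sj\"ogren--Vallarino \cite{msv08} and Yang--Zhou \cite{yz09} to the present Orlicz and local weight setting.

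Fix $f$ in the relevant subspace and normalize so that $\|f\|_{h^{\bfai}_{\oz}(\rn)}=1$. Since $f$ is a finite linear combination of atoms (together with a single-atom), $f$ has compact support in some cube $Q_f$, and moreover $f \in L^q_{\oz}(\rn)$ in case (i), while in case (ii) one has $f \in L^{\fz}(\rn)\cap C(\rn)$ with compact support. The plan is to run the Calder\'on--Zygmund decomposition of Section \ref{s4} at each dyadic height $2^k$ for $k\ge k_0$ and invoke Lemma \ref{l5.4} to obtain an (a priori infinite) atomic decomposition
$f=\lz_0 a_0+\sum_{k\ge k_0,\,i}\lz^k_i a^k_i$
with $\blz(\{\lz^k_i a^k_i\}\cup\{\lz_0 a_0\})\ls 1$, and then to truncate this representation to a finite sum at controlled cost.

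Concretely, I would fix a large integer $K$ and a finite truncation index $N_k$ for each $k_0\le k<K$, and split
$f=\underbrace{\lz_0 a_0+\sum_{k_0\le k<K,\,i\le N_k}\lz^k_i a^k_i}_{=:f_K}+R_K,$
where $R_K:=f-f_K$. The central claim is that $R_K=\mu_K\,a_{R_K}$ for some $(\rz,\,q,\,s)_{\oz}$-atom $a_{R_K}$ supported in a fixed cube $Q_K\supset Q_f$ with $l(Q_K)\ge 1$ (so that the moment condition in Definition \ref{d3.4}(iii) is vacuous), and for a multiplier $\mu_K$ that can be made arbitrarily small by choosing $K$ and $\{N_k\}$ large. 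For this, I would combine Lemma \ref{l4.7}(ii), which yields $\|g^K\|_{L^{\fz}(\rn)}\le C\,2^K$, with the $L^q_{\oz}(\rn)$-convergence of $\sum_i b_i^K$ from Lemma \ref{l4.5} (in case (i)) and uniform convergence of the inner sums $\sum_i h^k_i$ in case (ii), where uniform convergence is obtained from the continuity of $f$ and the $C^{\fz}$ partition of unity $\{\zez^k_i\}$ used in \eqref{4.3}. These estimates force $\|R_K\|_{L^q_{\oz}(\rn)}\to 0$ (respectively, $\|R_K\|_{L^{\fz}(\rn)}\to 0$), which is all that is needed to make the product $\mu_K=\oz(Q_K)\rz(\oz(Q_K))\cdot\|R_K\|_{\text{norm}}$ small.

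The principal technical obstacle is the precise absorption of the residue into a single admissible atom: one must balance the $L^q_{\oz}$- or $L^{\fz}$-norm bound on $R_K$ against the geometric quantity $\oz(Q_K)\rz(\oz(Q_K))$, and verify that the additional term $\oz(Q_K)\bfai\bigl(|\mu_K|/[\lz\,\oz(Q_K)\rz(\oz(Q_K))]\bigr)$ appearing in $\blz$ stays negligible compared to the main sum, exploiting the lower type $p_{\bfai}$ property of $\bfai$. A secondary but genuine difficulty specific to case (ii) is that the truncation residue $a_{R_K}$ must itself be continuous; this is secured by the continuity of $f$ together with uniform convergence, but it forces one to use the $(\rz,\,\fz,\,s)_{\oz}$-atoms from Lemma \ref{l5.4} rather than a cruder variant. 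Once these points are settled, the bound $\|f\|_{h^{\rz,\,q,\,s}_{\oz,\,\fin}(\rn)}\ls 1=\|f\|_{h^{\bfai}_{\oz}(\rn)}$ follows and completes both parts of the theorem.
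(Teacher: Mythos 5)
Your overall plan---run the Calder\'on--Zygmund decomposition, invoke Lemma \ref{l5.4}, truncate, and absorb the residue into a single atom whose coefficient is made small via the lower-type property of $\bfai$ and \eqref{2.8}---is exactly the strategy of the paper. However, there is a genuine gap in your treatment of case (i): you assert that ``since $f$ is a finite linear combination of atoms (together with a single-atom), $f$ has compact support in some cube $Q_f$.'' This is false when $\oz(\rn)<\fz$. A $(\rz,q)_\oz$-single-atom $a_0$ carries no support restriction at all (Definition \ref{d3.4} only bounds $\|a_0\|_{L^q_\oz(\rn)}$), and when $\oz(\rn)<\fz$ the single-atom is generically nonzero, so $f$ need not be compactly supported. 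Consequently there is no cube $Q_K\supset Q_f$ in which to place your residue $R_K$. The paper splits part (i) into two cases for exactly this reason: when $\oz(\rn)=\fz$ (where all single-atoms vanish and $f$ has compact support) the residue is absorbed into $(\rz,q,s)_\oz$-atoms supported in a fixed cube, while when $\oz(\rn)<\fz$ the residue $b_K=f-f_K$ is instead absorbed into a $(\rz,q)_\oz$-single-atom, with the multiplier controlled by \eqref{2.8} applied with $t=\oz(\rn)$. You need this second case; the repair is short, but the step as written fails.

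Two smaller points. For part (ii), the ``uniform convergence'' you invoke needs to be made precise. The paper splits the finite range $k_0\le k\le k_1$ of levels into atoms whose supporting cubes have sidelength $\ge\delta$ (finitely many, since each $\boz_{2^k}$ is bounded) and those with sidelength $<\delta$, then uses uniform continuity of $f$ to show via \eqref{6.3}--\eqref{6.4} and bounded overlap that the small-scale piece $f^\varepsilon_2$ satisfies $\|f^\varepsilon_2\|_{L^\fz(\rn)}\ls(k_1-k_0)\varepsilon$. Your truncation-by-index scheme $i\le N_k$ delivers the same conclusion only if atoms are ordered by decreasing sidelength; as stated it does not obviously give a uniformly small tail. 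Finally, your worry that $a_{R_K}$ ``must itself be continuous'' is unfounded: the finite quasi-norm $\|\cdot\|_{h^{\rz,\fz,s}_{\oz,\fin}(\rn)}$ of Definition \ref{d6.1} is an infimum over decompositions into ordinary $(\rz,\fz,s)_\oz$-atoms (and a single-atom) with no continuity requirement; the hypothesis $f\in C(\rn)$ is used only to control the $L^\fz$ size of the small-scale part, not to ensure continuity of the atoms in the finite decomposition.
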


\begin{proof}
We first show (i). Let $q\in(q_{\oz},\fz)$ and $(\rz,\,q,\,s)_{\oz}$
be admissible. Obviously, from Theorem \ref{t5.6}, we infer that
$h^{\rz,\,q,\,s}_{\oz,\,\fin}(\rn)\subset
h^{\rz,\,q,\,s}_{\oz}(\rn)=h^{\bfai}_{\oz}(\rn)$ and for all $f\in
h^{\rz,\,q,\,s}_{\oz,\,\fin}(\rn)$,
$$\|f\|_{h^{\bfai}_{\oz}(\rn)}\ls
\|f\|_{h^{\rz,\,q,\,s}_{\oz,\,\fin}(\rn)}.$$ Thus, we only need show
that for all $f\in h^{\rz,\,q,\,s}_{\oz,\,\fin}(\rn)$,
\begin{eqnarray}\label{6.1}
\|f\|_{h^{\rz,\,q,\,s}_{\oz,\,\fin}(\rn)}\ls
\|f\|_{h^{\bfai}_{\oz}(\rn)}.
\end{eqnarray}
By homogeneity, without loss of generality, we may assume that $f\in
h^{\rz,\,q,\,s}_{\oz,\,\fin}(\rn)$ with
$\|f\|_{h^{\bfai}_{\oz}(\rn)}=1$. In the rest of this section, for
any $f\in h^{\rz,\,q,\,s}_{\oz,\,\fin}(\rn)$, let $k_0$ be as in
Section \ref{s5} and $\boz_{2^k}$ with $k\ge k_0$ as in \eqref{4.1}
with $\lz=2^k$. Since $f\in(h^{\bfai}_{\oz,\,N}(\rn)\cap
L^{q}_{\oz}(\rn))$, by Lemma \ref{l5.4}, there exist $\lz_0\in\cc$,
$\{\lz^k_i\}_{k\ge k_0,\,i}\subset\cc$, a
$(\rz,\,\fz)_{\oz}$-single-atom $a_0$ and
$(\rz,\,\fz,\,s)_{\oz}$-atoms $\{a^k_i\}_{k\ge k_0,\,i}$, such that
\begin{eqnarray}\label{6.2}
f=\sum_{k\ge k_0}\sum_{i}\lz^k_i a^k_i+\lz_0 a_0
\end{eqnarray}
holds both in $\cd'(\rn)$ and almost everywhere.  First, we claim
that \eqref{6.2} also holds in $L^q_{\oz}(\rn)$. For any $x\in\rn$,
by $\rn=\cup_{k\ge k_0}(\boz_{2^k}\setminus \boz_{2^{k+1}})$, we see
that there exists $j\in\zz$ such that $x\in(\boz_{2^j}\setminus
\boz_{2^{j+1}})$. By the proof of Lemma \ref{l5.4}, we know that for
all $k>j$, $\supp(a^k_i)\subset
\wz{Q}^k_i\subset\boz_{2^k}\subset\boz_{2^{j+1}}$; then from
\eqref{5.13} and \eqref{5.22}, we conclude that
$$\left|\sum_{k\ge k_0}\sum_{i}\lz^k_i a^k_i
(x)\r|+|\lz_0 a_0 (x)|\ls\sum_{k_0\le k\le j}2^k +2^{k_0}\ls
2^j\ls\cg_N (f)(x).$$ Since $f\in L^q_{\oz}(\rn)$, from Proposition
\ref{p3.2}(ii), we infer that $\cg_N (f)(x)\in L^q_{\oz}(\rn)$. This
combined with the Lebesgue dominated convergence theorem implies that
$$\sum_{k\ge k_0}\sum_{i}\lz^k_i a^k_i+\lz_0 a_0$$
converges to $f$ in $L^q_{\oz}(\rn)$, which completes
the proof of the claim.

Next, we show \eqref{6.1} by considering the following two cases for
$\oz$.

{\it Case 1)} $\oz(\rn)=\fz$. In this case, by $f\in
L^{q}_{\oz}(\rn)$, we know that $k_0=-\fz$ and $a_0 (x)=0$ for
almost every $x\in\rn$ in \eqref{6.2}. Thus, in this case,
\eqref{6.2} has the version
$$f=\sum_{k\in\zz} \sum_{i}\lz^k_ia^k_i.$$
Since, when $\oz(\rn)=\fz$, all
$(\rz,\,q)_{\oz}$-single-atoms are 0, if $f\in
h^{\rz,\,q,\,s}_{\oz,\,\fin}(\rn)$, then $f$ has compact support.
Assume that $\supp(f)\subset Q_0 \equiv Q(x_0,r_0)$ and
$$\wz{Q}_0\equiv Q(x_0,\sqrt{n}r_0 +2^{3(10+n)+1}).$$
Then for any $\pz\in\cd_N (\rn)$, $x\in\rn\setminus\wz{Q}_0$ and $t\in(0,1)$,
we have
\begin{eqnarray*}
\pz_t \ast f(x)=\int_{Q(x_0,r_0)}\pz_t
(x-y)f(y)\,dy=\int_{B(x,2^{3(10+n)})\cap Q(x_0,r_0)}\pz_t
(x-y)f(y)\,dy=0.
\end{eqnarray*}
Thus, for any $k\in\zz$, $\boz_{2^k}\subset\wz{Q}_0$, which implies
that  $\supp(\sum_{k\in\zz} \sum_{i}\lz^k_i a^k_i)\subset \wz{Q}_0$.
For each positive integer $K$, let
$$F_K\equiv\{(i,k):\,k\in\zz,\,k\ge k_0, i\in\nn,\,|k|+i\le K\}$$
and
$$f_K\equiv\sum_{(k,i)\in F_K}\lz^k_i a^k_i.$$
Then, by the above claim, we know that $f_K$ converges
to $f$ in $L^q_{\oz}(\rn)$.
Thus, for any given $\epz\in(0,1)$, there exists $K_0 \in\nn$ large
enough such that
$$\|(f-f_{K_0})/\epz\|_{L^q_{\oz}(\rn)}\le[\rz(\oz(\wz{Q}_0))]^{-1}
[\oz(\wz{Q}_0)]^{1/q-1},$$
which together with
$\supp(f-f_{K_0})/\epz\subset\wz{Q}_0$ implies that
$(f-f_{K_0})/\epz$ is a $(\rz,\,q,\,s)_{\oz}$-atom. Moreover, we
equivalently divide $\wz{Q}_0$ into the union of some cubes
$\{Q_i\}_{i=1}^{N_0}$ with disjoint interior and their sidelengths
satisfying $l_i\in(1,2]$, where $N_0$ depends only on $r_0$ and $n$.
It is clear that
$$\|(f-f_{K_0})\chi_{Q_i}/\epz\|_{L^q_{\oz}(\rn)}\le
[\rz(\oz(\wz{Q}_0))]^{-1}
[\oz(\wz{Q}_0)]^{1/q-1}\le[\rz(\oz(Q_i))]^{-1} [\oz(Q_i)]^{1/q-1},$$
which together with $\supp((f-f_{K_0})\chi_{Q_i}/\epz)\subset Q_i$
implies that $(f-f_{K_0})\chi_{Q_i}/\epz$ is a
$(\rz,\,q,\,s)_{\oz}$-atom for $i=1,\,2,\cdots,\,N_0$. Thus,
$$f=f_{K_0}+\sum_{i=1}^{N_0}(f-f_{K_0})\chi_{Q_i}$$
almost everywhere is a finite linear weighted atom combination of $f$. Let
$$b_i\equiv(f-f_{K_0})\chi_{Q_i}/\epz$$
and take $\epz\equiv N_0^{-1/p_{\bfai}}$. Then,
by \eqref{2.8} with $t\equiv\oz(Q_i)$,
Remark \ref{r3.6}(ii) and the lower type $p_{\bfai}$ property of
$\bfai$,
\begin{eqnarray*}
\|f\|_{h^{\rz,\,q,\,s}_{\oz,\,\fin}(\rn)}&\ls&\blz\left(\{\lz^k_i
a^k_i\}_{(i,k)\in F_{K_0}}\r)+\blz\left(\{\epz b_i\}_{i=1}^{N_0}\r)\\
&\ls&\|f\|_{h^{\rz,\,q,\,s}_{\oz}(\rn)}+\inf\left\{\lz>0:\,
\sum_{i=1}^{N_0}\oz(Q_i)\bfai\left(\frac{\epz}{\lz\oz(Q_i)
\rz(\oz(Q_i))}\r)\le1\r\}\ls1,
\end{eqnarray*}
which implies \eqref{6.1} in Case 1).

{\it Case 2)} $\oz(\rn)<\fz$. In this case, $f$ may not have compact
support. Similarly to Case 1), for any positive integer $K$, let
$$f_K\equiv\sum_{(k,i)\in F_K}\lz^k_i a^k_i+\lz_0 a_0$$
and $b_K\equiv f-f_K$, where $F_K$ is as in Case 1).
From the above claim, we
deduce that $f_K$ converges to $f$ in $L^q_{\oz}(\rn)$. Thus, there
exists a positive integer $K_1\in\nn$ large enough such that
$$\|b_{K_1}\|_{L^q_{\oz}(\rn)}\le [\rz(\oz(\rn))]^{-1}
[\oz(\rn)]^{1/q-1}.$$
Thus, $b_{K_1}$ is a
$(\rz,\,q)_{\oz}$-single-atom and $f=f_{K_1}+b_{K_1}$ is a finite
linear weighted atom combination of $f$. Moreover, by Remark
\ref{r3.6}(ii) and \eqref{2.8} with $t\equiv\oz(\rn)$,
\begin{eqnarray*}
\|f\|_{h^{\rz,\,q,\,s}_{\oz,\,\fin}(\rn)}&\ls&\blz\left(\{\lz^k_i
a^k_i\}_{(i,k)\in F_{K_1}}\r)+\blz\left(\{b_{K_1}\}\r)\\
&\ls&\|f\|_{h^{\rz,\,q,\,s}_{\oz}(\rn)}+\inf\left\{\lz>0:\,
\oz(\rn)\bfai\left(\frac{1}{\lz\oz(\rn)\rz(\oz(\rn))}\r)\le1\r\}\ls1,
\end{eqnarray*}
which implies \eqref{6.1} in Case 2). This finishes the proof of
(i).

We now prove (ii). In this case, similarly to the proof of (i), we
only need prove that for all $f\in
h^{\rz,\,\fz,\,s}_{\oz,\,\fin,\,c}(\rn)$,
$$\|f\|_{h^{\rz,\,\fz,\,s}_{\oz}(\rn)}\ls
\|f\|_{h^{\bfai}_{\oz}(\rn)}.$$
Again, by homogeneity, without loss
of generality, we may assume that $\|f\|_{h^{\bfai}_{\oz}(\rn)}=1$.
Since $f$ has compact support, by the definition of $\cg_N (f)$, it
is easy to know that $\cg_N (f)$ also has compact support. Assume
that $\supp(\cg_N (f))\subset B(0,R_0)$ for some $R_0 \in(0,\fz)$. By
$f\in L^{\fz}_{\oz}(\rn)$, we have that $\cg_N f\in
L^{\fz}_{\oz}(\rn)$. Thus, there exists $k_1\in\zz$ such that
$\boz_{2^k}=\emptyset$ for any $k\in\zz$ with $k\ge k_1 +1$. By
Lemma \ref{l5.4}, there exist $\lz_0\in\cc$, $\{\lz^k_i\}_{k_1 \ge
k\ge k_0,\,i}\subset\cc$, a $(\rz,\,\fz)_{\oz}$-single-atom $a_0$
and $(\rz,\,\fz,\,s)_{\oz}$-atoms $\{a^k_i\}_{k_1 \ge k\ge k_0,\,i}$
such that
$$f=\sum^{k_1}_{k=k_0}\sum_{i}\lz^k_i
a^k_i+\lz_0 a_0$$
holds both in $\cd'(\rn)$ and almost everywhere.
By the fact that $f$ is uniformly continuous, we know that for any
given $\varepsilon\in(0,\fz)$, there exists a $\delta\in(0,\fz)$
such that if
$$|x-y|<\sqrt{n}\delta/2,$$
then $|f(x)-f(y)|<\varepsilon$. Without loss of generality, we may assume
that $\delta<1$. Write $f=f^{\varepsilon}_1 +f^{\varepsilon}_2$ with
$$f^{\varepsilon}_1\equiv\sum_{(i,k)\in G_1}\lz^k_i a^k_i+\lz_0 a_0$$
and
$$f^{\varepsilon}_2\equiv\sum_{(i,k)\in G_2}\lz^k_i a^k_i,$$
where
$$G_1\equiv\left\{(i,k):\,l(\wz{Q}^k_i)\ge\delta,\,k_0 \le k\le k_1\r\},$$
$$G_2\equiv\left\{(i,k):\,l(\wz{Q}^k_i)<\delta,\,k_0 \le k\le k_1\r\},$$
and $\wz{Q}^k_i$ is the support of $a^k_i$ (see the proof of Lemma
\ref{l5.4}). For any fixed integer $k\in[k_0,k_1]$, by Lemma
\ref{l5.1}(ii) and $\boz_{2^k}\subset B(0,R_0)$, we see that $G_1$
is a finite set.

For any $(i,k)\in G_2$ and
$x\in\wz{Q}^k_i,\,|f(x)-f(x^k_i)|<\varepsilon$. For all $x\in\rn$,
let
$$\wz{f}(x)\equiv[f(x)-f(x^k_i)]\chi_{\wz{Q}^k_i}(x)$$
and $\wz{P}^k_i (x)\equiv P^k_i (x)-f(x^k_i)$. By the definition of
$P^k_i$,  for all $P\in\cp_s (\rn)$,
$$\int_{\rn}\left[\wz{f}(x)-\wz{P}^k_i(x)\r]P(x)\zez^k_i
(x)\,dx=0.$$ Since $|\wz{f}(x)|<\varepsilon$ for all $x\in\rn$
implies that $\cg_N (\wz{f})(x)\ls\varepsilon$ for all $x\in\rn$,
then by Lemma \ref{l4.1}, we see that
\begin{eqnarray}\label{6.3}
\sup_{y\in\rn}\left|\wz{P}^k_i(y)\zez^k_i (y)\r|\ls
\sup_{y\in\rn}\left|\cg_N (\wz{f})(y)\r|\ls\varepsilon.
\end{eqnarray}
Let $\wz{P}^k_{i,\,j}\in\cp_s (\rn)$ be such that for any $P\in\cp_s
(\rn)$,
$$\int_{\rn}\left[\wz{f}(x)-\wz{P}^k_i(x)\r]\zez^k_i
(x)P(x)\zez^{k+1}_i (x)\,dx= \int_{\rn}\wz{P}^{k+1}_{i,\,j}
(x)P(x)\zez^{k+1}_j (x)\,dx.$$ Since
$(\wz{f}-\wz{P}^k_i)\zez^k_i=(f-P^k_i)\zez^k_i$, by $\supp
(\zez^k_i)\subset \wz{Q}^k_i$, we have $\wz{P}^k_{i,\,j}=P^k_{i,\,j}$.
Then from Lemma \ref{l5.2}, we deduce that
\begin{eqnarray}\label{6.4}
\sup_{y\in\rn}\left|\wz{P}^k_{i,\,j}(y)\zez^{k+1}_i (y)\r|\ls
\sup_{y\in\rn}\left|\cg_N (\wz{f})(y)\r|\ls\varepsilon.
\end{eqnarray} Thus, by the definition
of $\lz^k_i a^k_i$, $\sum_j \zez^{k+1}_j =\chi_{\boz_{2^{k+1}}}$ and
\eqref{5.11}, we know that
\begin{eqnarray*}
\lz^k_i a^k_i&=&f\chi_{\boz_{2^{k+1}}^\complement}\zez^k_i-P^k_i
\zez^k_i+\sum_{j\in F^{k+1}_2} P^{k+1}_j \zez^k_i
\zez^{k+1}_j+\sum_{j\in
F^{k+1}_2}P^{k+1}_{i,\,j}\zez^{k+1}_j\\
&=&\wz{f}\chi_{\boz_{2^{k+1}}^\complement}\zez^k_i-\wz{P}^k_i
\zez^k_i+\sum_{j\in F^{k+1}_2} \wz{P}^{k+1}_j \zez^k_i
\zez^{k+1}_j+\sum_{j\in F^{k+1}_2}\wz{P}^{k+1}_{i,\,j}\zez^{k+1}_j.
\end{eqnarray*}
From this together with \eqref{6.3}, \eqref{6.4} and Lemma
\ref{l5.1}(ii), it follows that $|\lz^k_i a^k_i|\ls\varepsilon$ for
all $x\in\wz{Q}^k_i$ with $(i,k)\in G_2$. Moreover, using Lemma
\ref{l5.1}(ii) again, we conclude that
$$|f^{\varepsilon}_2|\ls\sum_{k=k_0}^{k_1}\varepsilon\ls(k_1-k_0)
\varepsilon.$$ By the arbitrariness of $\varepsilon$, $\supp
(f^{\varepsilon}_2)\subset B(0,R_0)$ and
$|f^{\varepsilon}_2|\ls(k_1-k_0) \varepsilon$, we choose
$\varepsilon$ small enough such that $f^{\varepsilon}_2$ is an
arbitrarily small multiple of a $(\rz,\,\fz,\,s)_{\oz}$-atom. In
particular, we choose $\varepsilon_0\in(0,\fz)$ such that
$f^{\varepsilon_0}_2=\wz{\lz}\wz{a}$ with $|\wz{\lz}|\le1$ and
$\wz{a}$ is a $(\rz,\,\fz,\,s)_{\oz}$-atom, then
$$f=\sum_{(i,k)\in
G_1}\lz^k_i a^k_i+\lz_0 a_0+\wz{\lz}\wz{a}$$
is a finite weighted
atomic decomposition of $f$, and
$$\|f\|_{h^{\rz,\,\fz,\,s}_{\oz}(\rn)}\ls\|f\|_{h^{\bfai}_{\oz}(\rn)}+1\ls1,$$
which completes the proof of Theorem \ref{t6.2}.
\end{proof}

\begin{remark}\label{r6.3}
(i) From the proof of Theorem \ref{t6.2}, it follows that for any
$f\in h^{\rz,\,q,\,s}_{\oz,\,\fin}(\rn)$ with $q\in(q_{\oz},\fz)$,
there exist $\{\lz_j\}_{j=0}^k \subset\cc$, a
$(\rz,\,q)_{\oz}$-single-atom $a_0$ and $(\rz,\,q,\,s)_{\oz}$-atoms
$\{a_j\}_{j=1}^k$ satisfying $\supp(a_j)\subset Q_j$ with
$l(Q_j)\in(0,2]$ such that $f=\sum_{j=0}^k \lz_j a_j$ in both
$L^q_{\oz}(\rn)$ and $\cd' (\rn)$. Moreover, for all $f\in
h^{\rz,\,q,\,s}_{\oz,\,\fin}(\rn)$,
\begin{eqnarray*}
\|f\|_{h^{\rz,\,q,\,s}_{\oz,\,\fin}(\rn)}&\sim&
\|f\|_{h^{\bfai}_{\oz}(\rn)}\\
&\sim&\inf\left\{\blz\{\lz_i a_i\}_i:\,f=\sum_{i=0}^{k}\lz_i
a_i,\,k\in\zz_+,\,\{a_i\}_{i=1}^{k}\,\text{are}\,
(\rz,\,q,\,s)_{\oz}\text{-atoms}\r. \\
&&\hs\hs\hs\text{satisfying}\ \supp(a_j)\subset Q_j,\ l(Q_j)\in(0,2]\\
&&\hs\hs\hs \text{and}\ a_0 \,\text{is a}\,
(\rz,\,q)_{\oz}\text{-single-atom}\Bigg\}.
\end{eqnarray*}

(ii) Obviously, when $\oz(\rn)=\fz$,
$$h^{\rz,\,\fz,\,s}_{\oz,\,\fin,\,c}(\rn)\cap
C(\rn)=h^{\rz,\,\fz,\,s}_{\oz,\,\fin}(\rn)\cap C(\rn).$$
\end{remark}

As an application of Theorem \ref{t6.2}, we establish the
boundedness on $h^{\bfai}_{\oz}(\rn)$ of quasi-Banach-valued
sublinear operators.

Recall that a {\it  quasi-Banach space $\cb$} is a vector space
endowed with a quasi-norm $\|\cdot\|_{\cb}$ which is nonnegative,
non-degenerate (i.\,e., $\|f\|_{\cb}=0$ if and only if $f= 0$),
homogeneous, and obeys the quasi-triangle inequality, i.\,e., there
exists a positive constant $K$ no less than 1 such that for all
$f,\,g\in\cb$,
$$\|f + g\|_{\cb}\le K(\|f\|_{\cb} + \|g\|_{\cb}).$$

Let $\bz\in(0, 1]$. As in \cite{yz08,yz09}, a quasi-Banach space
$\cb_{\bz}$ with the quasi-norm $\|\cdot\|_{\cb_{\bz}}$ is called a
{\it $\bz$-quasi-Banach space} if
$$\|f+g\|^{\bz}_{\cb_{\bz}}\le\|f\|^{\bz}_{\cb_{\bz}}+
\|g\|^{\bz}_{\cb_{\bz}}$$
for all $f,\, g\in\cb_{\bz}$.

Notice that any Banach space is a 1-quasi-Banach space, and the
quasi-Banach spaces $l^{\bz}$ and $L^{\bz}_{\oz}(\rn)$ are typical
$\bz$-quasi-Banach spaces. Let $\bfai$ satisfy Assumption (A). By
the subadditivity of $\bfai$  and \eqref{2.6}, we know that
$h^{\bfai}_{\oz}(\rn)$ is a $p_{\bfai}$-quasi-Banach space.

For any given $\bz$-quasi-Banach space $\cb_{\bz}$ with $\bz\in(0,
1]$ and a linear space $\cy$, an operator $T$ from $\cy$ to
$\cb_{\bz}$ is called {\it $\cb_{\bz}$-sublinear} if for any $f,\,g
\in\cb_{\bz}$ and $\lz,\,\nu\in\cc$,
$$\|T(\lz f+\nu g)\|_{\cb_{\bz}}\le\left(|\lz|^{\bz}\|T(f)\|^{\bz}
_{\cb_{\bz}}+|\nu|^{\bz}\|T(g)\|^{\bz} _{\cb_{\bz}}\r)^{1/\bz},$$
and
$$\|T(f)-T(g)\|_{\cb_{\bz}}\le \|T(f-g)\|_{\cb_{\bz}}$$
(see \cite{yz08,yz09}).

We remark that if $T$ is linear, then $T$ is $\cb_{\bz}$-sublinear.
Moreover, if $\cb_{\bz}$ is a space of functions, and $T$ is
nonnegative and sublinear in the classical sense, then $T$ is also
$\cb_{\bz}$-sublinear.

\begin{theorem}\label{t6.4}
Let $\bfai$ satisfy Assumption $\mathrm{(A)}$, $\oz\in
A^{\loc}_{\fz}(\rn)$, $q_{\oz}$ be as in \eqref{2.4} and
$(\rz,\,q,\,s)_{\oz}$ be admissible. Let $\cb_{\bz}$ be a
$\bz$-quasi-Banach space with $\bz\in(0,1]$ and $\wz{p}$ be a upper
type of $\bfai$ satisfying $\wz{p}\in(0,\bz]$. Suppose that one of
the following holds:

$\mathrm{(i)}$ $q\in(q_{\oz},\fz)$ and
$T:\,h^{\rz,\,q,\,s}_{\oz,\,\fin}(\rn)\to \cb_{\bz}$ is a
$\cb_{\bz}$-sublinear operator such that
\begin{eqnarray*}
&&S\equiv\sup\left\{\|T(a)\|_{\cb_{\bz}}:\ a \ \text{is
any}\,(\rz,\,q,\,s)_{\oz}\text{-atom with}\,\supp(a)\subset
Q\ \text{and}\r.\\
&& \hspace{4 em}l(Q)\in(0,2]\ \text{or}\ (\rz,\,q)_{\oz}\
\text{-single -atom}\big\}<\fz.
\end{eqnarray*}

$\mathrm{(ii)}$ $T$ is a $\cb_{\bz}$-sublinear operator defined on
continuous $(\rz,\,\fz,\,s)_{\oz}$-atoms such that
\begin{eqnarray*}
&&S\equiv\sup\{\|T(a)\|_{\cb_{\bz}}:\,a \,\text{is any
continuous}\,\,(\rz,\,\fz,\,s)_{\oz}\text{-atom}\}<\fz.
\end{eqnarray*}

Then there exists a unique bounded $\cb_{\bz}$-sublinear operator
$\wz{T}$ from $h^{\bfai}_{\oz}(\rn)$ to $\cb_{\bz}$ which extends
$T$.
\end{theorem}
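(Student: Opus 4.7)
The plan is to deduce both parts from Theorem~\ref{t6.2} together with the completeness of $\cb_\bz$: on a suitable dense subspace of finite atomic combinations I will prove the a priori bound
$$\|T(f)\|_{\cb_\bz}\ls S\,\|f\|_{h^\bfai_\oz(\rn)},$$
and then extend $T$ by density.

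For case~(i), take $f\in h^{\rz,q,s}_{\oz,\fin}(\rn)$ and, by Remark~\ref{r6.3}(i), fix a finite atomic decomposition $f=\lz_0 a_0+\sum_{j=1}^k \lz_j a_j$ with $a_0$ a $(\rz,q)_\oz$-single-atom, each $a_j$ a $(\rz,q,s)_\oz$-atom supported in a cube $Q_j$ of sidelength at most $2$, and $\blz(\{\lz_j a_j\})\ls\|f\|_{h^\bfai_\oz(\rn)}$. The $\cb_\bz$-sublinearity of $T$ combined with the uniform atomic bound yields $\|T(f)\|_{\cb_\bz}^\bz\le S^\bz\sum_{j=0}^k|\lz_j|^\bz$, so the crux is the numerical inequality $\sum_j|\lz_j|^\bz\ls\blz(\{\lz_j a_j\})^\bz$. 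Setting $K\equiv\blz(\{\lz_j a_j\})$ and $u_j\equiv|\lz_j|/(K\oz(Q_j)\rz(\oz(Q_j)))$, the definition of $\blz$ forces each summand of $\sum_j\oz(Q_j)\bfai(u_j)\le1$ to be at most~$1$, whence by \eqref{2.8} and monotonicity of $\bfai$ one has $|\lz_j|\le K$. The upper type $\wz p\in(0,\bz]$ of $\bfai$ then gives
$$\oz(Q_j)\bfai(u_j)\gs\left(\frac{|\lz_j|}{K}\r)^{\wz p}\oz(Q_j)\bfai\!\left(\frac{1}{\oz(Q_j)\rz(\oz(Q_j))}\r)=\left(\frac{|\lz_j|}{K}\r)^{\wz p},$$
so $\sum_j(|\lz_j|/K)^{\wz p}\ls1$; combined with $|\lz_j|\le K$ and $\wz p\le\bz$ this yields $\sum_j|\lz_j|^\bz\ls K^\bz$. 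The analogous estimate for the single-atom term is identical with $\oz(Q_j)$ replaced by $\oz(\rn)$. Since $h^{\rz,q,s}_{\oz,\fin}(\rn)$ is dense in $h^{\rz,q,s}_\oz(\rn)=h^\bfai_\oz(\rn)$ by Theorem~\ref{t5.6}, and $\cb_\bz$ is complete, the unique bounded extension $\wz T$ follows in the standard way.

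Case~(ii) runs along the same lines but with Theorem~\ref{t6.2}(ii) replacing Theorem~\ref{t6.2}(i), so the boundedness on $h^{\rz,\fz,s}_{\oz,\fin,c}(\rn)\cap C(\rn)$ is automatic once the atomic/$\blz$-inequality above is in hand. What genuinely must be supplied is the density of $h^{\rz,\fz,s}_{\oz,\fin,c}(\rn)\cap C(\rn)$ in $h^\bfai_\oz(\rn)$. I would obtain this by combining Corollary~\ref{c4.8} (to reduce to $L^q_\oz(\rn)\cap h^\bfai_\oz(\rn)$ for some $q\in(q_\oz,\fz)$), a mollification step $f\mapsto f\ast\phi_\epz$ with $\phi\in\cd(\rn)$ and $\int\phi=1$ (using Lemma~\ref{l2.6}(iii) together with the local vertical maximal characterization of Theorem~\ref{t3.14} and \eqref{3.10} to control $h^\bfai_\oz$-convergence), and a compactly supported cutoff; the key elementary observation is that every bounded continuous compactly supported function is a scalar multiple of a single $(\rz,\fz,s)_\oz$-atom (take the support cube to have sidelength $\ge 1$, so no moment conditions are required), hence automatically lies in $h^{\rz,\fz,s}_{\oz,\fin,c}(\rn)\cap C(\rn)$.

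The principal obstacle is precisely this density argument in case~(ii): one must verify that $f\ast\phi_\epz\to f$ in the $h^\bfai_\oz(\rn)$-quasi-norm as $\epz\to 0$ (uniformly for $f$ in the dense $L^q_\oz$-subclass), which ultimately requires a Lebesgue-type dominated convergence applied to the maximal function $\pz_0^+(f\ast\phi_\epz-f)$ via Theorem~\ref{t3.14}. Once density is secured, the proof concludes by invoking the extension principle from case~(i) on the continuous dense subspace.
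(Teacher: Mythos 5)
Your argument for case (i) is correct and follows the paper's proof closely: you fix a finite atomic decomposition achieving the $h^{\bfai}_\oz$-norm via Theorem \ref{t6.2}(i) and Remark \ref{r6.3}(i), use $\cb_\bz$-sublinearity and the uniform bound $S$, and then deduce $\sum_j|\lz_j|^\bz\ls\blz(\{\lz_j a_j\})^\bz$ from the upper type $\wz p\le\bz$ of $\bfai$ and \eqref{2.8}; the chain $\sum(|\lz_j|/K)^\bz\le\sum(|\lz_j|/K)^{\wz p}\ls 1$ (valid since $|\lz_j|/K\le 1$) is exactly the point, stated a little more explicitly than in the paper. The density-plus-completeness extension to $h^\bfai_\oz(\rn)$ is then routine.

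For case (ii), however, there is a genuine gap: you correctly identify that one must prove $h^{\rz,\fz,s}_{\oz,\fin,c}(\rn)\cap C(\rn)$ is dense in $h^\bfai_\oz(\rn)$, but the route you propose --- first pass to $L^q_\oz(\rn)\cap h^\bfai_\oz(\rn)$ via Corollary \ref{c4.8}, then mollify a general such $f$, and control $\pz_0^+(f\ast\phi_\epz-f)$ by dominated convergence --- is not what is needed and is hard to make work: for a general $f\in L^q_\oz\cap h^\bfai_\oz$ neither an integrable $\epz$-uniform majorant of the maximal function nor pointwise convergence of $\pz_0^+(f\ast\phi_\epz-f)$ is available, and you leave this open as ``the principal obstacle.'' The resolution used in the paper avoids this entirely: since $h^{\rz,\fz,s}_{\oz,\fin}(\rn)$ is already dense in $h^\bfai_\oz(\rn)$ (Theorem \ref{t5.6} and the definition of $h^{\rz,\fz,s}_\oz$), one only needs to approximate a \emph{finite linear combination of atoms} by continuous ones. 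For such $f$, $\supp(f)$ is compact (plus possibly a single-atom term, handled by a cutoff $\psi_k$ when $\oz(\rn)<\fz$), $\phi_k\ast a_i$ is itself a multiple of a continuous $(\rz,\fz,s)_\oz$-atom, and the crucial observation is that the difference $f-\phi_k\ast f$ is a compactly supported function with the required vanishing moments whose $L^q_\oz$-norm tends to $0$ by Lemma \ref{l2.6}(iii); hence it is a shrinking multiple $c_k a_k$ of a single $(\rz,q,s)_\oz$-atom supported in a \emph{fixed} cube, and $\|f-\phi_k\ast f\|_{h^\bfai_\oz(\rn)}\ls\blz(\{c_k a_k\})\ls|c_k|\to 0$ by \eqref{2.8} and Theorem \ref{t5.6}. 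This converts the whole issue into an elementary $L^q_\oz$-convergence rather than a maximal-function estimate, which is the missing idea in your write-up.
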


\begin{proof}
We first show Theorem \ref{t6.4} under its assumption (i). For any
$f\in h^{\rz,\,q,\,s}_{\oz,\,\fin}(\rn)$, by Theorem \ref{t6.2}(i)
and Remark \ref{r6.3}(i), there exist a sequence
$\{\lz_j\}^l_{j=0}\subset\cc$ with some $l\in\nn$, a
$(\rz,\,q)_{\oz}$-single-atom $a_0$ and $(\rz,\,q,\, s)_{\oz}$-atoms
$\{a_j\}^l_{j=1}$ satisfying $\supp(a_j)\subset Q_j$ and
$l(Q_j)\in(0,2]$ for $j\in\{1,\,2,\,\cdots,\,l\}$ such that $f
=\sum^l_{j=0}\lz_j a_j$ pointwise and
\begin{eqnarray}\label{6.5}
\blz\left(\{\lz_j a_j\}_{j=0}^l\r)\ls\|f\|_{h^{\bfai}_{\oz}(\rn)}.
\end{eqnarray}
Then by the assumptions, we see that
\begin{eqnarray}\label{6.6}
\|T(f)\|_{\cb_{\bz}}&\le&\left\{\sum_{i=0}^l
|\lz_i|^{\bz}\|T(a)\|_{\cb_{\bz}}^{\bz}\r\}^{1/\bz}\le
\left\{\sum_{i=0}^l |\lz_i|^{\wz{p}}\|T(a)\|_{\cb_{\bz}}^{\wz{p}}\r\}
^{1/\wz{p}}\nonumber\\  &\ls&\left\{\sum_{i=0}^l
|\lz_i|^{\wz{p}}\r\}^{1/\wz{p}}.
\end{eqnarray}
Since $\bfai$ is of upper type $\wz{p}$, then for any $t\in(0,1]$
and $s\in(0,\fz)$, we have $\bfai(st)\gs t^{\wz{p}}\bfai(s)$. Let
$\wz{\lz}_0\equiv\{\sum_{i=0}^l |\lz_i|^{\wz{p}}\}^{1/\wz{p}}$.
Then,
\begin{eqnarray*}
\sum_{i=0}^l\oz(Q_i)\bfai\left(\frac{|\lz_i|}{\wz{\lz}_0
\oz(Q_i)\rz(\oz(Q_i))}\r)
\gs\sum_{i=0}^l\oz(Q_i)\left(\frac{|\lz_i|}{\wz{\lz}_0}\r)^{\wz{p}}
\frac{1}{\oz(Q_i)}\sim1.
\end{eqnarray*}
Thus, from this we deduce that $\wz{\lz}_0\ls\blz(\{\lz_i
a_i\}_{i=0}^l)$, which together with \eqref{6.5} and \eqref{6.6}
implies that
$$\|T(f)\|_{\cb_{\bz}}\ls\wz{\lz}_0\ls\blz\left(\{\lz_i
a_i\}_{i=0}^l\r)\ls\|f\|_{h^{\bfai}_{\oz}(\rn)}.$$ Since
$h^{\rz,\,q,\,s}_{\oz,\,\fin}(\rn)$ is dense in
$h^{\bfai}_{\oz}(\rn)$, a density argument gives the desired
conclusion in this case.

Now we prove Theorem \ref{t6.4} under its assumption (ii) by
considering the following two cases for $\oz$.

{\it Case 1)} $\oz(\rn)=\fz$. In this case, similarly to the proof
of (i), using Theorem \ref{t6.2}(ii) and Remark \ref{r6.3}(ii), we
see that for all $f\in h^{\rz,\,\fz,\,s}_{\oz,\,\fin}(\rn)\cap
C(\rn)$,
$$\|T(f)\|_{\cb_{\bz}}\ls\|f\|_{h^{\bfai}_{\oz}(\rn)}.$$
To extend $T$ to the whole $h^{\bfai}_{\oz}(\rn)$, we only need  prove
that $h^{\rz,\,\fz,\,s}_{\oz,\,\fin}(\rn)\cap C(\rn)$ is dense in
$h^{\bfai}_{\oz}(\rn)$. Since $h^{\rz,\,\fz,\,s}_{\oz,\,\fin}(\rn)$
is dense in $h^{\bfai}_{\oz}(\rn)$, it suffices to prove that
$h^{\rz,\,\fz,\,s}_{\oz,\,\fin}(\rn)\cap C(\rn)$ is dense in
$h^{\rz,\,\fz,\,s}_{\oz,\,\fin}(\rn)$ with respect to the quasi-norm
$\|\cdot\|_{h^{\bfai}_{\oz}(\rn)}$.

To see this, let $f\in h^{\rz,\,\fz,\,s}_{\oz,\,\fin}(\rn)$. In this
case, for any $(\rz,\,\fz)_{\oz}$-single-atom $b$, $b(x)=0$ for
almost every $x\in\rn$. Thus, $f$ is a finite linear combination of
$(\rz,\,\fz,\,s)_{\oz}$-atoms. Then there exists a cube $Q_0\equiv
Q(x_0, r_0)$ such that $\supp(f)\subset Q_0$. Take $\phi\in\cd(\rn)$
such that $\supp(\phi)\subset Q(0,1)$ and $\int_{\rn}\phi(x)\,dx=1$.
Then it is easy to see that for any $k\in\nn$, $\supp(\phi_k \ast f)
\subset Q(x_0, r_0 +1)$ and $\phi_k \ast f\in\cd(\rn)$. Assume that
$f=\sum_{i=1}^{N}\lz_i a_i$ with some $N\in\nn$,
 $\{\lz_i\}_{i=1}^N \subset\cc$ and $\{a_i\}_{i=1}^N $ being
$(\rz,\,\fz,\,s)_{\oz}$-atoms. Then for any $k\in\nn$,
$$\phi_k \ast f=\sum_{i=1}^{N}\lz_i \phi_k \ast a_i.$$
For any $k\in\nn$ and
$i\in\{1,\,2\,\cdots,\,N\}$, we now prove that $\phi_k \ast a_i$
 is a  multiple of some continuous $(\rz,\,\fz,\,s)_{\oz}$-atom, which
implies that for any $k\in\nn$,
\begin{eqnarray}\label{6.7}
\phi_k \ast f\in h^{\rz,\,\fz,\,s}_{\oz,\,\fin}(\rn)\cap C(\rn).
\end{eqnarray}
For $i\in\{1,\,2,\,\cdots,\,N\}$, assume that $\supp(a_i)\subset Q_i
\equiv Q(x_i,r_i)$. Then
$$\supp(\phi_k \ast a_i)\subset
\wz{Q}_{i,\,k}\equiv Q(x_i,r_i+1/2^k).$$
Moreover,
$$\|\phi_k \ast
a_i\|_{L^{\fz}_{\oz}(\rn)}\le\|a_i\|_{L^{\fz}_{\oz}(\rn)}\le
\frac{1}{\oz(Q_i)\rz(\oz(Q_i))}.$$ Furthermore, for any
$\az\in\zz^n_+$, $\int_{\rn}a_i (x)x^{\az}\,dx=0$ implies that
$$\int_{\rn}\phi_k \ast a_i (x) x^{\az}\,dx=0.$$
Thus, $\frac{\oz(Q_i)\rz
(\oz(Q_i))}{\oz(\wz{Q}_{i,\,k})\rz(\oz(\wz{Q}_{i,\,k}))}\phi_k \ast
a_i$ is a $(\rz,\,\fz,\,s)_{\oz}$-atom.

Likewise, $\supp(f-\phi_k \ast f)\subset Q(x_0, r_0 +1)$ and
$f-\phi_k \ast f$ has the same vanishing moments as $f$. Take
$q\in(q_{\oz},\fz)$. By Lemma \ref{l2.6}(iii),
\begin{eqnarray}\label{6.8}
\|f-\phi_k \ast f\|_{L^q_{\oz}(\rn)}\to0\,\,\text{as}\,\,k\to\fz.
\end{eqnarray}
Without loss of generality, we may assume that when $k$ is large
enough,
$$\|f-\phi_k \ast f\|_{L^q_{\oz}(\rn)}>0.$$
Let
$$c_k\equiv\|f-\phi_k \ast f\|_{L^q_{\oz}(\rn)}[\oz(Q(x_0, r_0
+1))]^{1/q-1}\rz(\oz(Q(x_0, r_0 +1)))$$ and $a_k\equiv\frac{f-\phi_k
\ast f}{c_k}$. Then,  $a_k$ is a $(\rz,\,q,\,s)_{\oz}$-atom,
$f-\phi_k \ast f=c_k a_k$,  and $c_k\to0$ as $k\to\fz$. Thus, from
\eqref{2.8} with $t\equiv\oz(Q(x_0,r_0+1))$, and Theorem \ref{t5.6},
we infer that
\begin{eqnarray}\label{6.9}
\|f-\phi_k \ast f\|_{h^{\bfai}_{\oz}(\rn)}\ls\blz(\{c_k
a_k\})\ls|c_k|\to0
\end{eqnarray}
as $k\to\fz$, which together with \eqref{6.7} shows the desired
conclusion in this case.

{\it Case 2)} $\oz(\rn)<\fz$. In this case, similarly to the proof
of Case 1), by Theorem \ref{t6.2}(ii), to finish the proof of (ii)
in this case, it suffices to prove that
$h^{\rz,\,\fz,\,s}_{\oz,\,\fin,\,c}(\rn)\cap C(\rn)$ is dense in
$h^{\rz,\,\fz,\,s}_{\oz,\,\fin}(\rn)$ in the quasi-norm
$\|\cdot\|_{h^{\bfai}_{\oz}(\rn)}$.

For any $f\in h^{\rz,\,\fz,\,s}_{\oz,\,\fin}(\rn)$, assume that
$$f\equiv\sum_{i=1}^{N_1} \lz_i a_i+\lz_0 a_0,$$
where $N_1\in\nn$,
$\{\lz_i\}_{i=1}^{N_1}\subset\cc$ and $a_0$ is a
$(\rz,\,\fz)_{\oz}$-single-atom and $\{a_i\}_{i=1}^{N_1}$
 are $(\rz,\,\fz,\,s)_{\oz}$-atoms. Let
$\{\psi_k\}_{k\in\nn}\subset\cd(\rn)$ satisfy $0\le\psi_k\le1$,
$\psi_k\equiv1$ on the cube $Q(0,2^k)$ and
$$\supp(\psi_k)\subset Q(0,2^{k+1}).$$
We assume that $\supp(\sum_{i=1}^{N_1} \lz_i
a_i)\subset Q(0,R_0)$ for some $R_0\in(0,\fz)$ and $k_0$ is the
smallest integer such that $2^{k_0}\ge R_0$. For any integer $k\ge
k_0$, let $f_k\equiv f\psi_k$. Then $f_k\in
h^{\rz,\,\fz,\,s}_{\oz,\,\fin,\,c}(\rn)$. Indeed, by the choice of
$\pz_k$, we know that
$$f_k=\sum_{i=1}^{N_1}\lz_i a_i+\lz_0 a_0\pz_k$$
and $\supp (f_k)\subset Q(0,2^{k+1})$. Furthermore, from
$\supp(a_0 \pz_k)\subset Q(0,2^{k+1})$ and
$$\|a_0\pz_k\|_{L^{\fz}_{\oz}(\rn)}\le\|a_0\|_{L^{\fz}_{\oz}(\rn)}
\le\frac{1}{\oz(\rn)\rz(\oz(\rn))}\le\frac{1}{\oz(Q(0,2^{k+1}))
\rz(\oz(Q(0,2^{k+1})))},$$
we deduce that $a_0 \pz_k$ is a
$(\rz,\,\fz,\,s)_{\oz}$-atom. Thus, $f_k\in
h^{\rz,\,\fz,\,s}_{\oz,\,\fin,\,c}(\rn)$. For any fixed integer
$k\ge k_0$ and any $i\in\nn$, let $\wz{f}_{k,\,i}\equiv f_k
\ast\phi_i$, where $\phi$ is as in Case 1). Similarly to the proof
of \eqref{6.7}, we have $\wz{f}_{k,\,i}\in
h^{\rz,\,\fz,\,s}_{\oz,\,\fin,\,c}(\rn)\cap C(\rn)$. For any
$q\in(q_{\oz},\fz)$, from the choice of $f_k$ and $\oz(\rn)<\fz$, we
conclude that
\begin{eqnarray}\label{6.10}
\|f-f_k\|_{L^q_{\oz}(\rn)}&\le&
\left\{\int_{Q(0,2^{k})^{\complement}}|f(x)|^q \oz(x)\,dx\r\}^{1/q}
\nonumber\\
 &\le&\|\lz_0
a_0\|_{L^{\fz}_{\oz}(\rn)}\left\{\int_{Q(0,2^{k})^{\complement}}
\oz(x)\,dx\r\}^{1/q}\to0,
\end{eqnarray}
as $k\to\fz$. Furthermore, for any fixed $k\in\zz$ with $k\ge k_0$,
similarly to the proof of \eqref{6.8}, we see that
$\|f_k-\wz{f}_{k,\,i}\|_{L^q_{\oz}(\rn)}\to0\,
\,\text{as}\,\,i\to\fz$, which together with \eqref{6.10}
implies that
$$\left\|f-\wz{f}_{k,\,i}\r\|_{L^q_{\oz}(\rn)}\to0$$
as $k,\,i\to\fz$. Without loss of generality, we may assume that
when $k$ and $i$ are large enough,
$\|f-\wz{f}_{k,\,i}\|_{L^q_{\oz}(\rn)}>0$. Let
$$c_{k,\,i}\equiv\|f-\wz{f}_{k,\,i}\|_{L^q_{\oz}(\rn)}[\oz(\rn)]^{1/q-1}
\rz(\oz(\rn))$$
and
$a_{k,\,i}\equiv\frac{f-\wz{f}_{k,\,i}}{c_{k,\,i}}$. Then,
$f-\wz{f}_{k,\,i}=c_{k,\,i} a_{k,\,i}$, $a_{k,\,i}$ is a
$(\rz,\,q)_{\oz}$-single-atom and $c_{k,\,i}\to0$ as $k,\,i\to\fz$.
Then, similarly to the proof of \eqref{6.9}, we know that
$\|f-\wz{f}_{k,\,i} \|_{h^{\bfai}_{\oz}(\rn)}\to0$ as $k,\,i\to\fz$,
which completes the proof of Case 2) and hence of Theorem
\ref{t6.4}.
\end{proof}

\begin{remark}\label{r6.5}
Let $p\in(0,1]$. We point out that Theorems \ref{t6.2}(i) and
\ref{t6.4}(i) when $\bfai(t)\equiv t^p$ for all $t\in(0,\fz)$ were
obtained by Tang \cite[Theorems\,6.1 and 6.2]{Ta1}. Theorems
\ref{t6.2}(ii) and \ref{t6.4}(ii) are new even when $\oz\equiv1$ and
$\bfai(t)\equiv t^p$ for all $t\in(0,\fz)$.
\end{remark}

\chapter{Dual spaces \label{s7}}

In this section, we introduce the $\bbmo$-type space
$\bmo^q_{\rz,\,\oz}(\rn)$ and establish the duality between
$h^{\rz,\,q,\,s}_{\oz}(\rn)$ and $\bmo^{q'}_{\rz,\,\oz}(\rn)$, where
and in what follows, $\frac{1}{q}+\frac{1}{q'}=1$. From this and
Theorem \ref{t5.6}, we deduce the duality between
$h^{\bfai}_{\oz}(\rn)$ and $\bmo_{\rz,\,\oz}(\rn)$,  and that for
$q\in[1,\frac{q_{\oz}}{q_{\oz}-1})$, $\bmo^{q}_{\rz,\,\oz}(\rn)
=\bmo_{\rz,\,\oz}(\rn)$ with equivalent norms, where the
\emph{symbol} $\bmo_{\rz,\,\oz}(\rn)$ denotes
$\bmo^1_{\rz,\,\oz}(\rn)$. We begin with some notions.

For any locally integrable function $f$ on $\rn$, we denote the {\it
minimizing polynomial} of $f$ on the cube $Q$ with degree at most
$s$ by $P^s_Q f$, namely, for all multi-indices $\tz\in\zz_+^n$ with
$0\le|\tz|\le s$,
\begin{eqnarray}\label{7.1}
\int_{Q}\left[f(x)-P^s_Q f(x)\r]x^{\tz}\,dx=0.
\end{eqnarray}
It is well known that if $f$ is locally integrable, then $P^s_Q f$
uniquely exists; see, for example, \cite{tw80}. Now, we introduce
the $\bbmo$-type space $\bmo^q_{\rz,\,\oz}(\rn)$ as follows.
\begin{definition}\label{d7.1}
Let $\bfai$ satisfy Assumption $\mathrm{(A)}$, $\oz\in
A^{\loc}_{\fz}(\rn)$, and $q_{\oz}$, $p_{\bfai}$ and $\rz$ be
respectively as in \eqref{2.4}, \eqref{2.6} and \eqref{2.7}. Let
$q\in[1,\frac{q_{\oz}}{q_{\oz}-1})$ and $s\in\zz_+$ with $s\ge\lfz
n(\frac{q_{\oz}}{p_{\bfai}}-1)\rf$. When $\oz(\rn)=\fz$, a locally
integrable function $f$ on $\rn$ is said to belong to the {\it space
$\bmo_{\rz,\,\oz}^q (\rn)$}, if
\begin{eqnarray*}
\|f\|_{\bmo_{\rz,\,\oz}^q
(\rn)}&\equiv&\sup_{Q\subset\rn,\,|Q|<1}\frac{1}{\rz(\oz(Q))}
\left\{\frac{1}{\oz(Q)}\int_{Q}\left|f(x)-P^s_Q f(x)\r|^q
[\oz(x)]^{1-q}\,dx\r\}^{1/q}\\
&&+\sup_{Q\subset\rn,\,|Q|\ge1}\frac{1}{\rz(\oz(Q))}
\left\{\frac{1}{\oz(Q)}\int_{Q}|f(x)|^q
[\oz(x)]^{1-q}\,dx\r\}^{1/q}<\fz,
\end{eqnarray*}
where the supremum is taken over all the cubes $Q\subset\rn$ and
$P^s_Q f$ is as in \eqref{7.1}. When $\oz(\rn)<\fz$, a function $f$
on $\rn$ is said to belong to the {\it space $\bmo_{\rz,\,\oz}^q
(\rn)$} if
\begin{eqnarray*}
\|f\|_{\bmo_{\rz,\,\oz}^q
(\rn)}&\equiv&\sup_{Q\subset\rn,\,|Q|<1}\frac{1}{\rz(\oz(Q))}
\left\{\frac{1}{\oz(Q)}\int_{Q}\left|f(x)-P^s_Q f(x)\r|^q
[\oz(x)]^{1-q}\,dx\r\}^{1/q}\\
&&+\sup_{Q\subset\rn,\,|Q|\ge1}\frac{1}{\rz(\oz(Q))}
\left\{\frac{1}{\oz(Q)}\int_{Q}|f(x)|^q
[\oz(x)]^{1-q}\,dx\r\}^{1/q}\\
&&+\frac{1}{\rz(\oz(\rn))} \left\{\frac{1}{\oz(\rn)}\int_{\rn}|f(x)|^q
[\oz(x)]^{1-q}\,dx\r\}^{1/q}<\fz,
\end{eqnarray*}
where the supremum is taken over all the cubes $Q\subset\rn$ and
$P^s_Q f$ is as in \eqref{7.1}.
\end{definition}

When $\oz\equiv1$, $\bfai\equiv t$ for all $t\in(0,\fz)$ and $q=1$,
the space $\bmo_{\rz,\,\oz}^q (\rn)$ is just the space $\bmo(\rn)$
introduced in \cite{Go79}.

Now, we establish the duality between $h^{\rz,\,q,\,s}_{\oz}(\rn)$
and $\bmo^{q'}_{\rz,\,\oz}(\rn)$. We begin with the notion of the
weighted atomic Orlicz-Hardy space $H^{\rz,\,q,\,s}_{\oz} (\rn)$.

\begin{definition}\label{d7.2}
Let $\bfai$ satisfy Assumption $\mathrm{(A)}$, $\oz\in
A^{\loc}_{\fz}(\rn)$, $\rz$ be as in \eqref{2.7} and
$(\rz,\,q,\,s)_{\oz}$ be admissible. A function $a$ on $\rn$ is
called an {\it $H^{\rz,\,q,\,s}_{\oz} (\rn)$-atom} if there exists a
cube $Q\subset\rn$ such that

(i) $\supp(a)\subset Q$;

(ii) $\|a\|_{L^q_{\oz}(\rn)}\le[\oz(Q)]^{1/q -1}[\rz(\oz(Q))]^{-1}$;

(iii) $\int_{\rn}a(x)x^{\az}\,dx=0$ for all multi-indices
$\az\in\zz_+^n$ with $|\az|\le s$.\\
The {\it weighted atomic Orlicz-Hardy space} $H^{\rz,\,q,\,s}_{\oz}
(\rn)$ is defined to be the space of all $f\in\cd'(\rn)$ satisfying
that $f=\sum_{i=1}^{\fz}\lz_i a_i$ in $\cd'(\rn)$, where
$\{a_i\}_{i\in\nn}$ are $H^{\rz,\,q,\,s}_{\oz} (\rn)$-atoms with
$\supp(a_i)\subset Q_i$, and $\{\lz_i\}_{i\in\nn}\subset\cc$,
satisfying
$$\sum_{i=1}^{\fz}\oz(Q_i)\bfai\left(\frac{|\lz_i|}{\oz(Q_i)
\rz(\oz(Q_i))}\r)<\fz.$$ Moreover, the \emph{quasi-norm} of $f\in
H^{\rz,\,q,\,s}_{\oz} (\rn)$ is defined by
$$\|f\|_{H^{\rz,\,q,\,s}_{\oz} (\rn)}\equiv\inf\left\{
\blz\left(\{\lz_i a_i\}_{i=1}^{\fz}\r)\r\},$$ where the infimum is
taken over all the decompositions of $f$ as above and
\begin{eqnarray*}
&&\blz(\{\lz_i a_i\}_{i=1}^{\fz})\equiv \inf\left\{\lz>0:\,
\sum_{i=1}^{\fz}\oz(Q_i)\bfai\left(\frac{|\lz_i|}{\lz\oz(Q_i)
\rz(\oz(Q_i))}\r)\le1\r\}.
\end{eqnarray*}

Furthermore, the {\it space} $H^{\rz,\,q,\,s}_{\oz,\,\fin}(\rn)$ is
defined to be the set of all finite linear combinations of
$H^{\rz,\,q,\,s}_{\oz} (\rn)$-atoms.
\end{definition}

Obviously, the space $H^{\rz,\,q,\,s}_{\oz,\,\fin}(\rn)$ is dense in
the space $H^{\rz,\,q,\,s}_{\oz}(\rn)$ with respect to the
quasi-norm $\|\cdot\|_{H^{\rz,\,q,\,s}_{\oz}(\rn)}$.

\begin{definition}\label{d7.3}
Let $\bfai$ satisfy Assumption $\mathrm{(A)}$, $\oz\in
A^{\loc}_{\fz}(\rn)$, $q_{\oz}$, $p_{\bfai}$ and $\rz$ be
respectively as in \eqref{2.4}, \eqref{2.6} and \eqref{2.7}. Let
$q\in[1,\frac{q_{\oz}}{q_{\oz}-1})$ and $s\in\zz_+$ with $s\ge\lfz
n(\frac{q_{\oz}}{p_{\bfai}}-1)\rf$. A locally integrable function
$f$ on $\rn$ is said to belong to the {\it space
$\bbmo_{\rz,\,\oz}^q (\rn)$} if
\begin{eqnarray*}
\|f\|_{\bbmo_{\rz,\,\oz}^q
(\rn)}&\equiv&\sup_{Q\subset\rn}\frac{1}{\rz(\oz(Q))}
\left\{\frac{1}{\oz(Q)}\int_{Q}\left|f(x)-P^s_Q f(x)\r|^q
[\oz(x)]^{1-q}\,dx\r\}^{1/q}<\fz,
\end{eqnarray*}
where the supremum is taken over all cubes $Q\subset\rn$ and $P^s_Q
f$ is as in \eqref{7.1}.
\end{definition}

Now, we establish the duality between $H^{\rz,\,q,\,s}_{\oz} (\rn)$
and $\bbmo_{\rz,\,\oz}^{q'} (\rn)$.
\begin{lemma}\label{l7.4}
Let $\bfai$ satisfy Assumption $\mathrm{(A)}$, $\oz\in
A^{\loc}_{\fz}(\rn)$, $q_{\oz}$ and $\rz$ be respectively as in
\eqref{2.4} and \eqref{2.7}, and $(\rz,\,q,\,s)_{\oz}$ be
admissible. Then $\left[H^{\rz,\,q,\,s}_{\oz} (\rn)\r]^{\ast}$, the
dual space of $H^{\rz,\,q,\,s}_{\oz} (\rn)$, coincides with
$\bbmo_{\rz,\,\oz}^{q'} (\rn)$ in the following sense.

$\mathrm{(i)}$ Let $g\in \bbmo_{\rz,\,\oz}^{q'} (\rn)$. Then the
linear functional $L$, which is initially defined on
$H^{\rz,\,q,\,s}_{\oz,\,\fin}(\rn)$ by
\begin{eqnarray}\label{7.2}
L(f)=\langle g,f\rangle,
\end{eqnarray}
has a unique extension to $H^{\rz,\,q,\,s}_{\oz}(\rn)$ with
$$\|L\|_{\left[H^{\rz,\,q,\,s}_{\oz} (\rn)\r]^{\ast}}\le
C\|g\|_{\bbmo_{\rz,\,\oz}^{q'} (\rn)},$$
where $C$ is a positive constant independent of $g$.

$\mathrm{(ii)}$ Conversely, for any $L\in\left[H^{\rz,\,q,\,s}_{\oz}
(\rn)\r]^{\ast}$, there exists $g\in \bbmo_{\rz,\,\oz}^{q'} (\rn)$
such that \eqref{7.2} holds for all $f\in
H^{\rz,\,q,\,s}_{\oz,\,\fin}(\rn)$ and
$$\|g\|_{\bbmo_{\rz,\,\oz}^{q'} (\rn)}\le
C\|L\|_{\left[H^{\rz,\,q,\,s}_{\oz} (\rn)\r]^{\ast}},$$
where $C$ is a positive constant independent of $L$.
\end{lemma}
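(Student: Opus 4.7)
The proof splits into the two stated directions. For direction (i), I would first define $L$ only on the dense subspace $H^{\rho,\,q,\,s}_{\omega,\,\mathrm{fin}}(\rn)$ via \eqref{7.2} and then extend by density. The heart of the estimate is bounding $|\langle g,a\rangle|$ for a single $H^{\rho,\,q,\,s}_{\omega}(\rn)$-atom $a$ supported in a cube $Q$. Using the vanishing moment condition and the fact that $P^s_Q g$ is a polynomial of degree $\le s$, one has
\begin{eqnarray*}
|\langle g,a\rangle|=\left|\int_Q a(x)\bigl[g(x)-P^s_Q g(x)\bigr]\,dx\right|.
\end{eqnarray*}
Writing the integrand as $(|a|\omega^{1/q})(|g-P^s_Q g|\omega^{-1/q})$ and applying H\"older's inequality with exponents $(q,q')$ (noting $-q'/q=1-q'$) yields
\begin{eqnarray*}
|\langle g,a\rangle|\le\|a\|_{L^q_\omega(\rn)}\left(\int_Q|g-P^s_Q g|^{q'}\omega^{1-q'}\,dx\right)^{1/q'}\le\|g\|_{\bbmo^{q'}_{\rz,\,\oz}(\rn)},
\end{eqnarray*}
where the atom size condition and the definition of $\bbmo^{q'}_{\rz,\,\oz}(\rn)$ cancel cleanly through the factor $[\oz(Q)\rz(\oz(Q))][\oz(Q)]^{1/q-1}[\rz(\oz(Q))]^{-1}=[\oz(Q)]^{1/q+1/q'-1}$. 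Summing an atomic decomposition $f=\sum_i\lambda_i a_i$, together with the analogue of Remark \ref{r3.6}(iii) for $H^{\rho,\,q,\,s}_{\omega}(\rn)$ (which uses the upper type $1$ of $\Phi$ to bound $\sum_i|\lambda_i|\lesssim\Lambda(\{\lambda_i a_i\})\lesssim\|f\|_{H^{\rho,\,q,\,s}_{\omega}(\rn)}$), yields the desired bound.

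For direction (ii), given $L$, I would produce $g$ by a local Riesz representation plus a patching argument. For each fixed cube $Q$, let $L^q_{\omega,\,s}(Q)$ denote the subspace of functions in $L^q_\omega(\rn)$ with support in $Q$ and vanishing moments up to order $s$; any $f$ in this subspace is a scalar multiple of an $H^{\rho,\,q,\,s}_{\omega}(\rn)$-atom, and combining the definition of $\Lambda$ with the identity \eqref{2.8} applied at $t=\oz(Q)$ gives the embedding estimate
\begin{eqnarray*}
\|f\|_{H^{\rho,\,q,\,s}_{\omega}(\rn)}\le[\oz(Q)]^{1-1/q}\rz(\oz(Q))\|f\|_{L^q_\omega(\rn)}.
\end{eqnarray*}
Thus $L$ restricts to a bounded functional on $L^q_{\omega,\,s}(Q)\subset L^q_\omega(Q)$; by Hahn--Banach I extend $L$ to all of $L^q_\omega(Q)$ without increasing its norm, and then apply the Riesz representation theorem (identifying the dual of $L^q_\omega(Q)$ with $L^{q'}_{\omega^{1-q'}}(Q)$ via the Lebesgue pairing) to obtain $g_Q\in L^{q'}_{\omega^{1-q'}}(Q)$ with $\|g_Q\|_{L^{q'}_{\omega^{1-q'}}(Q)}\le[\oz(Q)]^{1-1/q}\rz(\oz(Q))\|L\|$ and $L(f)=\int_Q g_Q f\,dx$ for all $f\in L^q_{\omega,\,s}(Q)$.

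To globalize, fix an exhausting sequence of cubes $Q_1\subset Q_2\subset\cdots$ with $\bigcup_k Q_k=\rn$. For nested cubes the difference $g_{Q_k}-g_{Q_{k+1}}|_{Q_k}$ annihilates $L^q_{\omega,\,s}(Q_k)$, hence must agree almost everywhere on $Q_k$ with a polynomial of degree $\le s$. Iteratively absorbing these polynomial discrepancies (replacing each $g_{Q_k}$ by $g_{Q_k}$ plus the telescoping polynomial correction built from the differences on $Q_1,\ldots,Q_{k-1}$) produces a single locally integrable function $g$ on $\rn$ such that on each $Q_k$, $g$ agrees with $g_{Q_k}$ modulo a polynomial of degree $\le s$. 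To verify the $\bbmo^{q'}_{\rz,\,\oz}(\rn)$-bound at a cube $Q$, pick $Q_k\supset Q$: then $g-P^s_Q g$ on $Q$ equals $g_{Q_k}|_Q-P^s_Q(g_{Q_k}|_Q)$, and since $P^s_Q(g_{Q_k}|_Q)$ annihilates the test space $L^q_{\omega,\,s}(Q)$, the duality norm of $f\mapsto\int_Q(g-P^s_Q g)f\,dx$ on $L^q_{\omega,\,s}(Q)$ equals that of $L$ there, giving
\begin{eqnarray*}
\left(\int_Q|g-P^s_Q g|^{q'}\omega^{1-q'}\,dx\right)^{1/q'}\le C[\oz(Q)]^{1/q'}\rz(\oz(Q))\|L\|.
\end{eqnarray*}

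The main obstacle I anticipate is reconciling the two polynomial projections involved: the projection $P^s_Q$ is defined via unweighted Lebesgue orthogonality on $Q$ (see \eqref{7.1}), while the natural orthogonal complement arising from duality against $L^q_{\omega,\,s}(Q)$ uses the weighted measure. These two polynomial projections generally differ, and the argument requires the standard equivalence of norms on the finite-dimensional space $\cp_s(\rn)$ restricted to $Q$ (together with the local $A_\infty^{\loc}$ property of $\oz$, via Lemma \ref{l2.3}(i) and the $A_p$-comparability on unit-scale cubes) to show that replacing one polynomial by the other changes the $L^{q'}_{\omega^{1-q'}}(Q)$-norm by only a controlled multiplicative constant uniform in $Q$, so that the final $\bbmo$-bound still holds.
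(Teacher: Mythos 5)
Your approach is, in its architecture, the same as the paper's: part (i) is the moment-cancellation plus H\"older estimate per atom, summed via the concavity/upper-type-$1$ property of $\bfai$ (Remark \ref{r3.6}(iii)), and part (ii) runs through a local Riesz representation on an exhausting sequence of cubes, a polynomial-patching step, and an atomic test of $L$. Your patching argument is in fact a touch cleaner than the paper's: you observe directly that the annihilator of $L^q_{\oz,\,s}(Q_k)$ inside $L^{q'}_{\oz^{1-q'}}(Q_k)$ under the Lebesgue pairing is $\cp_s(\rn)|_{Q_k}$, whereas the paper reaches the same conclusion by pushing $P^s_{Q_k}$ back and forth across the pairing using its self-adjointness.

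The place where you deviate — and where you sense trouble — is the final $\bbmo$-bound. Computing the dual norm of $f\mapsto\int_Q(g-P^s_Qg)f\,dx$ only on the subspace $L^q_{\oz,\,s}(Q)$ gives the quotient norm $\inf_{p\in\cp_s(\rn)}\|g-P^s_Qg-p\|_{L^{q'}_{\oz^{1-q'}}(Q)}$, not the norm of the representative $g-P^s_Qg$ itself. Your proposed fix is right in spirit but should be phrased as uniform boundedness of $P^s_Q$ on $L^{q'}_{\oz^{1-q'}}(Q)$: taking a near-minimizer $p^*$ and using $P^s_Q(g-P^s_Qg)=0$ gives $p^*=-P^s_Q(g-P^s_Qg-p^*)$, so $\|p^*\|\le\|P^s_Q\|_{\mathrm{op}}\|g-P^s_Qg-p^*\|$ and the triangle inequality recovers the bound on $g-P^s_Qg$ (the "two projections" you worry about are a red herring; the Lebesgue pairing is used throughout and no weighted projection ever enters). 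The paper's route avoids the quotient issue entirely by testing $L$ against the atom $a=\wz{C}^{-1}[\oz(Q)]^{1/q'}[\rz(\oz(Q))]^{-1}(f-P^s_Qf)\chi_Q$ with $f$ running over the full unit ball of $L^q_{\oz}(Q)$; then $\int_Q a\,g\,dx=\wz{C}^{-1}[\oz(Q)]^{1/q'}[\rz(\oz(Q))]^{-1}\int_Qf(g-P^s_Qg)\,dx$ by self-adjointness of $P^s_Q$ and the vanishing moments on each side, and the supremum over all such $f$ yields $\|g-P^s_Qg\|_{L^{q'}_{\oz^{1-q'}}(Q)}$ directly. Note the paper implicitly needs the same boundedness of $P^s_Q$ (to choose $\wz{C}$ uniformly so that $a$ is a genuine atom), so your proposal introduces no new hypothesis — it only takes a slightly longer path at the last step.
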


\begin{proof}
We borrow some ideas from \cite{tw80} and \cite[Theorm 4.1]{L95}.
Let $(\rz,\,q,\,s)_{\oz}$ be an admissible triplet. First, we prove
(i). Let $a$ be an $H^{\rz,\,q,\,s}_{\oz} (\rn)$-atom with $\supp
(a)\subset Q$ and $g\in\bbmo_{\rz,\,\oz}^{q'} (\rn)$. Then by the
vanishing condition of $a$ and H\"older's inequality, we have
\begin{eqnarray}\label{7.3}
\left|\int_{\rn}a(x)g(x)\,dx\r|&=&\left|\int_{\rn}a(x)\left[g(x)-P^s_Q
g(x)\r]\,dx\r|\nonumber\\
&\le&\|a\|_{L^q_{\oz}(\rn)}\left\{\int_{Q}\left|g(x)-P^s_Q
g(x)\r|^{q'}[\oz(x)]^{1-q'}\,dx\r\}^{1/q'}\nonumber\\
 &\le&\|g\|_{\bbmo_{\rz,\,\oz}^{q'}(\rn)},
\end{eqnarray}
where $P^s_Q g$ is as in \eqref{7.1}. Let
$$f=\sum_{i=1}^{k_0}\lz_i a_i \in H^{\rz,\,q,\,s}_{\oz,\,\fin} (\rn),$$
where $k_0\in\nn$,
$\{\lz_i\}_{i=1}^{k_0}\subset\cc$ and for
$i\in\{1,\,2,\,\cdots,\,k_0\}$, $a_i$ is an $H^{\rz,\,q,\,s}_{\oz}
(\rn)$-atom with $\supp(a_i)\subset Q_i$. Since $\bfai$ is concave
and has upper type 1, by Remark \ref{r3.6}(iii), we know that
$\sum_{i}|\lz_i|\ls\blz(\{\lz_i a_i\}_{i=1}^{k_0})$, which together
with \eqref{7.3} implies that
\begin{eqnarray*}
\left|\int_{\rn}f(x)g(x)\,dx\r|&\le&\sum_{i=1}^{k_0}|\lz_i|\left|\int_{Q_i}a_i
(x)g(x)\,dx\r|\\
&\le&\left\{\sum_{i=1}^{k_0}|\lz_i|\r\}\|g\|_{\bbmo_{\rz,\oz}^{q'}(\rn)}
\ls\blz\left(\{\lz_i
a_i\}_{i=1}^{k_0}\r)\|g\|_{\bbmo_{\rz,\oz}^{q'}(\rn)}.
\end{eqnarray*}
Thus, by the above estimate and the fact that
$H^{\rz,\,q,\,s}_{\oz,\,\fin} (\rn)$ is dense in
$H^{\rz,\,q,\,s}_{\oz} (\rn)$ with respect to the quasi-norm
$\|\cdot\|_{H^{\rz,\,q,\,s}_{\oz} (\rn)}$, we know that
$\mathrm{(i)}$ holds.

To prove $\mathrm{(ii)}$, assume that $L\in\left[H^{\rz,\,q,\,s}_{\oz}
(\rn)\r]^{\ast}$. Let $Q\subset\rn$ be a closed cube and
$$L^q_{\oz,\,s}(Q)\equiv\left\{f\in
L^q_{\oz}(Q):\ \int_{Q}f(x)x^{\az}\,dx=0,\,\az\in\zz^n_+,\,|\az|\le
s\r\},$$ where $f\in L^q_{\oz}(Q)$ means that $f\in L^q_{\oz}(\rn)$
and $\supp(f)\in Q$. We first prove that
\begin{eqnarray}\label{7.4}
\left[H^{\rz,\,q,\,s}_{\oz}
(\rn)\r]^{\ast}\subset\left[L^q_{\oz,\,s}(Q)\r]^{\ast}.
\end{eqnarray}
Obviously, for any given $f\in L^q_{\oz,\,s}(Q)$,
$$a\equiv[\oz(Q)]^{1/q
-1}[\rz(\oz(Q))]^{-1}\|f\|_{L^q_{\oz}(Q)}^{-1}f$$ is an
$H^{\rz,\,q,\,s}_{\oz} (\rn)$-atom. Thus, $f\in
H^{\rz,\,q,\,s}_{\oz} (\rn)$ and
$$\|f\|_{H^{\rz,\,q,\,s}_{\oz}
(\rn)}\le[\oz(Q)]^{1/q'}\rz(\oz(Q))\|f\|_{L^q_{\oz}(Q)},$$ which
implies that for all $f\in L^q_{\oz,\,s}(Q)$,
$$|Lf|\le\|L\|\|f\|_{H^{\rz,\,q,\,s}_{\oz}
(\rn)}\le[\oz(Q)]^{1/q'}\rz(\oz(Q))\|L\|\|f\|_{L^q_{\oz}(Q)}.$$ That
is, $L\in[L^q_{\oz,\,s}(Q)]^{\ast}$. Thus, \eqref{7.4} holds.

From \eqref{7.4}, the Hahn-Banach theorem and the Riesz represent
theorem, it follows that there exists a $\wz{g}\in L^{q'}_{\oz}(Q)$
such that for all $f\in L^q_{\oz,\,s}(Q)$,
\begin{eqnarray}\label{7.5}
Lf=\int_{Q}f(x)\wz{g}(x)\oz(x)\,dx,
\end{eqnarray}
 where when $q=\fz$, we used the fact that $L^{\fz}_{\oz,\,s}(Q)\subset
L^{\gz}_{\oz,\,s}(Q)$ for any $\gz\in[1,\fz)$ and
$L^{\gz'}_{\oz,\,s}(Q)\subset L^1_{\oz,\,s}(Q)$. Taking a sequence
$\{Q_j\}_{j\in\nn}$ of cubes  such that for any $j\in\nn$, $Q_j
\subset Q_{j+1}$ and $\lim_{j\to\fz}Q_j =\rn$. From the above
result, it follows that for each $Q_j$, there exists a $\wz{g}_j \in
L^{q'}_{\oz}(Q_j)$ such that for all $f\in L^q_{\oz,\,s}(Q_j)$,
\begin{eqnarray}\label{7.6}
Lf=\int_{Q_j}f(x)\wz{g}_j (x)\oz(x)\,dx.
\end{eqnarray}

Now, we construct a function $g$ such that
$$Lf=\int_{Q_j}f(x)g(x)\,dx$$
for all $f\in L^q_{\oz,\,s}(Q_j)$ and
all $j\in\nn$. First, assume that $f\in L^q_{\oz,\,s}(Q_1)$. By
\eqref{7.6}, we know that there exists a $\wz{g}_1 \in
L^{q'}_{\oz}(Q_1)$ such that
$$Lf=\int_{Q_1}f(x)\wz{g}_1(x)\oz(x)\,dx.$$
Notice that $f\in L^q_{\oz,\,s}(Q_1)\subset
L^q_{\oz,\,s}(Q_2)$. By \eqref{7.6} again, there exists a $\wz{g}_2
\in L^{q'}_{\oz}(Q_2)$ such that
$$Lf=\int_{Q_1}f(x)\wz{g}_1 (x)\oz(x)\,dx=\int_{Q_2}
f(x)\wz{g}_2 (x)\oz(x)\,dx,$$ which implies that for all $f\in
L^q_{\oz,\,s}(Q_1)$,
\begin{eqnarray}\label{7.7}
\int_{Q_1}f(x)\left[\wz{g}_1 (x)-\wz{g}_2 (x)\r]\oz(x)\,dx=0.
\end{eqnarray}
For any given $h\in L^q_{\oz}(Q_1)$, let $f_1\equiv h-P_{Q_1}^s h$.
Then by \eqref{7.1}, we know that $f_1\in L^q_{\oz,\,s}(Q_1)$. For
$f_1$, by \eqref{7.7}, we have
$$\int_{Q_1}[h(x)-P_{Q_1}^s h (x)][\wz{g}_1 (x)-\wz{g}_2
(x)]\oz(x)\,dx=0,$$ which combined with the well-known fact that
$$\int_{Q_1}P^s_{Q_1}h(x)\left[\wz{g}_1 (x)-\wz{g}_2
(x)\r]\oz(x)\,dx=\int_{Q_1}h(x)P^s_{Q_1}((\wz{g}_1-\wz{g}_2)\oz)(x)\,dx$$
implies that
\begin{eqnarray}\label{7.8}
\int_{Q_1}h(x)\left\{[\wz{g}_1 (x)-\wz{g}_2 (x)]\oz(x)-P_{Q_1}^s
((\wz{g}_1 -\wz{g}_2)\oz)(x)\r\}\,dx=0.
\end{eqnarray}
For $j=1,\,2$, let $g_j\equiv \wz{g}_j \oz$. By \eqref{7.8}, we know
that for all $h\in L^q_{\oz}(Q_1)$,
$$\int_{Q_1}h(x)\left\{\frac{[g_1
(x)-g_2 (x)]-P^s_{Q_1}(g_1-g_2)(x)}{\oz(x)}\r\}\oz(x)\,dx=0,$$ which
implies that for almost every $x\in Q_1$,
$$g_1 (x)-g_2
(x)=P^s_{Q_1}(g_1-g_2)(x).$$ Let
$$g(x)\equiv\begin{cases}g_1(x),&\text{when}\,\,x\in Q_1,\\
g_1(x)+P^s_{Q_1}(g_1-g_2)(x),&\text{when}\,\, x\in (Q_2 \setminus
Q_1).\end{cases}$$ It is easy to see that for any $f \in
L^q_{\oz,\,s}(Q_j)$ with $j\in\{1,\,2\}$,
\begin{eqnarray}\label{7.9}
Lf=\int_{Q_j}f(x)g(x)\,dx.
\end{eqnarray}
In this way, we obtain a function $g$ on $\rn$ such that \eqref{7.9}
holds for any $j\in\nn$.

Finally, we show that $g\in\bbmo^{q'}_{\rz,\,\oz}(\rn)$ and for all
$f\in H^{\rz,\,q,\,s}_{\oz,\,\fin}(\rn)$,
\begin{eqnarray}\label{7.10}
Lf=\int_{\rn}f(x)g(x)\,dx.
\end{eqnarray}
Indeed, for any $H^{\rz,\,q,\,s}_{\oz}(\rn)$-atom $a$, there exists
a $j_0\in\nn$ such that $a\in L^q_{\oz,\,s}(Q_{j_0})$. By this and
the fact that \eqref{7.9} holds for any $j\in\nn$, we see that
\eqref{7.10} holds for any $f\in H^{\rz,\,q,\,s}_{\oz,\,\fin}(\rn)$.
It remains to prove that $g\in \bbmo^{q'}_{\rz,\,\oz}(\rn)$. Take
any cube $Q\subset\rn$ as well as any $f\in L^q_{\oz}(Q)$ satisfying
$\|f\|_{L^q_{\oz}(Q)}\le1$ and $\supp(f)\subset Q$. Let
\begin{eqnarray}\label{7.11}
a\equiv \wz{C}^{-1}[\oz(Q)]^{1/q'}[\rz(\oz(Q))]^{-1}(f-P^s_Q f)
\chi_Q,
\end{eqnarray}
where $\wz{C}$ is a positive constant. Obviously, $\supp (a) \subset
Q$. We choose $\wz{C}$ such that $a$ becomes an
$H^{\rz,\,q,\,s}_{\oz}(\rn)$-atom. From the equality
$$La=\int_{Q}a(x)g(x)\,dx$$
and $L\in
[H^{\rz,\,q,\,s}_{\oz}(\rn)]^{\ast}$, it follows that
\begin{eqnarray}\label{7.12}
|La|=\left|\int_{Q}a(x)[g(x)-P^s_{Q}g(x)]\,dx\r|\le
\|L\|_{\left[H^{\rz,\,q,\,s}_{\oz} (\rn)\r]^{\ast}}.
\end{eqnarray}
 By \eqref{7.11}, \eqref{7.12} and \eqref{7.1}, for all
 $f\in L^q_{\oz}(Q)$ with $\|f\|_{L^q_{\oz}(Q)}\le1$, we see that
$$[\oz(Q)]^{1/q'}[\rz(\oz(Q))]^{-1}
\left|\int_{Q}f(x)[g(x)-P^s_{Q}g(x)]\,dx\r|\ls
\|L\|_{\left[H^{\rz,\,q,\,s}_{\oz} (\rn)\r]^{\ast}},$$ which implies
that
$$[\oz(Q)]^{1/q'}[\rz(\oz(Q))]^{-1}\left\{\int_Q
|g(x)-P^s_{Q}g(x)|^{q'}[\oz(x)]^{1-q'}\,dx\r\}^{1/q'}
\ls\|L\|_{\left[H^{\rz,\,q,\,s}_{\oz} (\rn)\r]^{\ast}}.$$ Thus, $g\in
\bbmo^{q'}_{\rz,\,\oz}(\rn)$ and
$\|g\|_{\bbmo^{q'}_{\rz,\,\oz}(\rn)}\ls
\|L\|_{\left[H^{\rz,\,q,\,s}_{\oz}(\rn)\r]^{\ast}}$. This finishes the
proof of Lemma \ref{l7.4}.
\end{proof}

Now, we give the duality between $h^{\rz,\,q,\,s}_{\oz} (\rn)$ and
$\bmo_{\rz,\,\oz}^{q'} (\rn)$ by invoking Lemma \ref{l7.4}.

\begin{theorem}\label{t7.5}
Let $\bfai$ satisfy Assumption $\mathrm{(A)}$, $\oz\in
A^{\loc}_{\fz}(\rn)$, $q_{\oz}$ and $\rz$ be respectively as in
\eqref{2.4} and \eqref{2.7}, and $(\rz,\,q,\,s)_{\oz}$ be
admissible. Then $\left[h^{\rz,\,q,\,s}_{\oz} (\rn)\r]^{\ast}$, the
dual space of $h^{\rz,\,q,\,s}_{\oz} (\rn)$, coincides with
$\bmo_{\rz,\,\oz}^{q'} (\rn)$ in the following sense.

$\mathrm{(i)}$ Let $g\in \bmo_{\rz,\,\oz}^{q'} (\rn)$. Then the
linear functional $L$, which is initially defined on
$h^{\rz,\,q,\,s}_{\oz,\,\fin}(\rn)$ by
\begin{eqnarray}\label{7.13}
L(f)=\langle g,f\rangle,
\end{eqnarray}
has a unique extension to $h^{\rz,\,q,\,s}_{\oz}(\rn)$ with
$$\|L\|_{\left[h^{\rz,\,q,\,s}_{\oz} (\rn)\r]^{\ast}}\le
C\|g\|_{\bmo_{\rz,\,\oz}^{q'} (\rn)},$$
where $C$ is a positive
constant independent of $g$.

$\mathrm{(ii)}$ Conversely, for any $L\in\left[h^{\rz,\,q,\,s}_{\oz}
(\rn)\r]^{\ast}$, there exists $g\in \bmo_{\rz,\,\oz}^{q'} (\rn)$
such that \eqref{7.13} holds for all $f\in
h^{\rz,\,q,\,s}_{\oz,\,\fin}(\rn)$ and
$$\|g\|_{\bmo_{\rz,\,\oz}^{q'}
(\rn)}\le C\|L\|_{\left[h^{\rz,\,q,\,s}_{\oz} (\rn)\r]^{\ast}},$$
where $C$ is a positive constant independent of $L$.
\end{theorem}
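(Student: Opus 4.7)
The plan is to adapt the proof of Lemma \ref{l7.4} (which handles the global atomic Hardy space $H^{\rz,\,q,\,s}_{\oz}(\rn)$) to the local setting, where three structural features must be accommodated simultaneously: (a) atoms supported in cubes with $l(Q)<1$ carry vanishing moments, while atoms on cubes with $l(Q)\ge 1$ do not; (b) when $\oz(\rn)<\fz$, there is an additional $(\rz,\,q)_{\oz}$-single-atom contribution; and (c) the norm on $\bmo^{q'}_{\rz,\,\oz}(\rn)$ has correspondingly three pieces (small-cube oscillation, large-cube mean, and, when $\oz(\rn)<\fz$, a global integral term).

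For part (i), given $g\in\bmo^{q'}_{\rz,\,\oz}(\rn)$, I would first define $L(f)\equiv\langle g,f\rangle$ on $h^{\rz,\,q,\,s}_{\oz,\,\fin}(\rn)$ and estimate $|\int_{\rn}a(x)g(x)\,dx|$ separately for each kind of atom. If $a$ is a $(\rz,\,q,\,s)_{\oz}$-atom supported in $Q$ with $l(Q)<1$, the vanishing moment condition permits the replacement $\int ag=\int a(g-P^s_Q g)$, and H\"older's inequality together with Definition \ref{d3.4}(ii) bounds this by $\|g\|_{\bmo^{q'}_{\rz,\,\oz}(\rn)}$, exactly as in \eqref{7.3}. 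If instead $l(Q)\ge 1$, no cancellation is available, but H\"older applied directly with the large-cube term of $\|g\|_{\bmo^{q'}_{\rz,\,\oz}(\rn)}$ gives the same bound. For a $(\rz,\,q)_{\oz}$-single-atom $a_0$, the global integral piece of the $\bmo^{q'}_{\rz,\,\oz}(\rn)$-norm yields $|\int a_0 g|\le\|g\|_{\bmo^{q'}_{\rz,\,\oz}(\rn)}$. Then, given any finite atomic decomposition $f=\sum_{i=0}^k\lz_i a_i$, Remark \ref{r3.6}(iii) gives $\sum_i |\lz_i|\ls\blz(\{\lz_i a_i\})$, so
\[
|L(f)|\le\sum_i|\lz_i|\left|\int_{\rn}a_i(x)g(x)\,dx\r|\ls \blz(\{\lz_i a_i\})\|g\|_{\bmo^{q'}_{\rz,\,\oz}(\rn)}.
\]
Taking infimum over decompositions (using Theorem \ref{t6.2}) and appealing to density of $h^{\rz,\,q,\,s}_{\oz,\,\fin}(\rn)$ in $h^{\rz,\,q,\,s}_{\oz}(\rn)$, $L$ extends uniquely with the required norm bound.

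For part (ii), I would follow the $L^q_{\oz,\,s}(Q)$-duality scheme from Lemma \ref{l7.4} but run it in parallel on two scales. Fix a sequence of cubes $Q_j\uparrow\rn$ with $l(Q_j)\ge 1$. For each $j$, argue that $L^q_{\oz,\,s}(Q_j)\hookrightarrow h^{\rz,\,q,\,s}_{\oz}(\rn)$ (via normalized atoms) and, when $l(Q)\ge 1$, that $L^q_{\oz}(Q)\hookrightarrow h^{\rz,\,q,\,s}_{\oz}(\rn)$ without any moment restriction, since such functions are already multiples of $(\rz,\,q,\,s)_{\oz}$-atoms by Definition \ref{d3.4}. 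Applying the Hahn--Banach and Riesz representation theorems to both inclusions, one produces compatible representatives $\wz g_j$ on each $Q_j$, and exactly as in \eqref{7.7}--\eqref{7.9} these may be glued via polynomial corrections into a single locally integrable $g$ with $L(f)=\int fg$ for all $f\in h^{\rz,\,q,\,s}_{\oz,\,\fin}(\rn)$. The verification that $g\in\bmo^{q'}_{\rz,\,\oz}(\rn)$ is then carried out by testing $L$ against three families of admissible test functions: for a cube $Q$ with $l(Q)<1$, normalize $(f-P^s_Q f)\chi_Q$ to an atom (as in \eqref{7.11}) to get the small-cube oscillation bound; for $l(Q)\ge 1$, normalize $f\chi_Q$ itself to an atom (no polynomial subtraction needed) to get the large-cube mean bound; and when $\oz(\rn)<\fz$, normalize an arbitrary $L^q_{\oz}(\rn)$-function to a single-atom to extract the global term. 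In each case, the corresponding piece of $\|g\|_{\bmo^{q'}_{\rz,\,\oz}(\rn)}$ is controlled by $\|L\|_{[h^{\rz,\,q,\,s}_{\oz}(\rn)]^{\ast}}$.

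The main obstacle will be the gluing step: because the small-cube pieces of the $\bmo^{q'}_{\rz,\,\oz}(\rn)$-norm are defined modulo polynomials of degree $\le s$ while the large-cube and global pieces are not, the representative $g$ must be fixed uniquely enough that all three norm bounds hold simultaneously. This forces one to use $L^q_{\oz}(Q_j)$-duality (rather than $L^q_{\oz,\,s}(Q_j)$-duality) on cubes where $l(Q_j)\ge 1$, so that $g$ itself -- and not merely its polynomial equivalence class -- is pinned down on large scales, and then to verify that the resulting $g$ still satisfies the polynomial-mod oscillation bound on small subcubes. Once this compatibility is in place, the remaining arguments are routine adaptations of \cite{tw80} and Lemma \ref{l7.4}.
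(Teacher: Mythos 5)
Your plan is essentially the same as the paper's: part (i) by pairing against the three species of atoms and using $\sum_i|\lz_i|\ls\blz(\{\lz_ia_i\})$, and part (ii) by $L^q_{\oz}$-duality on an exhaustion by large cubes followed by testing $L$ against normalized functions to read off each piece of the $\bmo^{q'}_{\rz,\,\oz}$-norm. Two small points of comparison are worth noting. First, the paper splits (ii) cleanly into the cases $\oz(\rn)=\fz$ and $\oz(\rn)<\fz$; in the latter, since $L^q_{\oz}(\rn)$ itself embeds into $h^{\rz,\,q,\,s}_{\oz}(\rn)$ via single-atoms, a single application of Hahn--Banach and Riesz representation on all of $\rn$ produces $g$ directly, so no gluing over an exhaustion is needed there. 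Your draft blends the two cases; this works, but the case split is cleaner and avoids having to track a single-atom contribution through the gluing. Second, for the small-cube oscillation bound the paper does not test $L$ directly against $(f-P^s_Qf)\chi_Q$; instead it observes that the continuous inclusion $H^{\rz,\,q,\,s}_{\oz}(\rn)\subset h^{\rz,\,q,\,s}_{\oz}(\rn)$ gives $[h^{\rz,\,q,\,s}_{\oz}(\rn)]^\ast\subset[H^{\rz,\,q,\,s}_{\oz}(\rn)]^\ast$, so the restriction of $L$ is already covered by Lemma \ref{l7.4}(ii), which yields $g\in\bbmo^{q'}_{\rz,\,\oz}(\rn)$ outright; your direct verification (via \eqref{7.11}) is a valid alternative but essentially reproves that lemma. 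Finally, your remark about ``gluing via polynomial corrections'' and then later insisting on full $L^q_{\oz}(Q_j)$-duality is internally inconsistent: once you use the full (momentless) $L^q_{\oz}(Q_j)$-duality on large cubes, the representatives agree a.e.\ on overlaps with no polynomial ambiguity, exactly as in the paper's Case i); the polynomial correction of \eqref{7.7}--\eqref{7.9} belongs only to the moment-restricted duality of Lemma \ref{l7.4}. You correctly diagnose this in your last paragraph, so the logic is ultimately sound, but you should resolve the tension by dropping the earlier reference to polynomial gluing.
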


\begin{proof}
Let $(\rz,\,q,\,s)_{\oz}$ be an admissible triplet. Obviously, the
proof of $\mathrm{(i)}$ is similar to the proof of Lemma
\ref{l7.4}(i). We omit the details.

Now, we prove $\mathrm{(ii)}$ by considering the following two cases
for $\oz$.

{\it Case i)} $\oz(\rn)=\fz$. In this case, let
 $Q\subset\rn$ be a cube with $l(Q)\in[1,\fz)$. We first
prove that
\begin{eqnarray}\label{7.14}
\left[h^{\rz,\,q,\,s}_{\oz}
(\rn)\r]^{\ast}\subset\left[L^q_{\oz}(Q)\r]^{\ast}.
\end{eqnarray}
Obviously, for any given $f\in L^q_{\oz}(Q)$,
$$a\equiv[\oz(Q)]^{1/q-1
}[\rz(\oz(Q))]^{-1}\|f\|_{L^q_{\oz}(Q)}^{-1}f\chi_Q$$ is a
$(\rz,\,q,\,s)_{\oz}$-atom. Thus, $f\in h^{\rz,\,q,\,s}_{\oz} (\rn)$
and
$$\|f\|_{h^{\rz,\,q,\,s}_{\oz}
(\rn)}\le[\oz(Q)]^{1/q'}\rz(\oz(Q))\|f\|_{L^q_{\oz}(Q)},$$ which
implies that for any $L\in\left[h^{\rz,\,q,\,s}_{\oz} (\rn)\r]^{\ast}$,
$$|Lf|\le\|L\|_{\left[h^{\rz,\,q,\,s}_{\oz} (\rn)\r]^{\ast}}
\|f\|_{h^{\rz,\,q,\,s}_{\oz}
(\rn)}\le[\oz(Q)]^{1/q'}\rz(\oz(Q))\|f\|_{L^q_{\oz}(Q)}
\|L\|_{\left[h^{\rz,\,q,\,s}_{\oz} (\rn)\r]^{\ast}}.$$ That is,
$L\in[L^q_{\oz}(Q)]^{\ast}$. Thus, \eqref{7.14} holds.

Now, assume that $L\in\left[h^{\rz,\,q,\,s}_{\oz} (\rn)\r]^{\ast}$.
Similarly to the proof of \eqref{7.5}, we know that there exists a
$\wz{g}\in L^{q'}_{\oz}(Q)$ such that for all $f\in L^q_{\oz}(Q)$,
$$Lf=\int_{Q}f(x)\wz{g}(x)\oz(x)\,dx.$$
Taking a sequence $\{Q_j\}_{j\in\nn}$ of cubes  such that for any
$j\in\nn$, $Q_j \subset Q_{j+1}$, $\lim_{j\to\fz}Q_j =\rn$ and
$l(Q_1)\in[1,\fz)$. From the above result, it follows that for each
$Q_j$, there exists a $\wz{g}_j \in L^{q'}_{\oz}(Q_j)$ such that for
all $f\in L^q_{\oz}(Q_j)$,
\begin{eqnarray}\label{7.15}
Lf=\int_{Q_j}f(x)\wz{g}_j (x)\oz(x)\,dx.
\end{eqnarray}

Now, we construct a function $g$ on $\rn$ such that
$$Lf=\int_{Q_j}f(x)g(x)\,dx$$
holds for all $f\in L^q_{\oz}(Q_j)$ and
$j\in\nn$. First, assume that $f\in L^q_{\oz}(Q_1)$. By
\eqref{7.15}, we know that there exists a $\wz{g}_1 \in
L^{q'}_{\oz}(Q_1)$ such that
$$Lf=\int_{Q_1}f(x)\wz{g}_1
(x)\oz(x)\,dx.$$
Notice that $f\in L^q_{\oz}(Q_1)\subset
L^q_{\oz}(Q_2)$. By \eqref{7.15} again, there exists a $\wz{g}_2 \in
L^{q'}_{\oz}(Q_2)$ such that
$$Lf=\int_{Q_1}f(x)\wz{g}_1 (x)\oz(x)\,dx=\int_{Q_2}f(x)
\wz{g}_2 (x)\oz(x)\,dx,$$ which implies that for all $f\in
L^q_{\oz}(Q_1)$,
\begin{eqnarray*}
\int_{Q_1}f(x)[\wz{g}_1 (x)-\wz{g}_2 (x)]\oz(x)\,dx=0.
\end{eqnarray*}
Thus, we obtain that for almost every $x\in Q_1$, $\wz{g}_1 (x)
=\wz{g}_2 (x)$. For $j=1,\,2$, let $g_j\equiv \wz{g}_j \oz$ and
$$g(x)\equiv\begin{cases}g_1(x),&\text{when}\,\,x\in Q_1,\\
g_2(x),&\text{when}\,\, x\in Q_2 \setminus Q_1.\end{cases}$$ It is
easy to see that for all $f\in L^q_{\oz}(Q_j)$ with $j\in\{1,\,2\}$,
\begin{eqnarray}\label{7.16}
Lf=\int_{Q_j}f(x)g_j (x)\,dx.
\end{eqnarray}
Continuing in this way, we obtain a function $g$ on $\rn$ such that
\eqref{7.16} holds for all $j\in\nn$.

Finally, we show that $g\in\bmo^{q'}_{\rz,\,\oz}(\rn)$ and for all
$f\in h^{\rz,\,q,\,s}_{\oz,\,\fin}(\rn)$,
\begin{eqnarray}\label{7.17}
Lf=\int_{\rn}f(x)g(x)\,dx.
\end{eqnarray}
Indeed, since $\oz(\rn)=\fz$, all $(\rz,\,q)_{\oz}$-single-atoms are
0, and for any $(\rz,\,q,\,s)_{\oz}$-atom $a$, there exists a
$j_0\in\nn$ such that $a\in L^q_{\oz}(Q_{j_0})$. By this and the
fact that \eqref{7.16} holds for all $j\in\nn$, we see that
\eqref{7.17} holds.

Now, we prove that $g \in\bmo^{q'}_{\rz,\,\oz}(\rn)$. Take any cube
$Q\subset\rn$ with $l(Q)\in[1,\fz)$ as well as any $f\in
L^q_{\oz}(Q)$ with $\|f\|_{L^q_{\oz}(Q)}\le1$. Let
$$a\equiv[\oz(Q)]^{-1/q'}[\rz(\oz(Q))]^{-1}f \chi_Q.$$
Then $a$ is a $(\rz,\,q,\,s)_{\oz}$-atom and $\supp(a)\subset Q$.
From the equality
$$La=\int_{Q}a(x)g(x)\,dx$$
and $L\in[h^{\rz,\,q,\,s}_{\oz}(\rn)]^{\ast}$, we deduce that
$$|La|=\left|\int_{Q}a(x)g(x)\,dx\r|\le
\|L\|_{\left[h^{\rz,\,q,\,s}_{\oz} (\rn)\r]^{\ast}}.$$ Thus, for any
$f\in L^q_{\oz}(Q)$ with $\|f\|_{L^q_{\oz}(Q)}\le1$, we have
$$[\oz(Q)]^{-1/q'}[\rz(\oz(Q))]^{-1}
\left|\int_{Q}f(x)g(x)\,dx\r|\le\|L\|_{\left[h^{\rz,\,q,\,s}_{\oz}
(\rn)\r]^{\ast}},$$ which implies that
\begin{eqnarray}\label{7.18}
[\oz(Q)]^{-1/q'}[\rz(\oz(Q))]^{-1}\left\{\int_Q
|g(x)|^{q'}[\oz(x)]^{1-q'}\,dx\r\}^{1/q'}
\le\|L\|_{\left[h^{\rz,\,q,\,s}_{\oz} (\rn)\r]^{\ast}}.
\end{eqnarray}
Furthermore, from $h^{\rz,\,q,\,s}_{\oz}(\rn)\supset
H^{\rz,\,q,\,s}_{\oz}(\rn)$ and
$$\|f\|_{h^{\rz,\,q,\,s}_{\oz}(\rn)}\le
\|f\|_{H^{\rz,\,q,\,s}_{\oz}(\rn)}$$
for all $f\in H^{\rz,\,q,\,s}_{\oz}(\rn)$, we deduce that
$$[h^{\rz,\,q,\,s}_{\oz}(\rn)]^{\ast}\subset[H^{\rz,\,q,\,s}_{\oz}(\rn)]
^{\ast}$$
and $L|_{H^{\rz,\,q,\,s}_{\oz}(\rn)}\in[H^{\rz,\,q,\,s}_{\oz}(\rn)]
^{\ast}$. Since \eqref{7.17} holds for all $f\in
H^{\rz,\,q,\,s}_{\oz,\,\fin}(\rn)$, by Lemma \ref{l7.4}(ii), we know
that $g\in \bbmo^{q'}_{\rz,\,\oz}(\rn)$ and
$$\|g\|_{\bbmo^{q'}_{\rz,\,\oz}(\rn)}\ls
\|L\mid_{H^{\rz,\,q,\,s}_{\oz}(\rn)}\|_{\left[H^{\rz,\,q,\,s}_{\oz}
(\rn)\r]^{\ast}}\ls\|L\|_{\left[h^{\rz,\,q,\,s}_{\oz}
(\rn)\r]^{\ast}}.$$ Thus, this estimate together with \eqref{7.18}
implies that $g\in\bmo^{q'}_{\rz,\,\oz}(\rn)$ and
$$\|g\|_{\bmo^{q'}_{\rz,\,\oz}(\rn)}\ls
\|L\|_{\left[h^{\rz,\,q,\,s}_{\oz} (\rn)\r]^{\ast}},$$ which completes
the proof of Theorem \ref{t7.5}(ii) in Case i).

{\it Case ii)} $\oz(\rn)<\fz$. In this case, let
\begin{eqnarray*}
\widetilde{h^{\rz,\,q,\,s}_{\oz}}(\rn)\equiv&&\left\{f=\sum_{i=1}^{\fz}\lz_i
a_i \,\,\text{in}\,\,\cd'(\rn):\ \text{For}\ i\in\nn,\
a_i\,\,\text{is a}\,\,
(\rz,\,q,\,s)_{\oz}\text{-atom,}\r.\\
&&\hs\ \supp (a_i)\subset Q_i, \,\lz_i\in\cc\ \text{and}\
\sum_{i=1}^{\fz}\oz(Q_i)\bfai\left(\frac{|\lz_i|}{\oz(Q_i)
\rz(\oz(Q_i))}\r)<\fz\Bigg\}
\end{eqnarray*}
and for all $f\in\widetilde{h^{\rz,\,q,\,s}_{\oz}}(\rn)$,
$$\|f\|_{\widetilde{h^{\rz,\,q,\,s}_{\oz}}(\rn)}\equiv\inf\left\{\blz\left(\{\lz_i
a_i\}_{i=1}^{\fz}\r)\r\},$$
where the infimum is taken over all the
decompositions of $f$ as above. For any $f\in L^1_{\loc}(\rn)$, let
\begin{eqnarray*}
\|f\|_{\widetilde{\bmo_{\rz,\,\oz}^{q'}} (\rn) }&\equiv&
\sup_{Q\subset\rn,\,|Q|<1}\frac{1}{\rz(\oz(Q))}
\left\{\frac{1}{\oz(Q)}\int_{Q}|f(x)-P^s_Q f(x)|^{q'}
[\oz(x)]^{1-q'}\,dx\r\}^{1/q'}\\
&&\hs\hs +\sup_{Q\subset\rn,\,|Q|\ge1}\frac{1}{\rz(\oz(Q))}
\left\{\frac{1}{\oz(Q)}\int_{Q}|f(x)|^{q'}
[\oz(x)]^{1-q'}\,dx\r\}^{1/q'}
\end{eqnarray*}
and
\begin{eqnarray*}
\widetilde{\bmo_{\rz,\,\oz}^{q'}} (\rn)\equiv\left\{f\in
L^1_{\loc}(\rn):\ \|f\|_{\widetilde{\bmo_{\rz,\,\oz}^{q'}} (\rn)
}<\fz\r\}.
\end{eqnarray*}
Similarly to the proofs of (i) and Case i), we conclude that
\begin{eqnarray}\label{7.19}
\left[\widetilde{h^{\rz,\,q,\,s}_{\oz}}(\rn)\r]^{\ast}
=\widetilde{\bmo_{\rz,\,\oz}^{q'}} (\rn).
\end{eqnarray}

Now we claim that
\begin{eqnarray}\label{7.20}
\left[h^{\rz,\,q,\,s}_{\oz}
(\rn)\r]^{\ast}\subset\left[L^q_{\oz}(\rn)\r]^{\ast}.
\end{eqnarray}
Indeed, for any  $f\in L^q_{\oz}(\rn)$, let
$$a\equiv[\oz(\rn)]^{1/q
-1}[\rz(\oz(\rn))]^{-1}\|f\|_{L^q_{\oz}(\rn)}^{-1}f.$$
Then $a$ is a
$(\rz,\,q)_{\oz}$-single-atom, which implies that $f\in
h^{\rz,\,q,\,s}_{\oz} (\rn)$ and
$$\|f\|_{h^{\rz,\,q,\,s}_{\oz}
(\rn)}\le[\oz(\rn)]^{1/q'}\rz(\oz(\rn))\|f\|_{L^q_{\oz}(\rn)}.$$
Thus, for any given $L\in\left[h^{\rz,\,q,\,s}_{\oz} (\rn)\r]^{\ast}$
and all $f\in L^q_{\oz}(\rn)$, we have
$$|Lf|\le\|L\|_{\left[h^{\rz,\,q,\,s}_{\oz}
(\rn)\r]^{\ast}}\|f\|_{h^{\rz,\,q,\,s}_{\oz} (\rn)}\le
[\oz(\rn)]^{1/q'}\rz(\oz(\rn))\|f\|_{L^q_{\oz}(\rn)}
\|L\|_{\left[h^{\rz,\,q,\,s}_{\oz} (\rn)\r]^{\ast}}.$$ That is,
$L\in[L^q_{\oz}(\rn)]^{\ast}$. Thus, \eqref{7.20} holds.

Now, assume that $L\in\left[h^{\rz,\,q,\,s}_{\oz} (\rn)\r]^{\ast}$.
From $\oz(\rn)<\fz$, it follows that
$$L^{\fz}_{\oz}(\rn)\subset
L^{\gz}_{\oz}(\rn)$$
for any $\gz\in[1,\fz)$ and
$L^{\gz'}_{\oz}(\rn)\subset L^1_{\oz}(\rn)$. By this,
\eqref{7.20}, the Hahn-Banach theorem and the Riesz represent
theorem, we conclude that there exists a $\wz{g}\in L^{q'}_{\oz}(\rn)$
such that for all $f\in L^q_{\oz}(\rn)$ with $q\in(q_{\oz},\fz]$,
$$Lf=\int_{\rn}f(x)\wz{g}(x)\oz(x)\,dx.$$
Let $g\equiv \wz{g}\oz$. Then we know that for all $f\in
L^q_{\oz}(\rn)$,
\begin{eqnarray}\label{7.21}
Lf=\int_{\rn}f(x)g(x)\,dx.
\end{eqnarray}

Finally, we prove that $g\in\bmo^{q'}_{\rz,\,\oz}(\rn)$ and
$$\|g\|_{\bmo^{q'}_{\rz,\,\oz}(\rn)}\ls\|L\|_{\left[h^{\rz,\,q,\,s}_{\oz}
(\rn)\r]^{\ast}}.$$
Obviously, \eqref{7.21} holds for all $f\in
h^{\rz,\,q,\,s}_{\oz,\,\fin}(\rn)$. For any $f\in L^q_{\oz}(\rn)$
with $\|f\|_{L^q_{\oz}(\rn)}\le1$, let
$$a\equiv[\oz(\rn)]^{-1/q'}[\rz(\oz(\rn))]^{-1}f.$$
Then $a$ is a
$(\rz,\,q)_{\oz}$-single-atom. From \eqref{7.21} with $f\equiv a$
and $L\in [h^{\rz,\,q,\,s}_{\oz}(\rn)]^{\ast}$, we deduce that
$$|La|=\left|\int_{\rn}a(x)g(x)\,dx\r|\le\|L\|_{\left[h^{\rz,\,q,\,s}_{\oz}
(\rn)\r]^{\ast}}.$$ That is,
$$[\oz(\rn)]^{-1/q'}[\rz(\oz(\rn))]^{-1}
\left|\int_{\rn}f(x)g(x)\,dx\r|\le\|L\|_{\left[h^{\rz,\,q,\,s}_{\oz}
(\rn)\r]^{\ast}},$$
which together with $\|f\|_{L^q_{\oz}(\rn)}\le1$
implies that
\begin{eqnarray}\label{7.22}
[\oz(\rn)]^{-1/q'}[\rz(\oz(\rn))]^{-1}\left\{\int_{\rn}
|g(x)|^{q'}[\oz(x)]^{1-q'}\,dx\r\}^{1/q'}
\le\|L\|_{\left[h^{\rz,\,q,\,s}_{\oz} (\rn)\r]^{\ast}}.
\end{eqnarray}
Moveover, from $h^{\rz,\,q,\,s}_{\oz}(\rn)\supset
\widetilde{h^{\rz,\,q,\,s}_{\oz}}(\rn)$ and
$$\|f\|_{h^{\rz,\,q,\,s}_{\oz}(\rn)}\le
\|f\|_{\widetilde{h^{\rz,\,q,\,s}_{\oz}}(\rn)}$$
for all $f\in
\widetilde{h^{\rz,\,q,\,s}_{\oz}}(\rn)$, we conclude that
$[h^{\rz,\,q,\,s}_{\oz}(\rn)]^{\ast}\subset[\widetilde{
h^{\rz,\,q,\,s}_{\oz}}(\rn)]^{\ast}$ and
$$L\mid_{\widetilde{h^{\rz,\,q,\,s}_{\oz}}(\rn)}
\in\left[\widetilde{h^{\rz,\,q,\,s}_{\oz}}(\rn)\r]^{\ast}.$$
Thus, by \eqref{7.19} and \eqref{7.21}, we know that
$g\in\widetilde{\bmo^{q'}_{\rz,\,\oz}}(\rn)$ and
$$\|g\|_{\widetilde{\bmo^{q'}_{\rz,\,\oz}}(\rn)}\ls
\left\|L\mid_{\widetilde{h^{\rz,\,q,\,s}_{\oz}}(\rn)}\r\|_{[
\widetilde{h^{\rz,\,q,\,s}_{\oz}}(\rn)]^{\ast}}\ls
\|L\|_{\left[h^{\rz,\,q,\,s}_{\oz} (\rn)\r]^{\ast}},$$
which together with \eqref{7.22} implies that
$g\in\bmo^{q'}_{\rz,\,\oz}(\rn)$ and
$$\|g\|_{\bmo^{q'}_{\rz,\,\oz}(\rn)}\ls\|L\|_{\left[h^{\rz,\,q,\,s}_{\oz}
(\rn)\r]^{\ast}}.$$
This finishes the proof of Theorem \ref{t7.5}.
\end{proof}

When $q=1$, we denote $\bmo^{q}_{\rz,\,\oz}(\rn)$ simply by
$\bmo_{\rz,\,\oz}(\rn)$. By Theorems \ref{t5.6} and \ref{t7.5}, we
have the following conclusions.
\begin{corollary}\label{c7.6}
Let $\bfai$ satisfy Assumption $\mathrm{(A)}$, $\oz\in
A^{\loc}_{\fz}(\rn)$, $q_{\oz}$ and $\rz$ be respectively as in
\eqref{2.4} and \eqref{2.7}. Then for
$q\in[1,\frac{q_{\oz}}{q_{\oz}-1})$,
$\bmo^{q}_{\rz,\,\oz}(\rn)=\bmo_{\rz,\,\oz}(\rn)$ with equivalent
norms.
\end{corollary}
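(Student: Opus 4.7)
The plan is to deduce this identification by combining the atomic decomposition equivalence (Theorem \ref{t5.6}) with the duality theorem (Theorem \ref{t7.5}), and observing that two quasi-Banach spaces with equivalent quasi-norms must have identical dual spaces with equivalent norms.

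First, I would note that the hypothesis $q\in[1,\frac{q_{\oz}}{q_{\oz}-1})$ is equivalent to $q'\in(q_{\oz},\fz]$, where $\frac{1}{q}+\frac{1}{q'}=1$ (with the convention $1'=\fz$). In particular, the case $q=1$ gives $q'=\fz$, and both $(\rz,\,q',\,s)_{\oz}$ and $(\rz,\,\fz,\,s)_{\oz}$ are admissible triplets (in the sense of Definition \ref{d3.4}) provided $s\ge\lfz n(q_{\oz}/p_{\bfai}-1)\rf$. Fix any such $s$, and also fix integer $N\ge N_{\bfai,\,\oz}$ with $N>s$, so that Theorem \ref{t5.6} applies.

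Next, by Theorem \ref{t5.6}, we have
$$h^{\rz,\,q',\,s}_{\oz}(\rn)=h^{\bfai}_{\oz,\,N}(\rn)=h^{\rz,\,\fz,\,s}_{\oz}(\rn)$$
with equivalent quasi-norms. Since the identity map between these two spaces is a bi-continuous bijection, it induces a bi-continuous bijection between their (topological) dual spaces, with norms equivalent up to the same constants. By Theorem \ref{t7.5}, the dual of $h^{\rz,\,q',\,s}_{\oz}(\rn)$ is $\bmo^{q}_{\rz,\,\oz}(\rn)$, while the dual of $h^{\rz,\,\fz,\,s}_{\oz}(\rn)$ is $\bmo^{1}_{\rz,\,\oz}(\rn)=\bmo_{\rz,\,\oz}(\rn)$. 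Combining these two identifications yields $\bmo^{q}_{\rz,\,\oz}(\rn)=\bmo_{\rz,\,\oz}(\rn)$ with equivalent norms.

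The only subtle point — and the one I would be most careful about — is making sure that the identification of each dual via Theorem \ref{t7.5} is compatible, namely, that the bounded functional on $h^{\bfai}_{\oz}(\rn)$ represented by a given $g\in\bmo^{q}_{\rz,\,\oz}(\rn)$ via the pairing \eqref{7.13} coincides with the functional represented by the same $g$ viewed as an element of $\bmo_{\rz,\,\oz}(\rn)$. This is transparent because both extensions arise from the same pairing on the common dense subspace of finite atomic combinations (compare Definition \ref{d6.1} and the statements of Theorem \ref{t6.2} and Theorem \ref{t7.5}). There is no genuine obstacle beyond this bookkeeping.
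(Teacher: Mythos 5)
Your proposal is correct and follows exactly the route the paper intends: the paper introduces Corollary \ref{c7.6} with the words ``By Theorems \ref{t5.6} and \ref{t7.5}, we have the following conclusions'' and gives no further proof, and your argument fills in precisely those details (convert $q\in[1,\frac{q_\omega}{q_\omega-1})$ to $q'\in(q_\omega,\infty]$, identify $h^{\rho,q',s}_\omega(\mathbb{R}^n)=h^{\rho,\infty,s}_\omega(\mathbb{R}^n)$ via Theorem \ref{t5.6}, and pass to duals via Theorem \ref{t7.5}). Your closing remark about compatibility of the two dual identifications on a common dense subspace is a sensible check, and it is unproblematic here since every $(\rho,\infty,s)_\omega$-atom is a $(\rho,q',s)_\omega$-atom by H\"older's inequality, so the pairing \eqref{7.13} agrees on the dense subspace $h^{\rho,\infty,s}_{\omega,\mathrm{fin}}(\mathbb{R}^n)$.
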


\begin{corollary}\label{c7.7}
Let $\bfai$ satisfy Assumption $\mathrm{(A)}$, $\oz\in
A^{\loc}_{\fz}(\rn)$ and $\rz$ be as in \eqref{2.7}. Then
$[h^{\bfai}_{\oz} (\rn)]^{\ast}=\bmo_{\rz,\,\oz}(\rn)$.
\end{corollary}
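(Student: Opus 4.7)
The plan is to chain together Theorems \ref{t5.6} and \ref{t7.5} and Corollary \ref{c7.6}. First, I would fix an admissible triplet $(\rz,\,q,\,s)_{\oz}$ with $q\in(q_{\oz},\fz)$ and integer $s\ge\lfz n(\frac{q_{\oz}}{p_{\bfai}}-1)\rf$, together with an integer $N\ge N_{\bfai,\,\oz}$ satisfying $N>s$. The key observation is that since $q\in(q_{\oz},\fz)$, the dual exponent $q'=\frac{q}{q-1}$ satisfies $q'\in[1,\frac{q_{\oz}}{q_{\oz}-1})$ when $q_{\oz}>1$, and $q'\in[1,\fz)$ when $q_{\oz}=1$; in either case $q'\in[1,\frac{q_{\oz}}{q_{\oz}-1})$, which is exactly the range required in the statement of Corollary \ref{c7.6}.

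Next, I would invoke Theorem \ref{t5.6}, which guarantees that
$$h^{\bfai}_{\oz}(\rn)=h^{\bfai}_{\oz,\,N}(\rn)=h^{\rz,\,q,\,s}_{\oz}(\rn)$$
with equivalent quasi-norms. Consequently, the continuous dual spaces coincide, namely
$$\left[h^{\bfai}_{\oz}(\rn)\r]^{\ast}=\left[h^{\rz,\,q,\,s}_{\oz}(\rn)\r]^{\ast}$$
with equivalent dual norms. Applying Theorem \ref{t7.5} to the right-hand side identifies this dual with $\bmo^{q'}_{\rz,\,\oz}(\rn)$ in the sense described there: every $g\in\bmo^{q'}_{\rz,\,\oz}(\rn)$ gives rise, via pairing, to a bounded functional on $h^{\rz,\,q,\,s}_{\oz}(\rn)$, and conversely every such functional is represented by some $g\in\bmo^{q'}_{\rz,\,\oz}(\rn)$.

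Finally, I would apply Corollary \ref{c7.6} to replace $\bmo^{q'}_{\rz,\,\oz}(\rn)$ by $\bmo_{\rz,\,\oz}(\rn)$ with equivalent norms. Combining the three equivalences yields $[h^{\bfai}_{\oz}(\rn)]^{\ast}=\bmo_{\rz,\,\oz}(\rn)$ in the natural duality sense inherited from Theorem \ref{t7.5}, with a uniform control of norms in both directions. There is essentially no obstacle here beyond bookkeeping: one only needs to verify that the range of $q$ chosen is simultaneously admissible in the sense of Definition \ref{d3.4} and produces a $q'$ in the range of Corollary \ref{c7.6}, which we have arranged, and to observe that the identification of dual spaces is compatible under the equivalence of quasi-norms provided by Theorem \ref{t5.6}. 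The only mildly delicate point is to note that the duality bracket used in Theorem \ref{t7.5}, initially defined on the dense subspace $h^{\rz,\,q,\,s}_{\oz,\,\fin}(\rn)$, extends unambiguously to $h^{\bfai}_{\oz}(\rn)$ because this finite-atomic subspace is dense in $h^{\bfai}_{\oz}(\rn)$ by Theorem \ref{t6.2}(i).
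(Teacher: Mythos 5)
Your proof is correct and essentially the paper's intended argument: the corollary is stated to follow from Theorems \ref{t5.6} and \ref{t7.5}, and you chain exactly those with Corollary \ref{c7.6} to normalize the exponent. One small simplification worth noting: rather than fixing $q\in(q_\oz,\fz)$ and then invoking Corollary \ref{c7.6} to pass from $\bmo^{q'}_{\rz,\oz}(\rn)$ to $\bmo_{\rz,\oz}(\rn)$, you can take $q=\fz$ directly in Theorem \ref{t7.5} (which is admissible), so that $q'=1$ and $\bmo^{q'}_{\rz,\oz}(\rn)=\bmo^1_{\rz,\oz}(\rn)=\bmo_{\rz,\oz}(\rn)$ by definition, bypassing Corollary \ref{c7.6} entirely; combined with $h^{\rz,\fz,s}_{\oz}(\rn)=h^{\bfai}_{\oz}(\rn)$ from Theorem \ref{t5.6}, the conclusion is immediate. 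Your route is equally valid since Corollary \ref{c7.6} is already established, and your care in checking the exponent range and the density of $h^{\rz,q,s}_{\oz,\fin}(\rn)$ is sound.
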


\chapter{Some applications\label{s8}}

In this section, we first show that local Riesz transforms are
bounded on $h^{\bfai}_{\oz}(\rn)$. Moreover, we introduce local
fractional integrals and show that they are bounded from
$h^p_{\oz^p}(\rn)$ to $L^q_{\oz^q}(\rn)$ when $q\in[1,\fz)$, and
from $h^p_{\oz^p}(\rn)$ to $h^q_{\oz^q}(\rn)$ when $q\in(0,1]$.
Finally, we prove that some pseudo-differential operators are
bounded on $h^{\bfai}_{\oz}(\rn)$, where $\oz\in A_p (\phi)$ which
was introduced by Tang \cite{Ta2} (see also Definition \ref{d8.13}
below) and is contained in $A^{\loc}_{p}(\rn)$ for $p\in[1,\fz)$.

Now, we recall the notion of local Riesz transforms introduced by
Goldberg \cite{Go79}. In what follows, $\cs(\rn)$ denotes the
\emph{space of all Schwartz functions on $\rn$}.

\begin{definition}\label{d8.1}
Let $\phi_0\in\cd(\rn)$ such that $\phi_0\equiv1$ on $Q(0,1)$ and
$\supp(\phi_0)\subset Q(0,2)$. For $j\in\{1,\,2,\,\cdots,\,n\}$ and
$x\in\rn$, let
$$k_j (x)\equiv\frac{x_j}{|x|^{n+1}}\phi_0(x).$$
For $f\in\cs(\rn)$, the \emph{local Riesz transform} $r_j (f)$ of
$f$ is defined by $r_j (f)\equiv k_j \ast f$.
\end{definition}

We remark that $\phi_0$ in \cite{Go79} was assumed that
$\phi_0\equiv1$ in a neighborhood of the origin and
$\phi_0\in\cd(\rn)$. In this paper, for convenience, we assume
$\phi_0\equiv1$ on $Q(0,1)$ and $\supp(\phi_0)\subset Q(0,2)$. We have
the boundedness on $h^{\bfai}_{\oz}(\rn)$ of local Riesz transforms
$\{r_j\}_j$ as follows.

\begin{theorem}\label{t8.2}
Let $\bfai$ satisfy Assumption $\mathrm{(A)}$, $\oz\in
A^{\loc}_{\fz}(\rn)$ and $p_{\bfai}$ be as in \eqref{2.6}. For
$j\in\{1,\,2,\,\cdots,\,n\}$, let $r_j$ be the local Riesz operator
as in Definition \ref{d8.1}. If $p_{\bfai}=p_{\bfai}^+$ and $\bfai$
is of upper type $p_{\bfai}^+$, then there exists a positive
constant $C_0 \equiv C_0 (\bfai,\,\oz,\,n)$, depending only on
$\bfai$, $q_{\oz}$, the weight constant of $\oz$ and $n$, such that
for all $f\in h^{\bfai}_{\oz}(\rn)$,  $$\left\|r_j
(f)\r\|_{h^{\bfai}_{\oz}(\rn)}\le C_0
\|f\|_{h^{\bfai}_{\oz}(\rn)}.$$
\end{theorem}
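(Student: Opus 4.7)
The strategy is to apply Theorem \ref{t6.4}(i) with the $\bz$-quasi-Banach space $\cb_{\bz}$ taken to be $h^{\bfai}_{\oz}(\rn)$ itself. Indeed, (3.31) (from the proof of Proposition \ref{p3.17}) already shows $h^{\bfai}_{\oz}(\rn)$ is a $p_{\bfai}$-quasi-Banach space, and the hypothesis $p_{\bfai}=p_{\bfai}^+$ together with $\bfai$ being of upper type $p_{\bfai}^+$ supplies an admissible upper type $\wz p=p_{\bfai}=\bz$ of $\bfai$ with $\wz p\in(0,\bz]$. Since $r_j$ is linear, it is automatically $\cb_{\bz}$-sublinear. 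Thus, after fixing some $q\in(q_{\oz},\fz)$ and $s\ge\lfz n(q_{\oz}/p_{\bfai}-1)\rf$, the whole task reduces to the uniform bound
$$\|r_j(a)\|_{h^{\bfai}_{\oz}(\rn)}\le C$$
for every $(\rz,q,s)_{\oz}$-atom $a$ with $\supp(a)\subset Q$ and $l(Q)\in(0,2]$, and for every $(\rz,q)_{\oz}$-single-atom.

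The easy cases dispose first. I would first establish the auxiliary $L^q_{\oz}(\rn)$-boundedness of $r_j$ for $q\in(q_{\oz},\fz)$: the kernel $k_j$ is supported in $Q(0,2)$, so $r_j$ is genuinely local; localizing via a bounded-overlap unit-cube cover, Lemma \ref{l2.3}(i) permits one to replace $\oz$ on each patch by an $A_q(\rn)$-weight, after which the classical weighted $L^q$-boundedness of the Riesz transform applies, with the bounded smooth residue $k_j-x_j|x|^{-n-1}\chi_{\{|x|\le 1\}}$ handled trivially by convolution estimates. With this tool, the single-atom case (which only arises when $\oz(\rn)<\fz$) is immediate: $r_j(a)/C$ is again a $(\rz,q)_{\oz}$-single-atom. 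For an atom supported in $Q=Q(x_0,r)$ with $r\in[1,2]$, put $\wz{Q}\equiv Q(x_0,r+4\sqrt n)$; then $\supp r_j(a)\subset\wz{Q}$, $l(\wz{Q})\ge 1$ (so no moment condition is required of an atom on $\wz{Q}$), iterating Lemma \ref{l2.3}(v) yields $\oz(\wz{Q})\sim\oz(Q)$, and the $L^q$-bound identifies $r_j(a)$ as a constant multiple of a $(\rz,q,s)_{\oz}$-atom on $\wz{Q}$; the norm estimate then follows from the analogue of (3.33) in the proof of Theorem \ref{t3.18}.

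The core difficulty lies in the remaining case $r\in(0,1)$, in which $a$ does enjoy vanishing moments up to order $s$, but $\supp r_j(a)\subset\wz{Q}$ with $l(\wz{Q})\ge 1$, so that an atom anchored on $\wz{Q}$ would carry the much larger $\oz$-mass $\oz(\wz{Q})$. I would therefore estimate the local grand maximal function $\cg^0_N(r_j(a))$ directly, imitating the proof of (3.33) in Theorem \ref{t3.18}. On $Q^{\ast}\equiv 2\sqrt n Q$ the $L^q$-boundedness of $r_j$ delivers the required size estimate. On $\wz{Q}\setminus Q^{\ast}$ I would Taylor-expand $k_j(x-y)$ about $x-x_0$ to order $s$ and use $\int a(y)(x_0-y)^{\az}\,dy=0$ for $|\az|\le s$; for $|x-x_0|>2\sqrt n r$ and $y\in Q$, any point $\xz$ on the segment joining $x-y$ to $x-x_0$ stays bounded away from both the origin and $\partial\supp(\phi_0)$, so $|\nabla^{s+1}k_j(\xz)|\ls|\xz|^{-(n+s+1)}\chi_{\{|\xz|\le 2\}}$, yielding
$$|r_j(a)(x)|\ls r^{s+1}\|a\|_{L^1(\rn)}|x-x_0|^{-(n+s+1)}\chi_{B(x_0,2\sqrt n)}(x),$$
which has exactly the shape of (3.36). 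A dyadic-annulus summation paralleling the one in Theorem \ref{t3.18} --- exploiting Lemma \ref{l2.3}(viii) on an $A_{q_0}$-extension of $\oz$ with $q_0\in(q_{\oz},\fz)$ satisfying $(s+n+1)p_{\bfai}>nq_0$, together with the strictly lower type $p_{\bfai}$ property of $\bfai$ --- would then deliver $\int_{\rn}\bfai(\cg^0_N(r_j(a))(x))\oz(x)\,dx\ls 1$.

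The main obstacle is reconciling the mismatch between $\supp r_j(a)$, which sits in the large cube $\wz{Q}$, and the effective $h^{\bfai}_{\oz}$-mass of $r_j(a)$, which must behave like $[\oz(Q)]^{1/q-1}[\rz(\oz(Q))]^{-1}$ attached to the much smaller cube $Q$; the hypothesis $p_{\bfai}=p_{\bfai}^+$ with $\bfai$ of upper type $p_{\bfai}^+$ is exactly what is needed to make the near-field Theorem \ref{t3.18}-style atom estimate on $Q^{\ast}$ match, after dyadic-annulus summation, the far-field decay coming from the Taylor cancellation, so that the two pieces together yield a bound integrable against $\bfai$ in the required modular sense.
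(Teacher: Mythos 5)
Your reduction to uniformly bounding atoms via Theorem \ref{t6.4}(i), the handling of single atoms, and the treatment of atoms with sidelength $\ge 1$ all track the paper's proof. The gap is in the remaining case $l(Q)\in(0,1)$, and it is a genuine one.

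Your display
$$|r_j(a)(x)|\ls r^{s+1}\|a\|_{L^1(\rn)}|x-x_0|^{-(n+s+1)}\chi_{B(x_0,2\sqrt n)}(x)$$
is a bound on the \emph{function} $r_j(a)$, obtained by Taylor-expanding the kernel $k_j$. But what must be estimated is $\cg^0_N(r_j(a))(x)$, the local grand maximal function, which is \emph{nonlocal}: at a point $x$ outside $Q^{\ast}$, the quantity $\pz_t\ast r_j(a)(x)$ still integrates $r_j(a)$ over $B(x,t)$ with $t$ up to $1$, and that ball can reach into $Q^{\ast}$, where $r_j(a)$ is only $L^q_{\oz}$-controlled and has no useful pointwise bound. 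Hence a pointwise bound on $|r_j(a)|$ off $Q^{\ast}$ does not, by itself, produce the pointwise bound on $\cg^0_N(r_j(a))$ that your dyadic-annulus summation requires; calling the display "exactly the shape of (3.36)'' conflates a bound on the function with a bound on its maximal function. Note also that the naive substitution of $r_j(a)$ for $a$ in the argument leading to \eqref{3.36} does not work: that argument hinges on $\supp(a)\subset Q$ forcing $t\gs|x-x_0|$, whereas $\supp\, r_j(a)$ fills the enlarged unit-scale cube, so no such lower bound on $t$ is available, and the Taylor remainder for $\pz$ would carry $l(\wz Q)^{s+1}\gs 1$ rather than $l(Q)^{s+1}$.

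The paper's proof handles precisely this point. For $x\in(\wz Q_0)^{\complement}$ it starts from
$$r_j(a)\ast\pz_t(x)=\frac{1}{t^n}\int_{\rn}r_j(a)(y)\left[\pz\left(\tfrac{x-y}{t}\r)-P_{\pz}^s\left(\tfrac{x-y}{t}\r)\r]\,dy,$$
legitimate because $r_j(a)$ inherits the vanishing moments of $a$, and then splits the $y$-integral into three regions $\mathrm{G_1}$, $\mathrm{G_2}$, $\mathrm{G_3}$: close to $Q_0$ (where one uses that the convolution is nonzero only when $t\gs|x-x_0|$, together with the $L^q_{\oz}$-boundedness of $r_j$ and H\"older), intermediate (where one Taylor-expands \emph{both} $\pz$ and $k_j$, the latter picking up a logarithmic loss absorbed as a small $\delta$ in the exponent), and far (where the decay of $k_j$ and of $P^s_\pz$ suffice). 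It is this interplay, not just the far-field kernel expansion, that converts the pointwise information on $r_j(a)$ into the pointwise bound \eqref{8.10} on $\cg^0_N(r_j(a))$. Your outline acknowledges the near-field/far-field mismatch as "the main obstacle'' but then defers it to "the hypothesis $p_{\bfai}=p_{\bfai}^+$ $\dots$ is exactly what is needed''; that hypothesis is what makes the \emph{modular summation} over dyadic annuli close, but it does not supply the missing maximal-function estimate, which is an unconditional inequality preceding any use of $\bfai$'s type indices.
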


To prove Theorem \ref{t8.2}, we need the following lemma established
in \cite[Lemma\,8.2]{Ta1}.

\begin{lemma}\label{l8.3}
For $j\in\{1,\,2,\,\cdots,\,n\}$, let $r_j$ be the local Riesz
operator as in Definition \ref{d8.1}.

$\mathrm{(i)}$ For $\oz\in A^{\loc}_p (\rn)$ with $p\in(1,\,\fz)$,
then there exists a positive constant
$$C_1 \equiv C_1(p,\,\oz,\,n),$$
depending only on $p$, the weight constant of $\oz$,
and $n$, such that for all $f\in L^p_{\oz}(\rn)$,
$$\left\|r_j (f)\r\|_{L^p_{\oz}(\rn)}\le
C_1\|f\|_{L^p_{\oz}(\rn)}.$$

$\mathrm{(ii)}$ For $\oz\in A^{\loc}_1 (\rn)$, there exists a
positive constant $C_2 \equiv C_2 (\oz,\,n)$, depending only on the
weight constant of $\oz$, and $n$, such that for all $f\in
L^1_{\oz}(\rn)$,
$$\|r_j (f)\|_{L^{1,\fz}_{\oz}(\rn)}\le
C_2\|f\|_{L^1_{\oz}(\rn)}.$$
\end{lemma}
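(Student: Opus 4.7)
The plan is to reduce both parts to classical Muckenhoupt-weighted Calder\'on--Zygmund theory by combining two observations: first, $r_j$ is a standard Calder\'on--Zygmund convolution operator on $\rn$; second, the compact support of $k_j$ together with Lemma \ref{l2.3}(i) (via the sidelength convention of Remark \ref{r2.4}) allows one to replace the local weight $\oz\in A^{\loc}_p(\rn)$ by a genuine global $A_p(\rn)$ weight on the only sets that matter.

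I would first verify that $r_j$ is a classical Calder\'on--Zygmund operator. The kernel $k_j(x)=(x_j/|x|^{n+1})\phi_0(x)$ is compactly supported in $Q(0,2)$, and using $|\phi_0|\le 1$ together with the fact that $\nabla\phi_0$ vanishes on $Q(0,1)$ (hence is supported where $|x|$ is bounded below), one obtains directly $|k_j(x)|\ls |x|^{-n}$ and $|\nabla k_j(x)|\ls |x|^{-n-1}$. For the $L^2(\rn)$-boundedness, write $\widehat{k_j}=\widehat{K_j}\ast\widehat{\phi_0}$ where the classical Riesz kernel $K_j(x)=x_j/|x|^{n+1}$ has $\widehat{K_j}(\xi)=c_n\xi_j/|\xi|\in L^\fz(\rn)$ and $\widehat{\phi_0}\in\cs(\rn)$, so $\widehat{k_j}\in L^\fz(\rn)$. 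Hence by the classical Coifman--Fefferman theory, $r_j$ is bounded on $L^p_{\oz'}(\rn)$ for every $\oz'\in A_p(\rn)$, $p\in(1,\fz)$, and from $L^1_{\oz'}(\rn)$ to $L^{1,\fz}_{\oz'}(\rn)$ for $\oz'\in A_1(\rn)$, with constants depending only on $n$, $p$, $\phi_0$ and the classical $A_p$-constant of $\oz'$.

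Next I would exploit the locality. Since $\supp(k_j)\subset Q(0,2)$, for every $x\in\rn$ the value $r_jf(x)$ depends only on $f|_{B(x,2)}$. Tile $\rn$ by disjoint unit cubes $\{Q_\az\}_\az$ and let $\wz{Q}_\az$ denote the cube concentric with $Q_\az$ of sidelength $5$; then $x\in Q_\az$ forces $B(x,2)\subset\wz{Q}_\az$, so $r_jf(x)=r_j(f\chi_{\wz{Q}_\az})(x)$, while $\{\wz{Q}_\az\}_\az$ has bounded overlap depending only on $n$. By Lemma \ref{l2.3}(i) combined with Remark \ref{r2.4} (sidelength $\wz{C}=5$), for each $\az$ there exists $\oz_\az\in A_p(\rn)$ with $\oz_\az=\oz$ on $\wz{Q}_\az$ and $A_p(\oz_\az)\le C\,A^{\loc}_p(\oz)$, with $C$ depending only on $n$ and $p$. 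For part (i), with $p\in(1,\fz)$ and $\oz\in A^{\loc}_p(\rn)$, this gives
\begin{eqnarray*}
\|r_j f\|^p_{L^p_\oz(\rn)}&=&\sum_\az\int_{Q_\az}|r_j(f\chi_{\wz{Q}_\az})(x)|^p\oz_\az(x)\,dx\le\sum_\az\|r_j(f\chi_{\wz{Q}_\az})\|^p_{L^p_{\oz_\az}(\rn)}\\
&\ls&\sum_\az\int_{\wz{Q}_\az}|f(x)|^p\oz(x)\,dx\ls\|f\|^p_{L^p_\oz(\rn)},
\end{eqnarray*}
the first $\ls$ being the classical $A_p$-bound applied uniformly in $\az$, the last using bounded overlap. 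For part (ii), with $\oz\in A^{\loc}_1(\rn)$, the same decomposition combined with the classical weak-$(1,1)$ bound of $r_j$ on $L^1_{\oz_\az}(\rn)$ yields, for every $\lz\in(0,\fz)$,
\[
\lz\,\oz\left(\{x\in\rn:|r_jf(x)|>\lz\}\r)\le\lz\sum_\az\oz_\az\left(\{x:|r_j(f\chi_{\wz{Q}_\az})(x)|>\lz\}\r)\ls\sum_\az\int_{\wz{Q}_\az}|f(x)|\oz(x)\,dx\ls\|f\|_{L^1_\oz(\rn)}.
\]

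The main subtlety is that the expansion factor defining $\wz{Q}_\az$ and the sidelength $\wz{C}$ in Remark \ref{r2.4} must be chosen coherently so that $A_p(\oz_\az)$, and hence the classical CZ bound for $r_j$ on $L^p_{\oz_\az}(\rn)$, is uniform in $\az$. Beyond this routine bookkeeping, the only conceptual input beyond the quoted lemmas is the classical verification that $k_j$ is a Calder\'on--Zygmund kernel.
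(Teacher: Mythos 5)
Your argument is correct, and it is worth noting that the paper itself does not prove Lemma \ref{l8.3} at all: it simply quotes it from \cite[Lemma\,8.2]{Ta1}. So what you have written is a self-contained substitute for that citation, and the route you take --- verify that $k_j$ is a Calder\'on--Zygmund kernel whose Fourier transform is bounded, invoke the classical weighted Calder\'on--Zygmund bounds for genuine $A_p(\rn)$ (resp. $A_1(\rn)$) weights, and then localize by tiling $\rn$ with unit cubes and replacing $\oz$ on each enlarged cube $\wz{Q}_\az$ by a global $A_p$ weight via Lemma \ref{l2.3}(i) and Remark \ref{r2.4} --- is precisely the mechanism this theory of local weights is built for; the constants are uniform in $\az$ because Lemma \ref{l2.3}(i) gives $A_p(\oz_\az)\le C\,A^{\loc}_p(\oz)$ with $C$ independent of the cube, and the bounded overlap of $\{\wz{Q}_\az\}_\az$ is dimensional. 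Two small points to tighten. First, since $\supp(k_j)\subset Q(0,2)$, the value $r_jf(x)$ is determined by $f$ on $x-Q(0,2)\subset Q(x,2)\subset B(x,\sqrt n)$, not on $B(x,2)$; this is harmless, because for $x\in Q_\az$ the cube $Q(x,2)$ lies in the concentric cube of sidelength $3\le 5$, so your sidelength-$5$ enlargement works in every dimension, but the containment should be argued through the cube rather than the Euclidean ball of radius $2$. Second, $k_j$ is not locally integrable at the origin, so $r_j(f\chi_{\wz{Q}_\az})$ for a general $f\in L^p_{\oz}(\rn)$ must be understood in the principal value sense (equivalently, as the a.e. limit of truncations); since $\oz_\az\in A_p(\rn)$ and $f\chi_{\wz{Q}_\az}\in L^p_{\oz_\az}(\rn)$, this is supplied by the same classical theory you invoke, but it deserves a sentence so that the pointwise identity $r_jf(x)=r_j(f\chi_{\wz{Q}_\az})(x)$ on $Q_\az$ is meaningful for all $f$ in the stated class.
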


Now, we prove Theorem \ref{t8.2} by using Theorem \ref{6.2} and
Lemma \ref{l8.3}.

\begin{proof}[Proof of Theorem \ref{t8.2}]
Let $s\equiv\lfz n(q_{\oz}/p_{\bfai}-1)\rf$, where $q_{\oz}$ and
$p_{\bfai}$ are respectively as in \eqref{2.4} and \eqref{2.6}. Then
$(n+s+1)p_{\bfai}>nq_{\oz}$, which implies that there exists
$q\in(q_{\oz},\fz)$ such that $(n+s+1)p_{\bfai}>nq$ and $\oz\in
A^{\loc}_q(\rn)$. To show Theorem \ref{t8.2}, by Theorem
\ref{t6.4}(i) and Theorem \ref{3.2}, it suffices to show that for
any $(\rz,\,q)_{\oz}$-single-atom $a$ or $(\rz,\,q,\,s)_{\oz}$-atom
$a$ supported in $Q(x_0,\,R_0)$ with $R_0\in(0,2]$,
\begin{eqnarray}\label{8.1}
\left\|\cg^0_N \left(r_j (a)\r)\r\|_{L^{\bfai}_{\oz}(\rn)}\ls1.
\end{eqnarray}

First, we prove \eqref{8.1} for any $(\rz,\,q)_{\oz}$-single-atom
$a\neq0$. In this case, $\oz(\rn)<\fz$. Since $\bfai$ is concave, by
Jensen's inequality, H\"older's inequality, Proposition
\ref{p3.2}(ii), Lemma \ref{l8.3}(i) and \eqref{2.8} with
$t\equiv\oz(\rn)$, we have
\begin{eqnarray*}
&&\int_{\rn}\bfai\left(\cg^0_N\left(r_j (a)\r)(x)\r)\oz(x)\,dx\\
&&\hs\le\oz(\rn)\bfai\left(\frac{1}{\oz(\rn)} \int_{\rn}
\cg^0_N(r_j (a))(x)\oz(x)\,dx\r)\\
&&\hs\le\oz(\rn)\bfai\left(\frac{1}{[\oz(\rn)]^{1/q}}
\left\{\int_{\rn}\left[\cg^0_N(r_j (a))(x)\r]^{q}\oz(x)\,dx\r\}^{1/q}\r)\\
&&\hs\ls\oz(\rn)\bfai\left(\frac{1}{[\oz(\rn)]^{1/q}} \|r_j
(a)\|_{L^q_{\oz}(\rn)}\r)\ls\oz(\rn)\bfai\left(\frac{1}{[\oz(\rn)]^{1/q}}
\|a\|_{L^q_{\oz}(\rn)}\r)\\
&&\hs\ls\oz(\rn) \bfai\left(\frac{1}{\oz(\rn)\rz(\oz(\rn))}\r)\sim1,
\end{eqnarray*}
which implies \eqref{8.1} in this case.

Now, let $a$ be any $(\rz,\,q,\,s)_{\oz}$-atom supported in
$Q_0\equiv Q(x_0,\,R_0)$ with $R_0\in(0,2]$. We prove \eqref{8.1}
for $a$ by considering the following two cases for $R_0$.

{\it Case 1)} $R_0 \in[1,2]$. In this case, by the definitions of
$r_j (a)$ and $\cg^0_N (r_j (a))$, we see that
$$\supp \left(\cg^0_N (r_j (a))\r)\subset
Q_0^{\ast}\equiv Q(x_0, R_0 +8).$$ From this, Jensen's inequality,
H\"older's inequality, Proposition \ref{p3.2}(ii), Lemmas \ref{l8.3}
and \ref{l2.3}(v), Remark \ref{r2.4} with $\wz{C}\equiv2$ and
\eqref{2.8} with $t\equiv\oz(Q_0)$, we infer that
\begin{eqnarray*}
&&\int_{\rn}\bfai\left(\cg^0_N(r_j (a))(x)\r)\oz(x)\,dx\\
&&\hs\le\oz(Q_0^{\ast})\bfai\left(\frac{1}{\oz(Q_0^{\ast})}
\int_{Q_0^{\ast}}
\cg^0_N(r_j (a))(x)\oz(x)\,dx\r)\\
&&\hs\le\oz(Q_0^{\ast})\bfai\left(\frac{1}{[\oz(Q_0^{\ast})]^{1/q}}
\left\{\int_{Q_0^{\ast}}\left[\cg^0_N(r_j (a))(x)\r]^{q}\oz(x)\,dx\r\}^{1/q}\r)\\
&&\hs\ls\oz(Q_0^{\ast})\bfai\left(\frac{1}{[\oz(Q_0)]^{1/q}} \left\|r_j
(a)\r\|_{L^q_{\oz}(\rn)}\r)\ls\oz(Q_0)\bfai\left(\frac{1}{[\oz(Q_0)]^{1/q}}
\|a\|_{L^q_{\oz}(\rn)}\r)\\
&&\hs\ls\oz(Q_0) \bfai\left(\frac{1}{\oz(Q_0)\rz(\oz(Q_0))}\r)\sim1,
\end{eqnarray*}
which implies \eqref{8.1} in Case 1).

{\it Case 2)} $R_0\in(0,1)$. In this case, let $\wz{Q}_0
\equiv8nQ_0$. Then
\begin{eqnarray}\label{8.2}
\int_{\rn}\bfai\left(\cg^0_N(r_j (a))(x)\r)\oz(x)\,dx
&=&\int_{\wz{Q}_0}\bfai\left(\cg^0_N(r_j
(a))(x)\r)\oz(x)\,dx+\int_{(\wz{Q}_0)^\complement}\cdots\nonumber\\
 &\equiv&\mathrm{I_1}+\mathrm{I_2}.
\end{eqnarray}

For $\mathrm{I_1}$, similarly to the proof of Case 1), we have
\begin{eqnarray}\label{8.3}
\mathrm{I_1}&\le&\oz(\wz{Q}_0)\bfai\left(\frac{1}
{[\oz(\wz{Q}_0)]^{1/q}}
\left\{\int_{\wz{Q}_0}\left[\cg^0_N(r_j (a))(x)\r]^{q}\oz(x)\,dx\r\}^{1/q}\r)\nonumber\\
 &\ls&\oz(\wz{Q}_0)\bfai\left(\frac{1}{[\oz(Q_0)]^{1/q}}
\left\|r_j (a)\r\|_{L^q_{\oz}(\rn)}\r)\nonumber\\
&\ls&\oz(Q_0)\bfai\left(\frac{1}{\oz(Q_0)\rz(\oz(Q_0))}\r)\sim1.
\end{eqnarray}

To estimate $\mathrm{I_2}$, let $x\in(\wz{Q}_0)^\complement$,
$t\in(0,1)$, $\pz\in\cd^0_N (\rn)$ and $P_{\pz}^s$ be the Taylor
expansion of $\pz$ about $(x-x_0)/t$ with degree $s$. Then by the
vanishing condition of $a$, we see that
\begin{eqnarray}\label{8.4}
|r_j (a)\ast\pz_t (x)|&=&\frac{1}{t^n}\left|\int_{\rn}r_j (a)(y)
\pz\left(\frac{x-y}{t}\r)\,dy\r|\nonumber \\
&=&\frac{1}{t^n}\left|\int_{\rn} r_j
(a)(y)\left\{\pz\left(\frac{x-y}{t}\r)-P_{\pz}^s
\left(\frac{x-y}{t}\r)\r\}\,dy\r|\nonumber\\
 &\le&\frac{1}{t^n}\int_{2\sqrt{n}Q_0} \left|r_j
(a)(y)\r|\left|\pz\left(\frac{x-y}{t}\r)-P_{\pz}^s
\left(\frac{x-y}{t}\r)\r|\,dy\nonumber\\
&&+\frac{1}{t^n}\int_{Q(x_0,\frac{|x-x_0|}{2\sqrt{n}})
\setminus{(2\sqrt{n}Q_0})}\cdots+
\frac{1}{t^n}\int_{Q(x_0,\frac{|x-x_0|}{2\sqrt{n}})^\complement}\cdots
\nonumber \\
&\equiv&\mathrm{G_1}+\mathrm{G_2}+\mathrm{G_3}.
\end{eqnarray}

To estimate $\mathrm{G_1}$, by $t\in(0,1)$ and
$x\in(\wz{Q}_0)^{\complement}$, we see that $\mathrm{G_1}\neq0$
implies that $t>\frac{3|x-x_0|}{4}$. From this, Taylor's remainder
theorem, H\"older's inequality, Lemma \ref{l8.3}(i), \eqref{2.1} and
Remark \ref{r2.2}(ii), we deduce that
\begin{eqnarray}\label{8.5}
\hs\hs\mathrm{G_1}&\ls&\frac{1}{t^{n+s+1}}\left\|r_j
(a)\r\|_{L^q_{\oz}(\rn)} \left\{\sum_{\az\in\zz^n_+,\,|\az|=s+1}
\int_{2\sqrt{n}Q_0}\left|\left(\partial^{\az}
\pz\r)\left(\frac{\xi}{t}\r)\r|^{q'}\r.\nonumber\\
 &&\times|y-x_0|
^{(s+1){q'}}[\oz(y)]^{-q'/q}\,dy\Bigg\}^{1/q'}\nonumber\\
&\ls&\frac{R_0^{s+1}}{|x-x_0|^{n+s+1}}\|a\|_{L^q_{\oz}(\rn)}
\left\{\int_{2\sqrt{n}Q_0}[\oz(y)]^{-q'/q}\,dy\r\}^{1/q'}\nonumber\\
&\ls&\frac{1}{\oz(Q_0)\rz(\oz(Q_0))}\frac{R_0^{n+s+1}}{|x-x_0|^{n+s+1}},
\end{eqnarray}
where $\tz\in(0,1)$, $\xi\equiv\tz(x-y)+(1-\tz) (x-x_0)$ and
$\frac{1}{q}+\frac{1}{q'}=1$.

To estimate $\mathrm{G_2}$,  by the definition of $k_j$ with
$j\in\{1,\,2,\,\cdots,\,n\}$, we have
\begin{eqnarray}\label{8.6}
\sum_{\az\in\zz^n_+,\,|\az|=s+1}\left|(\partial^{\az}k_j)
(z)\r|\ls\frac{1}{|z|^{n+s+1}}
\end{eqnarray}
for all $z\in\rn\setminus\{0\}$. For any fixed $y\in
(Q(x_0,\frac{|x-x_0|}{2\sqrt{n}})\setminus 2\sqrt{n}Q_0)$, let
$K_j^s$ be the Taylor expansion of $k_j (\cdot)$ at the point
$y-x_0$ with degree $s$. Moreover, it is easy to see that
$\mathrm{G_2}\neq0$ implies that $t>\frac{|x-x_0|}{2}$. From this,
Taylor's remainder theorem, \eqref{8.6}, H\"older's inequality,
Lemma \ref{l8.3}(i) and \eqref{2.1}, we conclude that
\begin{eqnarray}\label{8.7}
\hs\hs\mathrm{G_2}&\le&\frac{1}{t^n}\int_{Q(x_0,\frac{
|x-x_0|}{2\sqrt{n}})\setminus
{(2\sqrt{n}Q_0)}}\left\{\int_{Q_0}|a(z)||k_j (y-z)-K_j^s (y-z)|\,dz\r\}\nonumber\\
&&\times\left|\pz\left(\frac{x-y}{t}\r)-P_{\pz}^s \left(\frac{x-y}{t}\r)\r|\,dy\nonumber\\
&\ls&\frac{1}{t^{n+s+1}}\int_{Q(x_0,\frac{|x-x_0|}{2\sqrt{n}})\setminus
{(2\sqrt{n}Q_0)}}\left\{\int_{Q_0}|a(z)|\frac{|z-x_0|^{s+1}}
{|\xi|^{n+s+1}}\,dz\r\}\nonumber\\  &&\times|y-x_0|^{s+1}\,dy\nonumber\\
&\ls&\frac{1}{|x-x_0|^{n+s+1}}\int_{Q(x_0,\frac{|x-x_0|}{2\sqrt{n}})\setminus
{(2\sqrt{n}Q_0)}}\frac{1}{|y-x_0|^n}\left\{\int_{Q_0}|a(z)|
|z-x_0|^{s+1}\,dz\r\}\,dy\nonumber\\
&\ls&\frac{R_0^{s+1}}{|x-x_0|^{n+s+1}}\|a\|_{L^q_{\oz}(\rn)}\frac{|Q_0|}
{[\oz(Q_0)]^{1/q}}\int_{Q(x_0, \frac{|x-x_0|}{2\sqrt{n}})\setminus
{(2\sqrt{n}Q_0)}}\frac{1}{|y-x_0|^n}\,dy\nonumber\\
&\ls&\frac{1}{\oz(Q_0)\rz(\oz(Q_0))}
\frac{R_0^{n+s+1}}{|x-x_0|^{n+s+1}}\int_{\sqrt{n}R_0}^{\frac{|x-x_0|}
{2\sqrt{n}}}z^{-1}\,dz\nonumber\\
&\ls&\frac{1}{\oz(Q_0)\rz(\oz(Q_0))}
\frac{R_0^{n+s+1-\dz}}{|x-x_0|^{n+s+1-\dz}},
\end{eqnarray}
where $\xi\equiv\gz(y-z)+(1-\gz)(y-x_0)$ for some $\gz\in(0,1)$,
$\dz$ is a small positive constant which is determined later, and in
the third inequality we used the fact that for any $y\in
Q(x_0,\frac{|x-x_0|}{2\sqrt{n}})\setminus {(2\sqrt{n}Q_0)}$ and
$z\in Q_0$,
$$|(y-x_0)-\gz(z-x_0)|\ge|y-x_0|-|y-x_0|/2=|y-x_0|/2.$$

Finally, we estimate $\mathrm{G_3}$.  For any $y\in
[Q(x_0,\frac{|x-x_0|}{2\sqrt{n}} )]^\complement$, by the definition
of $P_{\pz}^s$ and the support condition $\pz$, we have
\begin{eqnarray}\label{8.8}
\frac{1}{t^n}\left|P_{\pz}^s \left(\frac{x-y}{t}\r)\r|\ls
\frac{|y-x_0|^s}{|x-x_0|^{n+s}}.
\end{eqnarray}
Thus, from the vanishing condition of $a$, Taylor's remainder
theorem, \eqref{8.6}, H\"older's inequality, \eqref{2.1} and
\eqref{8.8}, we deduce that
\begin{eqnarray}\label{8.9}
\mathrm{G_3}&\ls&\frac{1}{t^n}\int_{Q(x_0,\frac{|x-x_0|}{2\sqrt{n}}
)^\complement}
\left\{\int_{Q_0}|a(z)||k_j (y-z)-K_j^s (y-z)|\,dz\r\}\nonumber\\
 &&\times\left\{\left|\pz\left(\frac{x-y}{t}\r)\r|+\left|P_{\pz}^s
\left(\frac{x-y}{t}\r)\r|\r\}\,dy\nonumber\\
&\ls&\frac{1}{t^n}\int_{Q(x_0,\frac{|x-x_0|}{2\sqrt{n}}
)^\complement}\left\{\int_{Q_0}|a(z)|\frac{|z-x_0|^{s+1}}
{|\xi|^{n+s+1}}\,dz\r\}\nonumber\\
&&\times\left\{\left|\pz\left(\frac{x-y}{t}\r)\r|+\left|P_{\pz}^s
\left(\frac{x-y}{t}\r)\r|\r\}\,dy\nonumber\\
&\ls&\|a\|_{L^q_{\oz}(\rn)}\frac{R_0^{s+n+1}}
{[\oz(Q_0)]^{1/q}}\frac{1}{t^n}
\int_{Q(x_0,\frac{|x-x_0|}{2\sqrt{n}})^\complement}
\frac{1}{|y-x_0|^{n+s+1}}\nonumber\\
&&\times\left\{\left|\pz\left(\frac{x-y}{t}\r)\r|+\left|P_{\pz}^s
\left(\frac{x-y}{t}\r)\r|\r\}\,dy\nonumber\\
&\ls&\|a\|_{L^q_{\oz}(\rn)}\frac{R_0^{s+n+1}}
{[\oz(Q_0)]^{1/q}}\left\{\frac{1}{|x-x_0|^{n+s+1}}
\frac{1}{t^n}\int_{Q(x_0,\frac{|x-x_0|}{2\sqrt{n}})^\complement}
\left|\pz\left(\frac{x-y}{t}\r)\r|\,dy\r.\nonumber\\  &&+\frac{1}{t^n}
\int_{Q(x_0,\frac{|x-x_0|}{2\sqrt{n}})^\complement}
\frac{1}{|y-x_0|^{n+s+1}}\left|P_{\pz}^s \left(\frac{x-y}{t}\r)\r|\,dy\Bigg\}
\nonumber\\
 &\ls&\frac{1}{\oz(Q_0)\rz(\oz(Q_0))}
\left\{\frac{R_0^{n+s+1}}{|x-x_0|^{n+s+1}}\r.\nonumber\\
&&+\frac{R_0^{n+s+1}}{|x-x_0|^{n+s}}\int_{Q(x_0,
\frac{|x-x_0|}{2\sqrt{n}})^\complement}
\frac{1}{|y-x_0|^{n+1}}\,dy\Bigg\}\nonumber \\
&\ls&\frac{1}{\oz(Q_0)\rz(\oz(Q_0))}
\frac{R_0^{n+s+1}}{|x-x_0|^{n+s+1}},
\end{eqnarray}
where $\xi\equiv\gz(y-z)+(1-\gz)(y-x_0)$ for some $\gz\in(0,1)$ and
in the third inequality we used the fact that for any $y\in [Q(x_0,
\frac{|x-x_0|}{2\sqrt{n}})]^\complement$ and $z\in Q_0$,
$$|(y-x_0)-\gz(z-x_0)|\gs|y-x_0|.$$
Thus, by \eqref{8.4}, \eqref{8.5},
\eqref{8.7}, \eqref{8.9} and $|x-x_0|\ge4nR_0$, we know that
\begin{eqnarray*}
\left|r_j (a)\ast\pz_t (x)\r|&\ls&\frac{1}{\oz(Q_0)\rz(\oz(Q_0))}
\left\{\frac{R_0^{(n+s+1)}}{|x-x_0|^{n+s+1}}+
\frac{R_0^{(n+s+1-\dz)}}{|x-x_0|^{n+s+1-\dz}}\r\}\\
&\ls&\frac{1}{\oz(Q_0)\rz(\oz(Q_0))}
\frac{R_0^{(n+s+1-\dz)}}{|x-x_0|^{n+s+1-\dz}},
\end{eqnarray*}
which together with the arbitrariness of $\pz\in\cd^0_N (\rn)$
implies that for all $x\in(\wz{Q}_0)^\complement$,
\begin{eqnarray}\label{8.10}
\cg^0_N \left(r_j (a)\r)(x)\ls\frac{1}{\oz(Q_0)\rz(\oz(Q_0))}
\frac{R_0^{(n+s+1-\dz)}}{|x-x_0|^{n+s+1-\dz}}.
\end{eqnarray}
Take $\dz\in(0,\fz)$ small enough such that
$p_{\bfai}(n+s+1-\dz)>nq$. By the fact that
$$\supp(\cg^0_N (r_j
(a)))\subset Q(x_0, R_0+8)\subset Q(x_0,9)$$
and Lemma \ref{l2.3}(i),
we know that there exists an $\wz{\oz}\in A_q (\rn)$ such that
$\wz{\oz}=\oz$ on $Q(x_0,9)$. Let $m_0$ be the integer such that
$2^{m_0 -1}nR_0\le9<2^{m_0}nR_0$. By \eqref{8.10}, the lower type
$p_{\bfai}$ property of $\bfai$, Lemma \ref{l2.3}(viii) and
$p_{\bfai}(n+s+1-\dz)>nq$, we conclude that
\begin{eqnarray*}
\mathrm{I_2}&\ls&\int_{Q(x_0,9)\setminus\wz{Q}_0}\bfai\left(\cg^0_N
(r_j (a))(x)\r)\wz{\oz}(x)\,dx\\
&\ls&\sum_{j=3}^{m_0}\int_{2^{j+1}nQ_0\setminus2^j n
 Q_0}\bfai\left(\frac{1}{\oz(Q_0)\rz(\oz(Q_0))}
\frac{R_0^{(n+s+1-\dz)}}{|x-x_0|^{n+s+1-\dz}}\r)\wz{\oz}(x)\,dx\\
&\ls&\frac{1}{\oz(Q_0)}\sum_{j=3}^{m_0}\int_{2^{j+1}nQ_0\setminus2^j
nQ_0}\left(\frac{R_0^{n+s+1-\dz}}{|x-x_0|^{n+s+1-\dz}}\r)
^{p_{\bfai}}\wz{\oz}(x)\,dx\\
&\ls&\sum_{j=3}^{m_0}2^{k[(n+s+1-\dz)p_{\bfai}-nq]}\ls1,
\end{eqnarray*}
which together with \eqref{8.2} and \eqref{8.3} implies \eqref{8.1}
in Case 2). This finishes the proof of Theorem \ref{t8.2}.
\end{proof}

\begin{remark}\label{r8.4}
Theorem \ref{t8.2} when $\oz\in A^{\loc}_1 (\rn)$ and
$\bfai(t)\equiv t$ for all $t\in(0,\fz)$ was obtained by Tang
\cite[Lemma\,8.3]{Ta1}.
\end{remark}

Next, we introduce the local fractional integral and, using Theorem
\ref{t6.4}, prove that they are boundedness from $h^p_{\oz^p}(\rn)$
to $L^q_{\oz^q}(\rn)$ when $q\in[1,\fz)$, and from
$h^p_{\oz^p}(\rn)$ to $h^q_{\oz^q}(\rn)$ when $q\in(0,1]$, provided
that $\oz$ satisfies $\oz^{\frac{nr}{nr-n-r\az}}\in A^{\loc}_1
(\rn)$ for some $r\in(\frac{n}{n-\az},\fz)$ and
$\int_{\rn}[\oz(x)]^p\,dx=\fz$. We begin with some notions.

\begin{definition}\label{d8.5}
Let $\az\in[0,n)$ and $\phi_0$ be as in Definition \ref{d8.1}. For
any $f\in\cs(\rn)$ and all $x\in\rn$, the \emph{local fractional
integral} $I^{\loc}_{\az}(f)$ of $f$ is defined by
$$I^{\loc}_{\az}(f)(x)\equiv\int_{\rn}\frac{\phi_0 (y)}{|y|^{n-\az}}
f(x-y)\,dy.$$
\end{definition}

\begin{definition}\label{d8.6}
(i) If there exist $r\in(1,\fz)$ and a positive constant $C$ such
that for all cubes $Q\subset\rn$ with sidelength $l(Q)\in(0,1]$,
\begin{eqnarray}\label{8.11}
\left(\frac{1}{|Q|}\int_{Q}[\oz(x)]^r \,dx\r)^{1/r}\le C
\left(\frac{1}{|Q|}\int_{Q}\oz(x) \,dx\r),
\end{eqnarray}
then $\oz$ is said to satisfy the {\it local reverse H\"older
inequality of order $r$,} which is denoted by $\oz\in RH^{\loc}_r
(\rn)$. Furthermore, let $RH^{\loc}_r (\oz)\equiv\inf C$, where the
infimum is taken over all the positive constants $C$ satisfying
\eqref{8.11}.

(ii) Let $p,\,q\in(1,\fz)$. A locally integrable nonnegative
function $\oz$ on $\rn$ is said to belong to the \emph{class}
$A^{\loc}(p,\,q)$, if there exists a positive constant $C$ such that
for all cubes $Q\subset\rn$ with sidelength $l(Q)\in(0,1]$,
\begin{eqnarray}\label{8.12}
\left(\frac{1}{|Q|}\int_{Q}[\oz(x)]^q \,dx\r)^{1/q}
\left(\frac{1}{|Q|}\int_{Q}[\oz(x)]^{-p'} \,dx\r)^{1/p'}\le C,
\end{eqnarray}
 where and in what follows, $\frac{1}{p}+\frac{1}{p'}=1$. Furthermore,
 let
$A^{\loc}(p,\,q)(\oz)\equiv\inf\{C\}$, where the infimum is taken
over all the positive constants $C$ satisfying \eqref{8.12}.
\end{definition}

\begin{remark}\label{r8.7}
(i) Let $r$ be as in Definition \ref{d8.6}(i). If \eqref{8.11} holds
for all cubes $Q\subset\rn$, then $\oz$ is said to satisfy the {\it
reverse H\"older inequality of order $r$,} which is denoted by
$\oz\in RH_r (\rn)$ (see, for example, \cite{gr85}). Let $p,\,q$ be
as in Definition \ref{d8.6}(ii). If \eqref{8.12} holds for all cubes
$Q\subset\rn$, then $\oz$ is said to belong to the \emph{class}
$A(p,\,q)$.

(ii) For any given positive constant $A_1$, let the cube $Q$ satisfy
$l(Q)=A_1$. Similarly to the proof of Lemma \ref{l2.3}(i), we have
that for any $\oz\in RH^{\loc}_r (\rn)$, there exists an
$\wz{\oz}\in RH_r (\rn)$ such that $\oz=\wz{\oz}$ on $Q$ and $RH_r
(\wz{\oz})\ls RH^{\loc}_r (\oz)$, where $RH_r (\wz{\oz})$ is defined
similarly to $RH^{\loc}_r (\oz)$ and the implicit constant depends
only on $A_1$ and $n$.

(iii) Similarly to Remark \ref{r2.2}(ii), for any given constant
$A_2\in(0,\fz)$, the condition $l(Q)\in(0,1]$ in \eqref{8.11} can be
replaced by $l(Q)\in(0,A_2]$ with the positive constant $C$ in
\eqref{8.11} depending on $A_2$.
\end{remark}

About the relations of $A^{\loc}_{\fz}(\rn)$, $RH^{\loc}_r (\rn)$
and $A^{\loc}(p,\,q)$, we have the following conclusions.

\begin{lemma}\label{l8.8}
$\mathrm{(i)}$ Let $r\in(1,\,\fz)$. Then $\oz^r \in
A^{\loc}_{\fz}(\rn)$ if and only if $\oz\in RH_r^{\loc}(\rn)$.

$\mathrm{(ii)}$ Let $\az\in(0,\,n)$, $p\in(1,\,n/\az)$ and
$1/q=1/p-\az/n$. Then $\oz\in A^{\loc}(p,\,q)$ if and only if
$\oz^{-p'}\in A^{\loc}_{1+p'/q}(\rn)$.
\end{lemma}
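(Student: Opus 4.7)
The plan is to deduce both equivalences from the corresponding classical (non-local) results by the standard localization device in Lemma \ref{l2.3}(i), together with Remarks \ref{r2.2}(ii) and \ref{r8.7}(ii), (iii). The classical analogues---namely, $\oz^r\in A_\fz(\rn)$ iff $\oz\in RH_r(\rn)$, and $\oz\in A(p,q)$ iff $\oz^{-p'}\in A_{1+p'/q}(\rn)$---are well known (see, e.g., \cite{gr85,St93}), and will be treated as available black boxes.

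For part (i), I would first observe that the constraints $|Q|\le 1$ and $l(Q)\le 1$ defining $A^{\loc}_\fz(\rn)$ and $RH^{\loc}_r(\rn)$ differ only by a dimensional constant, which by Remark \ref{r2.2}(ii) and Remark \ref{r8.7}(iii) is harmless. For the forward direction, assume $\oz^r\in A^{\loc}_\fz(\rn)$. By Remark \ref{r2.2}(i), there exists $p\in(1,\fz)$ such that $\oz^r\in A^{\loc}_p(\rn)$. Given a cube $Q$ with $l(Q)\le 1$, apply Lemma \ref{l2.3}(i) (and its extended version in Remark \ref{r2.4} with $\wz C$ large enough so that $3Q$ is covered) to obtain $\overline{\oz^r}\in A_p(\rn)\subset A_\fz(\rn)$ agreeing with $\oz^r$ on $3Q$, with $A_p(\overline{\oz^r})\ls A^{\loc}_p(\oz^r)$. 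Set $\wz\oz\equiv(\overline{\oz^r})^{1/r}$; then the classical equivalence yields $\wz\oz\in RH_r(\rn)$, with constant controlled by $A_p(\overline{\oz^r})$. Since $\wz\oz=\oz$ on $Q$, the reverse H\"older inequality for $\wz\oz$ on $Q$ is exactly the desired inequality for $\oz$ on $Q$, and the constant is uniform in $Q$. The reverse direction is completely parallel: start from $\oz\in RH^{\loc}_r(\rn)$, pick a cube $Q$ with $l(Q)\le 1$, use Remark \ref{r8.7}(ii) to extend $\oz$ on $3Q$ to a global $\wz\oz\in RH_r(\rn)$, invoke the classical direction to obtain $\wz\oz^r\in A_\fz(\rn)$, and restrict back to $Q$.

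For part (ii), the argument is purely algebraic and does not require localization machinery beyond matching the domain of quantification ($l(Q)\le 1$ in both conditions). With $s\equiv 1+p'/q$, one computes $s-1=p'/q$, hence $-1/(s-1)=-q/p'$ and $(\oz^{-p'})^{-1/(s-1)}=\oz^{q}$. Substituting into \eqref{2.1} shows that the $A^{\loc}_s(\rn)$ condition on $\oz^{-p'}$ reads
\[
\sup_{|Q|\le 1}\left(\frac{1}{|Q|}\int_Q \oz^{-p'}(x)\,dx\r)\left(\frac{1}{|Q|}\int_Q \oz^{q}(x)\,dx\r)^{p'/q}<\fz,
\]
which is precisely the $p'$-th power of the condition \eqref{8.12} defining $A^{\loc}(p,q)$. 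Taking $p'$-th roots gives the claimed equivalence, and the weight constants are related by $A^{\loc}(p,q)(\oz)=[A^{\loc}_{1+p'/q}(\oz^{-p'})]^{1/p'}$.

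The only mild obstacle is bookkeeping in part (i): one must verify that the implicit constants produced by Lemma \ref{l2.3}(i) and by the classical self-improvement/extrapolation results (which pass from $A_p$ and $RH_r$ constants) remain uniform over all cubes $Q$ with $l(Q)\le 1$. This is handled by applying Remarks \ref{r2.2}(ii), \ref{r2.4}, and \ref{r8.7}(iii) with a fixed enlargement constant (say, $3$), so that the extension cube $3Q$ has bounded sidelength and the resulting constants depend only on $n$, $p$, $r$, and the original local weight constants. Part (ii) requires no delicate estimates, so the whole proof is essentially a transfer argument plus one algebraic identity.
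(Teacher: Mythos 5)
Your proposal is correct and follows essentially the same route as the paper: in part (i) the paper also localizes via Lemma \ref{l2.3}(i) to a global $A_\fz(\rn)$ weight agreeing with $\oz^r$ on a fixed neighborhood of each small cube, invokes the classical equivalence $\wz\oz^r\in A_\fz(\rn)\Longleftrightarrow\wz\oz\in RH_r(\rn)$ (cited there as \cite[Lemma A]{dll03}), and restricts back, with the converse handled symmetrically via Remark \ref{r8.7}(ii); in part (ii) the paper simply declares the equivalence immediate from the definitions, which is exactly the algebraic substitution $(\oz^{-p'})^{-1/(s-1)}=\oz^q$ you carry out.
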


\begin{proof}
We first prove (i). Let $\oz^r \in A^{\loc}_{\fz}(\rn)$.  Then by
Lemma \ref{l2.3}(i), we know that for any cube $Q\equiv Q(x_0,l(Q))$
with $l(Q)\in(0,1]$, there exists a function $\wz{\oz}$ on $\rn$
such that
\begin{eqnarray}\label{8.13}
\wz{\oz}^r \in A_{\fz}(\rn) \ \text{and}\ \wz{\oz}=\oz \  \text{on}\
Q(x_0,1).
\end{eqnarray}
Moreover, by \cite[Lemma A]{dll03}, we know that
\begin{eqnarray}\label{8.14}
\wz{\oz}^r \in A_{\fz}(\rn) \ \text{if and only if}\ \wz{\oz}\in
RH_r (\rn).
\end{eqnarray}
Thus, for any cube $Q(x_0,l(Q))$ with $l(Q)\in(0,1]$, by
\eqref{8.13} and \eqref{8.14}, we have
\begin{eqnarray*}
\left(\frac{1}{|Q|}\int_{Q}[\oz(x)]^r \,dx\r)^{1/r}=
\left(\frac{1}{|Q|}\int_{Q}[\wz{\oz}(x)]^r \,dx\r)^{1/r}
\ls\frac{1}{|Q|}\int_{Q}\wz{\oz}(x)\,dx\sim\frac{1}{|Q|}
\int_{Q}\oz(x)\,dx,
\end{eqnarray*}
which together with the arbitrariness of the cube $Q(x_0,l(Q))$
implies that $\oz\in RH^{\loc}_r (\rn)$.

Conversely, let $\oz\in RH^{\loc}_r (\rn)$.  Then by Remark
\ref{r8.7}(ii), we know that for any cube $Q(x_0,l(Q))$ with
$l(Q)\in(0,1]$, there exists a function $\wz{\oz}$ on $\rn$ such
that $\wz{\oz}\in RH_r (\rn)$ and $\wz{\oz}=\oz$ on $Q(x_0,1)$,
which together with \eqref{8.14} and the arbitrariness of the cube
$Q(x_0,l(Q))$ implies that $\oz\in A^{\loc}_{\fz}(\rn)$. This
finishes the proof of (i).

By the definitions of $A^{\loc}(p,\,q)$ and
$A^{\loc}_{1+p'/q}(\rn)$, we see that (ii) holds, which completes
the proof of Lemma \ref{l8.8}.
\end{proof}

To establish the boundedness of local fractional integrals, we need
the following technical lemma.

\begin{lemma}\label{l8.9}
Let $\az\in(0,n)$, $p\in(1,\frac{n}{\az})$ and
$\frac{1}{q}=\frac{1}{p}-\frac{\az}{n}$. For some $r\in(q,\fz)$, if
$$\oz^{-r'}\in A^{\loc}(q'/r',\,p'/r'),$$
then there exists a positive constant $C$ such that for all $f\in
L^p_{\oz^p}(\rn)$,
\begin{eqnarray}\label{8.15}
\left\|I^{\loc}_{\az}(f)\r\|_{L^q_{\oz^q}(\rn)}\le
C\|f\|_{L^p_{\oz^p}(\rn)},
\end{eqnarray}
where $p',\,q'$ and $r'$ respectively denote the conjugate indices
of $p,\,q$ and $r$.
\end{lemma}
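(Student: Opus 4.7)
The plan is to leverage the compact support of the kernel of $I^{\loc}_\az$ and reduce the weighted bound to the classical Muckenhoupt-Wheeden theorem for the global Riesz potential $I_\az g(x)\equiv\int_\rn|x-y|^{\az-n}g(y)\,dy$. Since $\phi_0$ is supported in $Q(0,2)$, the value $I^{\loc}_\az f(x)$ depends only on $f$ on the ball $B(x,2\sqrt{n})$, and one has the pointwise bound $|I^{\loc}_\az f(x)|\le I_\az(|f|\chi_{B(x,2\sqrt{n})})(x)$. I would partition $\rn$ into a family of unit cubes $\{Q_k\}_k$ with pairwise disjoint interiors, and let $Q_k^\ast$ be the cube concentric with $Q_k$ of sidelength $1+4\sqrt{n}$, so that $B(x,2\sqrt{n})\subset Q_k^\ast$ whenever $x\in Q_k$; these enlarged cubes have bounded overlap and bounded sidelength.

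Next, I would translate the hypothesis $\oz^{-r'}\in A^{\loc}(q'/r',p'/r')$ using Definition \ref{d8.6}(ii). A direct computation (taking an $r'$-th root of the defining inequality) shows that it is equivalent to the local $A(p,s)$-type condition
$$\sup_{l(Q)\le 1}\left(\frac{1}{|Q|}\int_Q\oz(x)^{s}\,dx\right)^{1/s}\left(\frac{1}{|Q|}\int_Q\oz(x)^{-p'}\,dx\right)^{1/p'}<\fz,$$
where $s\equiv r'q'/(q'-r')$. Using the identity $q+q'=qq'$ one checks that $s-q=qq'(r'-1)/(q'-r')>0$; hence $s>q$, and Jensen's inequality yields $\oz\in A^{\loc}(p,q)$, which by Lemma \ref{l8.8}(ii) means $\oz^{-p'}\in A^{\loc}_{1+p'/q}(\rn)$.

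Then, applying Lemma \ref{l2.3}(i) together with Remark \ref{r2.4} to $\oz^{-p'}$ on the cube $Q_k^\ast$ (which has bounded sidelength), I obtain a global weight $\wz\sigma_k\in A_{1+p'/q}(\rn)$ agreeing with $\oz^{-p'}$ on $Q_k^\ast$, with Muckenhoupt constants uniform in $k$. Setting $\wz\oz_k\equiv\wz\sigma_k^{-1/p'}$ produces a weight in the classical Muckenhoupt-Wheeden class $A(p,q)$ agreeing with $\oz$ on $Q_k^\ast$. The classical theorem applied to $\wz\oz_k$ then yields
$$\int_{Q_k}|I^{\loc}_\az f(x)|^q\oz(x)^q\,dx\le\int_\rn|I_\az(|f|\chi_{Q_k^\ast})(x)|^q\wz\oz_k(x)^q\,dx\lesssim\left(\int_{Q_k^\ast}|f(x)|^p\oz(x)^p\,dx\right)^{q/p}.$$
Summing over $k$ and using $q/p>1$ (since $\az>0$), whence $\sum_k a_k^{q/p}\le(\sum_k a_k)^{q/p}$ for $a_k\ge 0$, together with the bounded overlap of $\{Q_k^\ast\}_k$, produces
$$\int_\rn|I^{\loc}_\az f(x)|^q\oz(x)^q\,dx\lesssim\left(\sum_k\int_{Q_k^\ast}|f(x)|^p\oz(x)^p\,dx\right)^{q/p}\lesssim\|f\|_{L^p_{\oz^p}(\rn)}^q,$$
which is \eqref{8.15} after taking a $q$-th root.

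The main obstacle will be the middle two steps: carefully translating the hypothesis into a classical $A(p,q)$-type condition and extending the weight to a global Muckenhoupt-Wheeden weight with uniform constants across the unit cubes. The role of the strict inequality $r>q$ in the hypothesis is precisely to supply the reverse-Hölder-type buffer (manifested in the strict inequality $s>q$) that makes this uniform extension possible via the local-to-global transfer of Lemma \ref{l2.3}(i).
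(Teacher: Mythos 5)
Your proof is correct and follows the same skeleton as the paper's: tile $\rn$ with unit cubes $\{Q_k\}_k$, exploit that the kernel of $I^{\loc}_\az$ has compact support so that $I^{\loc}_\az f$ on $Q_k$ depends only on $f$ on a fixed dilate $Q^\ast_k$, extend the local weight on $Q^\ast_k$ to a global Muckenhoupt weight with uniform constants via Lemma \ref{l2.3}(i) and Remark \ref{r2.4}, apply a global weighted inequality for the fractional integral, and sum using the bounded overlap of $\{Q^\ast_k\}_k$ together with $q/p>1$. Where you diverge is the global theorem invoked: the paper extends $\oz$ to a $\wz{\oz}$ with $\wz{\oz}^{-r'}\in A(q'/r',\,p'/r')$ and appeals to \cite[Theorem 2]{dl98}, a weighted inequality for fractional integrals with rough kernels formulated under precisely this $r'$-type hypothesis, whereas you first convert $\oz^{-r'}\in A^{\loc}(q'/r',\,p'/r')$ into the local Muckenhoupt--Wheeden condition $\oz\in A^{\loc}(p,q)$ (your identity $s\equiv r'q'/(q'-r')>q$ and Jensen's inequality), pass to $\oz^{-p'}\in A^{\loc}_{1+p'/q}(\rn)$ by Lemma \ref{l8.8}(ii), extend, and then invoke the classical Muckenhoupt--Wheeden theorem. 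This is a genuine, mild simplification: since $I^{\loc}_\az$ has a smooth kernel, the classical result (needing only $\oz\in A(p,q)$) already suffices, and the $r'$-hypothesis is strictly stronger than what your argument requires. One small inaccuracy in your closing remark: the condition $r>q$ is not what makes the local-to-global extension possible --- Lemma \ref{l2.3}(i) applies just as well starting from $\oz^{-p'}\in A^{\loc}_{1+p'/q}(\rn)$ without any buffer. The $r'$-hypothesis is inherited from the Ding--Lu framework and from the form of the weight conditions arising in Theorems \ref{t8.10} and \ref{t8.11}; once your Jensen step reduces it to $\oz\in A^{\loc}(p,q)$, the gap $s>q$ plays no further role in the proof.
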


\begin{proof}
Let $\oz^{-r'}\in A^{\loc}(q'/r',\,p'/r')$. For any
unit cube $Q\subset\rn$, from Lemmas \ref{l8.8}(ii) and
\ref{l2.3}(i), and Remark \ref{r2.4}, we deduce that there exists a
function $\wz{\oz}$ on $\rn$ such that $\wz{\oz}^{-r'}\in
A(q'/r',\,p'/r')$ and $\wz{\oz}=\oz$ on $5Q$. For
$\wz{\oz}^{-r'}\in A(q'/r',\,p'/r')$, similarly to
the proof of \cite[Theorem\,2]{dl98}, we know that for all $f\in
L^p_{\wz{\oz}^p}(\rn)$,
$$\left\|I^{\loc}_{\az}(f)\r\|_{L^q_{\wz{\oz}^q}(\rn)}\ls
\|f\|_{L^p_{\wz{\oz}^p}(\rn)},$$ which combined with the definition
of $I^{\loc}_{\az}(f)$ implies that
\begin{eqnarray}\label{8.16}
\left\|I^{\loc}_{\az}(f)\r\|_{L^q_{\oz^q}(Q)}&=&
\left\|I^{\loc}_{\az}(f\chi_{5Q})\r\|_{L^q_{\wz{\oz}^q}(Q)}\ls
\|f\chi_{5Q}\|_{L^p_{\wz{\oz}^p}(\rn)} \sim\|f\|_{L^p_{\oz^p}(5Q)}.
\end{eqnarray}
Take unit cubes $\{Q_i\}_{i=1}^{\fz}$ such that
$\cup_{i=1}^{\fz}Q_i=\rn$, their interiors are disjoint and
$$\sum_{i=1}^{\fz}\chi_{5Q_i}\le M,$$
where $M$ is a positive integer depending only on $n$. From this and
\eqref{8.16}, we infer that
$$\left\|I^{\loc}_{\az}(f)\r\|^q_{L^q_{\oz^q}(\rn)}=\sum_{i=1}^{\fz}
\left\|I^{\loc}_{\az}(f)\r\|^q_{L^q_{\oz^q}(Q_i)}\ls\sum_{i=1}^{\fz}
\|f\|^q_{L^p_{\oz^p}(5Q_i)}\ls\|f\|^q_{L^p_{\oz^p}(\rn)},$$ which
implies \eqref{8.15}. This finishes the proof of Lemma \ref{l8.9}.
\end{proof}

\begin{theorem}\label{t8.10}
Let $\az\in(0,n)$, $p\in[\frac{n}{n+\az},1]$ and
$\frac{1}{q}=\frac{1}{p}-\frac{\az}{n}$. For some
$r\in(\frac{n}{n-\az},\fz)$, if the weight $\oz$ satisfies
$\oz^{\frac{nr}{nr-n-r\az}}\in A^{\loc}_1 (\rn)$ and
$\int_{\rn}[\oz(x)]^p\,dx=\fz$, then there exists a positive
constant $C$ such that for all $f\in h^p_{\oz^p}(\rn)$,
$$\left\|I^{\loc}_{\az}(f)\r\|_{L^q_{\oz^q}(\rn)}\le
C\|f\|_{h^p_{\oz^p}(\rn)}.$$
\end{theorem}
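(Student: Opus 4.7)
The plan is to apply Theorem~\ref{t6.4}(i) with the $1$-quasi-Banach space $\cb_1=L^q_{\oz^q}(\rn)$ (which is a Banach space since $q\in[1,n/(n-\az)]$), the Orlicz function $\bfai(t)=t^p$ of upper type $\wz p=p\le 1=\bz$, and the linear (hence $\cb_1$-sublinear) operator $T=I^{\loc}_\az$. First I would verify that $\oz^p\in A^{\loc}_\fz(\rn)$, so that $h^p_{\oz^p}(\rn)$ is well-defined: writing $\sz\equiv nr/(nr-n-r\az)$, the identity $\oz^p=(\oz^\sz)^{p/\sz}$ together with the elementary fact that raising an $A^{\loc}_1$-weight to a power in $(0,1]$ preserves the $A^{\loc}_1$-class (an immediate Jensen's inequality argument) gives $\oz^p\in A^{\loc}_1\subset A^{\loc}_\fz(\rn)$; in particular $q_{\oz^p}=1$. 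Because $\int_{\rn}[\oz(x)]^p\,dx=\fz$, every $(\rz,\,q_0)_{\oz^p}$-single-atom vanishes almost everywhere, so it remains to produce a uniform bound
$$\|I^{\loc}_\az(a)\|_{L^q_{\oz^q}(\rn)}\le C$$
for every $(\rz,\,\fz,\,s)_{\oz^p}$-atom $a$ supported in a cube $Q=Q(x_0,R_0)$ with $R_0\in(0,2]$ and $s\in\zz_+$ chosen below; any such atom is also a $(\rz,\,q_0,\,s)_{\oz^p}$-atom for any $q_0\in(1,\fz)$, so this suffices, and we may use $\|a\|_{L^\fz(\rn)}\le[\oz^p(Q)]^{-1/p}$.

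Since $\phi_0$ is supported in $Q(0,2)$, one has $\supp(I^{\loc}_\az(a))\subset Q^*\equiv Q(x_0,R_0+2\sqrt n)$, whose sidelength is uniformly bounded. I would split $\|I^{\loc}_\az(a)\|_{L^q_{\oz^q}}^q$ into a near part over $\wz Q\equiv cQ$ (for a suitable absolute constant $c$) and a far part over $Q^*\setminus\wz Q$. On the near part, the direct pointwise estimate
$$|I^{\loc}_\az(a)(x)|\ls\|a\|_{L^\fz(\rn)}\int_Q|x-y|^{-(n-\az)}\,dy\ls R_0^\az[\oz^p(Q)]^{-1/p}$$
reduces matters to the weighted inequality $R_0^\az[\oz^q(\wz Q)]^{1/q}\ls[\oz^p(Q)]^{1/p}$. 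On the far part (nonempty only when $R_0<1$, in which case the vanishing-moment condition of the atom applies), I would subtract the order-$s$ Taylor polynomial of $y\mapsto\phi_0(x-y)/|x-y|^{n-\az}$ about $y=x_0$ and invoke the Taylor-remainder bound, exactly as in the handling of $\mathrm{G_2}$ inside the proof of Theorem~\ref{t8.2}. A dyadic decomposition of the annuli between $\wz Q$ and $Q^*$, combined with Lemma~\ref{l2.3}(i) to extend $\oz^q$ to a global $A_{p_0}$-weight on $Q^*$ and Lemma~\ref{l2.3}(viii) to control its dyadic growth, produces the same weighted bound provided $s$ is chosen so that $q(n-\az+s+1)>np_0$.

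The decisive ingredient is the weighted inequality $R_0^\az\,[\oz^q(Q)]^{1/q}\ls[\oz^p(Q)]^{1/p}$, which is a local $A(p,q)$-type condition. I would derive it directly from $\oz^\sz\in A^{\loc}_1$: this defining inequality gives $\oz^\sz(Q)\ls|Q|(\einf_Q\oz)^\sz$; since $\sz>q$ (because $1/\sz=1/r'-\az/n<1/p-\az/n=1/q$ by $p<r'$), Hölder's inequality yields $\oz^q(Q)\le[\oz^\sz(Q)]^{q/\sz}|Q|^{1-q/\sz}\ls|Q|(\einf_Q\oz)^q$; and since trivially $\oz^p(Q)\ge|Q|(\einf_Q\oz)^p$, one concludes
$$[\oz^q(Q)]^{1/q}\ls|Q|^{1/q}\einf_Q\oz\le|Q|^{1/q-1/p}[\oz^p(Q)]^{1/p}=R_0^{-\az}[\oz^p(Q)]^{1/p}.$$
The passage from $Q$ to $\wz Q$ is handled via local doubling of $\oz^q$, which follows from $\oz^q\in A^{\loc}_\fz(\rn)$ (a consequence of $(\oz^q)^{\sz/q}=\oz^\sz\in A^{\loc}_1$, the local reverse-Hölder characterization of Lemma~\ref{l8.8}(i), and the same Jensen argument used above).

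The main obstacle lies in the far-part analysis: one must balance the polynomial decay produced by Taylor's theorem against the weight growth of $\oz^q$ on dyadic annuli of radii $2^kR_0$ inside the bounded cube $Q^*$, and the precise choice of $s$ (depending on the $A^{\loc}_{p_0}$-index of $\oz^q$), together with the uniform control $|Q^*|\ls 1$ coming from $R_0\le 2$, is exactly what makes the resulting geometric series converge to the same quantity that dominates the near part. Everything else---in particular the reduction to the local $A(p,q)$-type inequality---becomes routine once $\oz^\sz\in A^{\loc}_1$ has been translated into the pointwise essential-infimum bound above.
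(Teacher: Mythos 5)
Your reduction to bounding $(\rho,\infty,s)_{\omega^p}$-atoms and your invocation of Theorem~\ref{t6.4}(i) are incompatible. Theorem~\ref{t6.4}(i) requires uniform bounds on $T$ over \emph{all} $(\rho,q_0,s)_\omega$-atoms for some \emph{finite} $q_0\in(q_\omega,\infty)$; the subclass of $(\rho,\infty,s)_\omega$-atoms is strictly smaller, so the implication you write --- ``any such $L^\infty$ atom is also a $(\rho,q_0,s)_{\omega^p}$-atom, so this suffices'' --- runs the wrong way (a bound on the larger class gives one on the smaller, not vice versa). Dispensing with the finiteness of $q_0$ is exactly what requires the continuity hypothesis of Theorem~\ref{t6.4}(ii), which you do not have. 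And the issue is not cosmetic: your whole near-part estimate rests on $|I^{\loc}_\alpha(a)(x)|\lesssim\|a\|_{L^\infty(\rn)}R_0^\alpha$, which has no analogue for $L^{q_0}$-atoms with $q_0<\infty$.

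The paper's proof avoids this through an index manipulation your sketch omits: using the openness of $A^{\loc}_1$ (Lemma~\ref{l2.3}(ii)) applied to $\omega^{nr/(nr-n-r\alpha)}$, one produces $p_1\in(1,n/\alpha)$, $q_1$ with $1/q_1=1/p_1-\alpha/n$, and $\widetilde{q}=p_1(1+\varepsilon_2)>1$; the essential analytic input is then Lemma~\ref{l8.9} --- the weighted $L^{p_1}_{\omega^{p_1}}\to L^{q_1}_{\omega^{q_1}}$ boundedness of $I^{\loc}_\alpha$ --- which, combined with H\"older's inequality and the reverse-H\"older estimate~\eqref{8.25}, handles the near part for $L^{\widetilde{q}}_{\omega^p}$-atoms with $\widetilde{q}$ finite. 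Your derivation of $R_0^\alpha[\omega^q(Q)]^{1/q}\lesssim[\omega^p(Q)]^{1/p}$ from $\omega^{nr/(nr-n-r\alpha)}\in A^{\loc}_1$ via the essential infimum is correct and somewhat cleaner than the paper's route through $A^{\loc}(p,q)$ and $RH^{\loc}$, but it is no substitute for the weighted $L^p$--$L^q$ boundedness of $I^{\loc}_\alpha$, which your sketch never invokes and which is indispensable once the atoms are merely $L^{q_0}$ with $q_0$ finite.
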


\begin{proof}
Let $r\in(\frac{n}{n-\az},\fz)$ and $\oz$ satisfy
$\oz^{\frac{nr}{nr-n-r\az}}\in A^{\loc}_1 (\rn)$ and
$\int_{\rn}[\oz(x)]^p\,dx=\fz$. Then by
$$\oz^{\frac{nr}{nr-n-r\az}}\in A^{\loc}_1 (\rn)$$
and Lemma \ref{l2.3}(ii), we know that there exists
an $\ez_1\in(0,\fz)$ such that
\begin{equation}\label{8.17}
\oz^{\frac{nr(1+\ez_1)}{nr-n-r\az}}\in A^{\loc}_1 (\rn).
\end{equation}
Let
\begin{equation}\label{8.18}
\frac{1}{p_1}\equiv\frac{1}{r}+\frac{\az}{n}+\left(1-\frac{1}{r}-
\frac{\az}{n}\r)/(1+\ez_1)\ \text{and}\
\frac{1}{q_1}\equiv\frac{1}{p_1}-\frac{\az}{n}.
\end{equation}
Then by $r\in(\frac{n}{n-\az},\fz)$, we know that
\begin{equation}\label{8.19}
p_1\in\left(1,\frac{n}{\az}\r),\  r>q_1 \ \text{and}\ \oz^{-r'}\in
A^{\loc}\left(\frac{q_1'}{r'},\,\frac{p_1'}{r'}\r).
\end{equation}
Furthermore, from \eqref{8.17}, the fact that
$p_1<\frac{nr(1+\ez_1)}{nr-n-r\az}$ and H\"older's inequality, we
infer that
\begin{equation}\label{8.20}
\oz^{p_1}\in A^{\loc}_1 (\rn),
\end{equation}
which together with Lemma \ref{l2.3}(ii) implies that there exists
an $\ez_2\in(0,\fz)$ such that $\oz^{p_1 (1+\ez_2)}\in A^{\loc}_1
(\rn)$. Let
\begin{equation}\label{8.21}
\wz{q}\equiv p_1 (1+\ez_2).
\end{equation}
By $\frac{nr}{nr-n-r\az}>p$ and H\"older's inequality, we see that
$\oz^p \in A^{\loc}_1 (\rn)$. Let $s\equiv\lfz n(1/p -1)\rf$. To show
Theorem \ref{t8.10}, by the facts that $h^{p}_{\oz^p}(\rn)$ and
$L^q_{\oz^q}(\rn)$
 are respectively $p$-quasi-Banach space and 1-quasi-Banach space,
and Theorem \ref{t6.4}(i) with $\bfai(t)\equiv t^p$ for all
$t\in(0,\fz)$, it suffices to show that for any
$(p,\,\wz{q},\,s)_{\oz^p}$-atom $a$ supported in $Q_0\equiv
Q(x_0,R_0)$ with $R_0\in(0,2]$,
\begin{equation}\label{8.22}
\left\|I^{\loc}_{\az}(a)\r\|_{L^q_{\oz^q}(\rn)}\ls1.
\end{equation}

By $\supp (a)\subset Q_0$ and the definition of $I^{\loc}_{\az}(a)$,
we see that
\begin{equation}\label{8.23}
\supp\left(I^{\loc}_{\az}(a)\r)\subset Q(x_0,R_0 +4).
\end{equation}
Now, we prove \eqref{8.22} by considering the following two cases
for $R_0$.

{\it Case 1)} $R_0 \in[1,2]$. In this case, from \eqref{8.23},
H\"older's inequality, \eqref{8.19}, Lemma \ref{8.3}, $R_0 \in[1,2]$
and $\frac{1}{q}-\frac{1}{q_1}=\frac{1}{p}-\frac{1}{p_1}$, we deduce that
\begin{eqnarray}\label{8.24}
&&\left\{\int_{\rn}\left|I^{\loc}_{\az}(a)(x)\r|^q
[\oz(x)]^q\,dx\r\}^{1/q}\nonumber\\
 &&\hs\ls\left\{\int_{Q(x_0,R_0
+4)}\left|I^{\loc}_{\az}(a)(x)\r|^{q_1}
[\oz(x)]^{q_1}\,dx\r\}^{1/{q_1}}|Q_0|^{\frac{1}{q}-\frac{1}{q_1}}\nonumber\\
&&\hs\ls\|a\|_{L^{p_1}_{\oz^{p_1}}(\rn)}|Q_0|^{\frac{1}{p}-\frac{1}{p_1}}.
\end{eqnarray}
By \eqref{8.20} and the definition of $A^{\loc}_1 (\rn)$, we know
that $\oz^{\frac{p_1 (\wz{q}-p)}{(\wz{q}-p_1)}}\in A^{\loc}_1
(\rn)$. From this and Lemma \ref{8.2}(i), we infer that $\oz^p\in
RH^{\loc}_{\frac{p_1 (\wz{q}-p)}{p(\wz{q}-p_1)}} (\rn)$, which
implies that
\begin{eqnarray}\label{8.25}
\left\{\int_{Q_0}[\oz(x)]^p\,dx\r\}^{\frac{1}{\wz{q}}-
\frac{1}{p}}\left\{\int_{Q_0}[\oz(x)]^{\frac{p_1
(\wz{q}-p)}{(\wz{q}-p_1)}}\,dx\r\}^{\frac{1}{p_1}-
\frac{1}{\wz{q}}}\ls |Q_0|^{\frac{1}{p_1}- \frac{1}{p}}.
\end{eqnarray}
This, combined with \eqref{8.24}, H\"older's inequality and the fact
that $a$ is a $(p,\,\wz{q},\,s)_{\oz^p}$-atom, yields that
\begin{eqnarray*}
\left\|I^{\loc}_{\az}(a)\r\|_{L^q_{\oz^q}(\rn)}&\ls&
\|a\|_{L^{p_1}_{\oz^{p_1}}(\rn)}|Q_0|^{\frac{1}{p}-\frac{1}{p_1}}\\
&\ls&\left\{\int_{Q_0}|a(x)|^{\wz{q}}[\oz(x)]^p\,dx\r\}^{1/\wz{q}}
\left\{\int_{Q_0}[\oz(x)]^{\frac{p_1
(\wz{q}-p)}{(\wz{q}-p_1)}}\,dx\r\}^{\frac{1}{p_1}-
\frac{1}{\wz{q}}}|Q_0|^{\frac{1}{p}-\frac{1}{p_1}}\\
&\ls& \left\{\int_{Q_0}[\oz(x)]^p\,dx\r\}^{\frac{1}{\wz{q}}-
\frac{1}{p}}\left\{\int_{Q_0}[\oz(x)]^{\frac{p_1
(\wz{q}-p)}{(\wz{q}-p_1)}}\,dx\r\}^{\frac{1}{p_1}-
\frac{1}{\wz{q}}}|Q_0|^{\frac{1}{p}-\frac{1}{p_1}}\ls1.
\end{eqnarray*}
This shows \eqref{8.22} in Case 1).

{\it Case 2)} $R_0\in(0,1)$. In this case, let
$\wz{Q}_0\equiv4nQ_0$. From \eqref{8.23}, it follows that
\begin{eqnarray}\label{8.26}
\left\|I^{\loc}_{\az}(a)\r\|_{L^q_{\oz^q}(\rn)}&\le&
\left\{\int_{\wz{Q}_0}\left|I^{\loc}_{\az}(a)(x)\r|^q
[\oz(x)]^q\,dx\r\}^{1/q}\nonumber\\
&&+\left\{\int_{Q(x_0,R_0 +4)\setminus
\wz{Q}_0}\left|I^{\loc}_{\az}(a)(x)\r|^q [\oz(x)]^q\,dx\r\}^{1/q}\nonumber\\
 &\equiv&\mathrm{I_1}+\mathrm{I_2}.
\end{eqnarray}

To estimate $\mathrm{I_1}$, by H\"older's inequality, \eqref{8.15}
and \eqref{8.25}, we conclude that
\begin{eqnarray}\label{8.27}
\mathrm{I_1}&\le& \left(\int_{Q_1}\left|I^{\loc}_{\az}(a)(x)\r|^{q_1}
[\oz(x)]^{q_1}\,dx\r)^{1/{q_1}}|Q_0|^{\frac{1}{p}-\frac{1}{p_1}}\nonumber\\
&\ls&\|a\|_{L^{p_1}_{\oz^{p_1}}(\rn)}|Q_0|^{\frac{1}{p}-\frac{1}{p_1}}\nonumber\\
 &\ls&\left\{\int_{Q_0}[\oz(x)]^p\,dx\r\}^{\frac{1}{\wz{q}}-
\frac{1}{p}}\left\{\int_{Q_0}[\oz(x)]^{\frac{p_1
(\wz{q}-p)}{(\wz{q}-p_1)}}\,dx\r\}^{\frac{1}{p_1}-
\frac{1}{\wz{q}}}|Q_0|^{\frac{1}{p}-\frac{1}{p_1}}\ls1.
\end{eqnarray}

To estimate $\mathrm{I_2}$, for any fixed $x\in Q(x_0,R_0
+4)\setminus \wz{Q}_0$, let $E^s$ be the Taylor expansion of
$\frac{\phi_0 (\cdot)}{|\cdot|^{n-\az}}$ about $x-x_0$ with degree
$s$. Let $m_0$ be the integer such that
$$2^{m_0 -1}nR_0\le R_0+4<2^{m_0}nR_0.$$
Since $\oz^p\in A^{\loc}_1 (\rn)\subset
A^{\loc}_{\wz{q}}(\rn)$, by \eqref{2.1}, we have
$$\left(\int_{Q_0}[\oz(x)]^p\,dx\r)^{\frac{1}{\wz{q}}-
\frac{1}{p}} \left(\int_{Q_0}[\oz(x)]^{-\frac{p\wz{q}'}{\wz{q}}}
\,dx\r)^{\frac{1}{\wz{q}'}}\ls\left(\int_{Q_0}[\oz(x)]^p
\,dx\r)^{\frac{1}{p}}|Q_0|.$$ From this, the vanishing condition of
$a$, Minkowski's inequality, Taylor's remainder theorem, the fact
that
$$\sum_{\az\in\zz_+^n,\,|\az|=s+1}\left|\partial^{\az}
\left(\frac{\phi_0 (\cdot)}{|\cdot|^{n-\az}}\r)
(z)\r|\ls\frac{1}{|z|^{n+s+1-\az}}$$ for all
$z\in\rn\setminus\{0\}$, and H\"older's inequality, we deduce that
\begin{eqnarray}\label{8.28}
\hs\mathrm{I_2}&\le&\left(\sum_{k=2}^{m_0}\int_{2^{k+1}nQ_0\setminus2^{k}nQ_0}
\left\{\int_{Q_0}\left|\frac{\phi_0 (x-y)}{|x-y|^{n-\az}}-E^s
(x-y)\r|\r.\r. \nonumber\\  && \,\times|a(y)|\,dy\Bigg\}^q
[\oz(x)]^q \,dx\Bigg)^{1/q}
\nonumber\\
&\le&\sum_{k=2}^{m_0}\int_{Q_0}|a(y)|\left\{\int_{2^{k+1}nQ_0\setminus2^{k}nQ_0}
\left|\frac{\phi_0 (x-y)}{|x-y|^{n-\az}}-E^s (x-y)\r|^q
[\oz(x)]^q\,dx\r\}^{1/q}\,dy\nonumber\\
&\ls&\sum_{k=2}^{m_0}\int_{Q_0}|a(y)|\left\{\int_{2^{k+1}nQ_0\setminus2^{k}nQ_0}
\left(\frac{|y-x_0|^{s+1}}{|\tz(x-y)-(1-\tz)(x-x_0)|^{n+s+1-\az}}\r)^q\r.
\nonumber\\
&& \,\times[\oz(x)]^q\,dx\Bigg\}^{1/q}\,dy\nonumber\\
&\ls&\sum_{k=2}^{m_0}\int_{Q_0}|a(y)|\left\{\int_{2^{k+1}nQ_0\setminus2^{k}nQ_0}
\left(\frac{|y-x_0|^{s+1}}{|x-x_0|^{n+s+1-\az}}\r)^q
[\oz(x)]^q\,dx\r\}^{1/q}\,dy\nonumber\\
&\ls&\sum_{k=2}^{m_0}\frac{R_0^{\az-n}}{2^{k(n+s+1-\az)}}
\left\{\int_{Q_0}|a(y)|\,dy\r\}\left\{\int_{2^{k+1}nQ_0}[\oz(x)]^q
\,dx\r\}^{1/q}\nonumber\\
&\ls&\sum_{k=2}^{m_0}\frac{R_0^{\az-n}}{2^{k(n+s+1-\az)}}
\left\{\int_{Q_0}|a(y)|^{\wz{q}}[\oz(y)]^p\,dy\r\}^{1/\wz{q}}\nonumber\\
 &&
\,\times\left\{\int_{Q_0}[\oz(y)]^{-p\wz{q}^{'}/\wz{q}}\,dy\r\}^{1/
\wz{q}^{'}}\left\{\int_{2^{k+1}nQ_0}[\oz(x)]^q \,dx\r\}^{1/q}\nonumber\\
 &\ls&\sum_{k=2}^{m_0}\frac{R_0^{\az}}{2^{k(n+s+1-\az)}}
\left\{\int_{Q_0}[\oz(x)]^p\,dx\r\}^{-1/p}\left\{\int_{2^{k+1}nQ_0}[\oz(x)]^q
\,dx\r\}^{1/q},
\end{eqnarray}
where $\tz\in(0,1)$ and in the fourth inequality we used the fact
that for any $y\in Q_0$ and $x\in2^{k+1}nQ_0\setminus2^{k}nQ_0$ with
$k\in\{2,\,\cdots,\,m_0\}$, $|(x-x_0)-\tz(y-x_0)|\gs|x-x_0|$.

From $\frac{nr(1+\ez_1)}{nr-n-r\az}=\frac{rq_1}{r-q_1}>
\frac{rq}{r-q}$, \eqref{8.17} and H\"older's inequality, it follows
that
\begin{eqnarray}\label{8.29}
\oz^\frac{rq}{r-q}\in A^{\loc}_1 (\rn).
\end{eqnarray}
By Lemma \ref{l2.3}(i) and Remark \ref{r2.4} with $\wz{C}\equiv20n$,
we know that there exists a function $\wz{\oz}$ on $\rn$ such that
$\wz{\oz}^\frac{rq}{r-q}\in A_1 (\rn)$ such that $\wz{\oz}=\oz$ on
$Q(x_0,20n)$, which together with $2^{m_0 +1}nQ_0\subset Q(x_0,20n)$
and Lemma \ref{l2.3}(viii) implies that for any
$k\in\{1,\,2,\,\cdots,\,m_0\}$,
$$\int_{
2^k nQ_0}[\oz(x)]^{\frac{rq}{r-q}}\,dx=\int_{2^k
nQ_0}[\wz{\oz}(x)]^{\frac{rq}{r-q}}\,dx\ls2^{kn}
\int_{Q_0}[\wz{\oz}(x)]^{\frac{rq}{r-q}}\,dx\ls2^{kn}\int_{
Q_0}[\oz(x)]^{\frac{rq}{r-q}}\,dx.$$ By this estimate and H\"older's
inequality, we have
\begin{eqnarray}\label{8.30}
\left\{\int_{2^{k+1}nQ_0}[\oz(x)]^q \,dx\r\}^{1/q}\ls
R_0^{n/r}2^{\frac{kn}{q}}
\left\{\int_{Q_0}[\oz(x)]^{\frac{rq}{r-q}}\,dx\r\}^{\frac{1}{q}-\frac{1}{r}}.
\end{eqnarray}
Moreover, by \eqref{8.29} and Lemma \ref{l8.8}(i), we know that
$\oz^p \in RH^{\loc}_{\frac{rq}{p(r-q)}}(\rn)$. Thus, we have
\begin{eqnarray*}
\left\{\int_{Q_0}[\oz(x)]^p\,dx\r\}^{-1/p}
\left\{\int_{Q_0}[\oz(x)]^{\frac{rq}{r-q}}\,dx\r\}
^{\frac{1}{q}-\frac{1}{r}}\ls R_0^{-\frac{n}{r}-\az},
\end{eqnarray*}
which together with \eqref{8.28} and \eqref{8.30} implies that
$$\mathrm{I_2}\ls\sum_{k=2}^{m_0}2^{-k(n+s+1-\az-n/q)}.$$
From $\frac{1}{q}=\frac{1}{p}-\frac{\az}{n}$ and
$r>\frac{n}{n-\az}$, we deduce that $n+s+1-\az-n/q>n+s+1-n/p$, which
together with $s=\lfz n(1/p -1)\rf$ implies that $n+s+1-\az-n/q>0$.
Thus,
$$\mathrm{I_2}\ls\sum_{k=2}^{m_0}2^{-k(n+s+1-\az-n/q)}\ls1.$$
This combined with \eqref{8.26} and \eqref{8.27} proves \eqref{8.22}
in Case 2), which completes the proof of Theorem \ref{t8.10}.
\end{proof}

\begin{theorem}\label{t8.11}
Let $\az\in(0,1)$, $p\in(0,\frac{n}{n+\az}]$ and
$\frac{1}{q}=\frac{1}{p}-\frac{\az}{n}$. For some
$r\in(\frac{n}{n-\az},\fz)$, if the weight $\oz$ satisfies
$\oz^{\frac{nr}{nr-n-r\az}}\in A^{\loc}_1 (\rn)$ and
$\int_{\rn}[\oz(x)]^p\,dx=\fz$, then there exists a positive
constant $C$ such that for all $f\in h^p_{\oz^p}(\rn)$,
$$\left\|I^{\loc}_{\az}(f)\r\|_{h^q_{\oz^q}(\rn)}\le
C\|f\|_{h^p_{\oz^p}(\rn)}.$$
\end{theorem}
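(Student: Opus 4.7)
The plan is to mimic the strategy of Theorem \ref{t8.10}, but now targeting the Hardy-type quasi-norm via Theorem \ref{t3.14} (with $\bfai(t)\equiv t^q$, so that $p_\bfai=q$, $\rz(t)\sim t^{1/q-1}$), and invoking Theorem \ref{t6.4}(i) applied to the $q$-quasi-Banach space $\cb_q\equiv h^q_{\oz^q}(\rn)$. Thus it suffices to produce $\wz{q}\in(q_{\oz^p},\fz)$, set $s\equiv\lfz n(1/p-1)\rf\ge\lfz n(1/q-1)\rf$, and show that for every $(p,\,\wz{q},\,s)_{\oz^p}$-atom $a$ supported in $Q_0\equiv Q(x_0,R_0)$ with $R_0\in(0,2]$, one has $\|I^{\loc}_\az(a)\|_{h^q_{\oz^q}(\rn)}\ls 1$. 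By Theorem \ref{t3.14} this in turn reduces to bounding $\|\cg^0_N(I^{\loc}_\az(a))\|_{L^q_{\oz^q}(\rn)}\ls 1$ for $N\ge N_{\bfai,\,\oz^q}$. The auxiliary exponents $p_1\in(1,n/\az)$, $q_1$ with $1/q_1=1/p_1-\az/n$, and $\wz{q}$ will be chosen exactly as in \eqref{8.18}--\eqref{8.21}, so that $\oz^{-r'}\in A^{\loc}(q_1'/r',\,p_1'/r')$ (hence Lemma \ref{l8.9} applies), $\oz^{p_1}\in A^{\loc}_1(\rn)$, and $\wz{q}>p_1$ is large enough that the reverse H\"older inequality for $\oz^p$ provided by Lemma \ref{l8.8}(i) yields \eqref{8.25}.

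The proof then splits on the size of $R_0$. When $R_0\in[1,2]$, the distribution $I^{\loc}_\az(a)$ has support in $Q(x_0,R_0+4)$, so $\cg^0_N(I^{\loc}_\az(a))$ is supported in a cube of sidelength $\sim 1$; I will reduce the $L^q_{\oz^q}$ norm to the $L^{q_1}_{\oz^{q_1}}$ norm by H\"older, apply Proposition \ref{p3.2}(ii) (with $\oz^{q_1}\in A^{\loc}_{q_1}(\rn)$, which follows from $\oz^{p_1}\in A^{\loc}_1(\rn)$ by raising to a power and the embedding in Lemma \ref{l2.3}(iii)), and then invoke Lemma \ref{l8.9} and the same reverse-H\"older computation as in \eqref{8.25} to close it against the atomic normalization. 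The case $R_0\in(0,1)$ is handled by the dyadic splitting $\rn=\wz{Q}_0\cup(\wz{Q}_0)^\complement$ with $\wz{Q}_0=4nQ_0$, exactly as in \eqref{8.26}. The near part is estimated by H\"older plus Lemma \ref{l8.9} plus Proposition \ref{p3.2}(ii) and is essentially the analogue of \eqref{8.27}.

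The heart of the matter is the far part, where I need a pointwise decay estimate for $\cg^0_N(I^{\loc}_\az(a))$ on $(\wz{Q}_0)^\complement$ rather than just for $I^{\loc}_\az(a)$ itself. Fix $\pz\in\cd^0_N(\rn)$ and $t\in(0,1)$; by the support of $\phi_0$ the function $I^{\loc}_\az(a)\ast\pz_t(x)$ is supported in $|x-x_0|\le 2\sqrt{n}$, and the convolution writes as
\[
I^{\loc}_\az(a)\ast\pz_t(x)=\int_{\rn}a(y)\Bigl\{K\ast\pz_t\Bigr\}(x-y)\,dy,\qquad K(z)\equiv \frac{\phi_0(z)}{|z|^{n-\az}}.
\]
Using the vanishing moments of $a$ up to order $s$, I will subtract the degree-$s$ Taylor polynomial of $(K\ast\pz_t)$ around $x-x_0$, split the integral according to whether $y$ lies close to $Q_0$ or whether $x-y$ is close to the origin, and emulate the three-piece estimate $\mathrm{G_1}+\mathrm{G_2}+\mathrm{G_3}$ of Theorem \ref{t8.2}. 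Since $K$ itself has the pointwise decay $|\partial^{\az}K(z)|\ls |z|^{-(n-\az+|\az|)}$ for $z\ne 0$, the same scheme goes through and yields, for some small $\dz>0$,
\[
\cg^0_N(I^{\loc}_\az(a))(x)\ls [\oz^p(Q_0)]^{-1/p}\,R_0^{\az}\,\frac{R_0^{n+s+1-\dz}}{|x-x_0|^{n+s+1-\dz}}\,\chi_{B(x_0,2\sqrt{n})}(x).
\]
Integrating this in $L^q_{\oz^q}$ by dyadic annuli around $x_0$, choosing $\dz$ so that $(n+s+1-\dz)-\az-n/q>0$ (this uses $r>n/(n-\az)$ and $s=\lfz n(1/p-1)\rf$, so that $n+s+1-\az-n/q>n+s+1-n/p\ge 0$), and then inserting the weighted volume comparison $\{\int_{2^k nQ_0}\oz^q\}^{1/q}\ls R_0^{n/r}2^{kn/q}\{\int_{Q_0}\oz^{rq/(r-q)}\}^{1/q-1/r}$ derived from $\oz^{rq/(r-q)}\in A^{\loc}_1(\rn)$ (Lemmas \ref{l2.3}(i), \ref{l2.3}(viii) and Remark \ref{r2.4}) exactly as in \eqref{8.29}--\eqref{8.30}, and finally applying the reverse H\"older inequality $\oz^p\in RH^{\loc}_{rq/[p(r-q)]}(\rn)$ given by Lemma \ref{l8.8}(i), produces a convergent geometric series in $k$.

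The main obstacle will be the pointwise Taylor estimate for $\cg^0_N(I^{\loc}_\az(a))$ on $(\wz Q_0)^\complement$: the kernel $K(z)=\phi_0(z)|z|^{\az-n}$ is singular at the origin, so the splitting of the $y$-integral into the regions $\{y\in 2\sqrt{n}Q_0\}$, $\{y\in Q(x_0,|x-x_0|/(2\sqrt{n}))\setminus 2\sqrt{n}Q_0\}$ and $\{y\notin Q(x_0,|x-x_0|/(2\sqrt{n}))\}$ must be carried out with some care, verifying in each piece that $\gz(y-z)+(1-\gz)(y-x_0)$-type intermediate points have the correct lower bound so that the Taylor remainder of $K$ is integrable with the right power of $R_0/|x-x_0|$. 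Once this pointwise bound is established, the remaining weighted-integration step is routine by the ingredients already developed for Theorems \ref{t8.2} and \ref{t8.10}.
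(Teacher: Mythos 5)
Your overall strategy coincides with the paper's: reduce via Theorem \ref{t6.4}(i) (with $\bfai(t)\equiv t^p$ on the domain) and Theorem \ref{t3.14} (with $\bfai(t)\equiv t^q$ for the target) to proving $\|\cg^0_N(I^{\loc}_{\az}(a))\|_{L^q_{\oz^q}(\rn)}\ls1$ for $(p,\wz q,s)_{\oz^p}$-atoms, pick the same auxiliary exponents $p_1,q_1,\wz q$ as in \eqref{8.18}--\eqref{8.21}, split by $R_0\in[1,2]$ versus $R_0\in(0,1)$, use Lemma \ref{l8.9} and the reverse-H\"older bound \eqref{8.25} on the near part, and close the far part with a pointwise decay estimate plus the $A^{\loc}_1$/reverse-H\"older machinery of \eqref{8.29}--\eqref{8.30}.

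Two points need fixing. First, the far-part estimate as you describe it is internally inconsistent. The displayed formula $I^{\loc}_\az(a)\ast\pz_t(x)=\int a(y)(K\ast\pz_t)(x-y)\,dy$ has $y\in Q_0=\supp(a)$, so the three $y$-regions you list (around $2\sqrt{n}Q_0$, the middle annulus, and its complement) collapse to the first one only; that splitting belongs to the \emph{other} grouping. More importantly, if you try to subtract the degree-$s$ Taylor polynomial of $K\ast\pz_t$ about $x-x_0$, you need to control $\nabla^{s+1}(K\ast\pz_t)=(\nabla^{s+1}K)\ast\pz_t$ at points $\xi$ with $|\xi|\sim|x-x_0|$, and when the mollification scale $t$ exceeds $|\xi|$ the integral $\int_{|w-\xi|\ls t}|w|^{-(n+s+1-\az)}t^{-n}\,dw$ blows up because the singularity is non-integrable (for $s\ge 0$, $\az<1$). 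So a single Taylor subtraction against $K\ast\pz_t$ does not yield a bound uniform in $t\in(0,1)$. The paper's argument (and the $\mathrm{G}_1,\mathrm{G}_2,\mathrm{G}_3$ / $\mathrm{E}_1,\mathrm{E}_2,\mathrm{E}_3$ scheme you want to emulate) instead writes $I^{\loc}_\az(a)\ast\pz_t(x)=\frac{1}{t^n}\int I^{\loc}_\az(a)(y)[\pz(\frac{x-y}{t})-P^s_\pz(\frac{x-y}{t})]\,dy$, splits by the value of $|y-x_0|$ over the full support $Q(x_0,R_0+4)$ of $I^{\loc}_\az(a)$, and then \emph{within} $\mathrm{E}_2,\mathrm{E}_3$ uses a second Taylor expansion of the kernel $K$ to exploit the vanishing moments of $a$; that nested structure is what makes the argument survive all $t\in(0,1)$. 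Your claimed final decay bound is fine (it is comparable to \eqref{8.38} via $A^{\loc}_1$ for $\oz^p$), but the path to it should be the two-stage expansion, not the one you displayed.

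Second, a minor flaw: $\oz^{q_1}\in A^{\loc}_{q_1}(\rn)$ does not follow from $\oz^{p_1}\in A^{\loc}_1(\rn)$ by ``raising to a power'', since $q_1>p_1$ and $A^{\loc}_1$ is not stable under raising to powers greater than one. The paper derives $\oz^{q_1}\in A^{\loc}_1(\rn)$ directly from the hypothesis \eqref{8.17} ($\oz^{\frac{nr(1+\ez_1)}{nr-n-r\az}}\in A^{\loc}_1(\rn)$) together with $q_1<\frac{nr(1+\ez_1)}{nr-n-r\az}$ and Jensen/H\"older (lowering the power is allowed), and then $A^{\loc}_1\subset A^{\loc}_{q_1}$. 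You should do the same.
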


\begin{proof}
Let $p_1,\,q_1$ and $\wz{q}$ be respectively as in \eqref{8.18} and
\eqref{8.21}. To show Theorem \ref{t8.11}, by the facts that
$h^{p}_{\oz^p}(\rn)$ and $h^q_{\oz^q}(\rn)$ are respectively the
$p$-quasi-Banach space and the $q$-quasi-Banach space, Theorem
\ref{t6.4}(i) with $\bfai(t)\equiv t^p$ for all $t\in(0,\fz)$ and
Theorem \ref{t3.14}, it suffices to show that for any
$(p,\,\wz{q},\,s)_{\oz^p}$-atom $a$ supported in $Q_0\equiv
Q(x_0,R_0)$ with $R_0\in(0,2]$,
\begin{eqnarray}\label{8.31}
\left\|\cg^0_N\left(I^{\loc}_{\az}(a)\r)\r\|_{L^{\bfai}_{\oz^q}(\rn)}\ls1.
\end{eqnarray}

By $\supp(a)\subset Q_0$ and definitions of $I^{\loc}_{\az}(a)$ and
$\cg^0_N(I^{\loc}_{\az}(a))$, we know that
\begin{eqnarray}\label{8.32}
\supp\left(\cg^0_N(I^{\loc}_{\az}(a))\r)\subset Q(x_0,R_0 +8).
\end{eqnarray}
Now, we prove \eqref{8.31} by considering the following two cases
for $R_0$.

{\it Case 1)} $R_0 \in[1,2]$. In this case, by \eqref{8.17}, the
fact that $\frac{nr(1+\ez_1)}{nr-n-r\az}>q_1$ and H\"older's
inequality, we know that $\oz^{q_1}\in A^{\loc}_1 (\rn)$, where
$\ez_1$ is as in \eqref{8.17}. From this, \eqref{8.32}, H\"older's
inequality, Proposition \ref{p3.2}(ii), Lemma \ref{l8.9} and
\eqref{8.25}, it follows that
\begin{eqnarray*}
&&\int_{\rn}\left|\cg^0_N\left(I^{\loc}_{\az}(a)\r)(x)\r|^q
[\oz(x)]^q\,dx\\
&&\hs\ls|Q_0|^{1-\frac{q}{q_1}}\left\{\int_{Q(x_0,R_0 +8)}
\left|\cg^0_N\left(I^{\loc}_{\az}(a)\r)(x)\r|^{q_1}
[\oz(x)]^{q_1}\,dx\r\}^{q/q_1}\\
&&\hs\ls|Q_0|^{1-\frac{q}{q_1}}\left\{\int_{Q(x_0,R_0
+8)}\left|I^{\loc}_{\az}(a)(x)\r|^{q_1}
[\oz(x)]^{q_1}\,dx\r\}^{q/q_1}\\
&&\hs\ls|Q_0|^{1-\frac{q}{q_1}}\left\{\int_{Q_0}|a(x)|^{p_1}
[\oz(x)]^{p_1}\,dx\r\}^{q/p_1}\\
&&\hs\ls|Q_0|^{1-\frac{q}{q_1}}\left\{\int_{Q_0}|a(x)|^{\wz{q}}
[\oz(x)]^{p}\,dx\r\}^{q/\wz{q}}\left\{\int_{Q_0}[\oz(x)]^{\frac{p_1
(\wz{q}-p)}{\wz{q}-p_1}}\,dx\r\}^{(\frac{1}{p_1}-\frac{1}{\wz{q}})q}\\
&&\hs\ls\left\{|Q_0|^{\frac{1}{q}-\frac{1}{q_1}}\left(\int_{Q_0}[\oz(x)]^p
\,dx\r)^{\frac{1}{\wz{q}}-\frac{1}{p}}\left(\int_{Q_0}[\oz(x)]^{\frac{p_1
(\wz{q}-p)}{\wz{q}-p_1}}\,dx\r)^
{\frac{1}{p_1}-\frac{1}{\wz{q}}}\r\}^q\ls1,
\end{eqnarray*}
which proves \eqref{8.31} in Case 1).

{\it Case 2)} $R_0\in(0,1)$. In this case, let $\wz{Q}_0
\equiv8nQ_0$. Then from \eqref{8.32}, we conclude that
\begin{eqnarray}\label{8.33}
\int_{\rn}\left[\cg^0_N\left(I^{\loc}_{\az} a\r)(x)\r]^q [\oz(x)]^q\,dx
&=&\int_{\wz{Q}_0}\left[\cg^0_N\left(I^{\loc}_{\az} a\r)(x)\r]^q
[\oz(x)]^q\,dx\nonumber\\  &&\hs+\int_{Q(x_0,R_0
+8)\setminus\wz{Q}_0} \cdots \equiv\mathrm{I_1}+\mathrm{I_2}.
\end{eqnarray}
For $\mathrm{I_1}$, similarly to the proof of Case 1), we have
\begin{eqnarray}\label{8.34}
\mathrm{I_1}&\ls&|Q_0|^{1-\frac{q}{q_1}}\left\{\int_{\wz{Q}_0}
\left[\cg^0_N\left(I^{\loc}_{\az}(a)\r)(x)\r]^{q_1}
[\oz(x)]^{q_1}\,dx\r\}^{q/q_1}\nonumber\\
&\ls&|Q_0|^{1-\frac{q}{q_1}}\left\{\int_{\rn}\left|I^{\loc}_{\az}(a)(x)\r|^{q_1}
[\oz(x)]^{q_1}\,dx\r\}^{q/q_1}\nonumber\\
&\ls&|Q_0|^{1-\frac{q}{q_1}}\left\{\int_{Q_0}|a(x)|^{p_1}
[\oz(x)]^{p_1}\,dx\r\}^{q/p_1}\nonumber\\
&\ls&|Q_0|^{1-\frac{q}{q_1}}\left\{\int_{Q_0}|a(x)|^{\wz{q}}
[\oz(x)]^{p}\,dx\r\}^{q/\wz{q}}\left\{\int_{Q_0}[\oz(x)]^{\frac{p_1
(\wz{q}-p)}{\wz{q}-p_1}}\,dx\r\}^
{(\frac{1}{p_1}-\frac{1}{\wz{q}})q}\nonumber\\
&\ls&\left\{|Q_0|^{\frac{1}{q}-\frac{1}{q_1}}\left(\int_{Q_0}[\oz(x)]^p
\,dx\r)^{\frac{1}{\wz{q}}-\frac{1}{p}}\left(\int_{Q_0}[\oz(x)]^{\frac{p_1
(\wz{q}-p)}{\wz{q}-p_1}}\,dx\r)^
{\frac{1}{p_1}-\frac{1}{\wz{q}}}\r\}^q\ls1.
\end{eqnarray}
To estimate $\mathrm{I_2}$, let $x\in(\wz{Q}_0)^\complement$,
$\pz\in\cd^0_N (\rn)$, $t\in(0,1)$ and $P_{\pz}^s$ be the Taylor
expansion of $\pz$ about $(x-x_0)/t$ with degree $s$, where
$s\equiv\lfz n(1/p-1)\rf$. Then we have
\begin{eqnarray}\label{8.35}
\left|I^{\loc}_{\az}(a)\ast\pz_t
(x)\r|&=&\frac{1}{t^n}\left|\int_{\rn}I^{\loc}_{\az}
(a)(y)\pz\left(\frac{x-y}{t}\r)\,dy\r|\nonumber\\
 &=&\frac{1}{t^n}\left|\int_{\rn}I^{\loc}_{\az}
(a)(y)\left[\pz\left(\frac{x-y}{t}\r)-P_{\pz}^s
\left(\frac{x-y}{t}\r)\r]\,dy\r|\nonumber\\
 &\le&\frac{1}{t^n}\int_{2\sqrt{n}Q_0} \left|I^{\loc}_{\az}
(a)(y)\r|\left|\pz\left(\frac{x-y}{t}\r)-P_{\pz}^s
\left(\frac{x-y}{t}\r)\r|\,dy\nonumber\\
&&+\frac{1}{t^n}\int_{Q(x_0,\frac{|x-x_0|}{2\sqrt{n}}
)\setminus2\sqrt{n}Q_0}\cdots+
\frac{1}{t^n}\int_{[Q(x_0,\frac{|x-x_0|}{2\sqrt{n}})]^\complement}\cdots
\nonumber\\
&\equiv&\mathrm{E_1}+\mathrm{E_2}+\mathrm{E_3}.
\end{eqnarray}
To estimate $\mathrm{E_1}$, by $x\in(\wz{Q}_0)^\complement$ and
$t\in(0,1)$, we see that $\mathrm{E_1}\neq0$ implies that
$t>\frac{|x-x_0|}{2}$. From
$$\oz^{\frac{nr}{nr-n-r\az}}\in
A^{\loc}_1 (\rn)$$
and the definition of $A^{\loc}_1 (\rn)$, it
follows that $\oz\in A^{\loc}_1 (\rn)$. Let $q_2 \equiv \frac{2q_1
-1}{q_1}$. Then by $\oz\in A^{\loc}_1 (\rn)\subset
A^{\loc}_{q_2}(\rn)$ and Lemma \ref{l2.3}(iv), we know that
$\oz^{-q'_1}=\oz^{1-q'_2}\in A^{\loc}_{q'_2}(\rn)$. From these
facts, Taylor's remainder theorem, H\"older's inequality, Lemmas
\ref{l8.9} and \ref{l2.3}(v), Remark \ref{r2.4} with
$\wz{C}=2\sqrt{n}$, \eqref{8.25} and \eqref{2.2}, we infer that
\begin{eqnarray}\label{8.36}
\mathrm{E_1}&\ls&\frac{1}{t^{n+s+1}}\left\|I^{\loc}_{\az}
(a)\r\|_{L^{q_1}_{\oz^{q_1}}(\rn)}
\left\{\sum_{\az\in\zz^n_+,\,|\az|=s+1}\int_{2\sqrt{n}Q_0}
\left|\partial^{\az}\pz\left(\frac{\xi}{t}\r)\r|^{q_1'}\r.\nonumber\\
&&\,\times|y-x_0|
^{(s+1){q_1'}}[\oz(y)]^{-q_1'}\,dy\Bigg\}^{1/q_1'}\nonumber\\
&\ls&\frac{R_0^{s+1}}{|x-x_0|^{n+s+1}}\|a\|_{L^{p_1}
_{\oz^{p_1}}(\rn)} \left\{\int_{2\sqrt{n}Q_0}[\oz(y)]^{-q_1'}\,dy\r\}^{1/q_1'}
\nonumber\\
&\ls&\frac{R_0^{s+1}}{|x-x_0|^{n+s+1}}\left\{\int_{Q_0}|a(x)|^{\wz{q}}
[\oz(x)]^{p}\,dx\r\}^{1/\wz{q}}\nonumber\\
&&\,\times\left\{\int_{Q_0}[\oz(x)]^{\frac{p_1
(\wz{q}-p)}{\wz{q}-p_1}}\,dx\r\}^{\frac{1}{p_1}-
\frac{1}{\wz{q}}}\left\{\int_{Q_0}[\oz(y)]^{-q_1'}\,dy\r\}^{1/q_1'}\nonumber\\
 &\ls&\frac{R_0^{s+1}}{|x-x_0|^{n+s+1}}|Q_0|
^{\frac{1}{q_1}-\frac{1}{q}}
\left\{\int_{Q_0}[\oz(y)]^{-q_1'}\,dy\r\}^{1/q_1'}\nonumber\\
&\ls&
\frac{R_0^{s+1}}{|x-x_0|^{n+s+1}}|Q_0|
^{1-\frac{1}{q}}\left[\einf_{z\in Q_0}\oz(z)\r]^{-1},
\end{eqnarray}
where $\gz\in(0,1)$ and $\xi\equiv\gz(x-y)+(1-\gz) (x-x_0)$.
Similarly to the estimates of $G_2$ and $G_3$ in the proof of
Theorem \ref{t8.2}, we have
\begin{eqnarray}\label{8.37}
\max\{\mathrm{E_2},\,\mathrm{E_3}\}\ls\frac{R_0^{n+s+1}}{|x-x_0|
^{n+s+1-\az}}|Q_0|^{-1/p}\left[\einf_{z\in Q_0}\oz(z)\r]^{-1}.
\end{eqnarray}
Thus, from \eqref{8.35}, \eqref{8.36}, \eqref{8.37} and the facts
that $|x-x_0|\ge4nR_0$ and $\frac{1}{q}=\frac{1}{p}-\frac{\az}{n}$,
we deduce that
\begin{eqnarray*}
\left|\left[I^{\loc}_{\az} (a)\r]\ast\pz_t
(x)\r|&\ls&\frac{R_0^{s+1}}{|x-x_0|^{n+s+1}}|Q_0|
^{1-\frac{1}{q}}\left[\einf_{z\in
Q_0}\oz(z)\r]^{-1}\\
&&+\frac{R_0^{n+s+1}}{|x-x_0|
^{n+s+1-\az}}|Q_0|^{-1/p}\left[\einf_{z\in
Q_0}\oz(z)\r]^{-1}\\
&\ls&\frac{R_0^{n+s+1}}{|x-x_0|
^{n+s+1-\az}}|Q_0|^{-1/p}\left[\einf_{z\in Q_0}\oz(z)\r]^{-1},
\end{eqnarray*}
which together with the arbitrariness of $\pz\in\cd^0_N (\rn)$
implies that for any $x\in(\wz{Q}_0)^\complement$,
\begin{eqnarray}\label{8.38}
\cg^0_N \left(I^{\loc}_{\az} (a)\r)(x)\ls\frac{R_0^{n+s+1}}{|x-x_0|
^{n+s+1-\az}}|Q_0|^{-1/p}\left[\einf_{z\in Q_0}\oz(z)\r]^{-1}.
\end{eqnarray}
By $s=\lfz n(1/p-1)\rf$ and $\frac{1}{q}=\frac{1}{p}-\frac{\az}{n}$,
we know that $(n+s+1-\az)q-n>0$. Let $m_0$ be the integer such that
$2^{m_0 }nR_0\le R_0+8<2^{m_0+1}nR_0$. By
$$\oz^{\frac{nr}{nr-n-r\az}} \in A^{\loc}_1 (\rn)$$
and the definition
of $A^{\loc}_1 (\rn)$, we see that $\oz^q \in A^{\loc}_1 (\rn)$,
which together Lemma \ref{l2.3}(i) implies that there exists a
function $\wz{\oz}$ on $\rn$ such that $\wz{\oz}=\oz$ on $Q(x_0,
R_0+8)$ and $\wz{\oz}^q \in A_1 (\rn)$. From this,
$\frac{1}{q}=\frac{1}{p}-\frac{\az}{n}$, \eqref{8.38}, (i) and
(viii) of Lemma \ref{l2.3}, the definition of $A^{\loc}_1 (\rn)$ and
$(n+s+1-\az)q-n>0$, we infer that
\begin{eqnarray*}
\mathrm{I_2}&\ls&\int_{Q(x_0,R_0
+8)\setminus\wz{Q}_0}\left\{\frac{R_0^{n+s+1}}{|x-x_0|
^{n+s+1-\az}}|Q_0|^{-1/p}\left[\einf_{z\in Q_0}\oz(z)\r]^{-1}\r\}^q
[\wz{\oz}(x)]^q\,dx\\
&\ls&\sum_{k=3}^{m_0}\int_{2^{k+1}nQ_0\setminus2^k nQ_0}
\left\{\frac{R_0^{n+s+1-n/q-\az}}{(2^k R_0) ^{n+s+1-\az}}\left[\einf_{z\in
Q_0}\oz(z)\r]^{-1}\r\}^q
[\wz{\oz}(x)]^q\,dx\\
&\ls&\sum_{k=3}^{m_0}\frac{R_0^{-n}}{2^{k[(n+s+1-\az)q-n]}}\left[\einf_{z\in
Q_0}\oz(z)\r]^{-q}\int_{Q_0}[\oz(x)]^q\,dx\\
&\ls&\sum_{k=3}^{m_0}\frac{R_0^{-n}}{2^{k[(n+s+1-\az)q-n]}}|Q_0|\left[\einf_{z\in
Q_0}\oz(z)\r]^{-q}\einf_{z\in Q_0}[\oz(z)]^q\\
&\ls&\sum_{k=3}^{m_0}2^{-k[(n+s+1-\az)q-n]}\ls1,
\end{eqnarray*}
which together with \eqref{8.33} and \eqref{8.34} implies
\eqref{8.31} in Case 2). This finishes the proof of Theorem
\ref{t8.11}.
\end{proof}

The pseudo-differential operators have been extensively studied in
the literature, and they are important in the study of partial
differential equations and harmonic analysis; see, for example,
\cite{St93,Tay91,S99,Ta2}. Now, we recall the notion of the
pseudo-differential operators of order zero.

\begin{definition}\label{d8.12}
Let $\dz$ be a real number. A {\it  symbol} in $S^0_{1,\dz}(\rn)$ is
a smooth function $\sz(x,\xi)$ defined on $\rn\times\rn$ such that
for all multi-indices $\az$ and $\bz$, the following estimate holds:
$$\left|\partial^{\az}_x \partial^{\bz}_{\xi}\sz(x,\xi)\r|\le
C({\az,\,\bz})(1+|\xi|)^{-|\bz|+\dz|\az|},$$ where $C(\az,\,\bz)$ is
a positive constant independent of $x$ and $\xi$. Let $f$ be a
Schwartz function and $\hat{f}$ denote its Fourier transform. The
operator $T$ given by setting, for all $x\in\rn$,
$$Tf(x)\equiv\int_{\rn}\sz(x,\xi)e^{2\pi ix\xi}\hat{f}(\xi)\,d\xi
$$
is called a {\it  pseudo-differential operator} with symbol
$\sz(x,\xi)\in S^0_{1,\dz}(\rn)$.
\end{definition}

In the rest of this section, let
\begin{eqnarray}\label{8.39}
\phi(t)\equiv(1+t)^{\az}
\end{eqnarray}
for all $\az\in(0,\fz)$  and $t\in(0,\fz)$. Recall that a
\emph{weight} always means a locally integrable function which is
positive almost everywhere. The following notion of the weight class
$A_p (\phi)$ was introduced by Tang \cite{Ta2}.

\begin{definition}\label{d8.13}
A weight $\oz$ is said to belong to the \emph{class} $A_p (\phi)$
for $p\in(1,\fz)$, if there exists a positive constant $C$ such that
for all cubes $Q\equiv Q(x, r)$,
$$\left(\frac{1}{\phi(|Q|)|Q|}\int_{Q}\oz(y)\,dy\r)
\left(\frac{1}{\phi(|Q|)|Q|}\int_{Q}[\oz(y)]^{-\frac{1}{p-1}}\,dy\r)^{p-1}\le
C.$$ A weight $\oz$ is said to belong to the \emph{class} $A_1
(\phi)$, if there exists a positive constant $C$ such that for all
cubes $Q\subset\rn$ and almost every $x\in\rn$, $M_{\phi}(\oz)(x)\le
C\oz(x)$, where for all $x\in\rn$,
$$M_{\phi}(\oz)(x)\equiv\sup_{Q\ni x}\frac{1}{\phi(|Q|)|Q|}
\int_{Q}|f(y)|\,dy,$$ and the supremum is taken over all cubes
$Q\subset\rn$ and $Q\ni x$.
\end{definition}

\begin{remark}\label{r8.14}
By the definition of $A_p (\phi)$, we see that $A_p (\phi)\subset
A^{\loc}_p (\rn)$, and that $\phi(t)\ge1$ for all $t\in(0,\fz)$
implies that $A_p (\rn)\subset A_p (\phi)$ for all $p\in[1,\fz)$.
Moreover, if $\oz\in A_p(\phi)$, $\oz(x)\,dx$ may not be a doubling
measure; see the remark of Section 7 in \cite{Ta1} for the details.
\end{remark}

Similarly to the classical Muckenhoupt weights, we recall some
properties of weights $\oz\in A_{\fz}(\phi)\equiv \cup_{1\le p<\fz}
A_p(\phi)$. The following Lemmas \ref{l8.15} and \ref{l8.16} are
respectively Lemmas 7.3 and 7.4 in \cite{Ta1}.

\begin{lemma}\label{l8.15}
$\mathrm{(i)}$ If $1\le p_1 <p_2<\fz$, then $A_{p_1}(\phi)\subset
A_{p_2}(\phi)$.

$\mathrm{(ii)}$ For $p\in(1,\fz)$, $\oz\in A_{p}(\phi)$ if and only
if $\oz^{-\frac{1}{p-1}}\in A_{p'}(\phi)$, where $1/p +1/p' =1$.

$\mathrm{(iii)}$ If $\oz\in A_p (\phi)$ for $p\in[1,\fz)$, then
there exists a positive constant $C$ such that for any cube
$Q\subset\rn$ and measurable set $E\subset Q$,
$$\frac{|E|}{\phi(|Q|)|Q|}\le C\left(\frac{\oz(E)}{\oz(Q)}\r)^{1/p}.$$
\end{lemma}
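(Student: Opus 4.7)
The plan is to verify the three assertions by direct adaptation of the classical Muckenhoupt weight arguments, with $\phi(|Q|)|Q|$ playing the role of $|Q|$ throughout. All the key ingredients are either Hölder's inequality or the defining inequality itself, so no major new idea is needed; the only point requiring care is handling $p=1$ separately in parts (i) and (iii), since the $A_1(\phi)$ condition is pointwise rather than integral.

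For (i), when $p_1>1$, I would observe that $1/(p_1-1)>1/(p_2-1)$ and apply Hölder's inequality with exponent $(p_1-1)/(p_2-1)>1$ to the factor $\int_Q \omega^{-1/(p_2-1)}$, which bounds it by a constant times $[\phi(|Q|)|Q|]^{1-(p_2-1)/(p_1-1)} \bigl(\int_Q\omega^{-1/(p_1-1)}\bigr)^{(p_2-1)/(p_1-1)}$; raising to the power $p_2-1$ and multiplying by $\omega(Q)/[\phi(|Q|)|Q|]$ then yields the $A_{p_2}(\phi)$ condition from $A_{p_1}(\phi)$. When $p_1=1$, the pointwise bound $M_\phi(\omega)(x)\le C\omega(x)$ a.e.\ gives $\phi(|Q|)|Q|\cdot\operatorname{ess\,inf}_{x\in Q}\omega(x) \ge C^{-1}\omega(Q)$, and combining with $\int_Q\omega^{-1/(p_2-1)}\le |Q|\bigl(\operatorname{ess\,inf}_{x\in Q}\omega(x)\bigr)^{-1/(p_2-1)}$ gives the conclusion.

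For (ii), I would compute directly: setting $\sigma\equiv\omega^{-1/(p-1)}$ and using $p'-1=1/(p-1)$, I note that $\sigma^{-1/(p'-1)}=\omega$. The $A_{p'}(\phi)$ expression for $\sigma$ is therefore
\[
\left(\frac{1}{\phi(|Q|)|Q|}\int_Q\sigma\right)\left(\frac{1}{\phi(|Q|)|Q|}\int_Q\omega\right)^{p'-1},
\]
which is just the $(p'-1)$-th power of the $A_p(\phi)$ expression for $\omega$. Hence $\omega\in A_p(\phi)$ iff $\sigma\in A_{p'}(\phi)$, with the constants related by the power $p'-1=1/(p-1)$.

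For (iii), when $p\in(1,\fz)$, I would write $|E|=\int_Q\chi_E\,\omega^{1/p}\omega^{-1/p}\,dx$ and apply Hölder with exponents $p$ and $p'$ to obtain
\[
|E|\le\omega(E)^{1/p}\left(\int_Q\omega^{-1/(p-1)}\right)^{1/p'}.
\]
The $A_p(\phi)$ condition rearranges to $\bigl(\int_Q\omega^{-1/(p-1)}\bigr)^{1/p'}\le C^{1/p}\phi(|Q|)|Q|\,\omega(Q)^{-1/p}$, and substituting gives the desired estimate. For $p=1$ the argument is even shorter: from $A_1(\phi)$ one has $\omega(Q)\le C\phi(|Q|)|Q|\,\operatorname{ess\,inf}_{x\in Q}\omega(x)$, and since $\omega(E)\ge |E|\operatorname{ess\,inf}_{x\in Q}\omega(x)$, dividing yields $|E|/[\phi(|Q|)|Q|]\le C\omega(E)/\omega(Q)$. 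I do not anticipate any genuine obstacle here; the only minor subtlety is being consistent about which power of the $A_p(\phi)$ constant appears, since $\phi(|Q|)|Q|$ has replaced $|Q|$ in both factors of the weight condition.
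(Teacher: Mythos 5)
The paper does not give its own proof of this lemma; it simply cites \cite[Lemma~7.3]{Ta1}. Your proposal to adapt the classical Muckenhoupt arguments with $\phi(|Q|)|Q|$ replacing $|Q|$ is exactly the expected route, and parts~(ii) and~(iii) are carried out correctly.

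In part~(i), however, the exponents are written backwards. Since $p_1<p_2$, the ratio $(p_1-1)/(p_2-1)$ is less than $1$, not greater; the correct Hölder exponent is $r=(p_2-1)/(p_1-1)>1$, which gives
$$\int_Q\omega^{-1/(p_2-1)}\,dx\le\left(\int_Q\omega^{-1/(p_1-1)}\,dx\right)^{(p_1-1)/(p_2-1)}|Q|^{1-(p_1-1)/(p_2-1)},$$
with both exponents equal to $(p_1-1)/(p_2-1)$ and its complement, not $(p_2-1)/(p_1-1)$ and its complement (the latter would make the power of $|Q|$ negative). Once corrected, raising to the $(p_2-1)$-th power and combining with the $A_{p_1}(\phi)$ bound produces the extra factor $[\phi(|Q|)]^{-(p_2-p_1)}$, and it is precisely the property $\phi(t)=(1+t)^\alpha\ge1$ that lets you discard it. This last observation (that $\phi\ge1$, so the $\phi(|Q|)|Q|$-normalized measure on $Q$ is a sub-probability measure and the excess powers of $\phi(|Q|)$ only help) is the one genuinely non-cosmetic step in transferring the classical argument, and it is used again implicitly in your $p_1=1$ case; it deserves to be stated explicitly rather than folded into ``plays the role of $|Q|$ throughout.''
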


\begin{lemma}\label{l8.16}
Let $T$ be an $S^0_{1,\,0}(\rn)$ pseudo-differential operator. Then
for $\oz\in A_p (\phi)$ with $p\in(1,\fz)$, there exists a positive
constant $C(p,\,\oz)$ such that for all $f\in L^p_{\oz}(\rn)$,
$$\|Tf\|_{L^p_{\oz}(\rn)}\le
C(p,\,\oz)\|f\|_{L^p_{\oz}(\rn)}.$$
\end{lemma}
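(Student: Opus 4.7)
The plan is to exploit the fact that an $S^0_{1,0}(\rn)$ pseudo-differential operator $T$ has a distribution kernel $K(x,y)$ that behaves like a standard Calder\'on-Zygmund kernel near the diagonal and decays faster than any polynomial off the diagonal (the latter following from the smoothness of the symbol in $\xi$ via repeated integration by parts). Accordingly, I would fix $\phi_0\in\cd(\rn)$ with $\phi_0\equiv1$ on $Q(0,1)$ and $\supp(\phi_0)\subset Q(0,2)$, and decompose $T=T_{\loc}+T_{\fz}$ where $T_{\loc}$ and $T_{\fz}$ are the operators with kernels $\phi_0(x-y)K(x,y)$ and $(1-\phi_0(x-y))K(x,y)$, respectively. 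This separates the Calder\'on-Zygmund singularity on the diagonal from the smoothing tail.

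For the local part, the kernel $\phi_0(x-y)K(x,y)$ is compactly supported in $\{|x-y|\le2\}$ and satisfies the standard size and smoothness conditions of a Calder\'on-Zygmund kernel. Since $T$ is bounded on unweighted $L^2(\rn)$ (a classical consequence of the $S^0_{1,0}$ symbol class), so is $T_{\loc}$. A localized good-$\lz$ or sharp maximal function argument, relying only on Lemma \ref{l2.3}(vi) (boundedness of $M^{\loc}$ on $L^p_{\oz}(\rn)$ for $\oz\in A^{\loc}_p(\rn)$), then yields the weighted estimate $\|T_{\loc}f\|_{L^p_{\oz}(\rn)}\ls\|f\|_{L^p_{\oz}(\rn)}$ for every $\oz\in A^{\loc}_p(\rn)$. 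Since $A_p(\phi)\subset A^{\loc}_p(\rn)$ by Remark \ref{r8.14}, this handles $T_{\loc}$.

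For the tail part, the rapid off-diagonal decay of $K$ gives, for any $N\in\nn$, the pointwise bound
$$|T_{\fz}f(x)|\ls\sum_{k=0}^{\fz}2^{-kN}\frac{1}{|Q(x,2^{k+1})|}\int_{Q(x,2^{k+1})}|f(y)|\,dy.$$
Recalling that $\phi(t)=(1+t)^{\az}$, we may choose $N$ so large that $2^{-kN}\phi(|Q(x,2^{k+1})|)\ls 2^{-k}$ for every $k\ge0$ and every $x\in\rn$; the series is then dominated by a geometric sum of averages of $|f|$ with the normalization $\frac{1}{\phi(|Q|)|Q|}\int_Q$, which is precisely $CM_\phi(f)(x)$. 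Consequently, the theorem reduces to the boundedness of $M_\phi$ on $L^p_{\oz}(\rn)$ for $\oz\in A_p(\phi)$.

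The main obstacle is thus establishing this $\phi$-adapted maximal inequality. The strategy is the familiar Muckenhoupt one adapted to $\phi$: use Lemma \ref{l8.15}(ii),(iii) and a Calder\'on-Zygmund stopping-time decomposition with respect to level sets of $M_\phi$ to prove a $\phi$-reverse H\"older inequality for $A_p(\phi)$ weights, derive the weak-type $(p,p)$ bound for $M_\phi$ from the $A_p(\phi)$ condition, and then interpolate with the trivial $L^\fz$ bound to get strong $(p,p)$. Once this is in hand, combining the estimates for $T_{\loc}$ and $T_{\fz}$ yields the desired inequality $\|Tf\|_{L^p_{\oz}(\rn)}\le C(p,\oz)\|f\|_{L^p_{\oz}(\rn)}$.
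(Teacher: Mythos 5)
The paper offers no proof of this lemma---it is simply cited verbatim as Lemma~7.4 of Tang's paper [Ta1]---so there is nothing in the present paper to compare your sketch against; I will evaluate it on its own merits. Your overall plan is reasonable: the diagonal cutoff $T=T_{\loc}+T_\fz$ is standard for $S^0_{1,0}$ operators, the local piece falls under the local Calder\'on--Zygmund theory over $A^{\loc}_p(\rn)$, and the pointwise domination $|T_\fz f(x)|\ls M_\phi(f)(x)$, obtained by absorbing $\phi(|Q(x,2^{k+1})|)\ls 2^{(k+1)n\alpha}$ into the rapid decay $2^{-kN}$, is a correct reduction of the tail to the $\phi$-adapted maximal function.

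The weak link is the final step, the $L^p_\oz$-boundedness of $M_\phi$ for $\oz\in A_p(\phi)$, and here there is a genuine gap. Your chain ``weak $(p,p)$ plus the trivial $L^\fz$ bound plus interpolation $\Rightarrow$ strong $(p,p)$'' does not close: Marcinkiewicz interpolation between a weak $(p,p)$ endpoint and $L^\fz$ yields $L^q$ boundedness only for $q$ strictly between $p$ and $\fz$, not at $q=p$ itself. What one actually needs is the openness property $A_p(\phi)\subset A_{p-\epsilon}(\phi)$ for some $\epsilon>0$, so that one can derive the weak-type inequality at the exponent $p-\epsilon$ and then interpolate past $p$. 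That openness is exactly what the $\phi$-reverse H\"older inequality you mention is meant to deliver, but you never connect the two: the reverse H\"older step should feed into the $p-\epsilon$ improvement, and the weak-type bound must then be proved at $p-\epsilon$, not at $p$. Moreover, establishing a $\phi$-reverse H\"older inequality for $A_p(\phi)$ weights is itself a nontrivial task that cannot simply be borrowed from the classical case: by Remark~\ref{r8.14}, $\oz\,dx$ need not be doubling, and the defining condition uses the nonstandard normalization $\frac{1}{\phi(|Q|)|Q|}$, so the classical stopping-time proof has to be carefully re-run in this $\phi$-scaled framework (or, more economically, the boundedness of $M_\phi$ on $L^p_\oz(\rn)$ for $\oz\in A_p(\phi)$ should simply be cited from Tang [Ta2], where these weights were introduced). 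As written, the treatment of $T_\fz$ is incomplete.
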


The following Lemma \ref{l8.17} is just \cite[Lemma\,6]{Go79}.

\begin{lemma}\label{l8.17}
Let $T$ be an $S^0_{1,\,0}(\rn)$ pseudo-differential operator. If
$\pz\in\cd(\rn)$, then $T_t f=\pz_t \ast Tf$ has a symbol $\sz_t$
which satisfies that
$$\left|\pat^{\bz}_x \pat^{\az}_{\xi}\sz(x,\xi)\r|\le
C(\az,\,\bz)(1+|\xi|)^{-|\az|},$$
and a kernel $K_t (x,\,z)$ which satisfies that
$$\left|\pat^{\bz}_x \pat^{\az}_{z}K_t (x,z)\r|\le
C(\az,\,\bz)|z|^{-n-|\az|},$$
where $ C({\az,\,\bz})$ is independent
of $t$ when $t\in(0,1)$.
\end{lemma}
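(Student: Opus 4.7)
The plan is first to derive an explicit formula for the symbol $\sz_t$ of $T_t$, then verify the required symbol estimate with constants uniform in $t\in(0,1)$, and finally deduce the kernel estimate by Fourier inversion. Starting from the identity $Tf(y)=\int\sz(y,\xi)e^{2\pi iy\cdot\xi}\widehat f(\xi)\,d\xi$ and computing $T_tf(x)=(\pz_t\ast Tf)(x)$ by Fubini's theorem and the substitution $w=x-y$, one arrives at
$$T_tf(x) = \int e^{2\pi ix\cdot\xi}\sz_t(x,\xi)\widehat f(\xi)\,d\xi,\quad \sz_t(x,\xi)\equiv\int\pz_t(w)\sz(x-w,\xi)e^{-2\pi iw\cdot\xi}\,dw.$$
After the rescaling $w=tu$, this becomes $\sz_t(x,\xi)=\int\pz(u)\sz(x-tu,\xi)e^{-2\pi itu\cdot\xi}\,du$, which I would adopt as the working expression since the single large parameter $t\xi$ in the phase manifests itself naturally.

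For the symbol bound, I would differentiate under the integral and apply Leibniz to $\sz(x-tu,\xi)e^{-2\pi itu\cdot\xi}$, obtaining
$$\pat^{\bz}_x\pat^{\az}_{\xi}\sz_t(x,\xi) = \sum_{\gz\le\az}\binom{\az}{\gz}(-2\pi it)^{|\gz|}\int\pz(u)u^{\gz}(\pat^{\bz}_x\pat^{\az-\gz}_{\xi}\sz)(x-tu,\xi)e^{-2\pi itu\cdot\xi}\,du.$$
The $\gz=0$ term is handled by the $S^0_{1,0}$ bound on $\sz$, giving $C(\az,\bz)(1+|\xi|)^{-|\az|}\|\pz\|_{L^1}$. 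For $\gz\ne 0$ I would distinguish two cases: when $t|\xi|\le 1$, estimate trivially by absolute values to get $t^{|\gz|}(1+|\xi|)^{-|\az|+|\gz|}=(t(1+|\xi|))^{|\gz|}(1+|\xi|)^{-|\az|}\ls(1+|\xi|)^{-|\az|}$; when $t|\xi|>1$, perform $N\ge|\az|+1$ repeated integrations by parts in $u$ via $(1-\Delta_u)^Ne^{-2\pi itu\cdot\xi}=(1+4\pi^2t^2|\xi|^2)^Ne^{-2\pi itu\cdot\xi}$. Since each $u$-derivative of $\sz(x-tu,\xi)$ produces a harmless factor $t\le 1$ by the chain rule while preserving the decay $(1+|\xi|)^{-|\az-\gz|}$, and the $u$-derivatives of $\pz(u)u^{\gz}$ are uniformly bounded and compactly supported in $u$, the gained factor $(t|\xi|)^{-2N}$ more than compensates the visible factor $t^{|\gz|}(1+|\xi|)^{|\gz|}$, yielding the desired uniform bound.

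The kernel bound follows from the standard principle that the inverse Fourier transform in $\xi$ of a symbol of order $m$ decays like $|z|^{-n-m}$ away from the origin. Writing $K_t(x,z)=\int\sz_t(x,\xi)e^{2\pi iz\cdot\xi}\,d\xi$, differentiation gives $\pat^{\bz}_x\pat^{\az}_zK_t(x,z)=\int(\pat^{\bz}_x\sz_t)(x,\xi)(2\pi i\xi)^{\az}e^{2\pi iz\cdot\xi}\,d\xi$, and the integrand belongs to $S^{|\az|}_{1,0}$ in $\xi$, uniformly in $x$ and $t\in(0,1)$ by the first part. The usual split $\int_{|\xi|\le 1/|z|}+\int_{|\xi|>1/|z|}$, with the absolute bound used on the low-frequency piece and repeated integration by parts in $\xi$ against $e^{2\pi iz\cdot\xi}$ on the high-frequency piece (choosing the number of integrations larger than $n+|\az|$), produces the required bound $C(\az,\bz)|z|^{-n-|\az|}$.

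The main technical obstacle I anticipate is the uniform-in-$t$ bookkeeping in the regime $t|\xi|>1$ of the symbol estimate: one must verify that the powers of $t^{-1}$ generated by $(1+4\pi^2t^2|\xi|^2)^{-N}$ and the powers of $t$ generated by differentiating $\sz(x-tu,\xi)$ in $u$ combine with the visible factors $t^{|\gz|}$ and $(1+|\xi|)^{|\gz|-|\az|}$ in exactly the right way. The rescaling $w=tu$ and the interpretation of $t\xi$ as a single large parameter via nonstationary phase for $\pz\in\cd(\rn)$ make this bookkeeping transparent and the constants independent of $t\in(0,1)$.
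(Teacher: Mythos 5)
The paper does not give a proof of this lemma at all: immediately after its statement it remarks ``The following Lemma \ref{l8.17} is just \cite[Lemma\,6]{Go79},'' and moves on. So there is no in-paper argument to compare against; you are supplying a self-contained proof of a cited fact. Your route is the natural one and almost certainly parallels Goldberg's: the exact symbol formula $\sz_t(x,\xi)=\int\pz(u)\sz(x-tu,\xi)e^{-2\pi itu\cdot\xi}\,du$ after the rescaling $w=tu$, Leibniz in $\xi$, and a dichotomy $t|\xi|\le1$ versus $t|\xi|>1$ in which the large parameter $t|\xi|$ is killed by integration by parts in $u$ while each $u$-derivative of $\sz(x-tu,\xi)$ only costs a benign factor $t\le1$. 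I verified the bookkeeping: with $N$ integrations by parts the $\gz$-term is $\ls t^{|\gz|}(1+|\xi|)^{|\gz|-|\az|}(t|\xi|)^{-2N}\ls(t|\xi|)^{|\gz|-2N}(1+|\xi|)^{-|\az|}$ on $\{t|\xi|>1\}$, and any $N$ with $2N\ge|\az|$ suffices. The symbol half of the lemma is therefore correctly and fully argued.

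There is, however, one genuine (if small) gap in the kernel half. The split $\int_{|\xi|\le1/|z|}+\int_{|\xi|>1/|z|}$ together with $|\pat_x^{\bz}\sz_t(x,\xi)(2\pi i\xi)^{\az}|\ls(1+|\xi|)^{|\az|}$ gives, for the low-frequency piece, the bound $\int_{|\xi|\le1/|z|}(1+|\xi|)^{|\az|}\,d\xi$. When $|z|\le1$ this is $\ls|z|^{-n-|\az|}$ and your argument closes; but when $|z|\ge1$ the ball $\{|\xi|\le1/|z|\}$ lies in the unit ball where $(1+|\xi|)^{|\az|}\ls1$, so the low piece is only $\ls|z|^{-n}$, which is larger than the target $|z|^{-n-|\az|}$ for $|\az|\ge1$. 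The claimed estimate is still true for $|z|\ge1$ — in fact the kernel decays faster than any polynomial there — but to see it you should handle $|z|\ge1$ by a separate, simpler argument: integrate by parts in $\xi$ on the whole (uncut) oscillatory integral $\int\pat_x^{\bz}\sz_t(x,\xi)(2\pi i\xi)^{\az}e^{2\pi iz\cdot\xi}\,d\xi$, which after $M>n+|\az|$ applications of $e^{2\pi iz\cdot\xi}=(-4\pi^2|z|^2)^{-1}\Delta_\xi e^{2\pi iz\cdot\xi}$ converges absolutely (the integrand is $O((1+|\xi|)^{|\az|-2M})$) and yields $\ls|z|^{-2M}\le|z|^{-n-|\az|}$ on $\{|z|\ge1\}$. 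You should also replace the sharp cutoff in the split by a smooth one to avoid boundary terms in the integration by parts. With these two routine amendments the argument is complete and uniform in $t\in(0,1)$, matching the cited Goldberg lemma.
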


Now, we establish the boundedness on $h^{\bfai}_{\oz}(\rn)$ of the
$S^0_{1,\,0}(\rn)$ pseudo-differential operators as follows.

\begin{theorem}\label{t8.18}
Let $T$ be an $S^0_{1,\,0}(\rn)$ pseudo-differential operator,
$\bfai$ satisfy Assumption $\mathrm{(A)}$, $\oz\in A_{\fz}(\phi)$
and $p_{\bfai}$ be as in \eqref{2.6}. If $p_{\bfai}=p_{\bfai}^+$ and
$\bfai$ is of upper type $p_{\bfai}^+$, then there exists a positive
constant $C(\bfai,\,\oz)$, depending only on $\bfai$, $q_{\oz}$ and
the weight constant of $A_{\fz}(\phi)$, such that for all $f\in
h^{\bfai}_{\oz}(\rn)$,
$$\|Tf\|_{h^{\bfai}_{\oz}(\rn)}\le
C(\bfai,\,\oz) \|f\|_{h^{\bfai}_{\oz}(\rn)}.$$
\end{theorem}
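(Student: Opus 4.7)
The plan is to deduce Theorem~\ref{t8.18} from Theorem~\ref{t6.4}(i) by taking the target $\bz$-quasi-Banach space $\cb_\bz$ to be $h^{\bfai}_\oz(\rn)$ itself. Since $\bfai$ has strictly lower type $p_{\bfai}$, the space $h^{\bfai}_\oz(\rn)$ is a $p_{\bfai}$-quasi-Banach space (set $\bz = p_{\bfai}$); the hypotheses $p_{\bfai} = p_{\bfai}^+$ and the upper-type-$p_{\bfai}^+$ property of $\bfai$ together yield $0 < \wz{p} \le \bz$ with $\wz{p} = p_{\bfai}^+$, as required. By Remark~\ref{r8.14} one has $A_\fz(\phi) \subset A^{\loc}_\fz(\rn)$, and choosing $q \in (q_\oz,\fz)$ with $\oz \in A_q(\phi)$ (via the $A_p(\phi)$-analogue of Lemma~\ref{l2.3}(ii)) together with $s \ge \lfz n(q_\oz/p_{\bfai}-1)\rf$, it then suffices, by Theorem~\ref{t3.14}, to establish the uniform bound
\[
\int_\rn \bfai\left(\cg^0_N(T(a))(x)\r)\oz(x)\,dx \ls 1
\]
for every $(\rz,q)_\oz$-single-atom $a_0$ and every $(\rz,q,s)_\oz$-atom $a$ supported in $Q_0 = Q(x_0,R_0)$ with $R_0 \in (0,2]$.

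The single-atom case proceeds exactly as in the corresponding step of the proof of Theorem~\ref{t8.2}, via Jensen's inequality (concavity of $\bfai$), H\"older's inequality, Proposition~\ref{p3.2}(ii), the $L^q_\oz$-boundedness of $T$ furnished by Lemma~\ref{l8.16}, and \eqref{2.8} with $t = \oz(\rn)$. For an atom $a$, I will split
\[
\int_\rn \bfai\left(\cg^0_N(T(a))(x)\r)\oz(x)\,dx = \int_{\wz{Q}_0} + \int_{\wz{Q}_0^\complement} \equiv \mathrm{I_1} + \mathrm{I_2}
\]
with $\wz{Q}_0 = 8nQ_0$. The term $\mathrm{I_1}$ is dispatched as in Case~2) of Theorem~\ref{t8.2}, using Lemma~\ref{l8.16} in place of Lemma~\ref{l8.3}(i) together with Lemma~\ref{l2.3}(v) and \eqref{2.8}, and does not rely on the non-compact support of $T(a)$.

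For $\mathrm{I_2}$ the key input is Lemma~\ref{l8.17}: for every $\pz \in \cd^0_N(\rn)$ and $t \in (0,1)$ the operator $\pz_t \ast T$ has a kernel $K_t(x,z)$ satisfying $|\pat_x^\bz \pat_z^\az K_t(x,z)| \ls |z|^{-n-|\az|}$ with constants independent of $t$. When $R_0 \in (0,1)$, the vanishing moments of $a$ up to order $s$ permit a Taylor expansion of $z \mapsto K_t(x,z)$ about $x-x_0$, mirroring the estimates of $\mathrm{G_2}$ and $\mathrm{G_3}$ in the proof of Theorem~\ref{t8.2}, to obtain
\[
\cg^0_N(T(a))(x) \ls \frac{R_0^{n+s+1}}{|x-x_0|^{n+s+1}\,\oz(Q_0)\rz(\oz(Q_0))}, \qquad x \in \wz{Q}_0^\complement.
\]
Combined with the weight growth $\oz(2^k Q_0) \ls 2^{knq}\phi(|2^k Q_0|)^q \oz(Q_0)$ derived from Lemma~\ref{l8.15}(iii), the lower-type-$p_{\bfai}$ property of $\bfai$, and a dyadic decomposition of $\wz{Q}_0^\complement$, this yields $\mathrm{I_2} \ls 1$, provided $q$ is taken close enough to $q_\oz$ so that $(n+s+1)p_{\bfai} > nq(1+\az)$, which is consistent with $s \ge \lfz n(q_\oz/p_{\bfai}-1)\rf$.

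The principal obstacle is the case $R_0 \in [1,2]$, where $a$ carries no vanishing moments and --- in contrast to the Riesz-transform setting of Theorem~\ref{t8.2}, in which the cutoff $\phi_0$ in $k_j$ forces $r_j(a)$ to be compactly supported in $Q(x_0,R_0+2)$ --- $T(a)$ is genuinely non-compactly supported. My plan here is to use Lemma~\ref{l8.17} without Taylor expansion: for $x \in \wz{Q}_0^\complement$ and $y \in Q_0$, $|K_t(x,x-y)| \ls |x-x_0|^{-n}$ uniformly in $t$, so $\cg^0_N(T(a))(x) \ls |x-x_0|^{-n}\|a\|_{L^1(Q_0)}$ by H\"older. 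The delicate point --- explaining why Theorem~\ref{t8.18} requires the strictly smaller weight class $A_\fz(\phi)$ rather than merely $A^{\loc}_\fz(\rn)$ --- is that the borderline decay $|x-x_0|^{-n}$ is not by itself integrable against $\bfai\cdot\oz$ at infinity, and must be sharpened via the extra smoothness of $K_t$ (arbitrary $z$-derivatives give the additional decay $|z|^{-n-|\az|}$ from Lemma~\ref{l8.17}) together with a frequency-localized decomposition $T = T_1 + T_2$, in which $T_1$ has a smooth, rapidly-decaying kernel and $T_2$ enjoys the high-frequency cancellation of a standard Calder\'on--Zygmund operator, the specific form $\phi(t) = (1+t)^\az$ entering precisely to ensure convergence of the resulting weighted dyadic sum. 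Reconciling these ingredients into a uniform bound on $\mathrm{I_2}$ is the step I expect to be technically hardest.
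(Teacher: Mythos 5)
Your overall architecture matches the paper's: reduce to a uniform atomic estimate via Theorem~\ref{t6.4}(i), split into a near part $\mathrm{I}_1$ (handled by Jensen, H\"older, Proposition~\ref{p3.2}(ii), Lemma~\ref{l8.16}, and \eqref{2.8}) and a far part $\mathrm{I}_2$, use the $A_p(\phi)$ weight growth $\oz(2^kQ_0) \ls 2^{knq}\phi(|2^kQ_0|)^q\oz(Q_0)$ from Lemma~\ref{l8.15}(iii), and treat $R_0 \in (0,1)$ by Taylor expansion of $K_t$ in the $z$-variable using the vanishing moments. One small correction in that branch: the exponent condition $(n+s+1)p_{\bfai} > nq(1+\az)$ is not automatic from the minimal $s = \lfz n(q_\oz/p_\bfai - 1)\rf$; the paper explicitly invokes Theorem~\ref{t5.6} to raise $s$ until it holds.

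The genuine gap is the case $R_0 \in [1,2]$, which you flag as the ``technically hardest'' step and leave unresolved, proposing a frequency-localized splitting $T = T_1 + T_2$. This is an overcomplication, and your plan to rescue the borderline $|x-x_0|^{-n}$ decay purely from the $|z|^{-n-|\alpha|}$ bounds in Lemma~\ref{l8.17} cannot work, since those bounds are diagonal-type estimates and, by themselves, saturate at $|z|^{-n}$ when $|\alpha| = 0$. The missing ingredient is elementary and standard: the kernel of an $S^0_{1,0}$ pseudo-differential operator decays \emph{faster than any polynomial} away from the diagonal, i.e.\ $|K_t(x,z)| \le C(M)|z|^{-M}$ for every $M \in \nn$ when $|z| \gs 1$, with $C(M)$ uniform in $t \in (0,1)$ --- this is \cite[p.\,235,\,(9)]{St93}, and it follows from repeated integration by parts in $\xi$ using that $\partial_\xi^\alpha\sigma_t$ decays like $(1+|\xi|)^{-|\alpha|}$. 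Since $x \in \wz{Q}_0^{\complement}$ and $R_0 \ge 1$ force $|x-x_0| > 1$, this gives $\cg^0_N(Ta)(x) \ls |x-x_0|^{-M}\|a\|_{L^1(\rn)}$ directly, and choosing $M > nq(1+\az)/p_{\bfai}$ makes the resulting dyadic sum converge, exactly because $\phi(t) = (1+t)^{\az}$ contributes only polynomial growth $2^{kn\az q}$ in the shell $2^kQ_0$. No frequency decomposition is needed, and the role of $A_\fz(\phi)$ (as opposed to $A^{\loc}_\fz(\rn)$) is precisely to tame the weight growth across shells of large scale so that rapid kernel decay can win.
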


\begin{proof}
Since $\oz\in A_{\fz}(\phi)$, then $\oz\in A_{q}(\phi)$ for some
$q\in(1,\fz)$. To prove Theorem \ref{t8.18}, by the fact that
$\oz\in A_{\fz}(\phi)\subset A^{\loc}_{\fz}(\rn)$, Theorem
\ref{t6.4}(i) and Theorem \ref{t3.14}, it suffices to show that for
all $(\rz,\,q)_{\oz}$-single-atoms and $(\rz,\,q,\,s)_{\oz}$-atoms
$a$ supported in $Q_0\equiv Q(x_0,\,R_0)$ with $R_0\in(0,2]$,
\begin{eqnarray}\label{8.40}
\left\|\cg^0_N \left(Ta\r)\r\|_{L^{\bfai}_{\oz}(\rn)}\ls1.
\end{eqnarray}
By Theorem \ref{t5.6}, we may assume that $s$ satisfies
$(n+s+1)p_{\bfai}>nq_{\oz}(1+\az)$, where $p_{\bfai},\,q_{\oz}$ and
$\az$ are respectively as in \eqref{2.6}, \eqref{2.4} and
\eqref{8.39}.

First, we prove \eqref{8.40} for any $(\rz,\,q)_{\oz}$-single-atom
$a\neq0$. In this case, $\oz(\rn)<\fz$. Since $\bfai$ is concave, by
Jensen's inequality, H\"older's inequality, Proposition
\ref{p3.2}(ii) and Lemma \ref{l8.16} and \eqref{2.8} with
$t\equiv\oz(\rn)$, we have
\begin{eqnarray*}
&&\int_{\rn}\bfai\left(\cg^0_N(T
a)(x)\r)\oz(x)\,dx\\
&&\hs\le\oz(\rn)\bfai\left(\frac{1}{\oz(\rn)} \int_{\rn}
\cg^0_N(Ta)(x)\oz(x)\,dx\r)\\
&&\hs\le\oz(\rn)\bfai\left(\frac{1}{[\oz(\rn)]^{1/q}}
\left\{\int_{\rn}\left[\cg^0_N(Ta)(x)\r]^{q}\oz(x)\,dx\r\}^{1/q}\r)\\
&&\hs\ls\oz(\rn)\bfai\left(\frac{1}{[\oz(\rn)]^{1/q}}
\|Ta\|_{L^q_{\oz}(\rn)}\r)\ls\oz(\rn)\bfai\left(\frac{1}{[\oz(\rn)]^{1/q}}
\|a\|_{L^q_{\oz}(\rn)}\r)\\
&&\hs\ls\oz(\rn) \bfai\left(\frac{1}{\oz(\rn)\rz(\oz(\rn))}\r)\sim1,
\end{eqnarray*}
which shows \eqref{8.40} in this case.

Now, let $a$ be any $(\rz,\,q,\,s)_{\oz}$-atom supported in
$Q_0\equiv Q(x_0,\,R_0)$ with $R_0\in(0,2]$. Let
$\wz{Q}_0\equiv2Q_0$. Then from Jensen's inequality, H\"older's
inequality, Proposition \ref{p3.2}(ii), Lemma \ref{l8.16}, Lemma
\ref{l8.15}(iii) and \eqref{2.8} with $t\equiv\oz(\wz{Q}_0)$, it
follows that
\begin{eqnarray}\label{8.41}
&&\int_{\wz{Q}_0}\bfai\left(\cg^0_N(T a)(x)\r)\oz(x)\,dx\nonumber\\
&&\hs\le\oz(\wz{Q}_0)\bfai\left(\frac{1}{\oz(\wz{Q}_0)} \int_{\wz{Q}_0}
\cg^0_N(Ta)(x)\oz(x)\,dx\r)\nonumber\\
&&\hs\le\oz(\wz{Q}_0)\bfai\left(\frac{1}{[\oz(\wz{Q}_0)]^{1/q}}
\left\{\int_{\wz{Q}_0}\left[\cg^0_N(Ta)(x)\r]^{q}\oz(x)\,dx\r\}^{1/q}\r)\nonumber\\
 &&\hs\ls\oz(\wz{Q}_0
)\bfai\left(\frac{1}{[\oz(\wz{Q}_0)]^{1/q}}
\|Ta\|_{L^q_{\oz}(\rn)}\r)\ls\oz(\wz{Q}_0)\bfai\left(\frac{1}{[\oz(\wz{Q}_0)]^{1/q}}
\|a\|_{L^q_{\oz}(\rn)}\r)\nonumber \\ &&\hs\ls\oz(\wz{Q}_0)
\bfai\left(\frac{1}{\oz(\wz{Q}_0)\rz(\oz(\wz{Q}_0))}\r)\sim1.
\end{eqnarray}

For any  $\pz\in\cd_N^0 (\rn)$ and $t\in(0,1)$, let $K_t (x,x-z)$ be
the kernel of $T_t a(x)\equiv\pz_t \ast Ta(x)$. To estimate
$\int_{\rn\setminus \wz{Q}_0}\bfai[\cg^0_N(T a)(x)]\oz(x)\,dx$, we
consider the following two cases for $R_0$.

{\it Case 1)} $R_0\in(0,1)$.  In this case, we expand $K_t (x,x-z)$
into a Taylor series about $z=x_0$ such that for any
$x\in(\rn\setminus \wz{Q}_0)$,
$$\pz_t \ast Ta(x)=\int_{\rn}K_t (x,x-z)a(z)\,dz=\int_{Q_0}
\sum_{\gfz{\az\in\zz^n_+,}{|\az|=s+1}} (\pat^{\az}_z K_t)
(x,x-\xi)(z-x_0)^{\az}a(z)\,dz,$$ where $\xi\equiv\tz z+(1-\tz)x_0$
for some $\tz\in(0,1)$. By $z,\,x_0\in Q_0$, we know that $\xi\in
Q_0$. Thus, for any $x\in(\rn\setminus \wz{Q}_0)$,
$|x-\xi|\sim|x-x_0|$. From the above facts and Lemma \ref{l8.17}, we
deduce that
$$|\pz_t \ast Ta(x)|\ls |x-\xi|^{-(n+s+1)}R_0^{
s+1}\|a\|_{L^1 (\rn)}\ls
|x-x_0|^{-(n+s+1)}|Q_0|^{\frac{s+1}{n}}\|a\|_{L^1 (\rn)},$$
which together with the arbitrariness of $\pz\in\cd^0_N (\rn)$ implies
that for all $x\in(\rn\setminus \wz{Q}_0)$,
\begin{eqnarray*}
\cg_N^0(Ta)(x)\ls| x-x_0|^{-(n+s+1)}|Q_0|^{\frac{s+1}{n}}\|a\|_{L^1
(\rn)}.
\end{eqnarray*}
This, combined with H\"older's inequality, Lemma \ref{l8.15}(iii)
and the definition of $A_p (\phi)$, yields that
\begin{eqnarray}\label{8.42}
&&\hs\int_{\rn\setminus \wz{Q}_0}\bfai\left(\cg^0_N(T a)(x)\r)\oz(x)\,dx\nonumber\\
 &&\hs\hs\le C\int_{\rn\setminus \wz{Q}_0}\bfai\left(|
x-x_0|^{-(n+s+1)}|Q_0|^{\frac{s+1}{n}}\|a\|_{L^1 (\rn)}\r)\oz(x)\,dx\nonumber\\
 &&\hs\hs\le C\int_{\rn\setminus
\wz{Q}_0}\bfai\left(|Q_0|^{\frac{s+1}{n}}
|x-x_0|^{-(n+s+1)}\|a\|_{L^q_{\oz}(\rn)}\r.\nonumber\\
&&\hs\hs\hs\times\phi(|Q_0|)
|Q_0|[\oz(Q_0)]^{-1/q}\Big)\oz(x)\,dx\nonumber\\
 &&\hs\hs\le C\int_{\rn\setminus
\wz{Q}_0}\bfai\left(|Q_0|^{\frac{s+1}{n}}
|x-x_0|^{-(n+s+1)}\frac{\phi(|Q_0|)|Q_0|}
{\oz(Q_0)\rz(\oz(Q_0))}\r)\oz(x)\,dx\nonumber\\
 &&\hs\hs\le C\sum_{k=1}^{\fz}\int_{2^k
Q_0}\bfai\left(|Q_0|^{\frac{s+1}{n}} (2^k
R_0)^{-(n+s+1)}\frac{\phi(|Q_0|)|Q_0|} {\oz(Q_0)\rz(\oz(Q_0))}\r)\oz(x)\,dx\nonumber \\
&&\hs\hs\le C\left\{\sum_{k=1}^{m_0}\int_{2^k
Q_0}\bfai\left(|Q_0|^{\frac{s+1}{n}} (2^k R_0)^{-(n+s+1)}\frac{|Q_0|}
{\oz(Q_0)\rz(\oz(Q_0))}\r)\oz(x)\,dx\r.\nonumber\\  &&\hs\hs\hs
+\left.\sum_{k=m_0 +1}^{\fz}\int_{2^k Q_0}\cdots\r\} \equiv
C(\mathrm{I_1}+\mathrm{I_2}),
\end{eqnarray}
where the integer $m_0$ satisfies $2^{m_0
-1}\le\frac{1}{R_0}<2^{m_0}$.

To estimate $\mathrm{I_1}$, for any $k\in\{1,\,2,\,\cdots,\,m_0\}$,
by the choice of $m_0$ and $R_0\in(0,1)$, we know that $2^k
R_0^n\le1$, which, together with Jensen's inequality, the lower type
$p_{\bfai}$ property of $\bfai$, Lemma \ref{l8.15}(iii) and the fact
that $(n+s+1)p_{\bfai}>nq(1+\az)$, implies that
\begin{eqnarray}\label{8.43}
\mathrm{I_1}&\ls&\sum_{k=1}^{m_0}\oz(2^k
Q_0)\bfai\left(\frac{1}{\oz(2^k
Q_0)}\int_{2^k Q_0}2^{-k(n+s+1)}\{\oz(Q_0)\rz(\oz(Q_0))\}^{-1}\oz(x)\,dx\r)
\nonumber\\
 &\ls&\sum_{k=1}^{m_0}2^{-k(n+s+1)p_{\bfai}}\frac{\oz(2^k
Q_0)}{\oz(Q_0)}
\ls\sum_{k=1}^{m_0}2^{knq}[\phi(|2^k Q_0|)]^q 2^{-k(n+s+1)p_{\bfai}}
\nonumber \\
&\ls&\sum_{k=1}^{m_0}2^{-k[(n+s+1)p_{\bfai}-nq]}\ls1.
\end{eqnarray}
For $\mathrm{I_2}$, similarly to the estimate of $\mathrm{I_1}$, we
have
\begin{eqnarray*}
\mathrm{I_2}&\ls&\sum_{k=m_0 +1}^{\fz}\oz(2^k
Q_0)\bfai\left(\frac{1}{\oz(2^k Q_0)}\int_{2^k
Q_0}2^{-k(n+s+1)}\{\oz(Q_0)\rz(\oz(Q_0))\}^{-1}\oz(x)\,dx\r)\\
&\ls&\sum_{k=m_0+1}^{\fz}2^{-k(n+s+1)p_{\bfai}}\frac{\oz(2^k
Q_0)}{\oz(Q_0)} \ls\sum_{k=m_0 +1}^{\fz}2^{knq}[\phi(|2^k Q_0|)]^q
2^{-k(n+s+1)p_{\bfai}}\\
&\ls&\sum_{k=m_0 +1}^{\fz}2^{-k[(n+s+1)p_{\bfai}-q(\az+1)]}\ls1,
\end{eqnarray*}
which together with \eqref{8.43}, \eqref{8.42} and \eqref{8.41}
implies \eqref{8.40} in Case 1).

{\it Case 2)} $R_0\in[1,2]$. In this case, for any
$x\in(\rn\setminus \wz{Q}_0)$ and $z\in Q_0$, we have
$$|x-z|\sim|x-x_0|$$
and $|x-x_0|>1$. From this and
\cite[p.\,235,\,(9)]{St93}, we infer that for any positive integer
$M$, there exists a positive constant $C(M)$ such that
$$|K_t
(x,x-z)|\le C(M) |x-x_0|^{-M},$$ which implies that for any
$x\in(\rn\setminus \wz{Q}_0)$,
$$\left|\pz_t \ast Ta(x)\r|=\int_{\rn}|K_t (x,x-z)a(z)|\,dz\ls|x-x_0|^{-M}
\|a\|_{L^1 (\rn)}.$$
This, combined with the arbitrariness of
$\pz\in\cd^0_N (\rn)$, yields that for any $x\in(\rn\setminus
\wz{Q}_0)$,
\begin{eqnarray}\label{8.44}
\cg^0_N (Ta)(x)\ls|x-x_0|^{-M} \|a\|_{L^1 (\rn)}.
\end{eqnarray}
Take $M>\frac{nq(1+\az)}{p_{\bfai}}$. By Jensen's inequality,
\eqref{8.44}, H\"older's inequality and Lemma \ref{l8.15}(iii), we
have
\begin{eqnarray*}
&&\int_{\rn\setminus \wz{Q}_0}\bfai\left(\cg^0_N(T a)(x)\r)\oz(x)\,dx\\
 &&\hs\ls\int_{\rn\setminus \wz{Q}_0}\bfai\left(|
x-x_0|^{-M}\|a\|_{L^1 (\rn)}\r)\oz(x)\,dx\\
&&\hs\ls\sum_{k=1}^{\fz}\int_{2^k Q_0}\bfai\left((2^k
R_0)^{-M}\frac{\phi(|Q_0|)|Q_0|}{\oz(Q_0)\rz(\oz(Q_0))}\r)\oz(x)\,dx\\
&&\hs\ls\sum_{k=1}^{\fz}\oz(2^k Q_0)\bfai\left((2^k
R_0)^{-M}\frac{1}{\oz(Q_0)\rz(\oz(Q_0))}\r)\\
&&\hs\ls\sum_{k=1}^{\fz}2^{-kMp_{\bfai}}R_0^{-Mp_{\bfai}}\frac{\oz(2^k
Q_0)}{\oz(Q_0)}
\ls\sum_{k=1}^{\fz}2^{-k(Mp_{\bfai}-nq)}\phi(|2^k Q_0|)R_0^{-Mp_{\bfai}}\\
&&\hs\ls\sum_{k=1}^{\fz}2^{-k[Mp_{\bfai}-nq(1+\az)]}
R_0^{-(Mp_{\bfai}-nq\az)}
\ls\sum_{k=1}^{\fz}2^{-k[Mp_{\bfai}-nq(1+\az)]}\ls1,
\end{eqnarray*}
which together with \eqref{8.41} implies \eqref{8.40} in Case 2).
This finishes the proof of Theorem \ref{t8.18}.
\end{proof}

\begin{remark}\label{r8.19}
Let $p\in(0,1]$. Theorem \ref{t8.18} when $\oz\equiv1$ and
$\bfai(t)\equiv t^p$ for all $t\in(0,\fz)$ was obtained by Goldberg
\cite[Theorem\,4]{Go79}; moreover, Theorem \ref{t8.18} when
$\bfai(t)\equiv t^p$ for all $t\in(0,\fz)$ was obtained by Tang
\cite[Theorem\,7.3]{Ta1}. Also, Theorem \ref{t8.18} when $\oz\in
A_1(\rn)$ and $\bfai(t)\equiv t$ for all $t\in(0,\fz)$ was obtained
by Lee, C-C. Lin and Y.-C. Lin \cite[Theorem\,2]{lll10}.
\end{remark}

By Theorems \ref{t8.18} and \ref{t7.5}, \cite[p.\,233,\,(4)]{St93}
and the proposition in \cite[p.\,259]{St93}, we have the following
result.

\begin{corollary}\label{c8.20}
Let $T$ be an $S^0_{1,\,0}(\rn)$ pseudo-differential operator,
$\bfai$ satisfy Assumption $\mathrm{(A)}$, $\oz\in A_{\fz}(\phi)$
and $p_{\bfai}$ be as in \eqref{2.6}. If $p_{\bfai}=p_{\bfai}^+$ and
$\bfai$ is of upper type $p_{\bfai}^+$, then there exists a positive
constant $C (\bfai,\,\oz)$ such that for all
$f\in\bmo_{\rz,\,\oz}(\rn)$,
$$\|Tf\|_{\bmo_{\rz,\,\oz}(\rn)}\le C
(\bfai,\,\oz) \|f\|_{\bmo_{\rz,\,\oz}(\rn)}.$$
\end{corollary}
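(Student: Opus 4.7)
The plan is to exploit duality. By Corollary \ref{c7.7}, the space $\bmo_{\rz,\,\oz}(\rn)$ is exactly the dual of $h^{\bfai}_{\oz}(\rn)$, so boundedness of $T$ on the $\bbmo$-type space should follow from boundedness of the formal adjoint $T^{\ast}$ on $h^{\bfai}_{\oz}(\rn)$, which in turn is provided by Theorem \ref{t8.18} applied to $T^{\ast}$.

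The first step is to recall the two classical facts from Stein \cite{St93} cited just before the corollary. By \cite[p.\,233,\,(4)]{St93}, the formal adjoint $T^{\ast}$ of any pseudo-differential operator with symbol in $S^0_{1,\,0}(\rn)$ is again a pseudo-differential operator with symbol in $S^0_{1,\,0}(\rn)$; and by the proposition in \cite[p.\,259]{St93}, $T^{\ast}$ acts continuously on $\cd(\rn)$ (and extends to $\cd'(\rn)$). Thus we may apply Theorem \ref{t8.18} to $T^{\ast}$ in place of $T$: there exists a positive constant $C(\bfai,\,\oz)$ such that
\begin{equation*}
\|T^{\ast}g\|_{h^{\bfai}_{\oz}(\rn)}\le C(\bfai,\,\oz)\|g\|_{h^{\bfai}_{\oz}(\rn)}
\quad\text{for all}\ g\in h^{\bfai}_{\oz}(\rn).
\end{equation*}

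Next, I would transfer this bound to the dual side. Given $f\in\bmo_{\rz,\,\oz}(\rn)$, by Corollary \ref{c7.7}, $f$ induces a bounded linear functional $L_f$ on $h^{\bfai}_{\oz}(\rn)$ via $L_f(g)=\langle f,g\rangle$ for $g$ in a suitable dense subspace (namely $h^{\rz,\,q,\,s}_{\oz,\,\fin}(\rn)$ for some admissible triplet), with $\|L_f\|\sim\|f\|_{\bmo_{\rz,\,\oz}(\rn)}$. Define $Tf$ by the pairing $\langle Tf,g\rangle\equiv\langle f,T^{\ast}g\rangle$ on such $g$. Then
\begin{equation*}
|\langle Tf,g\rangle|=|\langle f,T^{\ast}g\rangle|
\le\|f\|_{\bmo_{\rz,\,\oz}(\rn)}\|T^{\ast}g\|_{h^{\bfai}_{\oz}(\rn)}
\le C(\bfai,\,\oz)\|f\|_{\bmo_{\rz,\,\oz}(\rn)}\|g\|_{h^{\bfai}_{\oz}(\rn)},
\end{equation*}
so the linear functional $g\mapsto\langle Tf,g\rangle$ extends boundedly to $h^{\bfai}_{\oz}(\rn)$ with norm controlled by $C(\bfai,\,\oz)\|f\|_{\bmo_{\rz,\,\oz}(\rn)}$. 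Invoking Corollary \ref{c7.7} in the reverse direction, this functional is represented by some element of $\bmo_{\rz,\,\oz}(\rn)$ whose norm is at most a multiple of $C(\bfai,\,\oz)\|f\|_{\bmo_{\rz,\,\oz}(\rn)}$, and this element must agree with $Tf$ in the distributional sense on the dense subspace.

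The small subtlety, and the only place deserving some care, is reconciling the distributional definition of $Tf$ (as an element of $\cd'(\rn)$) with the dual-pairing definition just used: one needs to check that the bounded functional produced above coincides with the distribution $Tf$ obtained from $f\in\bmo_{\rz,\,\oz}(\rn)\subset L^1_{\loc}(\rn)$ via the usual action of pseudo-differential operators on distributions. This follows because $T^{\ast}$ maps $\cd(\rn)$ to itself continuously and the pairing $\langle f,T^{\ast}\phi\rangle$ for $\phi\in\cd(\rn)$ is exactly $\langle Tf,\phi\rangle$ by definition of the adjoint; the density of $\cd(\rn)\cap h^{\rz,\,q,\,s}_{\oz,\,\fin}(\rn)$-type atoms in $h^{\bfai}_{\oz}(\rn)$ then pins down the two objects as the same element of $\bmo_{\rz,\,\oz}(\rn)$, completing the proof.
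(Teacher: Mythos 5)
Your proposal is correct and follows exactly the same approach as the paper: the paper's (unwritten) proof is precisely to note that $T^{\ast}$ is again an $S^0_{1,0}$ pseudo-differential operator by the symbolic calculus (Stein, p.~233, (4)), invoke Theorem~\ref{t8.18} for $T^{\ast}$ on $h^{\bfai}_{\oz}(\rn)$, and transfer to $\bmo_{\rz,\,\oz}(\rn)$ via the duality of Theorem~\ref{t7.5}/Corollary~\ref{c7.7}, using Stein's p.~259 proposition to identify the dually-defined operator with $T$ acting distributionally.
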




\end{document}